\newcommand*\rel@kern[1]{\kern#1\dimexpr\macc@kerna}
\newcommand*\widebar[1]{%
	\begingroup
	\def\mathaccent##1##2{%
		\rel@kern{0.8}%
		\overline{\rel@kern{-0.8}\macc@nucleus\rel@kern{0.2}}%
		\rel@kern{-0.2}%
	}%
	\macc@depth\@ne
	\let\math@bgroup\@empty \let\math@egroup\macc@set@skewchar
	\mathsurround\z@ \frozen@everymath{\mathgroup\macc@group\relax}%
	\macc@set@skewchar\relax
	\let\mathaccentV\macc@nested@a
	\macc@nested@a\relax111{#1}%
	\endgroup
}
\def\keywords{\xdef\@thefnmark{}\@footnotetext}
\newtheorem{thm}{Theorem}[section]
\newtheorem*{thm*}{Theorem}
\newtheorem{prop}[thm]{Proposition}
\newtheorem{coro}[thm]{Corollary}
\newtheorem{lem}[thm]{Lemma}
\theoremstyle{definition}
\newtheorem{defn}[thm]{Definition}
\theoremstyle{remark}
\newtheorem{rmk}[thm]{Remark}
\def\cref@thmoptarg[#1]#2#3#4{%
    \ifhmode\unskip\unskip\par\fi%
    \normalfont%
    \trivlist%
    \let\thmheadnl\relax%
    \let\thm@swap\@gobble%
    \thm@notefont{\fontseries\mddefault\upshape}%
    \thm@headpunct{.}% add period after heading
    \thm@headsep 5\p@ plus\p@ minus\p@\relax%
    \thm@space@setup%
    #2% style overrides
    \@topsep \thm@preskip               % used by thm head
    \@topsepadd \thm@postskip           % used by \@endparenv
    \def\@tempa{#3}\ifx\@empty\@tempa%
      \def\@tempa{\@oparg{\@begintheorem{#4}{}}[]}%
    \else%
      \refstepcounter[#1]{#3}%  <<< cleveref modification
      \@namedef{cref@#3@alias}{#1}% added
      \def\@tempa{\@oparg{\@begintheorem{#4}{\csname the#3\endcsname}}[]}%
    \fi%
    \@tempa}%
\DeclareMathAlphabet\matheuvm{U}{zeur}{m}{n}
\newcommand{\di}{\matheuvm{D}}
\newcommand{\diff}{\operatorname{d}}
\newcommand{\Leb}{\mathcal{L}}
\newcommand{\citeinfo}[2]{\cite[#1]{#2}}
\newcommand{\bset}{\matheuvm{B}}
\title{Absolute continuity of Wasserstein barycenters on manifolds with a lower Ricci curvature bound}
\author{Jianyu MA \quad
	% \href{mailto:Jianyu.Ma@math.univ-toulouse.fr}{Jianyu.Ma@math.univ-toulouse.fr}
	}
\affil{Institut de Mathématiques de Toulouse}
\date{\today}
\begin{document}
\maketitle

% ! TEX root = ../absolute_continuity.tex

\begin{abstract}
 	Given a complete Riemannian manifold $M$ with a lower Ricci curvature bound,
	we consider barycenters in the Wasserstein space $\mathcal{W}_2(M)$ of probability measures on $M$.
	We refer to them as Wasserstein barycenters,
	which by definition are probability measures on $M$.
	The goal of this article is to 
	present a novel approach to proving their absolute continuity.
	We introduce a new class of displacement functionals
	exploiting the Hessian equality for Wasserstein barycenters.
	To provide suitable instances of such functionals,
	we revisit Souslin space theory, Dunford-Pettis theorem and
	the de la Vall\'ee Poussin criterion for uniform integrability.
	Our method shows that if a probability measure $\mathbb{P}$ on $\mathcal{W}_2(M)$
	gives mass to absolutely continuous measures on $M$,
	then its unique barycenter is also absolutely continuous.
	This generalizes the previous results on compact manifolds by Kim and Pass \cite{kim2017wasserstein}.
\end{abstract}

\keywords{\\ \textsc{Keywords}: Optimal transports, Riemannian manifolds, Wasserstein barycenters,
Ricci curvature bounds, Weak compactness in $L^1$}

% \pagenumbering{roman}
\renewcommand\contentsname{}
\vspace{-1cm}
\tableofcontents
% \clearpage
% \pagenumbering{arabic}
% \unmarkedfntext{Optimal transports, Riemannian manifolds}

% ! TEX root = ../absolute_continuity.tex

\section{Introduction}

Barycenter is the notion of mean for probability measures on
metric spaces.
Given a probability measure $\mu$ on the Euclidean space $\mathbb{R}^m$,
if its first and second moments are finite,
then its mean $\int_{\mathbb{R}^m} x \diff \mu(x)$
can be equivalently defined as the unique point where
the infimum $\inf_{y \in \mathbb{R}^m} \int_{\mathbb{R}^m} \| y - x \|^2 \diff \mu(x)$ is reached.
This formulation in terms of minimization and metric
is still valid for general metric spaces, and it leads
to our definition of barycenter (see \Cref{defn:barycenter}).
It is worth noting that the existence of barycenters
is not guaranteed \emph{a priori} for general metric spaces.
We restrict our discussions to proper metric spaces
to ensure that barycenters exist.

Wasserstein spaces are metric spaces extensively studied in
the field of optimal transport theory.
Their geometric properties have gained constant attention.
By Wasserstein barycenter we mean a
barycenter of some probability measure on a given Wasserstein space.
In the simplest case, given two measures $\mu, \nu$ in the Wasserstein space $\mathcal{W}_2(M)$
over a Riemannian manifold $M$,
all minimal geodesics from $\mu$ to $\nu$ are made of
barycenters of $(1 - \lambda) \delta_{\mu} + \lambda \, \delta_{\nu}$ with $\lambda$ varying in $[0, 1]$.
Their absolute continuity (in possibly generalized settings)
was previously studied as the regularity of displacement
in \cite{mccann1997convexity,bernard2007optimal,fathi2010optimal,figalli2008absolute,villani2009optimal}.
A more general case was first studied by Agueh and Carlier
\cite{agueh2011barycenters}, where barycenters of $\mathbb{P}: = \sum_{i=1}^{n} \lambda_i \, \delta_{\mu_i}$
on the Wasserstein space $(\mathcal{W}_2(\mathbb{R}^m), W_2)$ were considered.
In this setting, Wasserstein barycenters are solutions
to the following minimization problem:
\[
	\min_{\nu} \sum_{i=1}^{n} \lambda_i \, W_2(\nu, \mu_i)^2, \quad
	\text{for }\nu \in \mathcal{W}_2(\mathbb{R}^m) .
\]
They proved the existence of barycenters constructively using a dual formulation
and showed that if at least one of $\mu_i$'s is absolutely continuous
with bounded density function,
then the unique barycenter is also absolutely continuous.
Kim and Pass \cite{kim2017wasserstein} conducted the same study for
Wasserstein barycenters on compact Riemannian manifolds $M$ with similar conclusions.
Their generalization is applicable to general probability measures
$\mathbb{P}$ on $\mathcal{W}_2(M)$ that give mass to the set
of absolutely continuous measures with a uniform upper density bound.
% In their work, t
The absolute continuity of Wasserstein barycenters
plays an indispensable role in their study of Jensen's type inequalities
for Wasserstein barycenters.
There is also a generalization \cite{jiang2017absolute} 
of Agueh and Carlier's results
to compact Alexandrov spaces with curvature bounded below.

When $\mathbb{P}$ has the form $\sum_{i=1}^n \lambda_i \, \delta_{\mu_i}$
with $\mu_1$ absolutely continuous,
Kim and Pass' proof of the absolute continuity of the
(unique) barycenter $\widebar \mu$ of $\mathbb{P}$ remains valid for non-compact manifolds $M$.
For a general measure $\mathbb{P}$ giving mass to absolutely continuous measures,
the strategy is to approximate $\mathbb{P}$ with finitely supported measures $\mathbb{P}_j$
whose barycenters $\widebar \mu_j$ are already shown to be absolutely continuous.
Thanks to the law of large numbers for Wasserstein barycenters (\Cref{thm:law_of_large_numbers_Wasserstein_barycenter}),
$\widebar \mu_j$ converges to $\widebar \mu$ weakly.
However, this is not sufficient to ensure that $\widebar \mu$ is also absolutely continuous.
To overcome this difficulty, Kim and Pass \cite{kim2017wasserstein} imposed
a uniform upper density bound on $\widebar \mu_j$'s,
which forced them to include the assumption on $\mathbb{P}$.

In our work, instead of following their quantitative approach, we seek for
proper integral functionals $F$ on $\mathcal{W}_2(M)$ that admit finite values only for absolutely continuous measures.
The continuity of these functionals has been studied in various sources, including
\cite{buttazzo1991functionals}, \cite[Theorem 29.20]{villani2009optimal},
\cite[Chapter 7]{Santambrogio2015}, and
\cite[Chapter 15]{ambrosio2021lectures}.
We summarize their assumptions and conclusions in \Cref{lem:lower_semi_continuity_entropy}.
Additionally, we aim to control the value of $F$ at $\widebar \mu_j$ by
those at the support of $\mathbb{P}_j$,
which enables us to use the convergence $\mathbb{P}_j \rightarrow \mathbb{P}$ effectively.
Classic references, such as Villani's monograph \cite{villani2009optimal},
focus on the $\lambda$-convexity of $F$,
a widely studied property that would satisfy our requirements if we tolerate
some independent constants in its inequality expression of convexity (\Cref{prop:entropy_estimation}).
Functionals defined in this way generalize the entropy functional
$ f \cdot \operatorname{Vol} \mapsto \int_M f \log f \diff \operatorname{Vol}$,
which is an important example in the study
of synthetic treatment of Ricci curvature lower bounds developed in
\cite{lott2009ricci,sturm2006geometryI,sturm2006geometryII}.
\Cref{prop:entropy_estimation} reveals how Ricci curvature affects the properties
of Wasserstein barycenters and suggests possible
extensions of our current work to general metric measures spaces.

The methodology previously described leads us to
\Cref{thm:absolute_continuity_main_theorem} on
the absolute continuity of Wasserstein barycenters,
where an extra assumption on $\mathbb{P}$ is needed.
% Basically, it replaces the role of Kim and Pass's extra assumption.
With the help of a generalized de la Vall\'ee Poussin criterion (\Cref{lem:modified_de_la_Vallee_Poussin_theorem}),
this assumption can be further simplified:
we ask that $\mathbb{P}$ gives mass to a compact
subset in some weak topology of absolutely continuous measures.
Although this topology is barely mentioned in the literature of optimal transport,
it generates the same Borel sets as the topology induced by the Wasserstein metric
according to the theory of Souslin space.
This helps us to state our main result with a natural assumption on $\mathbb{P}$:

\begin{thm*}
	Let $(M, \matheuvm{g})$ be a complete Riemannian manifold
	with a lower Ricci curvature bound.
	If a probability measure $\mathbb{P} \in \mathcal{W}_2(\mathcal{W}_2(M))$
	gives mass to the set of absolutely continuous probability
	measures on $M$, then its unique  barycenter is absolutely continuous.
\end{thm*}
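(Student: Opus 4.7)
The plan is to reduce the statement to \Cref{thm:absolute_continuity_main_theorem}, which delivers absolute continuity of the barycenter as soon as $\mathbb{P}$ satisfies a quantitative integrability condition against some displacement functional of the form $\mu \mapsto \int_M \varphi(\rho_\mu) \diff \operatorname{Vol}$. The goal is therefore to construct a single convex, superlinear $\varphi$ for which this condition holds, combining the Souslin space viewpoint on $\mathcal{W}_2^{\mathrm{ac}}(M)$ (the set of absolutely continuous measures) with the generalized de la Vallée Poussin criterion \Cref{lem:modified_de_la_Vallée_Poussin_theorem}.

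First I would observe that $\mathcal{W}_2^{\mathrm{ac}}(M)$ is a Borel subset of $\mathcal{W}_2(M)$, and, since the hypothesis gives $\mathbb{P}(\mathcal{W}_2^{\mathrm{ac}}(M)) > 0$, replace $\mathbb{P}$ by its normalized restriction to $\mathcal{W}_2^{\mathrm{ac}}(M)$. By uniqueness of the barycenter and linearity of the defining minimization, this restriction is harmless, so without loss of generality $\mathbb{P}$ is concentrated on $\mathcal{W}_2^{\mathrm{ac}}(M)$.

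Next I would equip $\mathcal{W}_2^{\mathrm{ac}}(M)$ with the topology pulled back from weak $L^1$ convergence via the density map $\mu \mapsto \rho_\mu$. The Souslin space considerations alluded to in the introduction say that the Borel $\sigma$-algebra for this topology coincides with that inherited from the Wasserstein metric, so $\mathbb{P}$ remains a Borel (hence Radon) probability measure in the weak topology. Its inner regularity then provides, for each integer $n \geq 1$, a weakly compact set $K_n \subset \mathcal{W}_2^{\mathrm{ac}}(M)$ with $\mathbb{P}(K_n) > 1 - 2^{-n}$. The Dunford--Pettis theorem identifies the weak $L^1$ compactness of $\{\rho_\mu : \mu \in K_n\}$ with uniform integrability, and \Cref{lem:modified_de_la_Vallée_Poussin_theorem} then produces a convex superlinear $\varphi_n \colon [0, \infty) \to [0, \infty)$ with
\[
	\sup_{\mu \in K_n} \int_M \varphi_n(\rho_\mu) \diff \operatorname{Vol} < \infty.
\]
A diagonal combination $\varphi := \sum_n c_n \varphi_n$ with positive coefficients $c_n$ tending rapidly to zero then yields a single convex superlinear function such that $\mu \mapsto \int_M \varphi(\rho_\mu) \diff \operatorname{Vol}$ is $\mathbb{P}$-integrable. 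This is precisely the extra assumption required by \Cref{thm:absolute_continuity_main_theorem}, whose conclusion is that the unique barycenter of $\mathbb{P}$ is absolutely continuous.

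I expect the main subtle step to be the diagonal assembly of the $\varphi_n$ into a single $\varphi$: the coefficients $c_n$ must be small enough to ensure $\mathbb{P}$-integrability on all of $\mathcal{W}_2^{\mathrm{ac}}(M)$, while the sum must retain superlinearity at infinity and convexity, and must still be compatible with the Ricci-dependent displacement-convexity estimate of \Cref{prop:entropy_estimation} invoked inside \Cref{thm:absolute_continuity_main_theorem}. A secondary technical hurdle is the Souslin-theoretic identification of the two Borel structures on $\mathcal{W}_2^{\mathrm{ac}}(M)$, but this is exactly the piece of descriptive set theory that the introduction singles out and that is established earlier in the paper.
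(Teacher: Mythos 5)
Your overall chain --- Souslin-theoretic identification of the Borel structures on the set $\mathbb{A}$ of absolutely continuous measures, inner regularity of the resulting Radon measure to extract weakly compact sets, Dunford--Pettis (\Cref{thm:Dunford_Pettis_theorem}), de la Vall\'ee Poussin, and then the intermediate result \Cref{thm:absolute_continuity_main_theorem} --- is exactly the paper's route, but two of your steps are genuinely wrong. First, the reduction ``without loss of generality $\mathbb{P}$ is concentrated on $\mathcal{W}_2^{\mathrm{ac}}(M)$'' is invalid: the functional $\nu \mapsto \int W_2(\nu,\mu)^2 \diff \mathbb{P}(\mu)$ is linear in $\mathbb{P}$, but minimizers are not, so the barycenter of the normalized restriction of $\mathbb{P}$ to $\mathbb{A}$ is in general a different measure (take $\mathbb{P} = \tfrac12 \delta_{\mu_1} + \tfrac12 \delta_{\delta_x}$ with $\mu_1$ absolutely continuous: the barycenter of $\mathbb{P}$ is a geodesic midpoint between $\mu_1$ and $\delta_x$, whereas the restricted measure has barycenter $\mu_1$). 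Second, the diagonal assembly $\varphi := \sum_n c_n \varphi_n$ does not deliver what you claim: for $\mu \in K_m \setminus K_n$ with $n < m$, nothing prevents $\int_M \varphi_n(\rho_\mu) \diff \operatorname{Vol} = +\infty$ (a fixed $L^1$ density need not be integrable against a convex function chosen for a compact set it does not belong to), and then $\int_M \varphi(\rho_\mu) \diff \operatorname{Vol} \ge c_n \int_M \varphi_n(\rho_\mu) \diff \operatorname{Vol} = +\infty$ no matter how small $c_n > 0$ is. So the combined functional is in general not $\mathbb{P}$-integrable, nor even $\mathbb{P}$-a.e.\ finite.

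Both problematic steps are also unnecessary, because you have misread the hypothesis of \Cref{thm:absolute_continuity_main_theorem}: it does not ask for $\mathbb{P}$-integrability of a displacement functional, only that $\mathbb{P}(\operatorname{B}(G,L)) > 0$ for a single pair $(G,L)$ as in \Cref{defn:bounded_entropy_subset}, and its proof already handles the part of $\mathbb{P}$ living outside $\mathbb{A}$. Hence one weakly compact set suffices: by \Cref{lem:density_function_topologies_induce_same_Borel_sets} choose $\mathcal{F} \subset (\mathbb{A}, \tau)$ compact with $\mathbb{P}(\mathcal{F}) > 0$, apply \Cref{thm:Dunford_Pettis_theorem} to get uniform integrability of the densities, and invoke the generalized criterion \Cref{lem:modified_de_la_Vallée_Poussin_theorem} --- which, unlike the classical statement you implicitly use, also provides the bounded-derivative condition on $H(x) = G(e^x)e^{-x}$ needed by \Cref{defn:bounded_entropy_subset} and \Cref{prop:entropy_estimation} --- to obtain $\mathcal{F} \subset \operatorname{B}(G,1)$, so that $\mathbb{P}(\operatorname{B}(G,1)) > 0$ and \Cref{thm:absolute_continuity_main_theorem} concludes. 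Deleting your WLOG and the diagonal sum, and keeping a single compact set, turns your outline into the paper's proof.
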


\subsection*{Structure of the paper}

% \Cref{sec:wasserstein_barycenter} begins with basic notation and definitions
% to introduce Wasserstein barycenters.
% After their existence and uniqueness are recalled,
% we extend Kim and Pass' proof of their absolute continuity
% for finitely many measures
% to non-compact manifolds.

% \textbf{Should section and appendix start with a capitalised letter?}
In \Cref{sec:wasserstein_barycenter},
we introduce notation and definitions for Wasserstein barycenters,
and then extend Kim and Pass' proof of their absolute continuity to non-compact manifolds.
In \Cref{sec:hessian_equality}, we present
the Hessian equality for Wasserstein barycenters
(\Cref{thm:hessian_equality_Wasserstein_barycenter}),
which is used in \Cref{sec:ricci_curvature}
to justify our displacement functionals (\Cref{prop:entropy_estimation}).
\Cref{sec:measure_theory} primarily concerns Polish spaces,
and we use the Souslin space theory to provide appropriate instances of the
previously defined displacement functionals.
Our main result, \Cref{thm:final_theorem_manifolds},
is a consequence of the intermediate result \Cref{thm:absolute_continuity_main_theorem} 
after proving several auxiliary results.

% This paper includes two appendices, which are meant to provide further discussions.
% Proofs in the main text are independent of them.

% In \cref{appendix:Wasserstein_barycenter_real_line}, we consider Wasserstein barycenters
% on the real line and give an alternative proof of our main result using explicit computations.

% In \cref{appendix:barycenter_cut_locus},
% we show that on a manifold with a lower Ricci curvature bound,
% probability measures give no mass to the cut loci of their barycenters.

% (\Cref{thm:barycenter_out_of_cut_locus}).
% It is a crucial property used to prove Wasserstein barycenters' absolute continuity
% for finitely many measures.
% \Cref{appendix:discrete_barycenters_mainfold} studies barycenters' uniqueness 
% of measures $\sum_{i=1}^n \lambda_i \, \delta_{x_i}$ on manifolds and

% This paper includes one appendix, which provides counter-examples to barycenters' existence
% in general metric spaces.

% ! TEX root = ../absolute_continuity.tex

\section{Wasserstein barycenters}
\label{sec:wasserstein_barycenter}

\subsection{Notation and definitions}
\label{sec:notation}

\begin{defn}[Barycenter]
	\label{defn:barycenter}
	Let $(E,d)$ be a metric space and let $\mu$ be a probability measure on $E$ such that
	$\int_{E} d(x_0, y)^2 \diff \mu(y) < \infty$ for some point $x_0 \in E$.
	We call $z \in E$ a barycenter of $\mu$ if
	\[
		\int_{E} d(z, y)^2 \diff \mu(y)
		= \min_{x \in E} \int_{E} d(x, y)^2 \diff \mu(y).
	\]
\end{defn}

A metric space is proper
if its bounded closed subsets are also compact.
Barycenters always exist in proper spaces since a minimizing sequence is
bounded and thus pre-compact.
We refer to Ohta \cite{ohta2012barycenters} for more details and
some other properties
of barycenters in a proper space.

% \begin{rmk}
% 	The Hopf-Rinow theorem by \cite[\S 2.5.3]{burago2001course} states that a length
% 	space is proper if and only if it is complete and locally compact.
% 	For $x$ and $y$ two points in a length space, we observe that barycenters of
% 	$\frac{1}{2}\delta_x + \frac{1}{2}\delta_y$ coincide with midpoints between $x$ and $y$.
% 	Here we call $z$ a midpoint between $x$ and $y$ if $d(x,z)=d(z,y)=\frac{1}{2}d(x,y)$.
% 	With this observation, we have counter-examples
% 	to barycenter's exsitence in general length spaces which are not proper spaces.
% 	We put details in \Cref{appendix:counter-examples}.
% \end{rmk}

A metric space is Polish if it is complete and separable.
Any proper space $(E,d)$ is Polish,
so are the Wasserstein spaces built over it.
In this article, we consider the ($2$-)Wasserstein space $(\mathcal{W}_2(E), W_2)$
of probability measures on $E$ with
\begin{align}
	\mathcal{W}_2(E) : & = \left \{ \mu \text{ is a probability measure on } E\, \big|\,
	\exists\, x_0 \in E,
	\int_{E} d(x_0, y)^2 \diff \mu(y) < \infty \right\}, \nonumber                       \\
	\label{equa:Wasserstein_distance}
	W_2(\mu, \nu)^2:   & =  \inf_{ \pi \in \Pi(\mu, \nu)}
	\int_{E \times E} d(x, y)^2 \diff \pi(x, y),
\end{align}
where $\Pi(\mu, \nu)$ is the set of probability measures on $E \times E$
with marginals $\mu$ and $\nu$.
The infimum in (\ref{equa:Wasserstein_distance}) is always reached
by some measure $\pi \in \Pi(\mu, \nu)$, and
we call it an optimal transport plan between $\mu$ and $\nu$.

% We call $E$ the base space of the Wasserstein space $(\mathcal{W}_2(E), W_2)$.
Since Wasserstein spaces are complete and separable,
we can construct the Wasserstein space
$(\mathcal{W}_2(\mathcal{W}_2(E)), \mathbb{W}_2)$ over the
Wasserstein space $(\mathcal{W}_2(E), W_2)$.
Symbols $W_2$ and $\mathbb{W}_2$ will always denote Wasserstein metrics
in the rest of the paper.
% if we talk about Wasserstein spaces in propositions
% without introducing the corresponding symbols for their metrics.
A Wasserstein space $\mathcal{W}_2(E)$ is not proper unless the
base space $E$ is compact \cite[Remark 7.19]{ambrosio2008gradient}.
Classic references on this topic are \cite{villani2009optimal},
\cite{Santambrogio2015}, and \cite{villani2021topics}.

As mentioned before, Wasserstein barycenters are barycenters
of measures on Wasserstein spaces.
We refer to the following result by Le Gouic and Loubes
as the law of large numbers for Wasserstein barycenters since
we can set $\mathbb{P}_j$ to be empirical measures for the law $\mathbb{P}$.
\begin{thm}[Law of large numbers for Wasserstein barycenters, \cite{le2017existence}]
	\label{thm:law_of_large_numbers_Wasserstein_barycenter}
	Let $(E, d)$ be a proper space.
	Fix a probability measure $\mathbb{P} \in \mathcal{W}_2(\mathcal{W}_2(E))$ on $\mathcal{W}_2(E)$.
	Given a sequence of measures $\mathbb{P}_j \in \mathcal{W}_2(\mathcal{W}_2(E))$
	with their corresponding barycenters $\widebar \mu_j \in \mathcal{W}_2(E)$,
	if $\mathbb{W}_2(\mathbb{P}_j , \mathbb{P}) \rightarrow 0$ as $j \rightarrow \infty$, then
	$W_2(\widebar \mu_j, \widebar \mu) \rightarrow 0$ for some barycenter $\widebar \mu$ of $\mathbb{P}$
	up to extracting a subsequence of $\widebar \mu_j$.
\end{thm}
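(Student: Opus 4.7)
The plan is to treat this as a stability result for the variational functional
\[
F_{\mathbb{Q}}(\mu) := \int_{\mathcal{W}_2(E)} W_2(\mu, \nu)^2 \, \diff \mathbb{Q}(\nu),
\]
whose minimizers over $\mu \in \mathcal{W}_2(E)$ are precisely the barycenters of $\mathbb{Q}$. The argument splits naturally into precompactness of $\{\widebar{\mu}_j\}$, identification of any limit as a minimizer of $F_{\mathbb{P}}$, and upgrading the convergence from narrow to $W_2$.

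First I would fix a reference measure $\mu_0 \in \mathcal{W}_2(E)$ and use the minimizing property $F_{\mathbb{P}_j}(\widebar{\mu}_j) \leq F_{\mathbb{P}_j}(\mu_0)$. Since $\mathbb{W}_2(\mathbb{P}_j, \mathbb{P}) \to 0$ yields convergence of integrals of functions of quadratic growth in $\nu$, the right-hand side is uniformly bounded; integrating the triangle inequality $W_2(\widebar{\mu}_j, \mu_0)^2 \leq 2 W_2(\widebar{\mu}_j, \nu)^2 + 2 W_2(\nu, \mu_0)^2$ against $\mathbb{P}_j$ then produces a uniform bound $W_2(\widebar{\mu}_j, \mu_0) \leq R$, and thereby a uniform second-moment bound on $\widebar{\mu}_j$. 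Markov's inequality combined with the properness of $E$ makes $\{\widebar{\mu}_j\}$ tight in the narrow topology, so I extract a narrowly convergent subsequence $\widebar{\mu}_{j_k} \to \widebar{\mu}$, and Fatou shows $\widebar{\mu} \in \mathcal{W}_2(E)$.

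The continuity tool that identifies $\widebar{\mu}$ as a barycenter is a Minkowski bridging estimate. Given an optimal coupling $\sigma_j \in \Pi(\mathbb{P}_j, \mathbb{P})$ on $\mathcal{W}_2(E) \times \mathcal{W}_2(E)$, Minkowski's inequality in $L^2(\sigma_j)$ applied to $(\tau, \tau') \mapsto W_2(\mu, \tau) \leq W_2(\mu, \tau') + W_2(\tau, \tau')$ yields
\[
\bigl| \sqrt{F_{\mathbb{P}_j}(\mu)} - \sqrt{F_{\mathbb{P}}(\mu)} \bigr| \leq \mathbb{W}_2(\mathbb{P}_j, \mathbb{P}),
\]
so $F_{\mathbb{P}_j}(\mu) \to F_{\mathbb{P}}(\mu)$ for every $\mu$, and, because $\sqrt{F_{\mathbb{P}_{j_k}}(\widebar{\mu}_{j_k})}$ is bounded by the first step, the same idea gives $|F_{\mathbb{P}}(\widebar{\mu}_{j_k}) - F_{\mathbb{P}_{j_k}}(\widebar{\mu}_{j_k})| = o(1)$. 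Combining this with the minimizing property $F_{\mathbb{P}_{j_k}}(\widebar{\mu}_{j_k}) \leq F_{\mathbb{P}_{j_k}}(\mu)$ and the narrow lower semicontinuity of $\mu \mapsto W_2(\mu, \nu)^2$ (via Fatou) produces the chain
\[
F_{\mathbb{P}}(\widebar{\mu}) \leq \liminf_k F_{\mathbb{P}}(\widebar{\mu}_{j_k}) \leq \liminf_k F_{\mathbb{P}_{j_k}}(\widebar{\mu}_{j_k}) \leq \lim_k F_{\mathbb{P}_{j_k}}(\mu) = F_{\mathbb{P}}(\mu)
\]
for every $\mu \in \mathcal{W}_2(E)$, certifying that $\widebar{\mu}$ is a barycenter of $\mathbb{P}$ and that $F_{\mathbb{P}}(\widebar{\mu}_{j_k}) \to F_{\mathbb{P}}(\widebar{\mu})$.

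The main obstacle is the last step: upgrading narrow convergence to $W_2$ convergence along some further subsequence. Because $\mathcal{W}_2(E)$ is non-proper when $E$ is non-compact, a uniform second-moment bound alone does not provide the uniform integrability of $d(x_0, \cdot)^2$ that narrow convergence must be paired with to give $W_2$ convergence. I would exploit the equality case of the preceding Fatou chain: setting $\mu = \widebar{\mu}$ forces every inequality in the chain to be an equality, which in turn forces the pointwise narrow-lsc inequality $W_2(\widebar{\mu}, \nu)^2 \leq \liminf_k W_2(\widebar{\mu}_{j_k}, \nu)^2$ to be an equality for $\mathbb{P}$-almost every $\nu$. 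A Scheffé-type argument applied to the auxiliary sequence $h_k := \min\bigl(W_2(\widebar{\mu}_{j_k}, \cdot)^2, W_2(\widebar{\mu}, \cdot)^2\bigr)$ and dominated convergence strengthens this to $L^1(\mathbb{P})$ convergence, hence to pointwise $\mathbb{P}$-a.e.\ convergence $W_2(\widebar{\mu}_{j_k}, \nu) \to W_2(\widebar{\mu}, \nu)$ along a further subsequence. Selecting any $\nu_*$ in this full-measure set, extracting a narrow limit of the optimal plans between $\widebar{\mu}_{j_k}$ and $\nu_*$, and noting that their total costs converge to the optimum, I deduce uniform integrability of $d^2$ against these plans; the elementary bound $d(x_0, x)^2 \leq 2 d(x_0, y)^2 + 2 d(x, y)^2$ then transfers this uniform integrability to the first marginals $\widebar{\mu}_{j_k}$, yielding the desired $W_2$ convergence.
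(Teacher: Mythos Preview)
The paper does not supply its own proof of this theorem: it is quoted verbatim as a result of Le~Gouic and Loubes \cite{le2017existence} and used as a black box throughout (e.g.\ in the proofs of \Cref{thm:exsitence_Wasserstein_barycenter}, \Cref{thm:absolute_continuity_discrete}, and \Cref{thm:absolute_continuity_main_theorem}). There is therefore nothing in the paper to compare your argument against.

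That said, your proposal is a sound self-contained proof. The first two steps (tightness via a uniform second-moment bound, and identification of any narrow cluster point as a barycenter via the Minkowski estimate $|\sqrt{F_{\mathbb{P}_j}}-\sqrt{F_{\mathbb{P}}}|\le \mathbb{W}_2(\mathbb{P}_j,\mathbb{P})$ together with narrow lower semicontinuity of $W_2^2$) are standard and correctly carried out. The third step, upgrading narrow convergence to $W_2$-convergence, is the nontrivial one on a non-proper space, and your route is correct: from equality in the Fatou chain one obtains $\liminf_k W_2(\widebar\mu_{j_k},\nu)^2 = W_2(\widebar\mu,\nu)^2$ for $\mathbb{P}$-a.e.\ $\nu$; your Scheff\'e-type argument with $h_k=\min\bigl(W_2(\widebar\mu_{j_k},\cdot)^2,W_2(\widebar\mu,\cdot)^2\bigr)$ then legitimately upgrades this to $L^1(\mathbb{P})$-convergence (since $h_k\le W_2(\widebar\mu,\cdot)^2\in L^1(\mathbb{P})$ and $h_k\to W_2(\widebar\mu,\cdot)^2$ pointwise), hence to a.e.\ convergence along a subsequence. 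Picking a single $\nu_*$ with $W_2(\widebar\mu_{j_k},\nu_*)\to W_2(\widebar\mu,\nu_*)$, passing optimal plans to a narrow limit, and combining convergence of transport costs with the fixed second marginal $\nu_*$ to get uniform integrability of $d(x_0,\cdot)^2$ under $\widebar\mu_{j_k}$ is a clean way to finish. The only place where a reader might want one more line is the transfer of uniform integrability from $d(x,y)^2$ under the plans $\pi_k$ to $d(x_0,x)^2$ under $\widebar\mu_{j_k}$: this works because the second marginal is the fixed $\nu_*$, so $d(x_0,y)^2$ is trivially uniformly integrable, and then $d(x_0,x)^2\le 2d(x_0,y)^2+2d(x,y)^2$ dominates.
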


For two topological spaces $E_1$ and $E_2$, we denote by
$p_1$ and $p_2$ the canonical projection maps defined on $E_1 \times E_2$,
where $p_1$ maps $(x, y) \in E_1 \times E_2$ to $x \in E_1$ and
$p_2$ maps $(x, y)$ to $y \in E_2$.
Recall that these projection maps are continuous and open
(mapping open sets to open sets).
The map $p_1$ (respectively $p_2$) is closed
if $E_2$ (respectively $E_1$)
is compact \cite[Proposition 8.2]{bredon2013topology}.
By convention, $\operatorname{Id}$ denotes the identity map
$ x \mapsto x$.
% Consider the Wasserstein space $(\mathcal{W}_2(E), W_2)$
% over some metric space $(E, d)$.
% For $\mu \in \mathcal{W}_2(E)$, \textbf{we define}
% $W_2(\mu, E) := \inf_{x \in E} W_2(\mu, \delta_x)$.
% For a given point $z \in E$,
% since $W_2(\mu, \delta_{z})^2 = \int_{M} d(z, y)^2 \diff \mu(y)$,
% $z$ is a barycenter of $\mu$ if and only if $W_2(\mu, \delta_z) = W_2(\mu, E)$.
The following lemma is useful when compactness arguments are needed,
whose proof is based on the previous property of projection maps.
% and it uses the notation introduced above.

\begin{lem}
	\label{lem:compact_barycenter}
	Let $(E, d)$ be a proper space.
	Given an integer $n \ge 1$, let $\lambda_i > 0, 1 \le i \le n$,
	be $n$ positive real numbers such that $\sum_{i=1}^n \lambda_i = 1$.
	The set
	\[
		\Gamma := \left\{ (x_1, \ldots, x_n, z) \in E^{n+1} \,\bigg|\, 
			% W_2(\mu, \delta_z) = W_2(\mu, E)
			\sum_{i=1}^n \lambda_i\,d(z, x_i)^2
			= \min_{y \in E} \sum_{i=1}^n \lambda_i\,d(y, x_i)^2
		% ,\,\mu : = \sum_{i=1}^n \lambda_i \, \delta_{x_i}
	\right\}
	\]
	is closed.
	Denote by $\operatorname{bary}( \boldsymbol{A} )$
	the set of all barycenters of the measures
	$\sum_{i=1}^n \lambda_i\, \delta_{x_i}$ when
	$(x_1, \ldots, x_n)$ runs through
	a subset $\boldsymbol{A} \subset E^n$.
	If $\boldsymbol{A}$ is compact,
	then $\operatorname{bary}( \boldsymbol{A} )$ is compact.
\end{lem}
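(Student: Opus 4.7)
The plan is to establish closedness of $\Gamma$ first by a direct continuity argument, then deduce the compactness of $\operatorname{bary}(\boldsymbol{A})$ by combining this closedness with a uniform boundedness estimate on how far barycenters can lie from $\boldsymbol{A}$, invoking properness at the very end.

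For closedness of $\Gamma$, I would take a convergent sequence $(x_1^k, \ldots, x_n^k, z^k) \in \Gamma$ with limit $(x_1, \ldots, x_n, z)$ and set $\mu^k := \sum_i \lambda_i \delta_{x_i^k}$ and $\mu := \sum_i \lambda_i \delta_{x_i}$. The diagonal coupling $\sum_i \lambda_i \delta_{(x_i^k, x_i)} \in \Pi(\mu^k, \mu)$ gives $W_2(\mu^k, \mu)^2 \le \sum_i \lambda_i d(x_i^k, x_i)^2 \to 0$, while $W_2(\delta_{z^k}, \delta_z) = d(z^k, z) \to 0$. Two applications of the triangle inequality in $\mathcal{W}_2$ then yield $W_2(\mu^k, \delta_{z^k}) \to W_2(\mu, \delta_z)$. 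The function $\nu \mapsto W_2(\nu, E) = \inf_{x \in E} W_2(\nu, \delta_x)$ is $1$-Lipschitz as an infimum of $1$-Lipschitz maps, hence $W_2(\mu^k, E) \to W_2(\mu, E)$. Passing to the limit in $W_2(\mu^k, \delta_{z^k}) = W_2(\mu^k, E)$ gives $W_2(\mu, \delta_z) = W_2(\mu, E)$, that is $(x_1, \ldots, x_n, z) \in \Gamma$.

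For the second claim, observe that $\operatorname{bary}(\boldsymbol{A}) = p_{n+1}\bigl(\Gamma \cap (\boldsymbol{A} \times E)\bigr)$, so it suffices to show that $\Gamma \cap (\boldsymbol{A} \times E)$ is compact in $E^{n+1}$. The key step is a uniform estimate: if $z$ is any barycenter of $\mu = \sum_i \lambda_i \delta_{x_i}$ with $(x_1, \ldots, x_n) \in \boldsymbol{A}$, then testing the optimality of $z$ against $x_1$ yields
\[
	\lambda_1\, d(z, x_1)^2 \le \sum_{i=1}^n \lambda_i\, d(z, x_i)^2 = W_2(\mu, \delta_z)^2 \le W_2(\mu, \delta_{x_1})^2 = \sum_{i=1}^n \lambda_i\, d(x_1, x_i)^2.
\]
Since $\boldsymbol{A}$ is compact, the right-hand side is uniformly bounded and $d(x_1, x_0)$ is bounded for any fixed $x_0 \in E$, hence $d(z, x_0)$ is uniformly bounded over $\operatorname{bary}(\boldsymbol{A})$. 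By properness, $\operatorname{bary}(\boldsymbol{A})$ lies in a compact subset of $E$, so $\Gamma \cap (\boldsymbol{A} \times E)$ lies in the compact set $\boldsymbol{A} \times \overline{\operatorname{bary}(\boldsymbol{A})}$; being a closed subset by the first part, it is itself compact, and so is its projection $\operatorname{bary}(\boldsymbol{A})$.

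I do not anticipate a serious obstacle: the argument is essentially routine once one notices that testing barycenter optimality against an atom of the discrete measure immediately gives the boundedness estimate. The only mildly delicate point is the joint continuity of the various Wasserstein quantities as the atoms move, handled explicitly via the diagonal coupling. Properness is used precisely once, to promote boundedness of $\operatorname{bary}(\boldsymbol{A})$ to precompactness.
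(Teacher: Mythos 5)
Your proof is correct and follows essentially the same route as the paper: closedness of $\Gamma$ via continuity of $\boldsymbol{x}\mapsto\sum_i\lambda_i\delta_{x_i}$ (diagonal coupling) and of $\nu\mapsto W_2(\nu,E)$, plus a boundedness estimate for barycenters over the compact set $\boldsymbol{A}$, with properness converting boundedness into compactness. The only cosmetic difference is that the paper invokes the closed-projection property of $p_2:\boldsymbol{A}\times E\to E$ for compact $\boldsymbol{A}$, while you first show $\Gamma\cap(\boldsymbol{A}\times E)$ is compact and then project; your explicit optimality test against $x_1$ just spells out the paper's brief remark that barycenters lie in bounded balls around the atoms.
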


\begin{proof}
	For $\boldsymbol{x}:=(x_1, x_2, \ldots, x_n) \in E^n$, we define
	$\eta(\boldsymbol{x}) := \sum_{i=1}^n \lambda_i \,\delta_{x_i} \in \mathcal{W}_2(E)$.
	The map $\eta: (E,d) \rightarrow (\mathcal{W}_2(E), W_2)$ is continuous
	by definition of Wasserstein metric:
	for $\boldsymbol{x}, \boldsymbol{y} \in E^n$,
	\[
		W_2(\eta(\boldsymbol{x}), \eta(\boldsymbol{y}))^2 \leq
		\sum_{i=1}^n \lambda_i \, d(x_i, y_i)^2.
	\]
	It follows from the triangle inequality that
	the map $\boldsymbol{x} \in E^n \mapsto
	\min_{y \in E} W_2(\eta(\boldsymbol{x}), \delta_y)$
	is also continuous, which implies that the set $\Gamma$ is closed
	as $W_2(\eta(\boldsymbol{x}), \delta_y)^2 = \sum_{i=1}^n \lambda_i\,d(y, x_i)^2$.

	Note that $\operatorname{bary}( \boldsymbol{A} )
		= p_2 \left( \Gamma \cap (\boldsymbol{A} \times E) \right)$,
	where $p_2: \boldsymbol A \times E \rightarrow E$ is the canonical projection map.
	If $\boldsymbol A$ is compact, $p_2$ is a closed map
	and thus $\operatorname{bary}( \boldsymbol{A} )$ is closed as $\Gamma$ is closed.
	The set $\operatorname{bary}( \boldsymbol{A} )$
	is bounded since barycenters are located within
	the union of $n$ bounded balls with centers $x_i$.
\end{proof}

For a metric space $E$, denote by $\mathcal{B}(E)$
the $\sigma$-algebra of its Borel sets.
If $E$ is separable, then the support of any Borel measure $\mu$ on $E$
exists \cite[Proposition 7.2.9]{bogachev2007measure}.
This support, denoted by $\operatorname{supp}(\mu)$,
is the closed set
whose complement is the union of all open sets $U \subset E$ satisfying $\mu(U) = 0$.
We shall apply the following widely used measurable selection theorem to construct
Wasserstein barycenters in the next subsection.
Its proof could be found in \cite[Theorem 6.9.3]{bogachev2007measure},
\cite{fremlin2006measurable}, and \cite{koumoullis1983ramsey}.

\begin{thm}[Kuratowski and Ryll-Nardzewski measurable selection theorem]
	\label{thm:measurable_selection_theorem}
	Let $E$ be a Polish metric space, and
	let \( \Psi \) be a map on a measurable space
	$( \Omega , \mathcal { B } )$ with values in the set of nonempty
	closed subsets of $E$.
	Suppose that for every open set $U \subset E$, we have
	\[
		\left\{ \omega \in \Omega \,\mid\, \Psi ( \omega ) \cap U
		\neq \emptyset \right\} \in \mathcal { B }.
	\]
	Then \( \Psi \) has a selection that is measurable with respect to
	the pair of \( \sigma \)-algebras \( \mathcal { B } \) and $\mathcal { B } ( E )$.
\end{thm}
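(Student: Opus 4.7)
The plan is to use the classical inductive construction via a countable dense set, which converts the measurability hypothesis on the multifunction $\Psi$ into a uniformly Cauchy sequence of measurable step functions whose limit is the desired selection. Using separability of $E$, I would fix a countable dense subset $\{x_k\}_{k \geq 1}$ and build measurable maps $\psi_n : \Omega \to E$ taking values in $\{x_k\}$ such that $d(\psi_n(\omega), \Psi(\omega)) < 2^{-n}$ and $d(\psi_{n+1}(\omega), \psi_n(\omega)) < 2^{-n+1}$ for all $\omega$.

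For the base case, I would let $\psi_0(\omega) = x_k$ on the set where $k$ is the smallest index with $\Psi(\omega) \cap B(x_k, 1) \neq \emptyset$; these sets are measurable by hypothesis and their union covers $\Omega$ because the $x_k$ are dense in $E$ and each $\Psi(\omega)$ is nonempty. For the inductive step, given $\psi_n$, I would choose $\psi_{n+1}(\omega) = x_k$ where $k$ is the smallest index such that both $d(x_k, \psi_n(\omega)) < 2^{-n+1}$ and $\Psi(\omega) \cap B(x_k, 2^{-(n+1)}) \neq \emptyset$ hold. The first condition is measurable because $\psi_n$ is measurable and balls are open, and the second by the hypothesis on $\Psi$; existence of such a $k$ follows from picking a point $y \in \Psi(\omega)$ within $2^{-n}$ of $\psi_n(\omega)$, then approximating $y$ by some $x_k$ within $2^{-(n+1)}$, and combining via the triangle inequality.

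Once this sequence is constructed, the Cauchy control gives $d(\psi_n(\omega), \psi_m(\omega)) < 2^{-n+2}$ uniformly in $\omega$ for $m > n$, so completeness of $E$ produces a uniform limit $\psi : \Omega \to E$ which is measurable as a pointwise limit of measurable maps with values in a Polish space. Since $d(\psi_n(\omega), \Psi(\omega)) \to 0$ and $\Psi(\omega)$ is closed, $\psi(\omega) \in \Psi(\omega)$, yielding the selection. I expect the main obstacle to be calibrating the inductive step so that measurability and the Cauchy property hold simultaneously: one must demand that the refined approximation stay close to $\psi_n(\omega)$, not merely close to $\Psi(\omega)$, which forces the selection sets to depend jointly on the previous map and on the multifunction. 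The hypothesis that $\{\omega : \Psi(\omega) \cap U \neq \emptyset\}$ is measurable for every open $U$, combined with the already-known measurability of $\psi_n$, is exactly what keeps the new partition inside $\mathcal{B}$.
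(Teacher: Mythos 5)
Your construction is correct; the only thing to note is that the paper does not prove this theorem at all --- it quotes it and refers to Bogachev (Theorem 6.9.3), Fremlin, and Koumoullis for the proof. What you have written is precisely the classical successive-approximation argument found in those references: the base step and the inductive step both reduce measurability to sets of the form $\{\omega \mid \Psi(\omega)\cap B(x_k,r)\neq\emptyset\}$ (covered by the hypothesis, since open balls are open) intersected with preimages $\psi_n^{-1}(B(x_k,2^{-n+1}))$ (measurable because $\psi_n$ is countably valued on measurable pieces), and the "smallest index $k$" device keeps the resulting partition in $\mathcal{B}$. Your invariants are consistent: existence of an admissible $k$ in the inductive step follows from $d(x_k,y)<2^{-(n+1)}$ and $d(y,\psi_n(\omega))<2^{-n}$ via the triangle inequality, giving $d(x_k,\psi_n(\omega))<2^{-n+1}$, and the resulting uniform Cauchy bound $d(\psi_n,\psi_m)<2^{-n+2}$ together with completeness, the standard fact that a pointwise limit of Borel-measurable maps into a metric space is measurable, and closedness of each $\Psi(\omega)$ yields a measurable selection. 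So the proposal is a complete and correct proof of the cited theorem, supplying the argument the paper delegates to its references.
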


The notion of conditional measures \cite[Definition 10.4.2]{bogachev2007measure}
will be used to prove \Cref{prop:absolutely_continous_barycenter_discrete_marginals}.

\begin{defn}[Conditional probability measures]
	\label{defn:regular_conditional_measure}
	Let $E$ be a Polish metric space and
	let $n \ge 2$ be a positive integer.
	Denote by $ \boldsymbol x^\prime = (x_2, \ldots, x_n) \in E^{n-1}$ the last $n-1$ components
	of a point $\boldsymbol x = (x_1, x_2, \ldots, x_n) \in E^n$.
	Given a Borel probability measures $\gamma$ on $E^n$,
	define the measure $\pi : = {p_2}_{\#} \gamma$ on $E^{n-1}$,
	where $p_2$ is the projection
	$\boldsymbol x \in E \times E^{n-1} \mapsto \boldsymbol x^\prime \in E^{n-1}$.
	We call $\gamma(\cdot, \cdot): \mathcal{B}(E^n) \times E^{n-1} \rightarrow \mathbb{R}$
	a conditional measure for $\gamma$,
	written as $ \label{equa:conditional_measure}
		\diff \gamma(\boldsymbol x) =
		\gamma(\diff \boldsymbol x , \boldsymbol x^\prime)\, \diff \pi(\boldsymbol x^\prime) $,
	if
	\begin{enumerate}
		\item for all $\boldsymbol x^\prime \in E^{n-1}$, $\gamma(\cdot , \boldsymbol x^\prime)$ is a
		      Borel probability measure on $E^n$,
		\item for $\pi$-almost every $\boldsymbol x^\prime \in E^{n-1}$,
		      $\gamma(\cdot, \boldsymbol x^\prime)$ is concentrated on $E \times \{\boldsymbol x^\prime\}$,
		\item for any Borel set $\boldsymbol R \subset E^n$, the function
		      $\boldsymbol  x^\prime \mapsto \gamma(\boldsymbol R , \boldsymbol x^\prime)$ is measurable, and
		\item for any Borel set $\boldsymbol S \subset E^{n-1}$,
		      $\gamma[\boldsymbol R \cap (E \times \boldsymbol S)] =
			      \int_{\boldsymbol S} \gamma(\boldsymbol R , \boldsymbol x^\prime) \diff \pi(\boldsymbol x^\prime)$.
	\end{enumerate}
\end{defn}

Under our assumption that $E$ is Polish,
conditional measures always exist
\cite[Corollary 10.4.10]{bogachev2007measure}.
For $\pi$-almost every $\boldsymbol x^\prime$, the measure $\gamma(\cdot, \boldsymbol x^\prime)$
is unique \cite[Lemma 10.4.3]{bogachev2007measure}
and coincides with the disintegration \cite[452E]{fremlin2000measure}
of $\gamma$ that is consistent with the projection $p_2$.

\vspace{0.6cm}
Finally, throughout this document, we assume that (Riemannian) manifolds are connected
and smooth without boundary.
These assumptions enable us to apply the results
by McCann \cite[Proposition 6]{mccann2001polar} and
Cordero-Erausquin et al.\@ \cite{cordero2001riemannian}.
In most propositions, we also assume that the manifolds are complete.
We always denote by $(M, \matheuvm{g})$ such a manifold, by
$d_{\matheuvm{g}}$ its intrinsic geodesic metric, by $\exp: TM \rightarrow M$
the exponential map on its tangent bundle, and by $\operatorname{Vol}$ the volume measure on it.
Denote by $\Leb^m$ the Lebesgue measure on $\mathbb{R}^m$.
$\mathbb{N}^* : = \mathbb{N} \setminus \{0\}$ is the set of natural numbers
$\mathbb{N}$ with $0$ excluded.

% ! TEX root = ../absolute_continuity.tex

\subsection{Construction, existence and uniqueness of Wasserstein barycenters}
\label{sec:existence_and_uniqueness}

This subsection covers several fundamental properties of Wasserstein barycenters.
We construct them via optimal transport theory and measurable barycenter selection maps.
It is crucial to comprehend further features of these maps,
as highlighted in Section \ref{sec:Lipschitz_continuous_transport_map}.
Once the construction is explained,
we discuss the problem of existence and uniqueness of Wasserstein barycenters.
These properties are closely related to the law of large numbers for Wasserstein barycenters.

We begin with the existence of measurable barycenter selection maps.

\begin{lem}[Measurable barycenter selection maps]
	\label{thm:measurable_barycenter_selection_for_discrete_measures}
	Let $(E, d)$ be a proper space.
	Given an integer $n \ge 1 $, let $\lambda_i > 0, 1 \le i \le n$,
	be $n$ positive real numbers such that $\sum_{i=1}^n \lambda_i = 1$.
	There exists a measurable barycenter selection map
	$B: E^n \rightarrow E$ such that
	$B(x_1, \ldots, x_n)$ is a barycenter of $\sum_{i=1}^n \lambda_i \,\delta_{x_i} \in \mathcal{W}_2(E)$.
\end{lem}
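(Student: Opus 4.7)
The plan is to apply the Kuratowski--Ryll-Nardzewski measurable selection theorem (\Cref{thm:measurable_selection_theorem}) to the set-valued map
\[
	\Psi(\boldsymbol{x}) := \operatorname{bary}(\{\boldsymbol{x}\}), \qquad \boldsymbol{x}\in E^n,
\]
defined on $E^n$ equipped with its Borel $\sigma$-algebra. Properness of $E$ guarantees that any minimizing sequence for the barycenter problem of $\sum_i \lambda_i\, \delta_{x_i}$ is pre-compact, so $\Psi(\boldsymbol{x})$ is nonempty; applied to the singleton $\{\boldsymbol{x}\}$, \Cref{lem:compact_barycenter} further shows that $\Psi(\boldsymbol{x})$ is compact, hence closed. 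What remains is to verify that, for every open $U \subset E$, the set
\[
	\Omega_U := \bigl\{\boldsymbol{x}\in E^n \,\big|\, \Psi(\boldsymbol{x}) \cap U \neq \emptyset\bigr\}
\]
is a Borel subset of $E^n$.

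Write $q : E^{n+1} \to E^n$ for the projection $(\boldsymbol{x}, z) \mapsto \boldsymbol{x}$, so that $\Omega_U = q\bigl(\Gamma \cap (E^n \times U)\bigr)$, with $\Gamma$ the closed set from \Cref{lem:compact_barycenter}. I first claim that the restriction of $q$ to $\Gamma$ is a closed map: given a closed subset $F \subset \Gamma$ and a sequence $\boldsymbol{x}_k \to \boldsymbol{x}$ with $\boldsymbol{x}_k \in q(F)$, pick lifts $(\boldsymbol{x}_k, z_k) \in F$. Properness of $E$ makes $\boldsymbol{A} := \{\boldsymbol{x}_k\}_k \cup \{\boldsymbol{x}\}$ compact in $E^n$, so $\operatorname{bary}(\boldsymbol{A})$ is compact by \Cref{lem:compact_barycenter} and contains every $z_k$; extracting a subsequence gives $z_k \to z$, and closedness of $F$ yields $(\boldsymbol{x}, z) \in F$, hence $\boldsymbol{x} \in q(F)$.

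To deduce the measurability of $\Omega_U$, I decompose the open set $U$ as a countable union $U = \bigcup_{m \geq 1} F_m$ of closed subsets, for instance $F_m := \{z \in E \mid d(z, E \setminus U) \geq 1/m\}$. Then
\[
	\Omega_U = \bigcup_{m \geq 1} q\bigl(\Gamma \cap (E^n \times F_m)\bigr)
\]
is a countable union of closed sets, by the closed-map property established above, so $\Omega_U$ is $F_\sigma$ and in particular Borel. The Kuratowski--Ryll-Nardzewski theorem then produces a Borel measurable selection $B \colon E^n \to E$, which is the desired barycenter selection map. The main obstacle is precisely this measurability step: because $E$ need not be compact, the full projection $E^{n+1} \to E^n$ is generally not a closed map, so one must really exploit the fiberwise compactness furnished by \Cref{lem:compact_barycenter} together with the properness of $E$ to confine the $z_k$ to a compact set before extracting a convergent subsequence.
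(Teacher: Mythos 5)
Your argument is correct: you verify the Kuratowski--Ryll-Nardzewski hypotheses for $\boldsymbol{x}\mapsto\operatorname{bary}(\{\boldsymbol{x}\})$ exactly as the paper does (nonempty values by properness, compact--hence closed--values and closedness of $\Gamma$ by \Cref{lem:compact_barycenter}), so the skeleton is the same; the difference lies in how the hitting sets of open sets are shown to be Borel. The paper fixes a \emph{compact} $C\subset E$, uses that the projection $E^n\times C\to E^n$ is a closed map to get closedness of $\{\boldsymbol{x}\mid\operatorname{bary}(\{\boldsymbol{x}\})\cap C\neq\emptyset\}$, and then passes through the complement of the hitting set of $U$ together with the $\sigma$-compactness of $E\setminus U$; you instead decompose $U$ itself into closed sets $F_m$ and prove by hand that $q|_\Gamma$ is a closed map, confining the lifted points $z_k$ to the compact set $\operatorname{bary}(\{\boldsymbol{x}_k\}_k\cup\{\boldsymbol{x}\})$ before extracting a limit. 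What your route buys is a direct $F_\sigma$ expression of $\{\boldsymbol{x}\mid\operatorname{bary}(\{\boldsymbol{x}\})\cap U\neq\emptyset\}$ with no complementation step -- a step that in the paper needs care, since $\operatorname{bary}(\{\boldsymbol{x}\})$ is not a singleton in general and may meet both $U$ and $E\setminus U$, so the complement of the hitting set of $U$ is only \emph{contained in} the hitting set of $E\setminus U$; what the paper's route buys is that the closedness of the relevant projections is a quoted general topological fact rather than a bespoke sequential argument. Two cosmetic remarks: compactness of a convergent sequence together with its limit holds in any metric space, so properness is not needed at that point (it is needed, via \Cref{lem:compact_barycenter}, for the compactness of the barycenter set); and since closed sets in a proper space are $\sigma$-compact, you could equally have taken the $F_m$ compact and reused the compact-factor projection argument, making the two proofs essentially merge.
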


\begin{proof}

	As in \Cref{lem:compact_barycenter}, for a subset $\boldsymbol{A} \subset E^n$,
	denote by $\operatorname{bary}(\boldsymbol{A}) \subset E$ the set of barycenters
	of $\sum_{i=1}^n \lambda_i \,\delta_{x_i}$ when $\boldsymbol{x}=(x_1, \ldots, x_n)$
	runs through $\boldsymbol{A}$.
	For the existence of a measurable function $B: E^n \rightarrow E$ such that
	$B(\boldsymbol{x}) \in \operatorname{bary}(\{ \boldsymbol{x} \})$,
	we shall apply the Kuratowski and Ryll-Nardzewski measurable selection theorem
	(\Cref{thm:measurable_selection_theorem}).
	By \Cref{lem:compact_barycenter},  the set $\Gamma : =\{
		(\boldsymbol{x}, z) \in E^{n+1} \mid z \in \operatorname{bary}(\{ \boldsymbol{x} \}) \}$ is closed.
	Let $C \subset E$ be a compact set, then
	\[
		\{ \boldsymbol x \, | \, \operatorname{bary}(\{ \boldsymbol{x} \})
		\cap C \neq \emptyset \} =
		p_1(\{ (\boldsymbol x, z) \in  E^n \times C \,\mid\,
		(\boldsymbol x, z) \in \Gamma \}),
	\]
	where $p_1: E^n \times C \rightarrow E^n$ is the canonical projection map.
	Since $C$ is compact and $\Gamma$ is closed,
	$p_1(\Gamma \cap (E^n \times C))$ is a closed set.
	As $(E, d)$ is a proper space,
	any open subset $U \subset E$ is a countable union of compact sets.
	Indeed, fix a point $z \in E$ and define
		$C_j := \{x \in E \mid d(x, z) \le j \text{ and }
		d(x, E \setminus U) \ge \frac{1}{j} \} $ for $j = 1, 2, \ldots$,
		then each $C_j$ is compact and $U = \cup_{j \ge 1} C_j$.
	Therefore, as a union of countably many closed sets,
	the set 
	\[
		\{ \boldsymbol x \, | \, \operatorname{bary}(\{ \boldsymbol{x} \}) \cap U \neq \emptyset \}
		 = \bigcup_{j \ge 1} \{ \boldsymbol x \, 
		| \, \operatorname{bary}(\{ \boldsymbol{x} \}) \cap C_j \neq \emptyset \}
	\]
	is measurable.
	Since $\operatorname{bary}(\{ \boldsymbol{x} \})$ is compact for $\boldsymbol{x} \in E^n$
	by \Cref{lem:compact_barycenter},
	the assumptions of \Cref{thm:measurable_selection_theorem} are satisfied
	by the map $\boldsymbol{x} \mapsto \operatorname{bary}(\{\boldsymbol{x}\})$.
	This proves the lemma.
\end{proof}

To construct Wasserstein barycenters of finitely many measures,
we first recall the following particular type of multi-marginal optimal
transport plans.

\begin{defn}[Multi-marginal optimal transport plans]
	Let $(E, d)$ be a proper space.
	Given an integer $n \ge 2 $,
	let $\lambda_i > 0, 1 \le i \le n$, be $n$ positive real numbers such
	that $\sum_{i=1}^n \lambda_i = 1$ and let $\mu_i \in \mathcal{W}_2(E), 1 \le i \le n$, be $n$
	probability measures on $E$.
	Denote by $\Pi$ the set of probability measures on $E^n$
	with marginals $\mu_1, \ldots, \mu_n$ in this order.
	We call $\gamma \in \Pi$ a multi-marginal optimal transport plan
	(of its marginals) if
	\begin{equation}
		\label{equa:multi_marginal_optimal_plan_definition}
		\int_{E^n} 
		\min_{y \in E} \sum_{i=1}^n \lambda_i\, d(y, x_i)^2
		\diff \gamma(x_1, \ldots, x_n)
		= \min_{\theta \in \Pi}
		\int_{E^n}
		\min_{y \in E} \sum_{i=1}^n \lambda_i\, d(y, x_i)^2
		\diff \theta(x_1, \ldots, x_n).
	\end{equation}
\end{defn}

In what follows, the marginal measures $\mu_i$ and constants $\lambda_i$ will be clear
from the context.
In the proof of \Cref{lem:compact_barycenter},
it is shown that $\min_{y \in E} \sum_{i=1}^n \lambda_i\, d(x_i, y)^2$ is continuous
with respect to $(x_1, \ldots, x_n) \in E^n$.
Hence, we can conclude the existence of a multi-marginal
optimal transport plan $\gamma$ in the same way
as the classic existence of optimal couplings between two measures
\cite[Theorem 4.1]{villani2009optimal}.
Now we are ready to construct Wasserstein barycenters.

\begin{prop}[Construction of Wasserstein barycenters of $\sum_{i=1}^n \lambda_i \, \delta_{\mu_i}$]
	\label{prop:construction_Wasserstein_barycenter}
	Let $(E,d)$ be a proper space.
	Given an integer $n \ge 2$,  let $\lambda_i > 0, 1 \le i \le n$,
	be $n$ positive real numbers such that $\sum_{i=1}^n \lambda_i = 1$.
	Let $\mu_1, \ldots, \mu_n \in \mathcal{W}_2(E)$ be $n$ probability measures
	and let $\gamma$ be a multi-marginal optimal transport plan
	of them, i.e., satisfying (\ref{equa:multi_marginal_optimal_plan_definition}).
	% That is to say,
	% \begin{equation*}
	% 	% \label{equa:multi_margin_optimal_transport}
	% 	\gamma \in \arg \min_{\theta \in \Pi}
	% 	\int_{E^n} W_2(\sum_{i=1}^n \lambda_i\, \delta_{x_i}, E)^2
	% 	\diff \theta(x_1, \ldots, x_n),
	% \end{equation*}
	% where $\Pi$ is the set of probability measures on $E^n$
	% with marginal measures $\mu_1, \ldots, \mu_n$ in this order.	
	If $B: E^n \rightarrow E$ is a measurable map such that
	$B(x_1, \ldots, x_n)$ is a barycenter of $\sum_{i=1}^n \lambda_i \,\delta_{x_i}$,
	then
	\begin{enumerate}
		\item $\widebar{\mu} : = B_{\#} \gamma$ is a barycenter
		      of $\mathbb{P}:=\sum_{i=1}^n \lambda_i \,\delta_{\mu_i}$;
		\item $(B, p_i)_{\#} \gamma$ is an optimal transport plan
		      between $\widebar \mu$ and $\mu_i$, where $p_i$ denotes
		      the canonical projection $(x_1, \ldots, x_n) \in E^n \mapsto x_i \in E$;
		\item if $X, X_1, \ldots, X_n$ are $n + 1$ random variables
			from a probability space $(\Omega, \mathcal{B}, P)$ to $(E, d)$ with law $\widebar \mu, \mu_1, \ldots, \mu_n$
			such that $\mathbb{E} \,d(X, X_i)^2 = W_2(\widebar \mu, \mu_i)^2$, i.e., $(X, X_i)$ is an optimal transport coupling between $\widebar \mu$ and $\mu_i$,
			then for $P$-almost every $\omega \in \Omega$,
			$X(\omega)$ is a barycenter of $\sum_{i=1}^n \lambda_i\, \delta_{X_i(\omega)}$.
	\end{enumerate}
\end{prop}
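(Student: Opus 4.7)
The plan for parts (1) and (2) is to sandwich the barycentric functional $\nu \mapsto \sum_i \lambda_i W_2(\nu,\mu_i)^2$ between two quantities that both equal $\int_{E^n} W_2\bigl(\sum_i \lambda_i \delta_{x_i}, E\bigr)^2 \diff\gamma(\boldsymbol{x})$. Part (3) will then follow from the construction itself.

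First I would establish the upper bound for $\nu = \widebar{\mu}$. Because $B$ and each $p_i$ are measurable, $(B,p_i)_{\#}\gamma$ is a probability measure on $E\times E$ whose marginals are $B_{\#}\gamma = \widebar{\mu}$ and $(p_i)_{\#}\gamma = \mu_i$. Hence $(B,p_i)_{\#}\gamma \in \Pi(\widebar{\mu},\mu_i)$, so
\[
	W_2(\widebar{\mu},\mu_i)^2 \le \int_{E^n} d\bigl(B(\boldsymbol{x}), x_i\bigr)^2 \diff \gamma(\boldsymbol{x}).
\]
Summing with weights $\lambda_i$ and using that $B(\boldsymbol{x})$ is a barycenter, i.e.\ $\sum_i \lambda_i d(B(\boldsymbol{x}),x_i)^2 = W_2\bigl(\sum_i \lambda_i \delta_{x_i}, E\bigr)^2$, yields
\[
	\sum_{i=1}^n \lambda_i W_2(\widebar{\mu}, \mu_i)^2 \;\le\; \int_{E^n} W_2\Bigl(\sum_{i=1}^n \lambda_i \delta_{x_i}, E\Bigr)^2 \diff \gamma(\boldsymbol{x}).
\]

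For the lower bound, fix an arbitrary competitor $\nu \in \mathcal{W}_2(E)$ and iterate the classical gluing lemma along an optimal plan between $\nu$ and each $\mu_i$ to produce a probability measure $\eta$ on $E \times E^n$ whose first marginal is $\nu$, whose projection to $E \times E^n \to E \times E$ picking the $i$-th coordinate of $E^n$ is optimal between $\nu$ and $\mu_i$, and whose $E^n$-marginal $\tilde\gamma$ has marginals $\mu_1,\ldots,\mu_n$. Then
\[
	\sum_i \lambda_i W_2(\nu,\mu_i)^2 = \int_{E\times E^n} \sum_i \lambda_i d(z,x_i)^2 \diff \eta \;\ge\; \int_{E^n} W_2\Bigl(\sum_i \lambda_i \delta_{x_i}, E\Bigr)^2 \diff \tilde\gamma,
\]
using the pointwise inequality $\sum_i \lambda_i d(z,x_i)^2 \ge \inf_y \sum_i \lambda_i d(y,x_i)^2$ and the fact that the resulting integrand depends only on $\boldsymbol{x}$. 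Since $\tilde\gamma$ is admissible in the multi-marginal problem and $\gamma$ achieves its minimum, the last integral is at least $\int_{E^n} W_2\bigl(\sum_i \lambda_i \delta_{x_i}, E\bigr)^2 \diff \gamma$. Combined with the upper bound above, this proves $\widebar{\mu}$ is a barycenter of $\mathbb{P}$, giving (1). Setting $\nu = \widebar{\mu}$ forces every inequality to become an equality, in particular $W_2(\widebar{\mu},\mu_i)^2 = \int d(B,x_i)^2 \diff \gamma$, so the transport plans in (2) are optimal.

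For (3), the canonical realization on $\Omega = E^n$ with $P = \gamma$, $X_i = p_i$ and $X = B\circ(X_1,\ldots,X_n)$ gives random variables with the required marginal laws, and the conclusion holds by the very definition of $B$; a general probability space supporting such variables in the joint configuration induced by $\gamma$ and $B$ transfers the conclusion by a pushforward argument. The main technical obstacle is in the lower bound of (2): one must verify the existence of the gluing measure $\eta$ in the non-compact proper setting (where measurability and tightness of marginals have to be tracked) and justify that the $\gamma$-integrand $\boldsymbol{x} \mapsto W_2(\sum_i \lambda_i \delta_{x_i}, E)^2$ is continuous (this is established in the proof of \Cref{lem:compact_barycenter}) so that the multi-marginal cost comparison between $\tilde\gamma$ and $\gamma$ is meaningful.
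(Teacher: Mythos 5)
Your parts (1) and (2) are correct and follow essentially the paper's own route: the same sandwich of $\nu \mapsto \sum_i \lambda_i W_2(\nu,\mu_i)^2$ against $\int_{E^n} W_2(\sum_i \lambda_i \delta_{x_i},E)^2 \diff\gamma$, with the gluing lemma phrased through a glued measure $\eta$ on $E\times E^n$ instead of random variables, and the same equality-case argument (at $\nu=\widebar\mu$) giving optimality of $(B,p_i)_{\#}\gamma$.

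Part (3) is where there is a genuine gap. The content of (3), as proved in the paper and as used later (in \Cref{thm:hessian_equality_Wasserstein_barycenter} one takes $X=\operatorname{Id}$ and $X_i=F_i$ on $(M,\mathcal{B}(M),\widebar\mu)$, the $F_i$ being optimal transport maps), concerns realizations $X,X_1,\dots,X_n$ that are \emph{not} assumed to have joint law $(B,p_1,\dots,p_n)_{\#}\gamma$; the relevant hypothesis, implicit in the paper's proof via the gluing lemma, is only that each pair is optimally coupled, $\mathbb{E}\,d(X,X_i)^2=W_2(\widebar\mu,\mu_i)^2$. You prove (3) only for the canonical realization $\Omega=E^n$, $P=\gamma$, $X=B(X_1,\dots,X_n)$ and its push-forward copies, where the conclusion holds by the very definition of $B$ and carries no information; this does not cover the later application, since $(\operatorname{Id},F_1,\dots,F_n)_{\#}\widebar\mu$ need not coincide with $(B,p_1,\dots,p_n)_{\#}\gamma$ (multi-marginal optimal plans and barycenter selections need not be unique), so no push-forward transfer is available. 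Note also that with no coupling restriction at all the statement is false (on $E=\mathbb{R}$ with $\lambda_1=\lambda_2=\tfrac12$, $\mu_1=\delta_0$, $\mu_2=\tfrac12(\delta_{-1}+\delta_1)$, one has $\widebar\mu=\tfrac12(\delta_{-1/2}+\delta_{1/2})$, and the coupling $X=-X_2/2$ has the right laws but $X(\omega)$ is never a barycenter of $\tfrac12\delta_0+\tfrac12\delta_{X_2(\omega)}$), so "by the construction itself" cannot be the whole story. The repair is already inside your own sandwich: let $\eta$ be the joint law of the given $(X,X_1,\dots,X_n)$ with pairwise optimal couplings; then
\begin{equation*}
	\sum_{i=1}^n \lambda_i W_2(\widebar\mu,\mu_i)^2
	= \int_{E\times E^n} \sum_{i=1}^n \lambda_i\, d(z,x_i)^2 \diff\eta
	\ge \int_{E^n} W_2\Bigl(\sum_{i=1}^n \lambda_i \delta_{x_i},E\Bigr)^2 \diff\tilde\gamma
	\ge \int_{E^n} W_2\Bigl(\sum_{i=1}^n \lambda_i \delta_{x_i},E\Bigr)^2 \diff\gamma ,
\end{equation*}
and the right-hand side equals $\sum_i \lambda_i W_2(\widebar\mu,\mu_i)^2$ by the equality case you established for (1)--(2); hence the pointwise inequality $\sum_i\lambda_i d(z,x_i)^2\ge W_2(\sum_i\lambda_i\delta_{x_i},E)^2$ must be an equality $\eta$-almost everywhere, which is exactly statement (3) for $P$-almost every $\omega$.
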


\begin{proof}
	Given an arbitrary probability measure $\nu \in \mathcal{W}_2(E)$,
	thanks to the gluing lemma \cite[Lemma 7.1]{villani2021topics},
	there are $n+1$ random variables $X, X_1, \ldots X_n$ valued in $E$
	with laws $\nu, \mu_1, \ldots \mu_n$ such that
	$\mathbb{E} \, d(X, X_i)^2 = W_2(\nu, \mu_i)^2$.
	Since $\mu_i = {p_i}_{\#} \gamma$, we have
	\begin{align*}
		\sum_{i=1}^{n}\lambda_i \, W_2(\widebar{\mu}, \mu_i)^2
		 & \leq \sum_{i=1}^{n} \int_{E^n} \lambda_i\, d(B(\boldsymbol x), x_i)^2
		\diff \gamma (\boldsymbol{x}) =  \int_{E^n} 
		\min_{y \in E} \sum_{i=1}^n \lambda_i\, d(y, x_i)^2
		% W_2(\sum_{i=1}^n \lambda_i\,\delta_{x_i}, E)^2
		\diff \gamma (\boldsymbol{x})                                            \\
		 & \le \mathbb{E}\, \min_{y \in E} \sum_{i=1}^n \lambda_i\, d(y, X_i)^2
		\leq \mathbb{E} \sum_{i=1}^n \lambda_i\, d(X, X_i)^2                     \\
		 & = \sum_{i=1}^{n}\lambda_i\, W_2(\nu, \mu_i)^2,
	\end{align*}
	where we sequentially applied the definitions of $\widebar{\mu} = B_{\#} \gamma$,
	$W_2(\widebar{\mu}, \mu_i)$, $\gamma$, and $X, X_1, \ldots X_n$.
	Since $\nu$ is arbitrary, it follows that $\widebar \mu$ is a Wasserstein barycenter.
	By setting $\nu = \widebar{\mu}$ in the above inequality,
	we actually obtain an equality.
	Firstly, this equality implies that
	$\sum_{i=1}^{n}\lambda_i \, W_2(\widebar{\mu}, \mu_i)^2 \leq \sum_{i=1}^{n}
	\int_{E^n} \lambda_i\, d(B(\boldsymbol{x}), x_i)^2 \diff \gamma (\boldsymbol{x})$
	is indeed always an equality,
	which proves the second statement.
	Secondly, it also implies
	that the law of $(X_1, \ldots, X_n)$ is a multi-marginal optimal transport plan
	and $\min_{y \in E} \sum_{i=1}^n \lambda_i\,d(y, X_i(\omega))^2 =
		\sum_{i=1}^n \lambda_i\, d(X(\omega), X_i(\omega))^2$
	for $P$-almost every $\omega \in \Omega$,
	which proves the third statement.
\end{proof}

The general existence of Wasserstein barycenters
was first established in \cite{le2017existence}.
Recall that finitely supported measures
are dense in Wasserstein spaces \cite[Theorem 6.18]{villani2009optimal},
so the above construction of Wasserstein barycenters together with the law of large numbers
for Wasserstein barycenters (\Cref{thm:law_of_large_numbers_Wasserstein_barycenter})
implies the following theorem.

\begin{thm}[Existence of Wasserstein barycenters]
	\label{thm:exsitence_Wasserstein_barycenter}
	If $(E,d)$ is a proper space,
	then any $\mathbb{P} \in \mathcal{W}_2(\mathcal{W}_2(E))$ has a barycenter.
\end{thm}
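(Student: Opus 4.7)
The plan is to combine the three ingredients already at hand: density of finitely supported measures in $\mathcal{W}_2(\mathcal{W}_2(E))$, the explicit construction of barycenters for such finitely supported measures furnished by \Cref{prop:construction_Wasserstein_barycenter}, and the law of large numbers for Wasserstein barycenters (\Cref{thm:law_of_large_numbers_Wasserstein_barycenter}). Fix $\mathbb{P} \in \mathcal{W}_2(\mathcal{W}_2(E))$. By \cite[Theorem 6.18]{villani2009optimal} applied to the Polish space $\mathcal{W}_2(E)$ (which is complete and separable since $E$ is proper, hence complete and separable), finitely supported measures are $\mathbb{W}_2$-dense in $\mathcal{W}_2(\mathcal{W}_2(E))$. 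Choose a sequence $\mathbb{P}_j = \sum_{i=1}^{n_j} \lambda_i^j \, \delta_{\mu_i^j}$ with $\mathbb{W}_2(\mathbb{P}_j, \mathbb{P}) \to 0$.

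Next, I would produce a barycenter $\widebar{\mu}_j$ of each $\mathbb{P}_j$. For this I invoke \Cref{thm:measurable_barycenter_selection_for_discrete_measures} to obtain, for each $j$, a measurable selection map $B_j : E^{n_j} \to E$ picking a barycenter of $\sum_{i} \lambda_i^j \delta_{x_i}$. I also need a multi-marginal optimal transport plan $\gamma_j$ with marginals $\mu_1^j, \dots, \mu_{n_j}^j$ realizing the minimum in \eqref{equa:multi_marginal_optimal_plan_definition}; existence follows from the standard tightness/lower-semicontinuity argument indicated in the paragraph preceding \Cref{prop:construction_Wasserstein_barycenter}. Then \Cref{prop:construction_Wasserstein_barycenter} gives $\widebar{\mu}_j := (B_j)_{\#} \gamma_j$ as a barycenter of $\mathbb{P}_j$.

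Finally, apply the law of large numbers for Wasserstein barycenters (\Cref{thm:law_of_large_numbers_Wasserstein_barycenter}) to the convergent sequence $\mathbb{W}_2(\mathbb{P}_j, \mathbb{P}) \to 0$: up to extracting a subsequence, $W_2(\widebar{\mu}_j, \widebar{\mu}) \to 0$ for some $\widebar{\mu} \in \mathcal{W}_2(E)$, and this $\widebar{\mu}$ is by the statement of that theorem a barycenter of $\mathbb{P}$. This produces the desired barycenter and completes the proof.

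The only delicate point, and what I expect to be the main obstacle, is verifying that the approximation at the level of $\mathbb{W}_2$ on $\mathcal{W}_2(\mathcal{W}_2(E))$ is legitimate and that the hypotheses of \Cref{thm:law_of_large_numbers_Wasserstein_barycenter} are genuinely met: one must confirm that $\mathcal{W}_2(E)$, as a metric space, is complete and separable so that the density-of-finitely-supported-measures result applies, and that the barycenters $\widebar{\mu}_j$ produced at the discrete stage lie in $\mathcal{W}_2(E)$ with uniformly controlled second moments (which indeed follows from the inequality displayed in the proof of \Cref{prop:construction_Wasserstein_barycenter}, by taking $\nu = \delta_{x_0}$). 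Everything else is then a direct invocation of the results already proved or cited.
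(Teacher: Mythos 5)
Your proposal is correct and follows essentially the same route as the paper: approximate $\mathbb{P}$ by finitely supported measures, use \Cref{prop:construction_Wasserstein_barycenter} (multi-marginal optimal plan plus measurable barycenter selection) to produce barycenters of the approximations, and conclude via the law of large numbers (\Cref{thm:law_of_large_numbers_Wasserstein_barycenter}). The "delicate points" you flag are already settled in the paper's setup (properness of $E$ gives completeness and separability of $\mathcal{W}_2(E)$, and the displayed inequality in \Cref{prop:construction_Wasserstein_barycenter} controls the second moments), so nothing further is needed.
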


Note that in the law of large numbers for Wasserstein barycenters
(\Cref{thm:law_of_large_numbers_Wasserstein_barycenter}),
we may need to pass to a subsequence of Wasserstein barycenters
$\widebar \mu_j$ and
the limit barycenter $\widebar \mu$ is not known in advance.
Hence, \Cref{thm:law_of_large_numbers_Wasserstein_barycenter} will be
enhanced if we can assert the uniqueness of barycenters
under some additional assumptions, as follows.

\begin{prop}[Uniqueness of Wasserstein barycenters]
	\label{prop:uniqueness_barycenter_Wasserstein}
	Let $(E,d)$ be a proper space.
	If a probability measure $\mathbb{P} \in \mathcal{W}_2(\mathcal{W}_2(E))$ gives mass
	to a Borel subset $\mathcal{A} \subset \mathcal{W}_2(E)$ such that
	for $\mu \in \mathcal{A}$ and $\nu \in \mathcal{W}_2(E)$,
	any optimal transport plan between $\mu$ and $\nu$ is
	induced by a measurable map $T$ pushing $\mu$ forward to $\nu$,
	i.e., $\nu = T_{\#} \mu$ and $W_2(\mu, \nu)^2 = \int_E d(x, T(x))^2 \diff \mu$,
	then $\mathbb{P}$ has a unique barycenter in $\mathcal{W}_2(E)$.
\end{prop}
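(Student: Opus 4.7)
The plan is to argue by contradiction. Suppose $\widebar\mu_1 \neq \widebar\mu_2$ are two barycenters of $\mathbb{P}$, and consider their affine midpoint $\widebar\mu_{1/2} := \tfrac{1}{2}(\widebar\mu_1 + \widebar\mu_2)$, which still lies in $\mathcal{W}_2(E)$ since the finite second-moment condition is preserved under convex combinations. I will show that the barycenter functional $F(\nu) := \int W_2(\nu, \sigma)^2 \, \diff \mathbb{P}(\sigma)$ satisfies $F(\widebar\mu_{1/2}) < \tfrac{1}{2}F(\widebar\mu_1) + \tfrac{1}{2}F(\widebar\mu_2) = \min F$, which is the desired contradiction.

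For any fixed $\sigma \in \mathcal{W}_2(E)$, I would pick optimal transport plans $\pi_i \in \Pi(\sigma, \widebar\mu_i)$ for $i = 1, 2$ and observe that $\pi := \tfrac{1}{2}(\pi_1 + \pi_2)$ is a (possibly non-optimal) transport plan between $\sigma$ and $\widebar\mu_{1/2}$, with cost $\tfrac{1}{2}(W_2(\sigma, \widebar\mu_1)^2 + W_2(\sigma, \widebar\mu_2)^2)$. This already yields the soft convexity inequality $W_2(\sigma, \widebar\mu_{1/2})^2 \le \tfrac{1}{2}(W_2(\sigma, \widebar\mu_1)^2 + W_2(\sigma, \widebar\mu_2)^2)$ for every $\sigma \in \mathcal{W}_2(E)$.

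The crux of the argument is upgrading this to a strict inequality whenever $\sigma \in \mathcal{A}$. By hypothesis, each $\pi_i$ is induced by a measurable map $T_i$ with $(T_i)_{\#}\sigma = \widebar\mu_i$; since $\widebar\mu_1 \neq \widebar\mu_2$, the maps $T_1$ and $T_2$ must disagree on a set of positive $\sigma$-measure, for otherwise pushing forward would force $\widebar\mu_1 = \widebar\mu_2$. I then claim that $\pi$ is not optimal: if it were, the hypothesis applied to $\sigma \in \mathcal{A}$ and $\widebar\mu_{1/2}$ would force $\pi$ to be induced by some map $T$, so $\pi$ would concentrate on the graph of $T$. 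The inequality $\pi_i \le 2\pi$ would then force $\pi_1$ and $\pi_2$ to concentrate on that same graph, giving $T_1 = T = T_2$ $\sigma$-almost everywhere, contradicting the previous step. Hence $\pi$ fails to be optimal and $W_2(\sigma, \widebar\mu_{1/2})^2 < \tfrac{1}{2}(W_2(\sigma, \widebar\mu_1)^2 + W_2(\sigma, \widebar\mu_2)^2)$ for every $\sigma \in \mathcal{A}$.

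Integrating this pointwise estimate against $\mathbb{P}$ and using $\mathbb{P}(\mathcal{A}) > 0$ produces $F(\widebar\mu_{1/2}) < \tfrac{1}{2}(F(\widebar\mu_1) + F(\widebar\mu_2))$, finishing the proof. The main delicate step is the graph argument establishing strict inequality, which relies crucially on the observation that a convex combination of two plans concentrated on distinct graphs cannot itself be concentrated on a single graph; this is precisely where the assumption on $\mathcal{A}$ intervenes.
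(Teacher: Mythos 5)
Your argument is correct and is essentially the paper's own proof: the soft convexity of $\nu \mapsto W_2(\sigma,\nu)^2$ under linear interpolation via the mixed plan $\tfrac12(\pi_1+\pi_2)$, upgraded to strict inequality for $\sigma \in \mathcal{A}$ by exactly the same graph argument (a convex combination of plans concentrated on two $\sigma$-essentially distinct graphs cannot itself be graph-induced), then integrated against $\mathbb{P}$ using $\mathbb{P}(\mathcal{A})>0$. The only differences are presentational (you run a midpoint contradiction where the paper states strict convexity of the functional), and note that, as in the paper, existence of a barycenter still comes from the separate existence theorem for proper spaces, your contradiction only delivering uniqueness.
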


\begin{proof}
	The uniqueness follows from the strict convexity
	of the squared distance function to a point in $\mathcal{W}_2(E)$,
	as shown by \cite[Theorem 7.19]{Santambrogio2015}
	and \cite[Theorem 3.1]{kim2017wasserstein}.
	We recall the proof for the sake of completeness.

	Observe that any convex combination of probability measures in the space
	$\mathcal{W}_2(E)$ is still a probability measure in it.
	Fix $\mu \in \mathcal{A}$ and consider the squared Wasserstein distance function $W_2(\mu, \cdot)^2$
	with respect to this convex structure.
	For $\lambda \in [0, 1]$ and
	two different probability measures $\nu_1,\nu_2 \in \mathcal{W}_2(E)$,
	by definition of Wasserstein metric we have
	\begin{equation}
		\label{equa:convexity_Wassersetein_distance}
		W_2(\mu, \lambda\, \nu_1 + (1- \lambda)\nu_2)^2 \leq
		\lambda\, W_2(\mu, \nu_1)^2 + (1-\lambda) W_2(\mu, \nu_2)^2.
	\end{equation}

	By our assumptions, there are two measurable maps $T_1, T_2: E \rightarrow E$
	such that $\gamma_1 : = (\operatorname{Id}  \times T_1)_{\#}\mu$ and
	$\gamma_2 : = (\operatorname{Id}  \times T_2)_{\#}\mu$
	are optimal transport plans between $\mu$ and the two measures $\nu_1$ and $\nu_2$ respectively.
	We claim that (\ref{equa:convexity_Wassersetein_distance}) cannot
	be an equality unless $\lambda = 0$ or $\lambda =1$.
	Indeed, if (\ref{equa:convexity_Wassersetein_distance}) is
	an equality for some $0 < \lambda < 1$,
	then by setting $\gamma := \lambda\, \gamma_1 + (1-\lambda)\, \gamma_2$ we have
	\begin{align*}
		\lambda\, W_2(\mu, \nu_1)^2 +(1 - \lambda) W_2(\mu, \nu_2)^2
		 & = W_2(\mu, \lambda\, \nu_1 + (1 - \lambda)\nu_2)^2             \\
		 & \leq \int_{E \times E} d(x,y)^2 \diff \gamma(x,y)              \\
		 & =	\lambda\, W_2(\mu, \nu_1)^2 + (1-\lambda) W_2(\mu, \nu_2)^2,
	\end{align*}
	and thus $\gamma$ is an optimal plan between $\mu$ and $\lambda \,\nu_1 + (1-\lambda)\nu_2$.
	By assumptions, there exists a measurable map $T: E \rightarrow E$ such
	that $\gamma = (\operatorname{Id} \times T)_{\#} \mu$.
	Denote by $\operatorname{graph}(S) \subset E^2$ the graph of a map $S: E \rightarrow E$.
	Note that if $S$ is a measurable map, then
	$\operatorname{graph}(S) = \{ (x, y) \in E^2 \mid d(S(x), y) = 0 \}$
	is a Borel subset of $E^2$.
	Since $\gamma[ \operatorname{graph}(T) ] 
	= \lambda\, \gamma_1[ \operatorname{graph}(T) ] + (1- \lambda)
	\gamma_2[ \operatorname{graph}(T) ] = 1$
	and $ 0 < \lambda < 1$,
	we have $\gamma_1[ \operatorname{graph}(T) ] = \gamma_2[ \operatorname{graph}(T) ] = 1$.
	Hence, for $i \in \{ 1,2 \}$,
	$\mu(\{ x \in E \mid T_i(x) = T(x) \}) = \gamma_i[ \operatorname{graph}(T) \cap
	\operatorname{graph}(T_i) ] = 1$.
	It follows that both $T_1$ and $T_2$ coincide with $T$ 
	almost everywhere with respect to $\mu$
	and thus $\gamma_1 = \gamma_2$, which is a contradiction since $\nu_1 \neq \nu_2$.

	This shows that $W_2(\mu, \cdot)^2$ is strictly convex on
	$\mathcal{W}_2(E)$ for $\mu \in \mathcal{A}$.
	Since $\mathbb{P}(\mathcal{A}) > 0$, the map
	\[
		\nu \in \mathcal{W}_2(E) \mapsto \int_{\mathcal{W}_2(E)} W_2(\mu, \nu)^2 \diff \mathbb{P}(\mu)
	\]
	is also strictly convex on $\mathcal{W}_2(E)$
	by the linearity and positivity of the above integral.
	It follows that the Wasserstein barycenter of $\mathbb{P}$
	asserted by \Cref{thm:exsitence_Wasserstein_barycenter} is unique.
\end{proof}

\begin{rmk}
	\label{rmk:unique_induced_transport_plan}
	Under the assumptions of \Cref{prop:uniqueness_barycenter_Wasserstein},
	the optimal transport plan between
	$\mu \in \mathcal{A}$ and $\nu \in \mathcal{W}_2(M)$ is unique.
	By setting $\nu_1 = \nu_2 = \nu$, (\ref{equa:convexity_Wassersetein_distance})
	becomes an equality for any $\lambda \in [0, 1]$.
	It is shown above that any two optimal transport plans $\gamma_1$ and $\gamma_2$
	between measures $\mu$ and $\nu$ coincide.
\end{rmk}

There are many setups in which we can
apply \Cref{prop:uniqueness_barycenter_Wasserstein}.
We typically choose
$\mathcal{A}$ as the set of absolutely continuous measures
with respect to some given reference measure.
The following lemma ensures that $\mathcal{A}$ is a Borel set
of $(\mathcal{W}_2(E), W_2)$.

\begin{lem}
	\label{lem:measurable_set_absolutely_continous_measures}
	Let $E$ be a metric space with a $\sigma$-finite Borel measure $\mu$ on $E$.
	Assume that $\mu$ is outer regular, i.e.,
	for any Borel set $N \in \mathcal{B}(E)$,
	$ \mu(N) = \inf \{ \mu(O) \mid O \text{ open neighborhood of N } \}$.
	Denote by $\mathcal{A}$ the set of probability measures in $\mathcal{W}_2(E)$ that are
	absolutely continuous with respect to $\mu$.
	For $\epsilon, \delta > 0$, define the set
	\[
		\mathcal{E}_{\epsilon, \delta}
		: = \left\{ \nu \in \mathcal{W}_2(E)  \mid
		\forall\, N \in \mathcal{B}(E),\, \mu(N) < \delta \implies \nu(N) \le \epsilon \right\}.
	\]
	It is a closed set with respect to
	the weak convergence topology of $\mathcal{W}_2(E)$, and we have
	\[
		\mathcal{A} = \bigcap_{k \in \mathbb{N}}\bigcup_{l \in \mathbb{N}} \mathcal{E}_{2^{-k}, 2^{-l}}.
	\]
	In particular, if $E$ is a proper space and $\mu$ is a locally finite
	Borel measure, i.e., $\mu$ gives finite mass to 
	some open neighborhood of every point in $E$,
	then for the Wasserstein space topology, $\mathcal{E}_{\epsilon, \delta}$ is a closed set and
	$\mathcal{A}$ is a Borel set.
\end{lem}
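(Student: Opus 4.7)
The plan is to break the lemma into three steps: first the set-theoretic identity $\mathcal{A} = \bigcap_k \bigcup_l \mathcal{E}_{2^{-k}, 2^{-l}}$, next the closedness of each $\mathcal{E}_{\epsilon,\delta}$ in the Wasserstein topology, and finally the verification that the $\sigma$-finiteness and outer regularity hypotheses are automatic under the ``proper $+$ locally finite'' assumption of the last sentence.

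For the identity, I would invoke the classical $\epsilon$-$\delta$ characterization of absolute continuity for finite measures: $\nu \ll \mu$ if and only if for every $\epsilon > 0$ there is $\delta > 0$ such that $\mu(N) < \delta$ implies $\nu(N) \le \epsilon$. The nontrivial direction is a standard contradiction/Borel–Cantelli argument: if the $\epsilon$-$\delta$ property failed for some $\epsilon > 0$, one could produce Borel sets $N_n$ with $\mu(N_n) < 2^{-n}$ yet $\nu(N_n) \ge \epsilon$; the set $N := \limsup_n N_n$ would then satisfy $\mu(N) = 0$ by the first Borel–Cantelli lemma, while reverse Fatou for the finite measure $\nu$ forces $\nu(N) \ge \epsilon$, contradicting $\nu \ll \mu$. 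Rewriting the $\epsilon$-$\delta$ criterion with dyadic values $\epsilon = 2^{-k}$ and $\delta = 2^{-l}$ gives exactly the displayed countable intersection of countable unions.

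For the closedness of $\mathcal{E}_{\epsilon,\delta}$, I would take a sequence $\nu_n \in \mathcal{E}_{\epsilon,\delta}$ converging to $\nu$ in $\mathcal{W}_2(E)$. Since $W_2$-convergence implies narrow convergence of measures, the Portmanteau theorem yields $\nu(O) \le \liminf_n \nu_n(O)$ for every open set $O \subset E$. Given a Borel set $N$ with $\mu(N) < \delta$, outer regularity of $\mu$ supplies an open $O \supset N$ with $\mu(O) < \delta$; then $\nu_n(O) \le \epsilon$ for all $n$, hence $\nu(N) \le \nu(O) \le \liminf_n \nu_n(O) \le \epsilon$, placing $\nu \in \mathcal{E}_{\epsilon,\delta}$. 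Combining with the first step, $\mathcal{A}$ is a countable intersection of countable unions of closed sets, hence Borel.

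For the final assertion, I would observe that a proper space is $\sigma$-compact (write $E = \bigcup_n \overline{B(x_0,n)}$), and a locally finite measure assigns finite mass to every compact set (cover by finitely many neighborhoods of finite mass), so $\mu$ is automatically $\sigma$-finite. Moreover, a locally finite Borel measure on a locally compact Polish space is a Radon measure, which is in particular outer regular on Borel sets; this is classical. Therefore the preceding two steps apply verbatim. I expect the most delicate point to be the interplay between narrow convergence and \emph{Borel} sets in Portmanteau — outer regularity is what bridges this gap, which is precisely why the hypothesis cannot be dropped without a substitute.
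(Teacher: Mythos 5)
Your proposal is correct and follows essentially the same route as the paper: closedness of $\mathcal{E}_{\epsilon,\delta}$ via outer regularity of $\mu$ together with the Portmanteau inequality for open sets, the dyadic rewriting of the $\epsilon$--$\delta$ characterization of absolute continuity to get $\mathcal{A} = \bigcap_{k}\bigcup_{l}\mathcal{E}_{2^{-k},2^{-l}}$, and the observation that a proper space with a locally finite Borel measure forces $\mu$ to be $\sigma$-finite and outer regular, so the general statement applies. The only genuine difference is internal to one step: where the paper proves the inclusion $\mathcal{A} \subset \bigcap_{k}\bigcup_{l}\mathcal{E}_{2^{-k},2^{-l}}$ by writing $\nu = f\cdot\mu$ via Radon--Nikodym and citing the absolute continuity of the Lebesgue integral, you give a self-contained Borel--Cantelli/reverse-Fatou contradiction argument, which is equally valid (and does not even use $\sigma$-finiteness for that direction); likewise your appeal to the Radon property of locally finite Borel measures on locally compact Polish spaces plays exactly the role of the paper's cited regularity theorem based on $\sigma$-compactness of open subsets of a proper space.
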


\begin{proof}
	Our proof is based on \cite[Proposition 2.1, Remark 2.2]{kim2017wasserstein}
	though we use different assumptions.

	Suppose that $\nu_j \in \mathcal{E}_{\epsilon, \delta}$ converges weakly to
	$\nu \in \mathcal{W}_2(E)$.
	For any $N \in \mathcal{B}(E)$ such that $\mu(N) < \delta$,
	there exists an open set $O$ such that $N \subset O$ and $ \mu(O) < \delta$
	since $\mu$ is outer regular.
	By the characterization of weak convergence of probability measures
	on metric spaces \cite[Corollary 8.2.10]{bogachev2007measure},
	we have
	\[
		\nu(N) \le \nu(O) \le \liminf_{j \rightarrow \infty} \nu_j(O) \le \epsilon
	\]
	and thus $\mathcal{E}_{\epsilon, \delta}$ is closed with respect to weak
	convergence topology of $\mathcal{W}_2(E)$.

	The inclusion $\mathcal{A} \supset
		\bigcap_{k \in \mathbb{N}}\bigcup_{l \in \mathbb{N}} \mathcal{E}_{2^{-k}, 2^{-l}}$
	follows from the definition of a measure $\nu$ being absolutely continuous
	with respect to $\mu$: $\forall\, N \in \mathcal{B}(E),\, \mu(N) = 0 \implies \nu(N) = 0$.
	Fix a measure $\nu \in \mathcal{A}$.
	Since $\mu$ is $\sigma$-finite,
	we can apply the Radon-Nikodym theorem to write $\nu = f \cdot \mu$.
	The reverse inclusion $\mathcal{A} \subset
		\bigcap_{k \in \mathbb{N}}\bigcup_{l \in \mathbb{N}} \mathcal{E}_{2^{-k}, 2^{-l}}$
	follows from the absolute continuity
	of Lebesgue integral \cite[Theorem 2.5.7, Proposition 2.6.4]{bogachev2007measure}.

	Given a proper space $E$ and a locally finite Borel measure $\mu$,
	$\mu$ gives finite mass to compact sets, and every open subset of $E$ is $\sigma$-compact.
	It follows that $\mu$ is outer regular \cite[Theorem 6 of \S 2.7]{swartz1994measure}
	and also $\sigma$-finite.
	Since Wasserstein convergence implies weak convergence,
	the set $\mathcal{E}_{\epsilon, \delta}$ is closed for the Wasserstein metric.
	It follows that $\mathcal{A}$ is a Borel set of $\mathcal{W}_2(E)$.
\end{proof}

\begin{rmk}
	On a metric space, any finite Borel measure is outer regular, see
	\cite[Definition 7.1.5, Theorem 7.1.7]{bogachev2007measure} or
	\cite[Theorem 1.1]{billingsley1999convergence}.
	However, this is not true for $\sigma$-finite Borel measures.
	For example, define the Borel measure $\mu$ on $\mathbb{R}$ such that
	for $N \in \mathcal{B}(\mathbb{R})$, $\mu$ counts the number
	of rational points in $N$.
	This measure is $\sigma$-finite but not outer regular since
	$\mu$ never gives finite mass to open sets.
	As for the assumption regarding the $\sigma$-compactness of open sets
	in the above cited theorem \cite[Theorem 6 of \S 2.7]{swartz1994measure},
	for metric spaces it can be replaced by assuming that $\mu$ gives
	finite mass to a sequence of open sets $O_i, i \ge 1$ such that
	$E = \bigcup_{i \ge 1} O_i$.
	We also mention that there exists a $\sigma$-finite and locally finite but not outer regular
	Borel measure on a locally compact Hausdorff space
	\cite[problem 5 of Exercise \S 1, INT IV.119]{bourbaki2004extension}.
\end{rmk}

Thanks to \Cref{prop:uniqueness_barycenter_Wasserstein} and
\Cref{lem:measurable_set_absolutely_continous_measures},
the Wasserstein barycenter of $\mathbb{P}$ is unique for the following spaces,
provided that $\mathbb{P}$ gives mass to the set of
absolutely continuous measures with respect to the
corresponding canonical reference measure:
\begin{enumerate}
	\item complete Riemannian manifolds, see Villani \cite[Theorem 10.41]{villani2009optimal} or Gigli \cite[Theorem 7.4]{gigli2011inverse};
	\item compact finite dimensional Alexandrov spaces, see Bertrand \cite[Theorem 1.1]{bertrand2008existence};
	\item for $K \in \mathbb{R}$ and $N \ge 1$, non-branching $\operatorname{CD}(K, N)$ spaces,
	      see Gigli \cite[Theorem 3.3]{gigli2012optimal};
	\item for $K \in \mathbb{R}$ and $N \ge 1$, $\operatorname{RCD^*}(K, N)$ spaces,
	      see Gigli, Rajala and Sturm \cite[Theorem 1.1]{gigli2016optimal};
	\item for $K \in \mathbb{R}$ and $N \ge 1$,
	      essentially non-branching $\operatorname{MCP}(K, N)$ spaces,
	      see Cavalletti and Mondino \cite[Theorem 1.1]{cavalletti2017optimal};
	\item ($2$-)essentially non-branching spaces with qualitatively non-degenerate
	      reference measures, see Kell \cite[Theorem 5.8]{kell2017transport}.
\end{enumerate}

The above spaces are listed in (nearly) ascending order of generality.
For the metric measure spaces, we assume that the metric space is proper
and the reference measure is locally finite.
The references cited above demonstrate that
the unique optimal transport plan (\Cref{rmk:unique_induced_transport_plan})
between an absolutely continuous probability measure
and a given probability measure is induced by a measurable map,
allowing us to apply \Cref{prop:uniqueness_barycenter_Wasserstein}.

Since the existence and uniqueness of Wasserstein barycenters (under mild assumptions)
on Riemannian manifolds $M$ are established, we are ready to prove the absolute
continuity of Wasserstein barycenters with respect to $\operatorname{Vol}$.
In the next subsection, we consider Wasserstein spaces $\mathcal{W}_2(M)$
over Riemannian manifolds and show that
the unique Wasserstein barycenter of finitely many measures
is absolutely continuous if one of those measures is so.

% ! TEX root = ../absolute_continuity.tex

\subsection{The absolute continuity of Wasserstein barycenters of finitely many measures}

Let $(M, \matheuvm{g})$ be a complete Riemannian manifold and
let $\mathbb{P} = \sum_{i=1}^n \lambda_i\, \delta_{\mu_i}$ 
be a probability measure on $\mathcal{W}_2(M)$
with positive real numbers $\lambda_i$ and compactly supported measures $\mu_i$ in $\mathcal{W}_2(M)$.
Assuming that $M$ is compact and $\mu_1$ is absolutely continuous
(with respect to $\operatorname{Vol}$),
Kim and Pass \cite[Theorem 5.1]{kim2017wasserstein} proved that
unique barycenter $\widebar{\mu}$ of $\mathbb{P}$ is absolutely continuous.
For completeness and also for a rigorous foundation of our later arguments,
we provide a proof for general non-compact manifolds.

The proof strategy is to investigate different cases
for measures $\mu_i, 2 \le i \le n$, step by step.
In the simplest case when $\mu_i = \delta_{x_i}, 2 \le i \le n$, are Dirac measures,
the unique barycenter $\widebar{\mu}$ is the push-forward of
$\mu_1 \otimes \delta_{x_2} \otimes \ldots \otimes \delta_{x_n}$ by a measurable barycenter selection map $B$
(\Cref{prop:construction_Wasserstein_barycenter}).
To deduce more properties of $B$, we shortly review $c$-concave functions.

% ! TEX root = ../absolute_continuity.tex

\subsubsection{\texorpdfstring{$c$}{c}-concave functions}

% We recall some known results in this subsection
% before starting our proof.
For $x,y \in M$, we define the function $c(x,y): = \frac{1}{2} d_{\matheuvm{g}}(x,y)^2$
as the half of the squared distance between $x$ and $y$ in $M$,
and define $d_y^2(\cdot) := d_{\matheuvm{g}}(\cdot, y)^2$ to avoid ambiguity
when fixing the point $y$.

\begin{defn}[$c$-transforms and $c$-concave functions]
	Let $(M, \matheuvm{g})$ be a Riemannian manifold.
	Let \( X \) and \( Y \) be two non-empty compact subsets of \( M \).
	A function \( \phi \) : \( X \rightarrow \mathbb { R }\)
	is $c$-concave if there exists a function 
	\( \psi : Y \rightarrow \mathbb { R } \) such that
	\begin{equation}
		\label{defn:c_transform}
		\phi ( x ) = \inf _ { y \in Y } c ( x , y ) - \psi ( y ), \quad \forall x \in X.
	\end{equation}
	We write it as \( \phi = \psi ^ { c } \) and call $\phi$ the $c$-transform of $\psi$.
	The set of all $c$-concave functions with respect to $X$ and $Y$
	is denoted by $\mathcal{I}^c(X, Y)$.
\end{defn}

% By compactness, a $c$-concave function $\phi$ is Lipschitz.
% Fix any interior point \( x \in  \mathring X \) where \( \phi \) is differentiable.
% If two points $x$ and \( z := \exp(- \nabla \phi(x)) \)
% could be joined by a minimal geodesic,
% then $z$ is the unique minimizer of (\ref{defn:c_transform}) among \( y  \in Y \)
% \cite[Lemma 7]{mccann2001polar}.
% Moreover, we have
% \( \nabla \phi ( x ) = \nabla d _ { y } ^ { 2 } ( x ) / 2 \).
% For example, for two points $x, y$  in a complete manifold $M$,
% if $x$ is not in the cut locus of $y$,
% then $y = \exp (- \nabla d^2_y (x) /2)$.

The significance of $c$-concave functions in optimal transport theory
is highlighted by the following theorem of McCann \cite{mccann2001polar},
which extends Brenier's seminal theorem \cite[Theorem 2.12]{villani2021topics} to Riemannian manifolds.
Recall that given a $c$-concave function $\phi$ on a compact set $\widebar{\mathcal{X}}$
with $\mathcal{X} \subset M$ open, its gradient $\nabla \phi$
exists on $\mathcal{X}$ almost everywhere with respect to $\operatorname{Vol}$
since $\phi$ is Lipschitz \cite[Lemma 4]{mccann2001polar}.

\begin{thm}[Optimal transport on manifolds, \citeinfo{Theorem 3.2}{cordero2001riemannian}]
	\label{thm:optimal_transport_manifold}
	Let $(M, \matheuvm{g})$ be a complete Riemannian manifold.
	Fix two measures $\mu, \nu \in \mathcal{W}_2(M)$ with compact support
	such that $\mu$ is absolutely continuous.
	Given two bounded open subsets $\mathcal{X} , \mathcal{Y} \subset M$
	containing the supports of \( \mu \) and \( \nu \) respectively,
	there exists $\phi \in \mathcal{I}^c(\widebar{\mathcal{X}}, \widebar{\mathcal{Y}})$
	such that $(\operatorname{Id}, F)_{\#} \mu$ is the unique optimal transport
	plan between \( \mu \) and \( \nu \), where the function $F: = \exp( - \nabla \phi)$
	is $\mu$-almost everywhere well-defined.
\end{thm}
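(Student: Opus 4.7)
The plan is to invoke Kantorovich duality for the cost $c(x,y) = \tfrac{1}{2} d(x,y)^2$ on the compact product $\widebar{\mathcal{X}} \times \widebar{\mathcal{Y}}$ and then exploit the absolute continuity of $\mu$ to reduce every optimal plan to the graph of a single map $F$. Since $c$ is continuous on this compact set, the classical duality theorem produces $\phi \in \mathcal{I}^c(\widebar{\mathcal{X}}, \widebar{\mathcal{Y}})$ together with a corresponding function $\psi$ on $\widebar{\mathcal{Y}}$ satisfying $\phi(x) + \psi(y) \le c(x,y)$ everywhere, with equality holding $\gamma$-almost everywhere for every optimal transport plan $\gamma$ between $\mu$ and $\nu$.

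Next I would pin down $y$ from $x$ by a first-order argument. Because $\phi$ is an infimum of the uniformly Lipschitz family $\{\,x \mapsto c(x,y) - \psi(y)\,\}_{y \in \widebar{\mathcal{Y}}}$, it is itself locally Lipschitz on $\mathcal{X}$, so Rademacher's theorem applied in charts guarantees that $\nabla \phi$ exists $\operatorname{Vol}$-almost everywhere and hence $\mu$-almost everywhere. At such a differentiability point $x$, if $(x,y) \in \operatorname{supp} \gamma$, the tightness identity $\phi(x) + \psi(y) = c(x,y)$ together with the corresponding inequality at nearby $x^\prime$ forces $\nabla \phi(x) = \nabla_x c(x, y)$. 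Using the Riemannian formula $\nabla_x \tfrac{1}{2} d(\cdot, y)^2 \big|_{x} = -\exp_x^{-1}(y)$, which is valid whenever $y$ lies outside the cut locus of $x$, this identity rearranges to $y = \exp_x\bigl(-\nabla \phi(x)\bigr) = F(x)$.

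Combining these steps, every optimal plan between $\mu$ and $\nu$ is concentrated on the graph of $F$, so its first marginal $\mu$ determines it uniquely as $(\operatorname{Id}, F)_{\#} \mu$. This simultaneously yields existence, uniqueness and the map representation, and the statement that $F$ is well-defined $\mu$-almost everywhere is read off from the $\mu$-almost everywhere existence of $\nabla \phi$. The main obstacle in executing this plan rigorously is controlling the cut locus: one must verify that the set of pairs $(x, y)$ with $y$ in the cut locus of $x$ is negligible for every optimal plan, so that the gradient formula for $c$ actually applies. This follows from the semiconcavity of $x \mapsto \tfrac{1}{2} d(x, y)^2$ off the cut locus combined with the hypothesis $\mu \ll \operatorname{Vol}$, and it is precisely the point at which McCann's Riemannian extension of Brenier's theorem requires absolute continuity of the source measure.
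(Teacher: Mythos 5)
This statement is not proved in the paper at all: it is quoted verbatim from Cordero-Erausquin--McCann--Schmuckenschl\"ager \cite[Theorem 3.2]{cordero2001riemannian} (building on McCann \cite{mccann2001polar}), so the only meaningful comparison is with the proof in that reference. Your sketch follows exactly that standard route: Kantorovich duality on the compact set $\widebar{\mathcal{X}} \times \widebar{\mathcal{Y}}$ producing a $c$-concave maximizer $\phi$, Lipschitz continuity of $\phi$ plus Rademacher and $\mu \ll \operatorname{Vol}$ to get $\mu$-a.e.\ differentiability, the first-order contact condition identifying the second coordinate of any point of $\operatorname{supp} \gamma$ as $\exp_x(-\nabla \phi(x))$, and uniqueness by graph concentration. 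All of that is sound.

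The one genuine gap is your treatment of the cut locus. You invoke $\nabla_x \tfrac{1}{2} d(\cdot,y)^2\big|_x = -\exp_x^{-1}(y)$ ``valid whenever $y$ lies outside the cut locus of $x$,'' and then assert that the set of pairs with $y \in \operatorname{cut}(x)$ is $\gamma$-negligible for every optimal plan ``by semiconcavity of $x \mapsto \tfrac{1}{2} d(x,y)^2$ off the cut locus combined with $\mu \ll \operatorname{Vol}$.'' That deduction does not work as stated: semiconcavity away from the cut locus plus absolute continuity of $\mu$ gives no control on where $\gamma$ sits, and cut-locus avoidance is in fact a \emph{second-order} statement, proved in \cite[Proposition 4.1]{cordero2001riemannian} (restated in the paper as \Cref{defn:differentiate_optimal_transport_map}) at points where the Alexandrov Hessian of the semiconcave $\phi$ exists --- not something you can read off from the first-order argument you set up. The correct and standard repair is that the first-order step needs no cut-locus hypothesis at all: $\tfrac{1}{2} d(\cdot,y)^2$ is superdifferentiable at \emph{every} $x$, with $-\exp_x^{-1}(y)$ (initial velocity of any minimizing geodesic) as a supergradient, and since $\phi$ touches $c(\cdot,y) - \psi(y)$ from below at $x$ and is differentiable there, one gets $y = \exp_x(-\nabla \phi(x))$ directly; this is McCann's Lemma 7 in \cite{mccann2001polar}, which the paper itself uses for exactly this purpose in the proof of \Cref{lem:Lipschitz_optimal_transport_map}. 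With that substitution your argument closes; as written, the cut-locus step is asserted rather than proved, and the proposed justification is not valid.
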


The following lemma shows that the definition of barycenters for measures
$\sum_{i=1}^n \lambda_i\, \delta_{x_i}$ on $M$ involves $c$-concave functions.

\begin{lem}
	\label{lem:barycenter_and_c_concave_function}
	Let $(M, \matheuvm{g})$ be a complete Riemannian manifold.
	Given an integer $n \ge 2$,
	let $\lambda_i > 0, 1 \le i \le n$, be $n$ positive real
	numbers such that $\sum_{i=1}^n \lambda_i = 1$.
	We define
	\begin{equation}
		\label{equa:barycenter_potential}
		f: (x_1, x_2, \ldots, x_n) \in M^n \mapsto 
		\min_{w \in M} \sum_{i=1}^n \lambda_i \,c(w, x_i) =
		\frac{1}{2} \min_{w \in M} \sum_{i=1}^n \lambda_i \,d_{\matheuvm{g}} (w, x_i)^2.
	\end{equation}
	Fix a non-empty compact subset $X \subset M$ and
	$n-1$ points $x_i \in M, 2 \le i \le n$.
	Denote by $Y$ the set of all barycenters of $\sum_{i=1}^n \lambda_i\, \delta_{x_i}$
	when $x_1$ runs through $X$.
	Define $f_1: x_1 \in X  \mapsto f(x_1, \ldots, x_n) / \lambda_1$ and
	 $g_1: y \in Y \mapsto - 1/ \lambda_1 \sum_{i = 2}^n
		\lambda_i\, c(y, x_i) $,
	then $f_1  = g_1^c \in \mathcal{I}^c(X, Y)$ 
	and $g_1 = f_1 ^c \in \mathcal{I}^c(Y, X)$.
\end{lem}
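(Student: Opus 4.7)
The plan is a direct verification of both $c$-conjugation identities by unwinding the definitions. I first observe that $Y$ is a non-empty compact subset of $M$: since $M$ is proper by the Hopf-Rinow theorem and $X \times \{x_2\} \times \cdots \times \{x_n\}$ is compact in $M^n$, \Cref{lem:compact_barycenter} gives that $Y$ is compact (and it is non-empty because $X \neq \emptyset$ and barycenters of compactly supported discrete measures always exist on proper spaces). Consequently, $\mathcal{I}^c(X, Y)$ and $\mathcal{I}^c(Y, X)$ are well-defined.

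The key observation is that for every $y \in M$ and every $x_1 \in X$, the trivial inequality
\[
\sum_{i=1}^n \lambda_i\, c(y, x_i) \;\ge\; \min_{w \in M} \sum_{i=1}^n \lambda_i\, c(w, x_i) \;=\; \lambda_1 f_1(x_1)
\]
rearranges, after dividing by $\lambda_1$ and isolating the $i = 1$ term, into the two equivalent statements
\[
c(x_1, y) - g_1(y) \;\ge\; f_1(x_1) \qquad \text{and} \qquad c(y, x_1) - f_1(x_1) \;\ge\; g_1(y).
\]
Taking the infimum over $y \in Y$ in the first inequality yields $f_1 \le g_1^c$ on $X$; taking the infimum over $x_1 \in X$ in the second yields $g_1 \le f_1^c$ on $Y$.

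For the matching reverse inequalities, I would exhibit $y \in Y$ (resp.\ $x_1 \in X$) that realize the infima. Given $x_1 \in X$, any barycenter $\widebar{w}$ of $\sum_{i=1}^n \lambda_i\, \delta_{x_i}$ lies in $Y$ by the very definition of $Y$, and the displayed inequality becomes an equality at $y = \widebar{w}$, yielding $c(x_1, \widebar{w}) - g_1(\widebar{w}) = f_1(x_1)$ and hence $f_1 \ge g_1^c$. Conversely, any $y \in Y$ is, again by definition of $Y$, a barycenter of $\lambda_1 \delta_{x_1^\star} + \sum_{i=2}^n \lambda_i\, \delta_{x_i}$ for some $x_1^\star \in X$ (with the same fixed $x_2, \ldots, x_n$); plugging $x_1 = x_1^\star$ into the second rearrangement turns it into an equality $c(y, x_1^\star) - f_1(x_1^\star) = g_1(y)$, so $g_1 \ge f_1^c$.

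No substantive obstacle is expected; the entire content of the lemma is the tautological observation that the minimizer realizing the ``unrestricted'' infimum $\min_{w \in M}$ in the definition of $f_1$ automatically lies in the ``restricted'' set $Y$, which is precisely how $Y$ was constructed from $X$.
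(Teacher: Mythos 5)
Your proof is correct and follows essentially the same route as the paper: the inequality $f_1(x_1) + g_1(y) \le c(x_1,y)$ on $X \times Y$ (your rearranged ``trivial inequality'') combined with the fact that equality is attained exactly at barycenter pairs, which exist in both directions by the very definition of $Y$, yields both $f_1 = g_1^c$ and $g_1 = f_1^c$. The paper phrases the first identity as replacing the minimum over $M$ by the minimum over $Y$, but that is the same observation you make explicit, so there is nothing to add.
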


\begin{proof}
	The set $Y \subset M$ is compact by \Cref{lem:compact_barycenter}.
	Using the given definition of $Y$, we can replace
	the minimum over $M$ in (\ref{equa:barycenter_potential})
	by the minimum over $Y$,
	which shows the equality $f_1 = g_1^c \in \mathcal{I}^c(X, Y)$.

	Since $ f_1 (x) + g_1(y) \le c(x, y)$
	for any $(x, y) \in X \times Y$, we have
	\begin{equation}
		\label{equa:g_1_equality_one_direction}
		g_1(y) \leq f_1 ^c(y) := \inf_{x \in X} c(x, y) - f_1(x) .
	\end{equation}
	Fix an arbitrary point $y \in Y$. Our definition of $Y$ implies
	the existence of $x_1 \in X$ such that $y$ is
	a barycenter of $\sum_{i=1}^n \lambda_i\, \delta_{x_i}$.
	For such a pair $(x_1, y) \in X \times Y$,
	$f_1 (x_1) + g_1(y) = c(x_1, y)$ by the definitions of $f_1$ and $g_1$.
	It follows from the inequalities $f_1 (x_1) + f_1^c(y) \leq c (x_1, y)
		= f_1 (x_1) + g_1(y)$ and (\ref{equa:g_1_equality_one_direction})
	that $g_1(y) = f_1^c (y)$.
	Since $y$ is arbitrarily chosen, we conclude that
	$g_1 = f_1^c \in \mathcal{I}^c(Y, X)$.
\end{proof}

The $c$-concave function $g_1 \in \mathcal{I}^c(Y, X)$
defined in \Cref{lem:barycenter_and_c_concave_function}
has simple expression unlike its $c$-transform $f_1$.
Furthermore, thanks to the following lemma by Kim and Pass \cite[Lemma 3.1]{kim2015multi},
we conclude that $g_1$ is smooth.
This differential property of $g_1$ (to be used in \Cref{lem:Lipschitz_optimal_transport_map})
is crucial to prove the absolute continuity of Wasserstein barycenters.

\begin{lem}[Barycenters and cut loci,
	\citeinfo{Lemma 3.1 and proof of Theorem 6.1}{kim2015multi}]
	\label{lem:barycenter_discrete_measure_out_of_cut_locus}
	Let $(M, \matheuvm{g})$ be a complete Riemannian manifold.
	Given an integer $n \ge 1$,
	let $\lambda_i > 0, 1 \le i \le n$, be $n$ positive real
	numbers such that $\sum_{i=1}^n \lambda_i = 1$
	and let $x_i \in M, 1 \le i \le n$, be $n$ points of $M$.
	For $1 \le i \le n$, $x_i$ is out of the cut locus of
	any barycenters of $\sum_{i=1}^n \lambda_i \, \delta_{x_i}$.
\end{lem}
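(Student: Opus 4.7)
The plan is to exhibit a barycenter $z$ of $\sum_{i=1}^n \lambda_i\, \delta_{x_i}$ as a global minimizer of $F(w) := \frac{1}{2} \sum_i \lambda_i\, d(w, x_i)^2$ on $M$, and to rule out both configurations that place a point $x_i$ in the cut locus of $z$: the existence of several minimizing geodesics from $z$ to $x_i$, and the occurrence of a first conjugate point along a unique such geodesic.

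For uniqueness of minimizing geodesics I would use a first-variation argument. Write $V_i := \{v \in T_z M : \exp_z(v) = x_i,\ \|v\| = d(z, x_i)\}$ for the set of initial velocities of minimizing geodesics from $z$ to $x_i$. The first variation formula for distance gives the one-sided directional derivative $D^+_u d(\cdot, x_i)^2(z) = -2 \max_{v \in V_i} \langle u, v\rangle$. Since $z$ minimizes $F$, we need $D^+_u F(z) \ge 0$ for every $u \in T_z M$; applying this with both $u$ and $-u$ yields $\sum_i \lambda_i \max_{v \in V_i} \langle u, v \rangle = \sum_i \lambda_i \min_{v \in V_i} \langle u, v \rangle = 0$. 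Since $\lambda_i > 0$ and $\max \ge \min$ termwise, this forces $\max_{v \in V_i} \langle u, v \rangle = \min_{v \in V_i} \langle u, v \rangle$ for all $u$, i.e.\ each $V_i$ is a singleton $\{v_i\}$. The balance identity $\sum_i \lambda_i v_i = 0$ comes out as a by-product.

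It remains to show that, along the now-unique minimizing geodesic $\gamma_i(t) := \exp_z(t v_i)$, the point $x_i$ is not the first conjugate point of $z$. Assume for contradiction that $x_n$ is such a conjugate point; then there is a nontrivial Jacobi field $J$ along $\gamma_n$ with $J(0) = J(1) = 0$, and set $u_0 := \dot J(0) \in T_z M$. Consider the perturbation $z_\epsilon := \exp_z(\epsilon u_0)$ for small $\epsilon > 0$. The field $V(t) := J(t)/t$ extends smoothly across $t = 0$ with $V(0) = u_0$ and $V(1) = 0$, so $\sigma_\epsilon(t) := \exp_{\gamma_n(t)}(\epsilon V(t))$ is a competitor curve from $z_\epsilon$ to $x_n$, and a careful expansion of its length via the Jacobi equation should deliver a strictly super-quadratic bound $d(z_\epsilon, x_n)^2 \le d(z, x_n)^2 - 2\epsilon\langle u_0, v_n\rangle - c\,\epsilon^{\alpha}$ for some $c > 0$ and $\alpha < 2$. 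For each $i \ne n$, the standard semi-concavity of $d(\cdot, x_i)^2$ produces only a quadratic remainder $+C_i \epsilon^2$. Summing with the weights $\lambda_i$ and using $\sum_i \lambda_i v_i = 0$ to kill the linear terms gives $F(z_\epsilon) - F(z) \le -c\lambda_n\epsilon^\alpha + O(\epsilon^2) < 0$ for $\epsilon$ small, contradicting the minimality of $z$.

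The main obstacle is exactly that strictly super-quadratic decrease in $d(z_\epsilon, x_n)^2$. The distance-squared function is globally semi-concave with a uniform constant, so the generic Hessian upper bound does not degrade at a first conjugate point; the super-quadratic gain has to be extracted from the degeneracy of $\exp_z$ encoded in $J$, either through a third-order length expansion for $\sigma_\epsilon$ or through a Morse-theoretic shortcut exploiting that $\gamma_n$ cannot be extended past $x_n$ as a minimizing geodesic. This is the Riemannian-geometric core of Kim--Pass's Lemma~3.1 in \cite{kim2015multi}, to which the present paper refers for the complete argument.
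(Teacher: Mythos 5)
The paper itself gives no proof of this lemma: it is quoted directly from Kim and Pass \cite{kim2015multi}, so your proposal has to stand on its own. Its first half does. The one-sided first-variation formula $D^+_u d(\cdot,x_i)^2(z) = -2\max_{v\in V_i}\langle u,v\rangle$, the minimality of $z$ tested with $u$ and $-u$, and the resulting squeeze $0\le\sum_i\lambda_i\min_{v\in V_i}\langle u,v\rangle\le\sum_i\lambda_i\max_{v\in V_i}\langle u,v\rangle\le 0$ indeed force each $V_i$ to be a singleton and yield the balance $\sum_i\lambda_i v_i=0$; this correctly excludes the case of a cut point reached by two distinct minimizing geodesics, and it is the standard argument for that case.

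The conjugate-point case, however, is not proved: the decisive estimate, a decrease of $d(z_\epsilon,x_n)^2$ strong enough to beat the quadratic semiconcavity losses from the other terms, is only asserted (``should deliver'') and then explicitly deferred to \cite{kim2015multi}. Since that estimate is the geometric core of the lemma, this is a genuine gap, located exactly where the work lies. Two further points. First, the power-law form you postulate, a gain $-c\,\epsilon^{\alpha}$ with $\alpha<2$, is both unjustified and stronger than needed; after the linear terms cancel via $\sum_i\lambda_i v_i=0$, it suffices to know that at a cut point that is conjugate along the unique minimizing geodesic the symmetric second difference quotient $\epsilon^{-2}\bigl[d(\exp_z(\epsilon u),x_n)^2+d(\exp_z(-\epsilon u),x_n)^2-2\,d(z,x_n)^2\bigr]$ tends to $-\infty$ for a suitable direction $u$ built from the Jacobi field. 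This is a classical fact, proved by Cordero-Erausquin, McCann and Schmuckenschläger \cite{cordero2001riemannian}, and it is what the Kim--Pass argument rests on; quoting it (or reproducing its index-form proof) is what would close your argument. Second, your competitor $\sigma_\epsilon(t)=\exp_{\gamma_n(t)}(\epsilon J(t)/t)$ has a variation field that does not vanish at $t=0$, so the second-order expansion of its length carries endpoint terms and is not automatically strictly negative; the strict gain has to be extracted beyond the naive second-order cancellation, and nothing in the sketch establishes this. As written, the proposal reduces the lemma to the very reference the paper already cites, rather than proving it.
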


It will be shown in \Cref{lem:Lipschitz_optimal_transport_map} that the map
$\exp(-\nabla g_1)$ is an optimal transport map $F$ as stated in \Cref{thm:optimal_transport_manifold}.
Given the above \Cref{lem:barycenter_discrete_measure_out_of_cut_locus},
the following lemma further illustrates how to differentiate such maps.

\begin{lem}
	\label{lem:differentiate_smooth_concave_function}
	Let $(M, \matheuvm{g})$ be a complete Riemannian manifold.
	Fix an open set $U \subset M$, a point $x \in U$, and
	a $\mathcal{C}^2$ smooth function $\phi$ defined on $U$.
	Define $F: = \exp(-\nabla \phi)$ on $U$.
	Assume that the (fixed) point $y: = F(x)$ is out of the cut locus of $x$.
	If the two functions,  $\phi$ and  $d^2_{y} / 2$,
	have the same gradient at $x$, then
	\begin{equation}
		\label{equa:differentiate_smooth_concave_function}
				\di_x F = [\di_{- \nabla \phi(x)} \exp_x] \circ (\operatorname{Hess}_x d^2_{y} / 2 - \operatorname{Hess}_x \phi).
		% \diff F(x) = \diff \exp_x |_{- \nabla \phi}
		% \circ (\operatorname{Hess}_x d^2_{y} / 2 - \operatorname{Hess}_x \phi).
	\end{equation}
	In the above formula,
	\begin{enumerate}
		\item $\operatorname{Hess}_x$ denotes the Hessian operator at $x$
		      and its values are maps from $T_xM$ to $T_xM$;
		\item $\di_{- \nabla \phi(x)} \exp_x : T_{-\nabla \phi(x)}T_xM \rightarrow T_{y}M$
		      denotes the differential of the exponential map
		      $\exp_x: T_xM \rightarrow M$ at $-\nabla \phi (x)$;
		\item the composition is valid since
		      $T_{-\nabla \phi(x)} T_x M$ can be canonically identified with $T_xM$.
	\end{enumerate}
\end{lem}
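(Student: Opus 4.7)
The plan is to identify $\diff F(x)(u)$ with the value at $s=1$ of a Jacobi field along the minimizing geodesic from $x$ to $y$, and then exploit the fact that a second, auxiliary map is locally constant to extract the Hessian-difference formula.

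First I would interpret $F$ as the composition $\exp \circ V$, where $V : U \to TM$ is the vector field $V(x') := -\nabla \phi(x')$ viewed as a section of $TM$. For a curve $\gamma$ with $\gamma(0) = x$, $\gamma'(0) = u$, the map $\sigma(s,t) := \exp_{\gamma(t)}(s\, V(\gamma(t)))$ is a geodesic variation with $\sigma(0,t) = \gamma(t)$ and $\sigma(1,t) = F(\gamma(t))$. Hence the Jacobi field $J(s) := \partial_t \sigma(s,0)$ along the geodesic $s \mapsto \exp_x(s\, v)$ (with $v := -\nabla \phi(x) = \exp_x^{-1}(y)$, the last equality using that $y$ lies outside the cut locus of $x$ and that $\nabla \phi(x) = \nabla(d_y^2/2)(x)$) satisfies $J(0) = u$, $D_s J(0) = \nabla_u V(x) = -\operatorname{Hess}_x \phi\,(u)$, and $J(1) = \diff F(x)(u)$.

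Next, I would introduce the auxiliary map $G(x') := \exp_{x'}\!\bigl(-\nabla (d_y^2/2)(x')\bigr)$ with $y$ fixed. Since $-\nabla(d_y^2/2)(x') = \exp_{x'}^{-1}(y)$ whenever $y$ is out of the cut locus of $x'$, and this is an open condition in $x'$ around $x$ by \Cref{lem:barycenter_discrete_measure_out_of_cut_locus}'s underlying cut-locus openness, $G$ is locally constantly equal to $y$ on a neighborhood of $x$. Applying the same Jacobi field construction with the vector field $W := -\nabla(d_y^2/2)$ in place of $V$—and observing that $V$ and $W$ agree at $x$ by hypothesis, so the underlying geodesic $s \mapsto \exp_x(s\,v)$ is the same—yields the Jacobi field $\tilde{J}$ along the very same geodesic with $\tilde{J}(0) = u$, $D_s \tilde{J}(0) = -\operatorname{Hess}_x(d_y^2/2)(u)$, and $\tilde{J}(1) = \diff G(x)(u) = 0$.

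Finally I would invoke linearity of Jacobi fields along a fixed geodesic: the difference $K := J - \tilde{J}$ is the Jacobi field with $K(0) = 0$ and $D_s K(0) = \operatorname{Hess}_x(d_y^2/2)(u) - \operatorname{Hess}_x \phi\,(u)$. By the standard Jacobi-field description of $\diff \exp_x|_v$, one has
\[
K(1) = \diff \exp_x|_{v}\!\bigl(\operatorname{Hess}_x(d_y^2/2)(u) - \operatorname{Hess}_x \phi\,(u)\bigr).
\]
Combining $\diff F(x)(u) = J(1) = \tilde{J}(1) + K(1) = 0 + K(1)$ gives exactly (\ref{equa:differentiate_smooth_concave_function}). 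The main thing to watch is the correct identification $D_s J(0) = \nabla_u V(x)$ for a Jacobi field arising from a geodesic variation whose initial velocity varies with the base point; this is a standard computation but must be written carefully, using that the mixed covariant derivative $D_t \partial_s \sigma|_{(0,0)}$ equals $\nabla_u V(x)$ because $\partial_s \sigma(0,t) = V(\gamma(t))$.
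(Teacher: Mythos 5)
Your argument is correct, but it takes a genuinely different route from the paper. You run the classical Jacobi-field computation: two geodesic variations generated by the vector fields $-\nabla\phi$ and $-\nabla(d_y^2/2)$, which agree at $x$ by hypothesis, produce Jacobi fields $J$ and $\tilde J$ along the same geodesic with $J(1)=\diff F(x)(u)$ and $\tilde J(1)=\diff G(x)(u)=0$ (since $G(x')=\exp_{x'}(\exp_{x'}^{-1}(y))\equiv y$ near $x$), and linearity of Jacobi fields plus the standard identity $K(1)=\diff\exp_x|_v\bigl(D_sK(0)\bigr)$ for $K(0)=0$ yields (\ref{equa:differentiate_smooth_concave_function}). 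The paper instead introduces the two-variable map $g(w,z)=\exp_w\bigl(-\nabla d_y^2(w)/2+\Pi_{z\to w}[\nabla d_y^2(z)/2-\nabla\phi(z)]\bigr)$, notes $g(z,z)=F(z)$ and $g(\cdot,x)\equiv y$, and gets the formula from the chain rule $\diff F(x)=\partial_w g(x,x)+\partial_z g(x,x)=\partial_z g(x,x)$, where the $z$-derivative is read off from the parallel-transport characterization of the covariant derivative; this avoids Jacobi-field theory altogether, while your version is closer in spirit to the original argument of \cite{cordero2001riemannian} cited in the paper. Both proofs hinge on the same key observation, namely that the ``pure distance-function part'' $x'\mapsto\exp_{x'}(-\nabla(d_y^2/2)(x'))$ is locally constant, so its derivative drops out and only the Hessian difference survives through $\diff\exp_x|_{-\nabla\phi(x)}$. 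Two small points to fix in your write-up: the openness of the set of pairs $(x',y)$ with $y$ outside the cut locus of $x'$ is a standard fact about the cut locus and is not what \Cref{lem:barycenter_discrete_measure_out_of_cut_locus} asserts, so cite it as such (the paper's proof uses the same shrinking of $U$ implicitly); and you should state explicitly that $-\nabla(d_y^2/2)(x')=\exp_{x'}^{-1}(y)$ for $x'$ near $x$, which is what makes $G$ locally constant and identifies $v=-\nabla\phi(x)$ with $\exp_x^{-1}(y)$. With the symmetry-lemma computation $D_sJ(0)=\nabla_uV(x)$ written out as you indicate, the proof is complete.
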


\begin{proof}
	The formula (\ref{equa:differentiate_smooth_concave_function}) is already proven
	in \cite[Proposition 4.1]{cordero2001riemannian},
	whose proof can be simplified thanks to our assumptions. 
	% Fix a point $x \in \mathcal{X}$ and 
	Define $y : = F(x)$.
	% We now prove (\ref{equa:differentiate_smooth_concave_function}) for the fixed point $x$.
	By the assumption that $y$ is not in the cut locus of $x$,
	$\operatorname{Hess}_x d^2_{y}  /2$ is  well-defined.
	Shrink the neighborhood $U$ of $x$ if necessary so that
	for $(w, z) \in U \times U$, $w$ is not in the cut loci of $y$ and $z$
	\cite[(2) of Proposition 4.1 in Chapter III]{sakai1996riemannian}.
	Define the following function $g$ on $U \times U$,
	\[
		g(w,z) := \exp_{w}\left(-\nabla d_y^2(w) / 2 +
		\Pi_{z \rightarrow w}\left[\nabla d_y^2(z) / 2 - \nabla \phi (z) \right]\right),
	\]
	where $\Pi_{z \rightarrow w}: T_{z}M \rightarrow T_wM$ denotes the parallel
	transport of tangent vectors along the minimal geodesic from $z$ to $w$.
	For $z \in U$, since $\Pi_{z \rightarrow z}$
	is the identity map on $T_zM$, $g(z, z) = F(z)$.
	For $ w \in U$,
	$g(w, x) = \exp_w(-\nabla d_y^2(w) / 2) \equiv y$ is a constant,
	where we used the assumption $\nabla d^2_y(x)  / 2 = \nabla \phi (x)$ for the first equality
	and used that $w$ is not in the cut locus of $y$ for the second one.
	It follows that
	\begin{align}
		\label{equa:d_xF_first_line}
		\di_x F & = \partial_w g(x, x) + \partial_z g(x, x) = \partial_z g(x, x)                                                 \\
		\label{equa:d_xF_second_line}
		        & = [\di_{- \nabla \phi(x)} \exp_x ] \circ (\operatorname{Hess}_x d^2_y / 2 - \operatorname{Hess}_x \phi),
	\end{align}
	where we used $F(z) = g(z, z)$,
	the chain rule and $g(w, x) \equiv y$
	in the line (\ref{equa:d_xF_first_line}),
	and applied the relation between covariant derivative and parallel transport
	\cite[Corollary 4.35]{lee2018introduction}
	to the definition of Hessian
	as the covariant derivative of gradients \cite[Proposition 2.2.6]{petersen2016riemannian}
	in the line (\ref{equa:d_xF_second_line}).
\end{proof}
% \Cref{lem:barycenter_discrete_measure_out_of_cut_locus} reveals how
% geometric properties affect barycenters,
% \textbf{(Can be removed together with the appendix B)}
% and this point deserves further discussion.
% In \cref{appendix:barycenter_cut_locus}, we prove a generalized
% theorem of \Cref{lem:barycenter_discrete_measure_out_of_cut_locus}.

% ! TEX root = ../absolute_continuity.tex

\subsubsection{Lipschitz continuous optimal transport maps of Wasserstein barycenters}
\label{sec:Lipschitz_continuous_transport_map}

To better illustrate our approach towards the absolute continuity
of Wasserstein barycenters of finitely many measures,
we recall the following result corresponding to the case of two measures.

\begin{prop}[Regularity of displacement interpolations,
		\citeinfo{Theorem 8.5, Theorem 8.7}{villani2009optimal}]
	\label{prop:Lipschitz_continuity_displacement_interpolation}
	Let $(M, \matheuvm{g})$ be a complete Riemannian manifold.
	Let $t \in [0, 1] \mapsto \mu_t \in \mathcal{W}_2(M)$ be a minimal geodesic in the
	Wasserstein space $\mathcal{W}_2(M)$ such that
	both $\mu_0$ and $\mu_1$ have compact support.
	For any $0 < \lambda < 1$, $\mu_{\lambda}$ is the barycenter
	of $(1-\lambda) \delta_{\mu_0} + \lambda\,\delta_{\mu_1}$.
	The optimal transport map from $\mu_\lambda$ to $\mu_0$
	is Lipschitz continuous,
	% almost everywhere with respect to $\mu_\lambda$,
	and it follows that $\mu_\lambda$ is absolutely continuous
	provided that $\mu_0$ is absolutely continuous.
\end{prop}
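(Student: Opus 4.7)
The plan is to construct the optimal transport map $S_\lambda$ from $\mu_\lambda$ to $\mu_0$ by inverting an interpolation map, and to establish its Lipschitz continuity through $c$-cyclical monotonicity of the underlying optimal plan between $\mu_0$ and $\mu_1$. When $\mu_0$ is absolutely continuous (the case relevant for the paper's application), the theorem on optimal transport on manifolds yields a $c$-concave function $\phi$ defined on a compact neighborhood of $\operatorname{supp}(\mu_0)$ such that $T := \exp(-\nabla \phi)$ is the unique optimal transport map from $\mu_0$ to $\mu_1$. Setting $T_t := \exp(-t \nabla \phi)$ for $t \in [0, 1]$, the curve $t \mapsto (T_t)_\# \mu_0$ is a minimal geodesic in $\mathcal{W}_2(M)$ of length $W_2(\mu_0, \mu_1)$, so by uniqueness of the displacement interpolation it must coincide with $(\mu_t)$, and in particular $\mu_\lambda = (T_\lambda)_\# \mu_0$. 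The identification of $\mu_\lambda$ as the barycenter of $(1-\lambda)\delta_{\mu_0} + \lambda\,\delta_{\mu_1}$ then follows from the equality case of the Wasserstein triangle inequality characterizing geodesics.

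The heart of the proof is to show that $T_\lambda$ is essentially injective and that its inverse $S_\lambda$ is Lipschitz on $\operatorname{supp}(\mu_\lambda)$. I would combine $c$-cyclical monotonicity of the graph of $T$ with the geodesic identities $d(x, T_\lambda(x)) = \lambda\, d(x, T(x))$ and $d(T_\lambda(x), T(x)) = (1-\lambda)\, d(x, T(x))$. For $\mu_0 \otimes \mu_0$-almost every pair $(x, y)$, monotonicity gives
\[
d(x, T(x))^2 + d(y, T(y))^2 \leq d(x, T(y))^2 + d(y, T(x))^2,
\]
and applying the triangle inequalities $d(x, T(y)) \leq d(x, T_\lambda(y)) + (1-\lambda)\,d(y, T(y))$ and its symmetric counterpart, then expanding the squares and rearranging, I would extract a lower bound of the form
\[
d(T_\lambda(x), T_\lambda(y)) \geq c_\lambda\, d(x, y)
\]
for some $c_\lambda > 0$ depending on $\lambda$ and the geometry of $M$ restricted to the compact supports. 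This yields Lipschitz continuity of $S_\lambda$ with constant $L_\lambda = 1/c_\lambda$.

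Once the Lipschitz continuity of $S_\lambda$ is in hand, the absolute continuity of $\mu_\lambda$ under the assumption $\mu_0 \ll \operatorname{Vol}$ follows directly. For any Borel $N \subset M$ with $\operatorname{Vol}(N) = 0$, the image $S_\lambda(N \cap \operatorname{supp}(\mu_\lambda))$ has zero volume since Lipschitz maps preserve volume-null sets in equal dimension, so $\mu_0\bigl(S_\lambda(N \cap \operatorname{supp}(\mu_\lambda))\bigr) = 0$. Using the pushforward relation $\mu_0 = (S_\lambda)_\# \mu_\lambda$,
\[
\mu_\lambda(N) = \mu_\lambda\bigl(N \cap \operatorname{supp}(\mu_\lambda)\bigr) \leq \mu_\lambda\bigl(S_\lambda^{-1}(S_\lambda(N \cap \operatorname{supp}(\mu_\lambda)))\bigr) = \mu_0\bigl(S_\lambda(N \cap \operatorname{supp}(\mu_\lambda))\bigr) = 0,
\]
hence $\mu_\lambda \ll \operatorname{Vol}$.

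The principal obstacle is the Lipschitz estimate in the second step. In Euclidean space, the identity $T_\lambda(x) = (1-\lambda)x + \lambda T(x)$ together with the monotonicity of $T = \nabla\psi$ immediately yields $|T_\lambda(x) - T_\lambda(y)| \geq (1-\lambda)|x-y|$. On a Riemannian manifold, the nonlinearity of the exponential map and the possibility of curvature-induced geodesic focusing produce error terms that must be controlled carefully. The key geometric fact to exploit is that for $0 < \lambda < 1$, the interpolating point $T_\lambda(x)$ lies strictly before the cut locus of $x$ along the minimizing geodesic to $T(x)$, in analogy with the lemma on barycenters and cut loci; this keeps the relevant Jacobi fields non-degenerate and permits uniform control of the error terms on the compact supports, delivering the constant $c_\lambda > 0$ required above.
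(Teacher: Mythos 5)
The crucial step of your argument is exactly the point where it is not actually carried out. From $c$-cyclical monotonicity plus triangle inequalities you claim to ``extract'' a bound $d(T_\lambda(x),T_\lambda(y)) \ge c_\lambda\, d(x,y)$, but the manipulation you describe cannot produce it: writing $a=d(x,T(x))$, $b=d(y,T(y))$, $D=d(T_\lambda(x),T_\lambda(y))$, the monotonicity inequality combined with your triangle estimates only yields $a^2+b^2 \le \bigl(\lambda a + D + (1-\lambda)b\bigr)^2 + \bigl(\lambda b + D + (1-\lambda)a\bigr)^2$, in which $d(x,y)$ does not even appear; injecting $d(x,y)\le \lambda a + D + \lambda b$ gives $D \ge d(x,y) - \lambda(a+b)$, which is useless. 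The Euclidean proof you invoke as a model does not go through triangle inequalities at all: it uses the inner-product identity $|x_\lambda-y_\lambda|^2=(1-\lambda)^2|x_0-y_0|^2+\lambda^2|x_1-y_1|^2+2\lambda(1-\lambda)\langle x_0-y_0,x_1-y_1\rangle$, and the Riemannian substitute for that identity is precisely Mather's shortening lemma (Theorem 8.1 in Villani), a genuinely nontrivial geometric argument with curvature-dependent constants on the compact set in question. Your closing paragraph acknowledges the ``error terms'' and gestures at Jacobi fields and the cut locus, but no argument is given; this is the heart of the proposition, so the proof has a genuine gap there. (This is also why the paper does not reprove the statement: it quotes Villani's Theorems 8.5 and 8.7 and notes that the Lipschitz continuity is a consequence of Mather's shortening lemma, or alternatively of the Hamilton--Jacobi approach of Bernard--Buffoni and Fathi--Figalli, the null-set argument for absolute continuity then being immediate.)

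A secondary issue of scope: you prove the statement only when $\mu_0$ is absolutely continuous, since your construction of $T=\exp(-\nabla\phi)$, $T_\lambda=\exp(-\lambda\nabla\phi)$ and of the inverse map $S_\lambda$ needs Theorem \ref{thm:optimal_transport_manifold}. The proposition as stated asserts the Lipschitz continuity of the optimal transport map from $\mu_\lambda$ to $\mu_0$ for an arbitrary geodesic with compactly supported endpoints; absolute continuity of $\mu_0$ enters only in the final implication. The existence and Lipschitz regularity of that map in the general case is part of what the cited theorems (via Mather's lemma) provide and is not recovered by your construction. Your last step (Lipschitz maps send $\operatorname{Vol}$-null sets to null sets, hence $\mu_\lambda(N)\le \mu_0(S_\lambda(N\cap\operatorname{supp}\mu_\lambda))=0$) is fine modulo the routine measurability of the image, and matches how the paper and its later generalization (\Cref{prop:absolutely_continous_barycenter_discrete_marginals}) argue.
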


The Lipschitz continuity in
\Cref{prop:Lipschitz_continuity_displacement_interpolation}
can be shown as a consequence of Mather's shortening lemma
\cite[Chapter 8]{villani2009optimal}.
Since Lipschitz maps send Lebesgue negligible sets to Lebesgue negligible sets,
the last statement on absolute continuity follows.
Another approach to the Lipschitz continuity
is given by Bernard and Buffoni \cite{bernard2007optimal}
using the Hamilton-Jacobi equation,
which is generalized to non-compact settings by Fathi and Figalli \cite{fathi2010optimal}.
For the case when both $\mu_0$ and $\mu_1$ are absolutely continuous
measures on Euclidean spaces, McCann \cite[Proposition 1.3]{mccann1997convexity}
presented a concise proof of the Lipschitz continuity.
See relevant references in Villani \cite[Bibliographical notes of Chapter 8]{villani2009optimal}.
The goal of this subsection is to generalize \Cref{prop:Lipschitz_continuity_displacement_interpolation}.

We deduce the following Lipschitz continuity from the $c$-concave
functions defined in \Cref{lem:barycenter_and_c_concave_function},
which are related to barycenter selection maps
and thus barycenters of $\lambda_1\,\delta_{\mu_1} + \sum_{i=2}^n \lambda_i\, \delta_{\delta_{x_i}}$.
Recall that a measurable barycenter selection map $B: M^n \rightarrow M$
(\Cref{thm:measurable_selection_theorem})
sends $(x_1, \ldots, x_n) \in M^n$ to a barycenter of $\sum_{i=1}^n \lambda_i\, \delta_{x_i}$.
In the following propositions, the constant $\lambda_i, 1 \le i \le n$
for $B$ are given in the context.

\begin{lem}[Lipschitz continuous maps $F= \exp(-\nabla g_1)$]
	\label{lem:Lipschitz_optimal_transport_map}
	Let $(M, \matheuvm{g})$ be a complete Riemannian manifold.
	Given an integer $n \ge 2$, let $\lambda _ { i } > 0,1 \le i \le n$, be $n$ positive real numbers
	such that $\sum _ { i = 1 } ^ { n } \lambda _ { i } = 1$.
	Fix a non-empty compact subset $X \subset M$ and
	a point $\boldsymbol x^\prime = (x_2, \ldots, x_n) \in M^{n-1}$.
	Denote by $Y$ the compact set of all barycenters of $\sum_{i=1}^n \lambda_i\, \delta_{x_i}$
	when $x_1$ runs through $X$.
	Define the function $g_1: y \in M  \mapsto - 1/\lambda_1 \sum_{i = 2}^n \lambda_i\, c(y, x_i)$.
	It is smooth in a neighborhood
	of $Y$ and thus $F := \exp(-\nabla g_1) : Y \rightarrow M$
	is a well-defined Lipschitz continuous function.
	We have $F(Y) = X$ and the following characterization of $F$:
	\begin{equation}
		\label{equa:g_1_and_barycenter}
		z \in Y \text{ and } x_1 = F(z) \iff
		x_1 \in X \text{ and } z \text{ is a barycenter of }
		\sum_{i=1}^n \lambda_i\, \delta_{x_i}.
	\end{equation}
	Given a measure $\mu_1 \in \mathcal{W}_2(M)$ with support $X$
	and a measurable barycenter selection map $B: M^n \rightarrow M$,
	$\widebar \mu : = B_{\#}(\mu_1 \otimes \delta_{x_2} \otimes
		\cdots \otimes \delta_{x_n})$ is a barycenter of
	$\lambda_1\,\delta_{\mu_1} + \sum_{i=2}^n \lambda_i \, \delta_{\delta_{x_i}}$
	and $(\operatorname{Id}, F)_{\#} \widebar{\mu}$
	is an optimal transport plan between $\widebar \mu$ and $\mu_1$.
\end{lem}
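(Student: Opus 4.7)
The plan is to prove the lemma in three stages: first set up the regularity of $g_1$ near $Y$, then establish the characterization (\ref{equa:g_1_and_barycenter}) via a first-order optimality computation at a barycenter, and finally deduce the optimal-transport statement from \Cref{prop:construction_Wasserstein_barycenter} applied to the trivial product plan.

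To begin, I would invoke \Cref{lem:barycenter_discrete_measure_out_of_cut_locus}: as $x_1$ ranges over $X$, every point of $Y$ is a barycenter of some $\sum_i \lambda_i \delta_{x_i}$, and by symmetry of the cut locus none of the points $x_2,\ldots,x_n$ lies in the cut locus of any $z \in Y$. Consequently each $d(\cdot, x_i)^2/2$ with $2 \le i \le n$ is $\mathcal{C}^3$ in a common open neighborhood of the compact set $Y$, and the linear combination $g_1$ inherits the same regularity. Since $Y$ is compact, $\nabla g_1$ is Lipschitz on $Y$ and the set $\{-\nabla g_1(y) : y \in Y\} \subset TM$ is relatively compact, so composing with the $\mathcal{C}^\infty$ exponential map produces a well-defined Lipschitz function $F : Y \to M$.

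Next I would prove the characterization. Given $x_1 \in X$ and a barycenter $z \in Y$ of $\sum_{i=1}^n \lambda_i \delta_{x_i}$, no $x_i$ lies in the cut locus of $z$, so $w \mapsto \sum_i \lambda_i d(w, x_i)^2/2$ is smooth near $z$ and its gradient vanishes there; using the standard identity $\nabla_w\bigl(d(w, x_i)^2/2\bigr)\big|_{w=z} = -\exp_z^{-1}(x_i)$ this yields $\sum_i \lambda_i \exp_z^{-1}(x_i) = 0$, which rearranges to $\exp_z^{-1}(x_1) = -\nabla g_1(z)$ and hence $F(z) = x_1$. Conversely, for any $z \in Y$ the definition of $Y$ produces some $x_1' \in X$ making $z$ a barycenter, and the forward direction forces $F(z) = x_1'$; this already gives $F(Y) = X$, and if in addition $F(z)$ equals the prescribed $x_1$, then $x_1 = x_1'$ and $z$ is a barycenter of $\sum_i \lambda_i \delta_{x_i}$ as required.

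Finally, for the transport plan: the product measure $\gamma := \mu_1 \otimes \delta_{x_2} \otimes \cdots \otimes \delta_{x_n}$ is the unique probability measure on $M^n$ with marginals $\mu_1, \delta_{x_2}, \ldots, \delta_{x_n}$ (the last $n-1$ are Diracs) and hence is trivially a multi-marginal optimal transport plan. \Cref{prop:construction_Wasserstein_barycenter} then furnishes both the fact that $\widebar\mu = B_{\#} \gamma$ is a barycenter of $\lambda_1 \delta_{\mu_1} + \sum_{i\ge 2}\lambda_i\, \delta_{\delta_{x_i}}$ and the optimal plan $(B, p_1)_{\#} \gamma$ between $\widebar\mu$ and $\mu_1$. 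Since $\operatorname{supp} \gamma \subset X \times \{x_2\} \times \cdots \times \{x_n\}$, equation (\ref{equa:g_1_and_barycenter}) gives $F \circ B = p_1$ on $\operatorname{supp}\gamma$, so $(B, p_1) = (\operatorname{Id}, F) \circ B$ there and $(B, p_1)_{\#} \gamma = (\operatorname{Id}, F)_{\#} \widebar\mu$ is the desired optimal plan. I expect the main subtlety to be the differential identity at a barycenter: without \Cref{lem:barycenter_discrete_measure_out_of_cut_locus} ruling out cut-locus interference, the squared-distance functions entering $g_1$ would fail to be smooth at $z$ and the Euler--Lagrange equation $\sum_i \lambda_i \exp_z^{-1}(x_i) = 0$ would not even be formulable.
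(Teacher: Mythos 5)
Your proposal is correct, and its first and last stages (cut-locus regularity of $g_1$ via \Cref{lem:barycenter_discrete_measure_out_of_cut_locus}, and the transport-plan statement via the trivial product plan and \Cref{prop:construction_Wasserstein_barycenter} together with $p_1 = F\circ B$ on $\operatorname{supp}\gamma$) coincide with the paper's proof. Where you genuinely diverge is the characterization (\ref{equa:g_1_and_barycenter}): the paper proves it through the $c$-concavity framework, using \Cref{lem:barycenter_and_c_concave_function} to identify $g_1$ as the $c$-transform $f_1^c$ and then invoking McCann's lemma on $c$-concave functions, which states that $x_1 = \exp(-\nabla g_1)(z)$ exactly when $g_1^c(x_1) + g_1(z) = c(x_1,z)$; you instead run a direct Euler--Lagrange argument at the barycenter, using $\nabla_w\bigl(d(w,x_i)^2/2\bigr)\big|_{w=z} = -\exp_z^{-1}(x_i)$ (licit since no $x_i$ lies in the cut locus of $z$) to get $\exp_z^{-1}(x_1) = -\nabla g_1(z)$ and hence $F(z)=x_1$, with the converse and $F(Y)=X$ obtained by comparing a given $z\in Y$ with the $x_1'\in X$ it is a barycenter for. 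Both arguments hinge on the same cut-locus lemma; your route bypasses \Cref{lem:barycenter_and_c_concave_function} and McCann's lemma entirely and is more elementary (indeed it is essentially the same first-order computation the paper later performs in the proof of \Cref{thm:absolute_continuity_discrete} to identify $\nabla g_1(z) = \nabla d^2_{x_1}/2(z)$), while the paper's route keeps the $c$-concave-potential structure of $g_1$ explicit, which is the viewpoint it continues to exploit when differentiating such maps later on. Two cosmetic points only: the exponential map is merely as regular as the $\mathcal{C}^3$ metric allows rather than $\mathcal{C}^\infty$, which is harmless since local Lipschitz continuity on a compact set suffices; and the surjectivity half of $F(Y)=X$ implicitly uses that every $x_1\in X$ admits a barycenter (guaranteed by completeness, hence properness, of $M$), a point the paper glosses over in the same way.
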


\begin{proof}
	\Cref{lem:barycenter_discrete_measure_out_of_cut_locus} implies
	the differential property of $g_1$ and thus the Lipschitz continuity of $F$.
	Since $g_1$ restricted to $Y$ is a $c$-concave function
	(\Cref{lem:barycenter_and_c_concave_function})
	and $\nabla g_1$ exists on $Y$,
	by defining $g_1^c: x \in X \mapsto \min_{y \in Y}\{ c(x, y) - g_1(y) \}$,
	a well-known property of $c$-concave functions proven
	by McCann \cite[Lemma 7]{mccann2001polar} shows that
	\begin{equation*}
		\label{equa:c_concave_g_1_equality}
		z \in Y \text{ and } x_1 = \exp(-\nabla g_1)(z) =
		F(z) \iff
		(x_1, z) \in X \times Y \text{ and } g_1^c(x_1) + g_1(z) = c(x_1, z).
	\end{equation*}
	Note that though McCann's lemma is proven for compact manifolds,
	the arguments of its proof only depend on the existence of gradient $\nabla g_1$ and
	the compactness of $X$ and $Y$.
	For $x_1 \in X$, we have
	$g_1^c(x_1) = 1 / \lambda_1 \inf_{w \in M} \sum_{i=1}^n \lambda_i\, c(w, x_i)^2$ 
	(\Cref{lem:barycenter_and_c_concave_function}) and thus
	\[
		z \in Y \text{ and }
		g_1^c(x_1) + g_1(z) = c(x_1, z) \iff
		\sum_{i=1}^n \lambda_i \, d_{\matheuvm{g}}(z, x_i)^2 = \inf_{w \in M}
		\sum_{i=1}^n \lambda_i\, d_{\matheuvm{g}}(w, x_i)^2,
	\]
	which implies the characterization (\ref{equa:g_1_and_barycenter}).
	$F(Y) = X$ follows from (\ref{equa:g_1_and_barycenter}) and the definition of $Y$.

	Since $\gamma:= \mu_1 \otimes \delta_{x_2} \otimes \cdots \otimes \delta_{x_n}$
	is the only measure on $M^{n}$ with marginals $\mu_1, \delta_{x_2}, \ldots, \delta_{x_n}$
	in this order,
	it is the (unique) multi-marginal optimal transport plan of its marginals.
	\Cref{prop:construction_Wasserstein_barycenter} shows that
	$\widebar \mu = B_{\#} \gamma$ is a Wasserstein barycenter.
	Denote by $p_1: M \times M^{n-1} \rightarrow M$ the canonical projection map.
	Since $ p_1(x_1, \boldsymbol x^\prime) = x_1 = F(B(x_1, \boldsymbol x^\prime))$ for $x_1 \in X$
	by the characterization (\ref{equa:g_1_and_barycenter}),
	\Cref{prop:construction_Wasserstein_barycenter} shows that
	$(B, p_1)_{\#} \gamma = (B, F \circ B)_{\#} \gamma =
	(\operatorname{Id}, F)_{\#} \widebar \mu$
	is an optimal transport plan between $\widebar \mu$ and $\mu_1$.
	% For the support of $\widebar \mu$, we first show that $F$ is a closed map.
	% Define $\Gamma: =\{ (x_1, \boldsymbol x^\prime, z) \mid z \in Y \text{ and }
	% x_1 = F(z) \}$, which is a closed set by (\ref{equa:g_1_and_barycenter})
	% and \Cref{lem:compact_barycenter}.
	% For a closed set $C \subset Y$,
	% the image $F(C) = p_1(\Gamma \cap(M \times \{ \boldsymbol x^\prime \} \times C ))$,
	% where $p_1: M \times \{ \boldsymbol x^\prime \} \times C \rightarrow M $
	% is the canonical projection.
	% Since $C$ is compact, $p_1$ is a closed map and thus
	% $F(C)$ is a closed set.
\end{proof}

\Cref{lem:Lipschitz_optimal_transport_map} implies that
any barycenter selection map on $X \times \{ \boldsymbol x^\prime \}$
is injective.
% with a Lipschitz continuous left inverse.
The following lemma by Kim and Pass \cite[Lemma 3.5]{kim2015multi}
generalizes this injectivity,
and it will help us to generalize \Cref{lem:Lipschitz_optimal_transport_map}.

\begin{lem}
	\label{lem:barycenter_injective_on_support}
	Let $(M, \matheuvm{g})$ be a complete Riemannian manifold.
	Given an integer $n \ge 2$,
	let $\lambda _ { i } > 0,1 \le i \le n$, be $n$ positive real numbers
	such that $\sum _ { i = 1 } ^ { n } \lambda _ { i } = 1$
	and let $\mu_i \in \mathcal{W}_2(M), 1 \le i \le n$, be $n$ measures
	with compact support.
	If $\gamma$ is a multi-marginal optimal transport plan
	with marginals $\mu_1, \ldots, \mu_n$, then
	\[
		\boldsymbol x, \boldsymbol y \in \operatorname{supp}(\gamma),\quad
		\boldsymbol x \neq \boldsymbol y \implies
	 \operatorname{bary}(\{ \boldsymbol x \}) \cap
	 \operatorname{bary}(\{ \boldsymbol y \}) = \emptyset,
 \]
	 where
	 % $\operatorname{supp}(\gamma)$ is the support of $\gamma$ and
	$\operatorname{bary}(\{ x_1, \ldots, x_n \})$ is the set
	of barycenters of $\sum_{i=1}^n \lambda_i \, \delta_{x_i}$.
\end{lem}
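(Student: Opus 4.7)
The plan is a contradiction argument based on the $c$-cyclical monotonicity of $\operatorname{supp}(\gamma)$ for the multi-marginal cost $\Phi(\boldsymbol{x}) := W_2(\sum_{i=1}^n \lambda_i \, \delta_{x_i}, M)^2$, applied in its simplest pairwise, single-coordinate-swap form. Suppose $z$ is a common barycenter of two distinct tuples $\boldsymbol{x}, \boldsymbol{y} \in \operatorname{supp}(\gamma)$, and choose an index $k$ with $x_k \ne y_k$. Form the swapped tuples $\boldsymbol{u}$ and $\boldsymbol{v}$ obtained from $\boldsymbol{x}$ and $\boldsymbol{y}$ respectively by exchanging only their $k$-th slot. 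Since $\{\boldsymbol{u}, \boldsymbol{v}\}$ has the same projection multiset on each factor as $\{\boldsymbol{x}, \boldsymbol{y}\}$, a local mass exchange between small neighborhoods of $\boldsymbol{x}, \boldsymbol{y}$ and corresponding neighborhoods of $\boldsymbol{u}, \boldsymbol{v}$ produces a competitor plan with the same marginals as $\gamma$; the optimality of $\gamma$, together with continuity of $\Phi$, then yields the pointwise inequality
\[
\Phi(\boldsymbol{x}) + \Phi(\boldsymbol{y}) \le \Phi(\boldsymbol{u}) + \Phi(\boldsymbol{v}).
\]

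Next I upper-bound the right-hand side by testing the infima defining $\Phi(\boldsymbol{u})$ and $\Phi(\boldsymbol{v})$ against the candidate point $z$:
\[
\Phi(\boldsymbol{u}) \le \sum_{i \ne k} \lambda_i\, d(z, x_i)^2 + \lambda_k\, d(z, y_k)^2, \quad
\Phi(\boldsymbol{v}) \le \sum_{i \ne k} \lambda_i\, d(z, y_i)^2 + \lambda_k\, d(z, x_k)^2.
\]
Adding these and using that $z$ is a barycenter of both $\boldsymbol{x}$ and $\boldsymbol{y}$ gives the reverse inequality $\Phi(\boldsymbol{u}) + \Phi(\boldsymbol{v}) \le \Phi(\boldsymbol{x}) + \Phi(\boldsymbol{y})$. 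Hence all inequalities are equalities, and in particular $z$ is also a genuine barycenter of $\boldsymbol{u}$.

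I then invoke first-order optimality. By \Cref{lem:barycenter_discrete_measure_out_of_cut_locus}, none of the $x_i$'s or $y_i$'s lies in the cut locus of $z$, so for each $p \in \{x_i, y_i\}$ the function $w \mapsto d(w, p)^2/2$ is smooth at $z$ with gradient $-\exp_z^{-1}(p)$. Writing the vanishing-gradient condition at $z$ for the $\boldsymbol{x}$- and the $\boldsymbol{u}$-barycenter objectives and subtracting yields
\[
\lambda_k \bigl(\exp_z^{-1}(x_k) - \exp_z^{-1}(y_k)\bigr) = 0.
\]
Because $\exp_z$ is a diffeomorphism from the star-shaped domain of non-cut tangent vectors onto the complement of the cut locus of $z$, this forces $x_k = y_k$, contradicting the choice of $k$.

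The main obstacle is establishing the pairwise single-swap inequality rigorously when $\gamma$ is not finitely supported: one must produce, from a hypothetical strict violation, an admissible competitor plan with the same marginals as $\gamma$ but strictly smaller total cost. This requires a careful construction using small relatively compact neighborhoods of $\boldsymbol{x}$ and $\boldsymbol{y}$ in $\operatorname{supp}(\gamma)$, the coordinate-swap maps between them, and the continuity of $\Phi$ already noted in the proof of \Cref{lem:compact_barycenter}. Once this cyclical-monotonicity step is secured, the remainder is a direct local calculation using facts that the paper has established.
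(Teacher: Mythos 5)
Your proposal is correct and follows exactly the route the paper attributes to the omitted proof of Kim and Pass: two-point $c$-monotonicity of $\operatorname{supp}(\gamma)$ for the continuous barycenter cost $W_2(\sum_i \lambda_i \delta_{x_i}, M)^2$ (via the single-coordinate swap and a local mass-exchange competitor), combined with \Cref{lem:barycenter_discrete_measure_out_of_cut_locus} and the vanishing-gradient condition at the common barycenter. Since the paper itself skips this proof and cites \cite{kim2015multi}, and the monotonicity step you flag is the standard competitor construction (which goes through here because $\Phi$ is continuous and all marginals are compactly supported), there is no genuine gap.
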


To avoid being lengthy,
we skip the proof of above lemma \cite[Lemma 3.5]{kim2015multi},
which is based on $c$-cyclical monotonicity and
\Cref{lem:barycenter_discrete_measure_out_of_cut_locus}.
Though the proof in the given reference 
is for the case when $\lambda_1 = \cdots = \lambda_n = 1 / n$, there
is no essential difficulty to apply it to the stated case
\cite[proof of Theorem 6.1]{kim2015multi}.
The following proposition constructs an optimal transport map
from $\widebar \mu: = B_{\#} \gamma$ to $\mu_1$ when $\mu_i, 2 \le i \le n$ are discrete measures
and thus generalizes \Cref{lem:Lipschitz_optimal_transport_map}.
The optimal transport map may fail to be a Lipschitz map, but it is
a disjoint union of Lipschitz maps.
Recall that given (at most) countably many disjoint subsets 
$Y_j \subset M, j \in J \subset \mathbb{N}$ with functions $F_j: Y_j \rightarrow M$,
the \emph{disjoint union} $F$ of $F_j, j \in J$ is the function defined
on $\cup_{j \in J} F_j$ such that $F |_{Y_j} = F_j$.
We shall use conditional measures (\Cref{defn:regular_conditional_measure})
to deduce further conclusions from $F_j$'s Lipschitz continuity.

\begin{prop}
	\label{prop:absolutely_continous_barycenter_discrete_marginals}
	Let $(M, \matheuvm{g})$ be an $m$-dimensional complete Riemannian manifold.
	Given an integer $n \ge 2$,
	let $\lambda _ { i } > 0,1 \le i \le n$, be $n$ positive real numbers
	such that $\sum _ { i = 1 } ^ { n } \lambda _ { i } = 1$.
	Let $\mu_{1} \in \mathcal{W}_2(M)$ be a measure with compact support and
	let $\mu_i \in \mathcal{W}_2(M), 2 \le i \le n$, be
	$n - 1$ discrete measures, i.e., measures concentrated on
	at most countably many points.
	Given a multi-marginal optimal transport plan $\gamma$ of $\mu_1, \ldots, \mu_n$
	in this order and a measurable barycenter selection map $B : M^n \rightarrow M$,
	the measure $\widebar \mu := B_{\#} \gamma$ is a barycenter
	of $\sum_{i=1}^n \lambda_i \, \delta_{\mu_i}$.
	This barycenter $\widebar \mu$ assigns full mass to a disjoint union of 
	at most countably many compact sets, and on each of them
	 \Cref{lem:Lipschitz_optimal_transport_map} defines a
	 Lipschitz continuous map with compact subset $X \subset M$
	 and point $\boldsymbol{x^\prime} \in M^{n-1}$
	 such that $X \times \{\boldsymbol x^\prime \}$ is contained in the support of $\gamma$.
	Denote by $F$ the disjoint union of the Lipschitz maps.
	$(\operatorname{Id}, F)_{\#} \widebar \mu$ is an optimal transport
	plan between $\widebar \mu$ and $\mu_1$.

	For positive real numbers $\delta, \epsilon > 0$, we define the set
	\[
		\mathcal{E}_{\epsilon, \delta}
		: = \left\{ \mu \in \mathcal{W}_2(M) \mid
		\forall\, N \in \mathcal{B}(M),\, \operatorname{Vol}(N) < \delta
		\implies \mu(N) \le \epsilon \right\}.
	\]
	If there is a common Lipschitz constant $C$ of the
	Lipschitz maps, then $\mu_1 \in \mathcal{E}_{\epsilon, \delta} \implies
		\widebar \mu \in \mathcal{E}_{\epsilon, \delta / C^m}$.
\end{prop}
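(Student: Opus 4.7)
The plan is to exploit the discreteness of $\mu_2, \ldots, \mu_n$ to reduce the proposition to a countable collection of one-measure/one-atom problems that are already handled by Lemma \ref{lem:Lipschitz_optimal_transport_map}. The first claim, that $\widebar \mu = B_{\#} \gamma$ is a barycenter of $\sum_{i=1}^n \lambda_i\, \delta_{\mu_i}$, is immediate from Proposition \ref{prop:construction_Wasserstein_barycenter}. For the rest, I would first define $\pi := (p_2)_{\#} \gamma$, where $p_2 : M^n \to M^{n-1}$ projects onto the last $n-1$ coordinates. Since $\pi$ has marginals $\mu_2, \ldots, \mu_n$, it is itself discrete, supported on a countable set of atoms $\boldsymbol{x}^\prime \in \operatorname{supp}(\mu_2) \times \cdots \times \operatorname{supp}(\mu_n)$. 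Disintegrating $\gamma$ via $\pi$ gives
\[
\gamma = \sum_{\boldsymbol{x}^\prime} \pi(\{\boldsymbol{x}^\prime\}) \, \mu_1^{\boldsymbol{x}^\prime} \otimes \delta_{\boldsymbol{x}^\prime},
\]
where the conditional $\mu_1^{\boldsymbol{x}^\prime}$ is a probability measure on $M$ whose compact support $X_{\boldsymbol{x}^\prime} \subset \operatorname{supp}(\mu_1)$ can be chosen so that $X_{\boldsymbol{x}^\prime} \times \{\boldsymbol{x}^\prime\} \subset \operatorname{supp}(\gamma)$.

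Next I would apply Lemma \ref{lem:Lipschitz_optimal_transport_map} to each atom $\boldsymbol{x}^\prime$ with $\mu_1$ replaced by $\mu_1^{\boldsymbol{x}^\prime}$ and $X := X_{\boldsymbol{x}^\prime}$. This yields the compact barycenter set $Y_{\boldsymbol{x}^\prime}$, a Lipschitz map $F_{\boldsymbol{x}^\prime} : Y_{\boldsymbol{x}^\prime} \to M$ characterized by (\ref{equa:g_1_and_barycenter}), and the Wasserstein barycenter $\widebar{\mu}_{\boldsymbol{x}^\prime} := B_{\#}(\mu_1^{\boldsymbol{x}^\prime} \otimes \delta_{\boldsymbol{x}^\prime})$ for which $(\operatorname{Id}, F_{\boldsymbol{x}^\prime})_{\#} \widebar{\mu}_{\boldsymbol{x}^\prime}$ is optimal between $\widebar{\mu}_{\boldsymbol{x}^\prime}$ and $\mu_1^{\boldsymbol{x}^\prime}$. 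Lemma \ref{lem:barycenter_injective_on_support} guarantees that the $Y_{\boldsymbol{x}^\prime}$ are pairwise disjoint, so $F := \bigsqcup_{\boldsymbol{x}^\prime} F_{\boldsymbol{x}^\prime}$ is a well-defined function on $\bigsqcup_{\boldsymbol{x}^\prime} Y_{\boldsymbol{x}^\prime}$, and $\widebar \mu = \sum_{\boldsymbol{x}^\prime} \pi(\{\boldsymbol{x}^\prime\}) \widebar{\mu}_{\boldsymbol{x}^\prime}$ is concentrated on this countable disjoint union. To identify the optimal transport plan claimed, I would note that for $\gamma$-almost every $(x_1, \boldsymbol{x}^\prime)$ the characterization (\ref{equa:g_1_and_barycenter}) gives $F(B(x_1, \boldsymbol{x}^\prime)) = x_1 = p_1(x_1, \boldsymbol{x}^\prime)$, hence $(B, p_1)_{\#} \gamma = (\operatorname{Id}, F)_{\#} \widebar \mu$, which is optimal between $\widebar \mu$ and $\mu_1$ by Proposition \ref{prop:construction_Wasserstein_barycenter}.

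For the final $\mathcal{E}_{\epsilon, \delta}$ implication, fix a Borel set $N$ with $\operatorname{Vol}(N) < \delta/C^m$ and set $A_{\boldsymbol{x}^\prime} := F_{\boldsymbol{x}^\prime}(N \cap Y_{\boldsymbol{x}^\prime})$. Because $F_{\boldsymbol{x}^\prime}$ is a continuous injection on the compact set $Y_{\boldsymbol{x}^\prime}$ (injectivity from the characterization (\ref{equa:g_1_and_barycenter})), it is a homeomorphism onto its image, so each $A_{\boldsymbol{x}^\prime}$ is Borel and $\widebar{\mu}_{\boldsymbol{x}^\prime}(N) = \widebar{\mu}_{\boldsymbol{x}^\prime}(N \cap Y_{\boldsymbol{x}^\prime}) = \mu_1^{\boldsymbol{x}^\prime}(A_{\boldsymbol{x}^\prime})$ using $(F_{\boldsymbol{x}^\prime})_{\#} \widebar{\mu}_{\boldsymbol{x}^\prime} = \mu_1^{\boldsymbol{x}^\prime}$. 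Summing over $\boldsymbol{x}^\prime$ and using $\mu_1 = \sum_{\boldsymbol{x}^\prime} \pi(\{\boldsymbol{x}^\prime\}) \mu_1^{\boldsymbol{x}^\prime}$,
\[
\widebar \mu(N) = \sum_{\boldsymbol{x}^\prime} \pi(\{\boldsymbol{x}^\prime\}) \mu_1^{\boldsymbol{x}^\prime}(A_{\boldsymbol{x}^\prime}) \le \mu_1\Big(\bigcup_{\boldsymbol{x}^\prime} A_{\boldsymbol{x}^\prime}\Big).
\]
The $C$-Lipschitz property on the $m$-dimensional manifold gives $\operatorname{Vol}(A_{\boldsymbol{x}^\prime}) \le C^m \operatorname{Vol}(N \cap Y_{\boldsymbol{x}^\prime})$; disjointness of the $Y_{\boldsymbol{x}^\prime}$ lets me sum these to obtain $\operatorname{Vol}(\bigcup_{\boldsymbol{x}^\prime} A_{\boldsymbol{x}^\prime}) \le C^m \operatorname{Vol}(N) < \delta$, so the hypothesis $\mu_1 \in \mathcal{E}_{\epsilon, \delta}$ delivers $\widebar \mu(N) \le \epsilon$.

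The chief obstacle will be the measure-theoretic bookkeeping: picking representatives of the conditional measures $\mu_1^{\boldsymbol{x}^\prime}$ whose supports are exactly the slices of $\operatorname{supp}(\gamma)$, and verifying that each $A_{\boldsymbol{x}^\prime}$ is Borel through the homeomorphism argument. The volume-contraction step relies on the identification of the Riemannian volume with the $m$-dimensional Hausdorff measure, ensuring that a $C$-Lipschitz map increases volume by at most a factor of $C^m$.
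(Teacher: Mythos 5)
Your route is essentially the paper's: disintegrate $\gamma$ over its discrete projection $\pi={p_2}_{\#}\gamma$, apply \Cref{lem:Lipschitz_optimal_transport_map} block by block, get disjointness of the sets $Y_{\boldsymbol x^\prime}$ from \Cref{lem:barycenter_injective_on_support}, glue the maps, and push the volume bound through the blockwise Lipschitz maps. One genuine (and welcome) difference: you compare $\widebar\mu_{\boldsymbol x^\prime}$ with the true conditional measures $\mu_1^{\boldsymbol x^\prime}$ rather than with the normalized restrictions $\frac{1}{\pi_j}\mu_1|_{X_j}$ used in the paper; since $\mu_1=\sum_{\boldsymbol x^\prime}\pi(\{\boldsymbol x^\prime\})\,\mu_1^{\boldsymbol x^\prime}$, the final bound $\sum_{\boldsymbol x^\prime}\pi(\{\boldsymbol x^\prime\})\,\mu_1^{\boldsymbol x^\prime}(A_{\boldsymbol x^\prime})\le\mu_1\bigl(\bigcup_{\boldsymbol x^\prime} A_{\boldsymbol x^\prime}\bigr)$ is immediate, and you bypass the paper's intermediate claim that $\mu_1(X_i\cap X_k)=0$. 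A minor caveat: with your smaller sets $X_{\boldsymbol x^\prime}=\operatorname{supp}\mu_1^{\boldsymbol x^\prime}$ and only positive-mass atoms you obtain that $\widebar\mu$ is \emph{concentrated} on the union of the $Y_{\boldsymbol x^\prime}$, whereas the statement asserts containment of the \emph{support}; the paper gets this by indexing over all of $\operatorname{supp}\pi$ and taking $X_j$ to be the full slice of $\operatorname{supp}\gamma$, so that $\bigcup_j Y_j=\operatorname{bary}(\operatorname{supp}\gamma)$ is compact (\Cref{lem:compact_barycenter}), hence closed.

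There is, however, one step that is wrong as written: $F_{\boldsymbol x^\prime}$ is in general \emph{not} injective on $Y_{\boldsymbol x^\prime}$, and (\ref{equa:g_1_and_barycenter}) does not say it is. That characterization shows $F_{\boldsymbol x^\prime}^{-1}(x_1)\cap Y_{\boldsymbol x^\prime}$ is the whole barycenter set of $\lambda_1\delta_{x_1}+\sum_{i\ge 2}\lambda_i\delta_{x_i}$, which can contain many points (two antipodal points on a round sphere have an entire equator of barycenters, all sent by $F$ to the same point); what is injective is the selection map $x_1\mapsto B(x_1,\boldsymbol x^\prime)$, of which $F_{\boldsymbol x^\prime}$ is a left inverse --- this is the remark after \Cref{lem:Lipschitz_optimal_transport_map}. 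Consequently your homeomorphism argument for Borel measurability of $A_{\boldsymbol x^\prime}=F_{\boldsymbol x^\prime}(N\cap Y_{\boldsymbol x^\prime})$ and the claimed equality $\widebar\mu_{\boldsymbol x^\prime}(N\cap Y_{\boldsymbol x^\prime})=\mu_1^{\boldsymbol x^\prime}(A_{\boldsymbol x^\prime})$ both fail. The estimate itself survives, because only an inequality is needed: $N\cap Y_{\boldsymbol x^\prime}\subset F_{\boldsymbol x^\prime}^{-1}(A_{\boldsymbol x^\prime})$ together with ${F_{\boldsymbol x^\prime}}_{\#}\widebar\mu_{\boldsymbol x^\prime}=\mu_1^{\boldsymbol x^\prime}$ gives $\widebar\mu_{\boldsymbol x^\prime}(N\cap Y_{\boldsymbol x^\prime})\le\mu_1^{\boldsymbol x^\prime}(A_{\boldsymbol x^\prime})$; but the measurability must be handled as the paper does, by replacing $A_{\boldsymbol x^\prime}$ with a Borel set $N_{\boldsymbol x^\prime}$ satisfying $F_{\boldsymbol x^\prime}(N\cap Y_{\boldsymbol x^\prime})\subset N_{\boldsymbol x^\prime}\subset X_{\boldsymbol x^\prime}$ and $\operatorname{Vol}(N_{\boldsymbol x^\prime})\le C^m\operatorname{Vol}(N\cap Y_{\boldsymbol x^\prime})$ (the cited Lipschitz-image volume estimates), or by invoking universal measurability of the analytic image. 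With that repair, your argument goes through.
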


\begin{proof}
	\Cref{prop:construction_Wasserstein_barycenter} shows that
	$\widebar \mu $ is a Wasserstein barycenter.
	Let us reveal more details of $\gamma$.
	Denote by $p_1$ and $p_2$ the canonical projections
	sending $\boldsymbol x = (x_1, \boldsymbol x^\prime) \in M \times M^{n-1}$ to $x_1 \in M$
	and $\boldsymbol x^\prime \in M^{n-1}$ respectively.
	The measure $\pi: = {p_2}_{\#} \gamma$ on $M^{n-1}$ is discrete
	since its marginals $\mu_2, \ldots, \mu_n$ are so.
	Denote by $\{  \boldsymbol x_j^\prime \}_{j \in J} \subset M^{n-1}$ ($J \subset \mathbb{N}$)
	the set of all atoms of $\pi$.
	% where $J \subset \mathbb{N}$ is an at most countable set (\Cref{lem:uncountable_sum_positive_numbers}).
	For each $j \in J$, we introduce the following definitions.
	Define $\pi_j := \pi(\{  \boldsymbol x^\prime_j \}) > 0$ and
	define $X_j: = p_1(\operatorname{supp} \gamma \cap (M \times \{ \boldsymbol x_j^\prime\}))$.
	Applying \Cref{lem:Lipschitz_optimal_transport_map} to
	$X_j$ and $ \boldsymbol x_j^\prime \in M^{n-1}$,
	we obtain a compact set $Y_j$ and a Lipschitz continuous map
	$F_j: Y_j \rightarrow M$ such that $F_j(Y_j) = X_j$.
	Since $\pi$ assigns full mass to the union $\bigcup_{j \in J} \{  \boldsymbol x_j \}$,
	$\gamma$ assigns full mass to the union $\bigcup_{j \in J} X_j \times \{  \boldsymbol x_j^\prime \}$.
	As in \Cref{lem:barycenter_injective_on_support},
	we denote by $\operatorname{bary}(\{ (x_1, \ldots, x_n) \})$ the set of barycenters
	of $\sum_{l=1}^n \lambda_l \, \delta_{x_l}$.

	% Denote by $\{ \boldsymbol x_j^\prime \}_{j \in J}$ the support of $\pi$,
	% where $J \subset \mathbb{N}$ is an at most countable index set.
	% For each $j \in J$, we introduce the following definitions.
	% Define $\pi_j := \pi(\{ \boldsymbol x^\prime_j \}) \ge 0$.
	% Denote by $\operatorname{supp} \gamma$ the support of $\gamma$ and
	% define $X_j: = p_1(\operatorname{supp} \gamma \cap (M \times \{\boldsymbol x_j^\prime\}))$.
	% Applying \Cref{lem:Lipschitz_optimal_transport_map} to
	% $X_j$ and $\boldsymbol x_j^\prime \in M^{n-1}$,
	% we get a compact set $Y_j$ and a Lipschitz continuous map
	% $F_j: Y_j \rightarrow M$ such that $F_j(Y_j) = X_j$.

	We claim that $Y_i \cap Y_k = \emptyset$ for two different indices $i, k \in J$.
	Indeed, if $z \in Y_i \cap Y_k$ for $i, k \in J$,
	then by the characterization of $F_i, F_k$ in \Cref{lem:Lipschitz_optimal_transport_map},
	$z \in \operatorname{bary}(\{(F_i(z), \boldsymbol x_i^\prime )\}) \cap
	\operatorname{bary}(\{(F_k(z), \boldsymbol x_k^\prime )\})$.
	% where $\operatorname{bary}(\{ (x_1, \ldots, x_n) \})$ denotes the set of barycenters
	% of $\sum_{l=1}^n \lambda_l \, \delta_{x_l}$.
	Since $\operatorname{supp} \gamma = \bigcup_{j \in J} X_j \times \{ \boldsymbol x_j^\prime \}$
	and $F_j(Y_j) = X_j$,
	\Cref{lem:barycenter_injective_on_support} forces that
	$\boldsymbol x^\prime_i = \boldsymbol x^\prime_k$ and thus $i = k$.
	Define $F$ as the disjoint union of $F_j, j \in J$, i.e., 
	$F |_{Y_j} = F_j$.
	Since $ p_1(x, \boldsymbol x^\prime_j) = x = F(B(x, \boldsymbol x^\prime_j)) $
	for $x \in X_j$, \Cref{prop:construction_Wasserstein_barycenter} implies that
	$(B, p_1)_{\#} \gamma = (B, F \circ B)_{\#} \gamma = (\operatorname{Id}, F)_{\#} \widebar \mu$
	is an optimal transport plan between $\widebar \mu$ and $\mu_1$.
	Since $\bigcup_{j \in J} Y_j$ is the domain of $F$
	and $F_{\#} \widebar \mu = \mu_1$,
	$\widebar \mu$ assigns full mass to
	a union of at most countably many compact sets that satisfies our description.

	% Define the index set $J ^\prime := \{j \in J \mid \pi_j: 
	% = \pi(\{ \boldsymbol x^\prime \}) > 0 \}$,
	% which is not equal to $J$ if $\bigcup_{j \in J } \boldsymbol x^\prime_j$ is not closed.
	We claim that $\mu_1( X_i \cap X_k) = 0$ for two different indices $i, k \in J$.
	Consider the conditional measure 
	% $\gamma(\cdot, \boldsymbol x^\prime)$ on $M^n$
	such that $\diff \gamma(\boldsymbol x) = \gamma(\diff \boldsymbol x,
		\boldsymbol x^\prime) \diff \pi(\boldsymbol x^\prime)$.
	For $j \in J$, define $\nu_j : = \frac{1}{\pi_j} \mu_1 |_{X_j}$
	and $\widebar \nu_j : = B_{\#} \gamma(\cdot, \boldsymbol x_j^\prime)$.
	Note that for $j \in J$ and $\boldsymbol R \in \mathcal{B}(M^n)$,
	$\gamma[\boldsymbol R \cap (M \times \{\boldsymbol x_j ^ \prime\})] =
		\gamma(\boldsymbol R, \boldsymbol x_j^\prime)\, \pi_j$
	by \Cref{defn:regular_conditional_measure},
	so $\gamma(\cdot, \boldsymbol x_j^\prime)$ is concentrated on
	$X_j \times  \{ \boldsymbol x_j^\prime \}$.
	Since $\gamma$ assigns full mass to the union $\bigcup_{j \in J} X_j \times \{  \boldsymbol x_j^\prime \}$,
	we obtain the following equality by choosing $\boldsymbol R$ of the form $A \times M^{n-1}$
	with $A \in \mathcal{B}(M)$,
	\[
		\gamma(A \times M^{n-1}, \boldsymbol x_j^\prime) =
		\frac{1}{\pi_j} \gamma[A \times \{\boldsymbol x_j ^ \prime\}]
		= \frac{1}{\pi_j} \gamma[(A \cap X_j) \times M^{n-1}]
		= \frac{1}{\pi_j} \mu_1 (A \cap X_j),
	\]
	which implies that the first marginal of $\gamma(\cdot, \boldsymbol x_j^\prime)$ is $\nu_j$
	as $A$ is arbitrarily chosen.
	Furthermore, for a measurable map $f: M^n \rightarrow M$,
	\[
		\forall N \in \mathcal{B}(M), \quad
		[f_{\#} \gamma](N) =
		\gamma(f^{-1}(N))
		= \sum_{j \in J} \gamma(f^{-1}(N), \boldsymbol x_j^\prime)\,
		\pi_j
		= \sum_{j \in J} [f_{\#} \gamma(\cdot, \boldsymbol x_j^\prime)](N)\,
		\pi_j.
	\]
	Taking $f = p_1$ and $f = B$, we obtain
	$\mu_1 = \sum_{j \in J} \pi_j\, \nu_j$ and
	$\widebar \mu = \sum_{j \in J} \pi_j\, \widebar \nu_j$.
	Hence, given $i \in J$, $\mu_1(X_i) = \sum_{j \in J} \mu_1 |_{X_j}(X_i)$
	and thus $\mu_1(X_i \cap X_k) = 0$ for $k \in J$ different from $i$.

	Assume that $C$ is a common Lipschitz constant of all $F_j, j \in J$.
	For any Borel set $N \in \mathcal{B}(M)$,
	there exist Borel sets $W_j, j \in J$ such that
	$F_j(N \cap Y_j) \subset  W_j \subset X_j$ and
	$\operatorname{Vol}(W_j) \le C^m \operatorname{Vol}(N \cap Y_j)$
	\cite[Proposition 12.6, Proposition 12.12, Remark after Proposition 12.12]{taylor2006measure}
	(c.f.\@ \cite[Proof of Theorem 8.7]{villani2009optimal}).
	For $j \in J$, since 
	$\gamma(\cdot, \boldsymbol x^\prime_j)$ is the product measure of its marginals,
	\Cref{lem:Lipschitz_optimal_transport_map} shows
	that ${F_{j}}_{\#} \widebar \nu_j = \nu_j$.
	It follows that $\widebar \nu_j(N \cap Y_j) \le \widebar \nu_j(F_j^{-1}(W_j)) \le \nu_j(W_j)$
	for $j \in J$ and thus
	\begin{equation}
		\label{equa:Lipschitz_comparison_by_parts}
		\widebar \mu (N)
		=  \sum_{j \in J} \pi_j \, \widebar \nu_j(N \cap Y_j)
		\le \sum_{j \in J} \pi_j \frac{1}{\pi_j} \mu_1 |_{X_j} (W_j)
		=  \sum_{j \in J} \mu_1( W_j)
		=  \mu_1( \bigcup_{j \in J} W_j),
	\end{equation}
	where we used $W_j \subset X_j$
	and $\mu_1(X_i \cap X_k) = 0$ if $i \neq k \in J$.
	Since $Y_j, j \in J$ are disjoint,
	$\operatorname{Vol}(\bigcup_{j \in J} W_j)
		\le C^m \sum_{j \in J} \operatorname{Vol}(N \cap Y_j) \le C^m \operatorname{Vol}(N)$.
	Assuming that $\mu_1 \in \mathcal{E}_{\epsilon, \delta}$,
	then for any $N \in \mathcal{B}(M)$ with $\operatorname{Vol}(N) < \delta / C^m$,
	we have $\operatorname{Vol}(\bigcup_{j \in J} W_j) < \delta$
	and thus $ \mu_\mathbb{P} (N) \le \mu_1( \bigcup_{j \in J} W_j) \le \epsilon$ by
	(\ref{equa:Lipschitz_comparison_by_parts}).
	Therefore, the asserted implication
	$\mu_1 \in \mathcal{E}_{\epsilon, \delta} \implies
		\mu_\mathbb{P} \in \mathcal{E}_{\epsilon, \delta / C^m}$ is proven.
	% which concludes our proof.
\end{proof}

\begin{rmk}
	Figuratively speaking, the sets $X_j, j \in J$ create a tiling
	of the support of $\mu_1$ and the points $\boldsymbol x^\prime_j, j \in J$
	pull them apart (via barycenter selection maps)
	into disjoint sets $Y_j, j \in J$, which
	contain different pieces of the support of $\widebar \mu$ separately.
\end{rmk}

\subsubsection{Proof of absolute continuity}

Consider the probability measure $\mathbb{P} = \sum_{i=1}^n \lambda_i\, \delta_{\mu_i}$
with positive real numbers $\lambda_i$ and compactly supported measures
$\mu_i \in \mathcal{W}_2(M)$.
We can approximate each $\mu_i$ for $2 \le i \le n$ with discrete measures
to apply \Cref{prop:absolutely_continous_barycenter_discrete_marginals}.
If $\mu_1$ is absolutely continuous, then $\mathbb{P}$ has a unique
barycenter $\widebar \mu$, which is approximated by
the barycenters of the approximating sequence (\Cref{thm:law_of_large_numbers_Wasserstein_barycenter}).
Recall that the sets $\mathcal{E}_{\epsilon, \delta}$
(\Cref{lem:measurable_set_absolutely_continous_measures})
fully characterize absolutely continuous measures and are closed with respect to weak convergence.
Hence, to prove the absolute continuity of $\widebar \mu$,
it remains to find a common Lipschitz constant $C$ for $F$
defined as in \Cref{lem:Lipschitz_optimal_transport_map}
valid for any element of the whole approximating sequence.
Then we get the result thanks to \Cref{prop:absolutely_continous_barycenter_discrete_marginals}.
Note that the domain $Y$ of $F$ is at least varying along the approximating sequence,
so the existence of $C$ is not simply a direct consequence of compactness. More precisely,
we shall prove:

\begin{thm}[Absolute continuity of the barycenter of $\sum_{i=1}^n \lambda_i \,\delta_{\mu_i}$]
	\label{thm:absolute_continuity_discrete}
	Let $(M, \matheuvm{g})$ be a complete Riemannian manifold.
	Given an integer $n \ge 2$,
	let $\lambda _ { i } > 0,1 \le i \le n$, be $n$ positive real numbers
	such that $\sum _ { i = 1 } ^ { n } \lambda _ { i } = 1$ and
	let $\mu_i \in \mathcal{W}_2(M), 1 \le i \le n$, be $n$ probability
	measures with compact support.
	If $\mu_1$ is absolutely continuous,
	then the unique barycenter $\widebar{\mu}$ of $\sum_{i=1}^n \lambda_i \, \delta_{\mu_i}$
	is absolutely continuous with compact support.
\end{thm}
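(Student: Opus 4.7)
The plan follows the strategy sketched just above the statement. I approximate each $\mu_i$ with $2 \le i \le n$ by a sequence $\mu_i^{(k)}$ of finitely supported probability measures in $\mathcal{W}_2(M)$ converging in $W_2$ to $\mu_i$, with supports contained in a fixed compact neighborhood $K_i$ of $\operatorname{supp}(\mu_i)$. Set $\mathbb{P}_k := \lambda_1\, \delta_{\mu_1} + \sum_{i=2}^n \lambda_i\, \delta_{\mu_i^{(k)}}$, so that $\mathbb{W}_2(\mathbb{P}_k, \mathbb{P}) \to 0$. Since $\mu_1$ is absolutely continuous, \Cref{prop:uniqueness_barycenter_Wasserstein} guarantees that $\mathbb{P}$ has a unique barycenter $\widebar\mu$; combined with \Cref{thm:law_of_large_numbers_Wasserstein_barycenter}, this forces the entire sequence of barycenters $\widebar\mu_k$ of $\mathbb{P}_k$ to converge to $\widebar\mu$ in $W_2$, hence weakly.

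For each $k$, \Cref{prop:absolutely_continous_barycenter_discrete_marginals} applies to $\mathbb{P}_k$, producing a measurable optimal transport map from $\widebar\mu_k$ to $\mu_1$ that is a disjoint union of Lipschitz pieces $F_j^{(k)} : Y_j^{(k)} \to X_j^{(k)}$ with common Lipschitz constant $C_k$. If I can upgrade $\{C_k\}_{k \in \mathbb{N}}$ to a single constant $C$ independent of $k$, then for every $\epsilon, \delta > 0$ the implication $\mu_1 \in \mathcal{E}_{\epsilon, \delta} \Rightarrow \widebar\mu_k \in \mathcal{E}_{\epsilon, \delta/C^m}$ provided by that proposition, combined with the weak closedness of $\mathcal{E}_{\epsilon, \delta/C^m}$ furnished by \Cref{lem:measurable_set_absolutely_continous_measures}, forces $\widebar\mu \in \mathcal{E}_{\epsilon, \delta/C^m}$. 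Running through the countable collection $(\epsilon, \delta) = (2^{-a}, 2^{-b})$ and invoking the characterization of absolute continuity in the same lemma then concludes that $\widebar\mu \ll \operatorname{Vol}$.

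The main and essentially only obstacle is therefore the uniformity of the Lipschitz constant. I handle it by uniform compactness. Every auxiliary configuration $\boldsymbol{x}^\prime$ appearing in the construction of $\widebar\mu_k$ lies in the fixed compact set $K := K_2 \times \cdots \times K_n$, and \Cref{lem:compact_barycenter} shows that every barycenter arising in the construction of any $\widebar\mu_k$ lies in the fixed compact set $Y_\infty := \operatorname{bary}(\operatorname{supp}(\mu_1) \times K) \subset M$. By \Cref{lem:barycenter_discrete_measure_out_of_cut_locus}, no point of $Y_\infty$ lies in the cut locus of any point of $\operatorname{supp}(\mu_1) \cup \bigcup_{i=2}^n K_i$, so on $Y_\infty$ the Hessians $\operatorname{Hess} g_1$ (as $g_1$ ranges over parameters $\boldsymbol{x}^\prime \in K$ with coefficients rescaled by $1/\lambda_1$) and $\operatorname{Hess} d_{x_1}^2/2$ (with $x_1 \in \operatorname{supp}(\mu_1)$) depend continuously on their parameters and are uniformly bounded; the differential $\diff \exp_y|_{-\nabla g_1(y)}$ is likewise uniformly bounded on the relevant compact subset of $TM$. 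Plugging these uniform bounds into the Jacobian formula of \Cref{lem:differentiate_smooth_concave_function} yields a uniform operator-norm bound on $\diff F_j^{(k)}$, and hence a uniform Lipschitz constant $C$ valid simultaneously for all $j$ and all $k$.

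Finally, the support of $\widebar\mu$ is compact because every $\widebar\mu_k$ is supported in the fixed compact set $Y_\infty$ and weak convergence preserves this containment. This completes the strategy; the only nontrivial piece of work is the uniform Lipschitz bound, whose proof reduces to the continuity of the data appearing in \Cref{lem:differentiate_smooth_concave_function} on a compact set that stays clear of all relevant cut loci.
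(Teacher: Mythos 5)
Your overall scheme (approximate $\mu_i$, $2\le i\le n$, by discrete measures, invoke \Cref{prop:absolutely_continous_barycenter_discrete_marginals}, pass to the limit through the sets $\mathcal{E}_{\epsilon,\delta}$ of \Cref{lem:measurable_set_absolutely_continous_measures}) is exactly the paper's, and the only substantive step is, as you say, the uniform Lipschitz constant. But your justification of that step contains a genuine error: you claim that, by \Cref{lem:barycenter_discrete_measure_out_of_cut_locus}, \emph{no} point of $Y_\infty=\operatorname{bary}(\operatorname{supp}(\mu_1)\times K)$ lies in the cut locus of \emph{any} point of $\operatorname{supp}(\mu_1)\cup\bigcup_{i\ge 2}K_i$. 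The lemma only says that a barycenter $z$ of a configuration $(x_1,\dots,x_n)$ avoids the cut loci of \emph{those particular} $x_i$; it says nothing about the cut loci of points from other configurations, and the stronger statement is false in general (on a sphere, a midpoint of one pair can be antipodal to another admissible point). Moreover, even restricting attention to the pairs that actually occur, "not in the cut locus'' is a qualitative statement: along your approximating sequence the relevant points could a priori approach a cut locus, where $\operatorname{Hess}_z d^2_{x_i}$ blows up to $-\infty$, so the asserted continuity and uniform boundedness of $\operatorname{Hess} g_1$ and $\operatorname{Hess} d^2_{x_1}/2$ over $Y_\infty$ does not follow from what you have proved. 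Since the whole proof hinges on this bound, the argument as written has a gap.

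Two repairs are available. One is to salvage your compactness idea correctly: the set of barycentric pairs $\Gamma\cap\bigl((\operatorname{supp}(\mu_1)\times K)\times M\bigr)$ is compact by \Cref{lem:compact_barycenter}, it is contained in the open set of pairs $(z,x_i)$ with $x_i$ outside the cut locus of $z$ by \Cref{lem:barycenter_discrete_measure_out_of_cut_locus}, hence it stays at positive distance from the (closed) cut-locus set, and on it the Hessians of the squared distances depend continuously on $(z,\boldsymbol{x})$ and are uniformly bounded on both sides; only then may you plug into \Cref{lem:differentiate_smooth_concave_function}. The paper's route is cleaner and avoids any lower Hessian bound: at a barycenter $z$ one has $\nabla g_1(z)=\nabla d^2_{x_1}/2(z)$, and the operator appearing in the differential is $\tfrac{1}{2\lambda_1}\sum_{i=1}^n\lambda_i\operatorname{Hess}_z d^2_{x_i}$, which is positive semi-definite by second-order optimality and bounded \emph{above} uniformly on the compact set by the Rauch comparison theorem (an upper bound that needs no quantitative separation from cut loci); a PSD operator with a uniform upper bound has uniformly bounded norm, which yields the constant $C$ independent of $j$, $X$ and $\boldsymbol{x}'$. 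With either repair the rest of your argument (uniqueness of $\widebar\mu$ and of the $\widebar\mu_k$ so that the full sequence converges, closedness of $\mathcal{E}_{\epsilon,\delta/C^m}$, and compactness of the support) goes through as in the paper.
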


\begin{proof}
	The uniqueness and compact support of $\widebar \mu$ follow from
	\Cref{sec:existence_and_uniqueness} and \Cref{lem:compact_barycenter}.
	We approximate each $\mu_i$ for $2 \leq i \leq n $ in $(\mathcal{W}_2(M), W_2)$ by
	a sequence of discrete measures $\{\mu_i^{j}\}_{ j \ge 1}$
	whose supports are contained in the compact support of $\mu_i$.
	Then $\mathbb{P}_j := \lambda_1 \delta_{\mu_1} + \sum_{i = 2}^n
		\lambda_i\, \delta_{\mu_i^j}$ converges to $\mathbb{P}$
	in $\mathcal{W}_2(\mathcal{W}_2(M))$.
	By the law of large numbers for Wasserstein barycenters
	(\Cref{thm:law_of_large_numbers_Wasserstein_barycenter}),
	the unique barycenter $\widebar{\mu}_j$ of $\mathbb{P}_j$
	converges in $(\mathcal{W}_2(M), W_2)$ to the unique barycenter $\widebar{\mu}$ of $\mathbb{P}$.
	% By \Cref{prop:absolutely_continous_barycenter_discrete_marginals},
	% each $\widebar{\mu}^l$ is absolutely continuous.

	Denote by $\gamma_j$ a multi-marginal
	optimal transport plan of marginal measures $\mu_1, \mu_2^j, \ldots, \mu_n^j$ in this order.
	Fix an index $j$, a non-empty compact subset $X \subset M$ and
	a point $\boldsymbol x^\prime :=(x_2, \ldots, x_n) \in M^{n-1}$
	such that $X \times \{ \boldsymbol x^\prime \} \subset \operatorname{supp} \gamma_j$.
	% where $\operatorname{supp} \gamma_j$ denotes the support of $\gamma_j$.
	Applying \Cref{lem:Lipschitz_optimal_transport_map} to $X$ and $\boldsymbol x^\prime$,
	we obtain a Lipschitz continuous function $F = \exp(- \nabla g_1)$ on
	a compact set $Y$.
	We claim that there exists a Lipschitz constant
	$C$ of $F$ on $Y$ independent of $j, X$ and $\boldsymbol x^\prime$.
	Recall that $g_1(y) : = - 1 / \lambda_1 \sum_{i = 2}^n \lambda_i\, c(y, x_i)$
	is smooth in a neighborhood of $Y$.
	Given $z \in Y$,
	since $z$ is a barycenter of $\sum_{i=1}^n \lambda_i\, \delta_{x_i}$ (\Cref{lem:Lipschitz_optimal_transport_map})
	with $x_1: = F(z)$,
	we have $\sum_{i=1}^n \nabla d^2_{x_i}(z) = 0$ thanks to \Cref{lem:barycenter_discrete_measure_out_of_cut_locus}
	and thus $\nabla d^2_{x_1} / 2 (z) = \nabla g_1(z)$.
	Moreover, \Cref{lem:barycenter_discrete_measure_out_of_cut_locus} enables
	us to apply \Cref{lem:differentiate_smooth_concave_function}, which implies
	\begin{align}
		\di_z F =
		\di_z \exp(- \nabla g_1)
		 & = [\di_{- \nabla g_1(z)} \exp_z]
		\circ (\operatorname{Hess}_z d^2_{x_1} / 2 - \operatorname{Hess}_z g_1) \nonumber \\
		\label{equa:differential_exp_g_1}
		 & = [\di_{- \nabla g_1(z)} \exp_z]
		\circ \frac{1}{2 \lambda_1} \sum_{i=1}^n \lambda_i \operatorname{Hess}_z d^2_{x_i}.
	\end{align}
	In (\ref{equa:differential_exp_g_1}),
	$\sum_{i=1}^n \lambda_i\,\operatorname{Hess}_z d^2_{x_i}$ is positive semi-definite since
	$z$ reaches the global minimum of $ w \in M \mapsto \sum_{i=1}^n \lambda_i\, d_{\matheuvm{g}}(w, x_i)^2$.
	We now bound (\ref{equa:differential_exp_g_1}) using compactness as follows.
	By \Cref{lem:compact_barycenter} and our construction of $\mathbb{P}_j$,
	the union of the supports of $\widebar{\mu}$, $\mu_i$, $\widebar{\mu}_j$ and $\mu_i^j$
	for $ 1 \le i \le n$ and $j \ge 1$ is compact.
	Hence, independent of $z$, $j$ and $\boldsymbol x^\prime$,
	$\di_{- \nabla g_1(z)} \exp_z$ is uniformly bounded (in norm)
	and $\sum_{i=1}^n \lambda_i\,\operatorname{Hess}_z d^2_{x_i}$ is uniformly
	bounded from above by the Rauch comparison theorem for Hessians
	of distance functions, which is applicable here and provides a constant upper bound
	thanks to the compactness,
	see \cite[Lemma 3.12 and Corollary 3.13]{cordero2001riemannian}
	or \cite[Theorem 6.4.3]{petersen2016riemannian}.
	This shows the existence of the claimed Lipschitz constant $C$.
	We remark that the absolute continuity of $\mu_1$ is not needed for
	the existence of $C$.

	Applying \Cref{prop:absolutely_continous_barycenter_discrete_marginals} to
	measures $\mu_1, \mu_2^j, \ldots, \mu_n^j$, we have for $\epsilon, \delta > 0$,
	$\mu_1 \in \mathcal{E}_{\epsilon, \delta} \implies
		\widebar \mu_j \in \mathcal{E}_{\epsilon, \delta / C^m}$ since
	$\widebar \mu_j$ is the unique barycenter of $\mathbb{P}_j$.
	As $\widebar \mu_j$ converges to $\widebar \mu$ weakly,
	\Cref{lem:measurable_set_absolutely_continous_measures}
	shows that all measures $\widebar \mu_j$ for $j \ge 1$ and $\widebar \mu$ are
	absolutely continuous since $\mu_1$ is so.
\end{proof}

% ! TEX root = ../absolute_continuity.tex

\section{Hessian equality for Wasserstein barycenters}
\label{sec:hessian_equality}

In this section, we prove the Hessian equality
for Wasserstein barycenters of finitely many
measures (\Cref{thm:hessian_equality_Wasserstein_barycenter}).
% It is a first order differential property in terms of optimal transport maps,
% as illustrated in \Cref{prop:uniqueness_in_hessian_equality},
% and thus a second order property in terms of optimal transport potentials.
A similar property is named as the \emph{second order balance} (inequality)
by Kim and Pass \cite[Theorem 4.4]{kim2017wasserstein},
but being an equality instead of an inequality
is crucial for our proof of \Cref{prop:entropy_estimation}.
Let us take a special case to illustrate this equality.
Consider the reduced case in \Cref{lem:Lipschitz_optimal_transport_map}.
Namely, take $n$ positive numbers $\lambda_i > 0$ such that $\sum_{i=1}^n \lambda_i = 1$
and denote by $\widebar \mu$ the barycenter of $\sum_{i=1}^n \lambda_i\, \delta_{\mu_i}$,
where $\mu_1$ is absolutely continuous with compact support
and $\mu_i = \delta_{x_i}, 2 \le i \le n$, are Dirac measures.
Let us set $\phi_1(z) : =g_1(z): = - 1 / \lambda_1 \sum_{i=2}^n \lambda_i\, c(z, x_i)$
and $\phi_i(z) : = c(z, x_i), 2 \le i \le n$. Thanks to \Cref{lem:compact_barycenter} and
\Cref{lem:barycenter_discrete_measure_out_of_cut_locus},
if $z$ is in the support of $\widebar{\mu}$,
then $z$ is not in the cut locus of any $x_i$,
which implies $\exp(- \nabla \phi_i)_{\#} {\widebar \mu} = \mu_i$ for $2 \le i \le n$.
Besides, by definition of the $\phi_i$'s, $\sum_{i=1}^n \lambda_i\, \phi_i \equiv 0$; therefore $\sum_{i=1}^n \lambda_i \nabla \phi_i (z) = 0$. Consequently we get $\sum_{i=1}^n \lambda_i \operatorname{Hess}_z \phi_i = 0$,
which is the Hessian equality we are referring to.

%Our proof of the general Hessian equality follows similar steps.
%We first demonstrate the related first-order equality,
%and then differentiate it to obtain the (second-order) Hessian equality.
%To clarify the computational details in the second step,
%we begin with a weak definition of Hessian.
% we shall divide our subsequent introduction of Hessian into two steps.
% By doing so, our Hessian is slightly more restrictive than
% the general notion of Hessian given in the classic references
% \cite[13.A]{rockafellar2009variational} and \cite[6.4]{evans2018measure}.
% where the definitions can be presented in one step.

% ! TEX root = ../absolute_continuity.tex

\subsection{Approximate differentiability}

We justify the definition of \emph{density point} for Riemannian manifolds
by comparing it to its usual Euclidean counterpart.
Denote by $\widebar B(x, r)$ the closed metric ball centered at $x$ with radius $r$.

\begin{lem}[Density points]
	\label{lem:density_points_mainfolds}
	Let $(M, {\matheuvm{g}})$ be a Riemannian manifold and
	let $A$ be a Borel subset of $M$.
	We call $x \in M$ a density point of $A$ (with respect to $\operatorname{Vol}$) if
	\[
		\lim_{r \downarrow 0} \frac{\operatorname{Vol}[\widebar B(x, r) \setminus A]}
		{\operatorname{Vol}[\widebar B(x, r)]} = 0.
	\]
	This definition is equivalent to the standard one with respect to the Lebesgue measure after pulling $x$ and $A$ back to the Euclidean space through an arbitrary chart around $x$.
	In particular, almost every point of $A$ is a density point of $A$ with respect to $\operatorname{Vol}$.
\end{lem}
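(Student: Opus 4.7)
The plan is to work locally in a chart around $x$, where the Riemannian volume and Lebesgue measure become comparable and geodesic balls become comparable to Euclidean balls. The statement then reduces to the classical Lebesgue density theorem in $\mathbb{R}^m$.

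First I would fix an arbitrary chart $(\varphi, U)$ around a point $x \in M$, set $\tilde x := \varphi(x)$ and $\tilde A := \varphi(A \cap U)$, and write the Riemannian volume in local coordinates as $\operatorname{Vol} = \sqrt{\det g}\, d\lambda$, where $\lambda$ is the Lebesgue measure on $\varphi(U) \subset \mathbb{R}^m$ and $g = (g_{ij})$ is the matrix of the metric tensor in these coordinates. Since $g$ is continuous and positive definite, for every $\epsilon > 0$ there exists $\rho_0 > 0$ such that for every $\tilde y$ in the Euclidean ball $B^e(\tilde x, \rho_0)$,
\[
(1-\epsilon)\sqrt{\det g(\tilde x)} \le \sqrt{\det g(\tilde y)} \le (1+\epsilon)\sqrt{\det g(\tilde x)}.
\]
Hence for any Borel set $\tilde N \subset B^e(\tilde x, \rho_0)$, the quantities $\lambda(\tilde N)$ and $\operatorname{Vol}(\varphi^{-1}(\tilde N))$ differ by a multiplicative factor arbitrarily close to $\sqrt{\det g(\tilde x)}$ when $\rho_0$ shrinks.

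Next I would compare geodesic balls $B(x,r) \subset M$ with Euclidean balls $B^e(\tilde x, r) \subset \varphi(U)$. Since the intrinsic distance $d$ and the pulled-back Euclidean distance $\|\varphi(\cdot) - \varphi(\cdot)\|_2$ on $U$ are mutually Lipschitz equivalent in a small enough neighborhood of $x$ (a consequence of the continuity of $g$ and its coincidence up to first order with the Euclidean metric in normal coordinates, or simply the Lipschitz property of $\varphi$ and $\varphi^{-1}$ near $x$), for every $\epsilon > 0$ there exists $r_0 > 0$ such that for all $0 < r \le r_0$,
\[
B^e(\tilde x,(1-\epsilon)r) \subset \varphi(B(x,r)) \subset B^e(\tilde x,(1+\epsilon)r).
\]
Combining the two comparisons, I can sandwich the Riemannian density ratio by ratios of the Euclidean form
\[
\frac{\lambda[B^e(\tilde x, r') \setminus \tilde A]}{\lambda[B^e(\tilde x, r'')]}
\]
with $r', r''$ equal to $r$ up to factors arbitrarily close to $1$. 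This immediately gives the equivalence: $x$ is a density point of $A$ for $\operatorname{Vol}$ in $M$ if and only if $\tilde x$ is a density point of $\tilde A$ for the Lebesgue measure in $\mathbb{R}^m$.

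For the last assertion I would invoke the Lebesgue density theorem in $\mathbb{R}^m$, which ensures that $\lambda$-almost every point of $\tilde A$ is a density point of $\tilde A$. Since $\varphi$ is a diffeomorphism, it maps $\lambda$-negligible subsets of $\varphi(U)$ to $\operatorname{Vol}$-negligible subsets of $U$ and conversely; hence $\operatorname{Vol}$-almost every point of $A \cap U$ is a density point of $A$. Covering $M$ by countably many charts (using its separability) and taking a countable union of null sets concludes the proof. The main, and essentially only, point to handle carefully is the two-sided comparison between geodesic and Euclidean balls at small scales; everything else is a direct consequence of continuity of $g$ and the classical Lebesgue differentiation theorem.
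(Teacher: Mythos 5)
Your proof follows essentially the same route as the paper: compare $\operatorname{Vol}$ with Lebesgue measure in a chart, compare geodesic balls with Euclidean balls, and reduce to the classical Lebesgue density theorem, so the argument is correct in substance. One intermediate claim is overstated, though: bi-Lipschitz equivalence of $d$ and the pulled-back Euclidean distance does \emph{not} give the inclusions $B^e(\tilde x,(1-\epsilon)r) \subset \varphi(B(x,r)) \subset B^e(\tilde x,(1+\epsilon)r)$ for an \emph{arbitrary} chart (take the flat plane with the chart $(x,y)\mapsto(x,2y)$: images of geodesic balls are ellipses that never fit inside $B^e(\tilde x,(1+\epsilon)r)$); factors close to $1$ would require the differential of the chart at $x$ to be an isometry, e.g.\ normal coordinates. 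This does not harm the proof, because fixed two-sided constants $c_1,c_2$ --- exactly what the paper uses, giving $B(\psi(x),c_1 r)\subset \psi(B(x,r))\subset B(\psi(x),c_2 r)$ and $c_1^m\operatorname{Leb}\le\operatorname{Vol}\le c_2^m\operatorname{Leb}$ --- already suffice: the density ratios in the two settings are comparable up to the bounded factor $(c_2/c_1)^{2m}$, and only the vanishing of the limit matters. Your final covering argument by countably many charts is fine and in fact spells out a step the paper leaves implicit.
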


\begin{proof}
	Denote by $m$ the dimension of $M$.
	In a (smooth) local chart $(\varphi,U)$ with $U$ a small enough neighborhood of $x \in M$,
	the metric of $M$ is bounded (from both sides) by the metric of $\mathbb{R}^m$
	with constant scales $0 < c_1 < c_2$.
	It follows that
	$ c_1^m\Leb^m(\varphi(N)) \le \operatorname{Vol}(N) \le c_2^m\Leb^m(\varphi(N))$
	for any measurable subset $N \subset U$
	\cite[Proposition 12.6 and 12.7]{taylor2006measure}.
	Hence, $x$ is a density point of $A$ if and only if
	\begin{equation}
		\label{equa:local_chart_density_point}
		\lim_{r \downarrow 0} \frac{\Leb^m[\varphi(\widebar B(x, r)) \setminus \varphi(A \cap U)]}
		{\Leb^m[\varphi(\widebar B(x, r))]} = 0.
	\end{equation}
	Applying again the relation between the metric of $M$ and the metric of $\mathbb{R}^m$,
	for any $r > 0$, we have $\widebar B(\varphi(x), c_1\,r)
	\subset \varphi(\widebar B(x, r)) \subset \widebar B(\varphi(x), c_2\,r)$.
	% where we use $B$ to denote closed balls in $\mathbb{R}^m$ as well.
	Therefore, (\ref{equa:local_chart_density_point}) is equivalent to
	that $\varphi(x)$ is a density point of $\varphi(A)$ with respect to $\Leb^m$.
\end{proof}

We now recall the definition of \emph{approximate derivatives} first on Euclidean space 
(see \cite[5.8(v)]{bogachev2007measure} and \cite[3.1.2]{federer2014geometric}
for more detailed discussions),
then on manifolds.

\begin{defn}[Approximate derivatives on Euclidean spaces]
	\label{defn:approximate_derivative_Euclidean_spaces}
	Let $m, n \ge 1$ be two positive integers.
	Given a function $F: \Omega \rightarrow \mathbb{R}^n$ defined on a subset $\Omega$ of $\mathbb{R}^m$,
	$l \in \mathbb{R}^n$ is an \emph{approximate limit} of $F$ at a point
	$x \in \mathbb{R}^m$, for which we write $l = \operatorname{ap} \lim_{y \rightarrow x} F(y)$,
	if there exists a Borel set $\Omega_x \subset \Omega$ such that
	$x$ is a density point of $\Omega_x$ and $\displaystyle \lim_{y \in \Omega_x, y \rightarrow x} F(y) = l$.
	The approximate derivatives of $F$ are defined via the approximate limits
	of its difference quotients as follows.

	A linear map $L: \mathbb{R}^m \rightarrow \mathbb{R}^n$
	is called the \emph{approximate derivative} of a function $F: \Omega \rightarrow \mathbb{R}^n$
	at a point $x \in \Omega \subset \mathbb{R}^m$ if
	\begin{equation}
		\label{equa:approximate_derivative}
		\operatorname{ap} \lim_{y \rightarrow x}
		\frac{|F(y) - F(x) - L(y - x)|}{|y - x|} = 0.
	\end{equation}
	The approximate derivative $L$ will be denoted by $\operatorname{ap} \di_x F$.
\end{defn}

The previous definition can be extended to the Riemannian setting as follows:

\begin{lem}[Approximate derivatives on manifolds]\label{defn:approximate_derivative_manifolds}
	Let $(M, {\matheuvm{g}})$ be an $m$-dimensional Riemannian manifold $M$
	and let $ f:  A \rightarrow \mathbb{R}^n$ be a function
	defined on a subset $A$ of $M$.
	Given an arbitrary local chart $(\varphi, U)$ around a point $x \in A$,
	$f$ is said to be approximately differentiable at $x$ if
	the approximate derivative $\operatorname{ap} \di_{\varphi(x)}
		[ f \circ \varphi^{-1} |_{\varphi( A \cap U)}]$ exists.
	The approximate derivative of $f$ at $x$ is then defined as
	\[
		\operatorname{ap} \di_x f:=
		\operatorname{ap} \di_{\varphi(x)}  [f \circ \varphi^{-1} |_{\varphi(A \cap U)}] 
		\circ \di_x \varphi:
		T_xM \rightarrow \mathbb{R}^n,
	\]
	where $\di_x \varphi: T_x M \rightarrow T_{\varphi(x)} \mathbb{R}^m$
	denotes the differential map of $\varphi$ at $x$
	and the tangent space $T_{\varphi(x)} \mathbb{R}^m$ is canonically identified
	with $\mathbb{R}^m$ in the above composition of functions.
	In particular, a constant function has null approximate derivative at
	density points located in its domain.
\end{lem}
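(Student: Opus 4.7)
The plan is to verify the two assertions packaged in this lemma: (i) the property of being approximately differentiable at $x$, and the resulting formula for $\operatorname{ap} D_x \tilde f$, are independent of the chosen local chart $(\psi, U)$; and (ii) a constant function has null approximate derivative at density points of its domain. Both rest on a chain rule for approximate derivatives under composition with a $\mathcal{C}^1$-diffeomorphism, together with the preservation of density points by such diffeomorphisms.

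First I would establish the following chain rule. Let $g : V \to V'$ be a $\mathcal{C}^1$-diffeomorphism between open subsets of $\mathbb{R}^m$ and let $f : A' \subset V' \to \mathbb{R}^n$ be approximately differentiable at $y' := g(y)$ with approximate derivative $L$. Then $f \circ g : g^{-1}(A') \to \mathbb{R}^n$ is approximately differentiable at $y$ with $\operatorname{ap} D_y (f \circ g) = L \circ D_y g$. To prove this, I would fix a witnessing Borel set $A'_{y'} \subset A'$ having $y'$ as density point along which the quotient $|f(z) - f(y') - L(z - y')|/|z - y'|$ tends to zero, then set $A_y := g^{-1}(A'_{y'})$. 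A bi-Lipschitz estimate on $g$ near $y$, in the same spirit as the volume-to-Lebesgue comparison used in the proof of \Cref{lem:density_points_mainfolds}, shows that $y$ is a density point of $A_y$; a first-order Taylor expansion of $g$ at $y$ then combines with the approximate limit on $A'_{y'}$ to yield the claimed approximate derivative. Applying this chain rule to the transition map $T := \psi' \circ \psi^{-1}$ between any two charts $(\psi, U), (\psi', U')$ around $x$ and invoking the identity $\diff_x \psi' = D T|_{\psi(x)} \circ \diff_x \psi$, one checks that the expression $\operatorname{ap} D_{\psi(x)} [\tilde f \circ \psi^{-1}|_{\psi(\tilde A \cap U)}] \circ \diff_x \psi$ transforms covariantly under change of charts; hence it determines the same linear map $T_x M \to \mathbb{R}^n$ in every chart, giving assertion (i).

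For (ii), chart-independence reduces the problem to a single chart. If $\tilde f$ is constant on $\tilde A$ and $x$ is a density point of $\tilde A$, then \Cref{lem:density_points_mainfolds} ensures that $\psi(x)$ is a density point of $\psi(\tilde A \cap U)$; since $\tilde f \circ \psi^{-1}$ is constant on this set, the approximate limit (\ref{equa:approximate_derivative}) is attained trivially with $L$ the zero linear map, so $\operatorname{ap} D_{\psi(x)} [\tilde f \circ \psi^{-1}|_{\psi(\tilde A \cap U)}] = 0$. Composition with $\diff_x \psi$ then yields $\operatorname{ap} D_x \tilde f = 0$.

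The main obstacle will be setting up the chain rule cleanly. One must simultaneously control the preservation of density points by the diffeomorphism $g$ (handled via local bi-Lipschitz estimates and the Jacobian of $g$ being continuous and nonzero near $y$) and the interplay between the first-order Taylor remainder of $g$ and the approximate differentiability of $f$ along its witnessing set. Both ingredients are classical (compare \cite[3.1.2]{federer2014geometric}), but some care is needed because the approximate limits on the two sides of the composition are taken along a priori different Borel sets; one has to verify explicitly that the preimage under $g$ of a witnessing set for $f$ at $y'$ does indeed serve as a witnessing set for $f \circ g$ at $y$.
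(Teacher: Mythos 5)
Your proposal is correct and follows essentially the same route as the paper's own (much terser) proof: chart-independence via the chain rule for approximate derivatives under $\mathcal{C}^1$ coordinate changes together with the chart-invariance of density points from \Cref{lem:density_points_mainfolds}, and the constant-function claim by observing that $L = 0$ trivially satisfies (\ref{equa:approximate_derivative}) on the domain itself. You merely spell out in detail the "change of variables rule" that the paper invokes without proof, which is a sound elaboration rather than a different approach.
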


\begin{proof}
	In Euclidean space, approximate derivatives are unique when they exist
	\cite[Theorem 6.3]{evans2018measure}.
	Since density points are well-defined for Riemannian manifolds by \Cref{lem:density_points_mainfolds}
	and coordinate changes for $M$ are smooth diffeomorphisms,
	it follows from (\ref{equa:approximate_derivative}) that
	the existence of approximate derivative at a given point is independent of the choice of the  chart
	and the change of variables rule applies.
	To show our last statement, note that
	$L = 0$ satisfies (\ref{equa:approximate_derivative}) whenever
	$F: = f \circ \varphi^{-1}$ is a constant function.
\end{proof}

\subsection{Approximate Hessian of locally semi-concave functions}

The properties of locally semi-concave functions provide a valuable toolbox
for analyzing optimal transport maps on manifolds.
In this section, we examine the weak second-order regularity of these functions.

In a Riemannian manifold $(M, {\matheuvm{g}})$,
a subset $C$ of $M$ is said to be a \emph{geodesically convex}
(or \emph{simple and convex}) set if,
given any two points in $C$,
there is a unique minimizing geodesic contained within $C$ that joins those two points.
A function $f: C \rightarrow \mathbb{R}$ defined on a geodesically convex set $C \subset M$
is said to be \emph{geodesically convex} (respectively \emph{geodesically concave}) if the composition
$f \circ \gamma$ of $f$ and any geodesic curve $\gamma$ contained within $C$ is convex (respectively concave).
It is noteworthy that for any point $x \in M$,
there exists an open ball centered at $x$ that is geodesically convex \cite{whitehead1932convex, kobayashi1996foundations},
and such a ball is referred to as a geodesically convex ball.

\begin{defn}[Semi-concavity]
	\label{defn:semi-concavity}
	Let $(M, {\matheuvm{g}})$ be a Riemannian manifold.
	Fix an open subset $O \subset M$.
	A function $\phi: O \rightarrow \mathbb{R}$ is semi-concave
	at $x \in O$ if there exists an open and geodesically convex set $C(x)$ centered at $x$
	and a $\mathcal{C}^2$ function $V : C(x) \rightarrow \mathbb{R}$
	such that $\phi + V$ is
	geodesically concave throughout $C(x)$.
	The function $\phi$ is locally semi-concave on $O$ if it is
	semi-concave at each point of $O$.
\end{defn}

%\begin{rmk}
%	Depending on the reference, our definition of semi-concavity is also called
%	semi-concavity with linear modulus \cite[Definition 2.1.1]{cannarsa2004semiconcave} or with quadratic modulus as in
%	\cite[Definition 10.10]{villani2009optimal}.
%	In \cite[Proposition 3.8]{cordero2001riemannian}, the above
%	locally semi-concave functions are referred as semi-concave functions.
%	For a recap on locally semi-concave functions, see \cite[Appendix A]{fathi2010optimal}.
%	For comprehensive references, see \cite{cannarsa2004semiconcave} and \cite[Definition 10.29]{rockafellar2009variational}.
%\end{rmk}

Bangert \cite[(2.3) Satz]{bangert1979analytische} proved that
the notion of local semi-concavity is independent of the Riemannian metric.
This property also follows from the following characterization of locally
semi-concave functions (with a linear module),
whose proof for the Euclidean case is detailed in
\cite[Proposition 4.3, Proposition 4.8]{vial1983strong} and
\cite[Theorem 5.1]{clarke1995proximal}.
In \cite[Appendix A]{fathi2010optimal}, it is adopted as the definition of local semi-concavity.
Denote by $\langle \cdot, \cdot \rangle$ and $\| \cdot \|_2$ respectively
the Euclidean inner product and its associated norm.
To stress that certain points are
coordinate representations of manifold points,
we denote them by tilde symbols $\widetilde x$ and $\widetilde z$.

\begin{prop}[Characterization of local semi-concavity,
		\citeinfo{Proposition 10.12}{villani2009optimal}]
	\label{prop:characterization_local_semi_concavity}
	Let $(M, {\matheuvm{g}})$ be an $m$-dimensional Riemannian manifold.
	Fix an open subset $O$ of $M$.
	A function $f: O \rightarrow \mathbb{R}$ is locally semi-concave
	if and only if for each point in $O$,
	there exist a chart $(\varphi, U)$ defined around the point
	and a positive constant $C > 0$ such that
	$\forall\,\widetilde x \in \varphi(U)$,
	$\exists\, l_{\widetilde x} \in \mathbb{R}^m$, $\forall\, \widetilde {z} \in \varphi(U)$,
	\begin{equation*}
		(f \circ \varphi^{-1}) \left( \widetilde z \right)
		\label{equa:super_differentiability_characterization}
		\leq (f \circ \varphi^{-1}) ( \widetilde x ) + \langle l_{\widetilde x},\,\widetilde z - \widetilde x\rangle
		\ + C \,\|\widetilde z - \widetilde x\|_2^2.
	\end{equation*}
\end{prop}

Hence, a function is locally semi-concave if and only if it is so when expressed in local charts
\cite[discussion after Lemma A.9]{fathi2010optimal}.
We shall apply this chart-independence, along with Alexandrov's theorem,
to establish the weak second-order regularity of locally semi-concave functions.

In the following theorem,
we revisit Alexandrov's theorem stated via approximate derivatives.
The proofs of this theorem can be found in
\cite[Theorem 14.1]{villani2009optimal} and
\cite[Theorem D.2.1]{niculescu2018convex}.
To maintain clarity of notation, for a function
$f : U \rightarrow \mathbb{R}$ defined on an open subset $U \subset \mathbb{R}^m$,
we define its Euclidean gradient $\nabla^E f(x) \in \mathbb{R}^m$ at $x \in U$
as the (column) vector $(\partial_1 f(x), \partial_2 f(x), \ldots, \partial_m f(x))$
when all of these partial derivatives of $f$ exist at $x$.
By contrast, the symbol $\nabla f$ is reserved to
denote the gradient of functions $f: U \rightarrow \mathbb{R}$ defined on some open subset $U$
of a Riemannian manifold,
which is a (possibly not continuous) vector field defined
at points where $f$ is differentiable.

\begin{thm}[Alexandrov's theorem]\label{thm:Alexandrov-theorem}
	Let $f: U \subset \mathbb{R}^m \rightarrow \mathbb{R}$ be a semi-concave function.
	Then the Euclidean gradient $\nabla^E f$ of $f$ is defined $\Leb^m$-almost everywhere on $U$:
	\[
		\nabla^E f: A \longrightarrow  \mathbb{R}^m \qquad \text{ with }\,
		A \in \mathcal{B}(\mathbb{R}^m) \text{ and } \Leb^m (U \setminus A) = 0.
	\]
	For $\Leb^m$-almost everywhere on $A$,
	the function $\nabla^E f$ is approximately differentiable
	and its approximate derivative $(\partial_{ij}^2 f)_{1 \le i, j \le m}$ forms a symmetric matrix.
	Moreover, at every point $x$ where such approximate derivative of $ \nabla^E f$ exists,
	$f$ admits a second-order Taylor expansion:
	\begin{equation}
		\label{equa:Alexandrov_theorem_Taylor_expansion}
		f(z) = f(x) + \langle \nabla^E f(x), z-x\rangle +
		\frac{1}{2} \langle \operatorname{ap} \di_x  \nabla^E f (z-x),z-x\rangle + o(\|z-x\|_2^2).
	\end{equation}
\end{thm}

\begin{rmk}
	\label{rmk:equivalent_formulations_Alexandrov_theorem}
	In the literature, the weak second-order regularity in Alexandrov's theorem is
	expressed in different formulations, including the one that
	differentiates super-gradients of semi-concave functions
	\cite[Theorem D.2.1, Theorem D.2.2]{niculescu2018convex}.
	Their equivalence to (\ref{equa:Alexandrov_theorem_Taylor_expansion})
	is proven in \cite[Theorem 14.25]{villani2009optimal}.
	Compared to these equivalent formulations,
	our \Cref{thm:Alexandrov-theorem} further requires $x$ to be a density point of $A$
	for the existence of $\operatorname{ap} \di_x  \nabla^E f$.
	However, under our assumption that $U$ is an open set,
	the condition $\Leb^m (U \setminus A) = 0$ implies that
	every point of $A$ is a density point.
\end{rmk}

To extend our results to the Riemannian setting,
we provide a concise review of the Riemannian Hessian.
For a $\mathcal{C}^2$ function defined on a Riemannian manifold $(M, \matheuvm{g})$,
the Hessian at a point $x \in M$ can be interpreted either as a self-adjoint linear map
from the tangent space $T_xM$ to itself
or as a symmetric bilinear form on $T_xM \times T_xM$.
These two interpretations are related by duality
through the Riemannian metric $\matheuvm{g}$ at $x$ \cite[Proposition 2.2.6]{petersen2016riemannian}.
While we shall primarily adopt the linear map perspective in the subsequent sections,
we shall utilize the bilinear form viewpoint in the following two paragraphs.
This choice is motivated by the fact that the chart-based expression of the Hessian
is simpler when viewed as a bilinear form.

In what follows, the Hessian of a $\mathcal{C}^2$ function on a Riemannian manifold
is a particular instance of a continuous $(0, 2)$-tensor $S$.
Namely, for any two given charts $\varphi, \psi$ defined on a common open subset
$U \subset M$, there exist two bilinear forms $S_{\varphi}$ and $S_{\psi}$
whose coefficients are continuous functions such that
$\forall\, \widetilde x \in \varphi(U) \subset \mathbb{R}^m,\, \forall\, u,v \in \mathbb{R}^m$,
\begin{equation*}
	\label{eq: tensor}
	[S_{\varphi}{(\widetilde x)}] \,(u,v)
	= [S_{\psi}{(T(\widetilde x))}]
	(\di_{\widetilde x} T(u), \di_{\widetilde x} T(v)),
\end{equation*}
where $T = \psi \circ \varphi^{-1}$ is assumed to be a smooth (transition) map defined on $\varphi(U)$.
In the case of the Hessian of a $\mathcal{C}^2$ function $f$, its expression in a chart $\varphi$ is given by
\[
	\operatorname{Hess}_{\widetilde x} (f \circ \varphi^{-1})(\partial_i, \partial_j) =
	\partial^2_{ij} (f\circ \varphi^{-1})(\widetilde x) -
	\sum_{k=1}^m \Gamma_{ij}^k(\widetilde x) \, \partial_k (f \circ \varphi^{-1})(\widetilde x),
\]
where $\partial_i$ are the coordinate vectors associated with the given coordinate system
\cite[p.60 of Chapter 3]{lee2012introduction},
and $\Gamma_{ij}^k$ are the Christoffel symbols of the chart,
see \cite[Chapter 2]{petersen2016riemannian} for more details.

In the particular case of a chart $\varphi$ inducing a normal coordinate system at $x_0 \in M$
\cite[\S 2 of Chapter II]{sakai1996riemannian},
i.e., $\varphi^{-1}(u)=\exp_{x_0} (u)$ after identifying $T_{x_0}M$ with $\mathbb{R}^m$
by choosing an orthonormal basis of $T_{x_0}M$,
the matrix made with the metric components $g_{ij}$ is the identity at $\widetilde x_0 = \varphi(x_0)$,
and all its first-order partial derivatives (and thus the Christoffel symbols)
vanish at $\widetilde x_0$  \cite[2.89 bis]{gallot2004riemannian}.
Hence, the above formula at the point $\widetilde x_0$ is simplified into
\begin{equation}
	\label{equa:Hessain_in_normal_coordinate}
	\operatorname{Hess}_{\widetilde x_0} (f \circ \varphi^{-1})(\partial_i, \partial_j) =
	\partial^2_{ij} (f\circ \varphi^{-1})(\widetilde x_0).
\end{equation}
Since the metric matrix $(g_{ij})_{1 \le i,j \le m}$ at $\widetilde x_0$ is the identity,
if we consider $\operatorname{Hess}_{\widetilde x_0} (f \circ \varphi^{-1})$
as a linear map from $\mathbb{R}^m \cong T_{x_0} M$ to itself,
then it coincides with the derivative of $\nabla^E (f \circ \varphi^{-1})$ at $\widetilde x_0$.

As a consequence, we are led to the following definition of Hessian
for semi-concave functions on a Riemannian manifold.

\begin{defn}[Hessian of semi-concave functions]
	\label{defn:approximate_Hessian}
	Let $(M, \matheuvm{g})$ be an $m$-dimensional complete Riemannian manifold,
	$f: O \rightarrow \mathbb{R}$ be a semi-concave function defined on an open subset $O \subset M$,
	and $A\subset O$ be the subset of points where $f$ is differentiable.

	The function $f$ is said to have an \emph{approximate Hessian}
	or simply a \emph{Hessian} at a point $x \in A$
	if there exists a chart $(\varphi,U)$ inducing a \emph{normal} coordinate system around $x$
	such that $\nabla^E (f \circ \varphi^{-1})$ is approximately differentiable at $\varphi(x)$,
	and its approximate derivative is symmetric.
	Then the Hessian of $f$ at $x$
	is the function $\operatorname{Hess}_x f$ from $T_xM$ to $T_xM$ defined by
	\begin{equation}
		\label{equa:Hessian}
		\operatorname{Hess}_x f (u)
		:=  (\di_x\varphi)^{-1}\circ \operatorname{ap} \di_{\varphi(x)}
		\nabla^E (f \circ \varphi^{-1}) \circ \di_x\varphi(u),
		\quad \forall\, u\in T_xM.
	\end{equation}
\end{defn}

\begin{rmk}
	% Recall that the exponential maps are $\mathcal{C}^1$ smooth
	% since the metric tensor $g$ is assumed to be $\mathcal{C}^2$ smooth.
	% It follows that the differential map $\diff \varphi$ is $\mathcal{C}^1$ smooth. 
	To justify \Cref{defn:approximate_Hessian},
	first note that if $(\psi,V)$ is another chart defined in a neighborhood of $x$,
	then $\nabla^E (f \circ \varphi^{-1})$ is approximately differentiable at $\varphi(x)$
	if and only if $\nabla^E (f \circ \psi^{-1})$ is approximately differentiable at $\psi(x)$;
	indeed both vector fields are related by the formula
	\begin{equation}
		\label{equa:gradient_in_dfferent_charts}
		^t(\di_{\psi(z)} T) \cdot [\nabla^E (f \circ \varphi^{-1}) (\varphi(z))]
		= \nabla^E (f \circ \psi^{-1})(\psi(z)),
	\end{equation}
	where $z$ is close to $x$, $T := \varphi \circ \psi^{-1}$ is a $\mathcal{C}^\infty$
	diffeomorphism defined around $\psi(x)$
	and $^t(\di_{\psi(z)} T)$ is the transpose of $T$'s differential at $\psi(z)$.
	See the proof of Lemma \ref{defn:approximate_derivative_manifolds} for a similar argument.
	Moreover, in our definition (\ref{equa:Hessian}) of $\operatorname{Hess}_x f (u)$,
	we can justify the independence of charts (inducing normal coordinate systems) in two different ways.
	Since the Hessian of a $\mathcal{C}^2$ function defined on manifolds is a tensor,
	the required independence is guaranteed by its simplified local expressions 
	(\ref{equa:Hessain_in_normal_coordinate}) in normal coordinate systems.
	Alternatively, we suppose that $(\psi, V)$ also induces a normal coordinate system around $x$,
	which implies that the transition map $T = \varphi \circ \psi^{-1}$ is linear.
	By applying the chain rule to (\ref{equa:Hessian}) for the chart $(\psi, V)$,
	the independence follows from the linearity of
	$\di_{\psi(z)} T = T$ and the equality
	(\ref{equa:gradient_in_dfferent_charts}).
\end{rmk}

To summarize the content of this part,
we have obtained the following analog of Alexandrov's theorem
for locally semi-concave functions on Riemannian manifolds.

\begin{prop}
	\label{prop:existence_Hessian_semi_concave_functions}
	Let $(M, \matheuvm{g})$ be a complete Riemannian manifold.
	Fix an open subset $O \subset M$ and a locally semi-concave function $f: O \rightarrow \mathbb{R}$.
	For $\operatorname{Vol}$-almost every $x \in O$,
	there exists a function $\operatorname{Hess}_x f: T_xM \rightarrow T_xM$,
	called the Hessian of $f$ at $x$, such that
	\begin{itemize}
		\item $\operatorname{Hess}_x f$ is a self-adjoint operator on $T_xM$;
		\item the function $f$ satisfies the following second-order expansion at $x$,
		      \begin{equation}
			      \label{equa:second_order_expansion}
			      f(\exp_{x} u) = f(x) + \di_xf( u) +
				  \frac{1}{2} \matheuvm{g}_x(\operatorname{Hess}_x f(u), u)+o(\|u\|^2),
		      \end{equation}
		      for  $u \in T_xM$.
	\end{itemize}
\end{prop}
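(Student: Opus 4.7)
The plan is to reduce to the Euclidean Aleksandrov theorem (\Cref{thm:Aleksandrov-theorem}) by working in normal coordinate charts, and then check that the resulting object behaves as a Riemannian Hessian. Since $O$ is second countable and the Hessian is a pointwise object, I would first cover $O$ by countably many open sets $U_\alpha$ on each of which a normal coordinate chart $\varphi_\alpha = \exp_{x_\alpha}^{-1}$ is defined. It suffices to establish the conclusion almost everywhere on each $U_\alpha$. By \Cref{prop:characterization_local_semi_concavity} and the fact that local semi-concavity is a chart-invariant notion (the remark after that proposition), the pull-back $\tilde f := f \circ \varphi_\alpha^{-1}$ is a semi-concave function on the open set $\varphi_\alpha(U_\alpha) \subset \mathbb R^m$.

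Applying \Cref{thm:Aleksandrov-theorem} to $\tilde f$, the Euclidean gradient $\nabla^E \tilde f$ is defined and approximately differentiable Lebesgue-almost everywhere, with symmetric approximate derivative. Since local charts are $\mathcal C^1$-diffeomorphisms, Lebesgue negligible sets in the chart correspond (via \Cref{lem:density_points_mainfolds} and the argument it contains) to $\operatorname{Vol}$-negligible sets on $M$. Thus for $\operatorname{Vol}$-almost every $x \in U_\alpha$, the approximate derivative $\operatorname{ap}D_{\varphi_\alpha(x)} \nabla^E \tilde f$ exists, and I can set
\[
\operatorname{Hess}_x f := (\diff_x \varphi_\alpha)^{-1} \circ \operatorname{ap}D_{\varphi_\alpha(x)} \nabla^E \tilde f \circ \diff_x \varphi_\alpha,
\]
in accordance with \Cref{defn:approximate_Hessian}. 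The chart-independence stated in the remark after that definition (whose argument mimics \Cref{defn:approximate_derivative_manifolds}) guarantees that this is intrinsic.

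For the self-adjointness with respect to $g_x$, the key point is that $\varphi_\alpha$ induces a normal coordinate system at $x_\alpha$; to get this at every point I instead, for each fixed $x$ where Aleksandrov's conclusion holds, introduce a normal chart $\varphi$ centered at $x$ itself, which is legal because a single point does not change the almost-everywhere claim and the definition is pointwise. At $\tilde x = \varphi(x) = 0$ the metric matrix is the identity and the Christoffel symbols vanish, so the Riemannian inner product on $T_xM$ is pulled back to the standard Euclidean inner product on $\mathbb R^m$. Aleksandrov's theorem gives symmetry of the matrix $\operatorname{ap}D_{\tilde x} \nabla^E \tilde f$, which under the isometric identification $\diff_x\varphi$ becomes self-adjointness of $\operatorname{Hess}_x f$ with respect to $g_x$.

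For the second-order expansion \eqref{equa:second_order_expansion}, I again use the normal chart centered at $x$, so that $\exp_x u = \varphi^{-1}(u)$ and $\|u\|_{g_x} = \|u\|_2$. Aleksandrov's Taylor expansion applied to $\tilde f$ at $0$ reads
\[
\tilde f(u) = \tilde f(0) + \langle \nabla^E \tilde f(0), u\rangle + \tfrac12 \langle \operatorname{ap}D_0 \nabla^E \tilde f(u), u\rangle + o(\|u\|_2^2).
\]
Unpacking each term through $\varphi$: the first-order term equals $\diff_x f(u)$ because $\diff_x\varphi$ sends the basis used in the chart to the coordinate basis of $T_xM$ and the metric at $x$ is the identity, and the second-order term equals $\tfrac12\, g_x(\operatorname{Hess}_x f(u), u)$ by the definition above and the same metric identification. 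Substituting $\exp_x u$ for $\varphi^{-1}(u)$ on the left yields \eqref{equa:second_order_expansion}.

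The main obstacle is bookkeeping rather than a single deep step: one must verify that working pointwise in a normal chart centered exactly at the point of interest is compatible with the measure-theoretic ``almost everywhere'' obtained from Aleksandrov (which is a global statement on $\varphi_\alpha(U_\alpha)$). This is handled by noting that the conclusion of Aleksandrov is chart-independent up to a negligible set, so one can first discard the $\operatorname{Vol}$-negligible bad set using any fixed covering by normal charts, and then, at each surviving point, recompute the Hessian in a normal chart centered at that point to obtain both the intrinsic self-adjointness and the Taylor expansion in their cleanest form.
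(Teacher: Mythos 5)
Your proposal is correct and follows essentially the same route the paper takes implicitly: reduce to the Euclidean Aleksandrov theorem in charts, transfer the negligible sets via \Cref{lem:density_points_mainfolds}, use the chart-independence remark after \Cref{defn:approximate_Hessian} to recenter in a normal chart at each good point, and read off self-adjointness and the expansion \eqref{equa:second_order_expansion} from the vanishing of the metric corrections at the center of normal coordinates. Nothing further is needed.
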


\subsection{Differentiating optimal transport maps}

In this part, we collect some properties of optimal transport maps between absolutely continuous measures on a Riemannian manifold, which are taken from \cite[Sections 4 \& 5]{cordero2001riemannian}.
These properties will be used in \Cref{sec:ricci_curvature}.
To justify them, we remark that
our definition of Hessian enjoys the second-order expansion (\ref{equa:second_order_expansion}),
which allows us to apply properties proven
for the Hessian defined in \cite[Definition 3.9]{cordero2001riemannian}.
See \Cref{rmk:equivalent_formulations_Alexandrov_theorem} and
\cite[Discussion after Definition 3.9]{cordero2001riemannian} for more details.
% We first recall the definition of the (weak) differential of an optimal map under these assumptions, then we state the change of variable formula.

\begin{prop}[Differentiating optimal transport maps, \citeinfo{Proposition 4.1}{cordero2001riemannian}]
	\label{defn:differentiate_optimal_transport_map}
	Let $(M, \matheuvm{g})$ be a complete Riemannian manifold.
	Given a $c$-concave function $\phi$ defined on $\widebar {\mathcal{X}} \subset M$
	with $\mathcal{X}$ a bounded open set,
	we set $F := \exp(- \nabla \phi)$,
	which is $\operatorname{Vol}$-almost everywhere well-defined on $\mathcal{X}$.
	Fix a point $x \in \mathcal{X}$ such that $\operatorname{Hess}_x \phi$
	exists (\ref{equa:Hessian}).
	Then the point $y : = F(x)$ is not in the cut locus of $x$,
	$\nabla \phi(x) = \nabla d^2_y /2 (x)$,
	and $\operatorname{Hess}_x d^2_{y} / 2 - \operatorname{Hess}_x \phi$
	is positive semi-definite.
	Define the differential $\di_x F : T_xM \rightarrow T_y M$ of $F$ at $x$ as
	\begin{align}
		\label{equa:differentiate_optimal_transport_map}
		\di_x F: = [\di_{- \nabla \phi(x)} \exp_x]
		\circ (\operatorname{Hess}_x d^2_{y} / 2 - \operatorname{Hess}_x \phi),
	\end{align}
	and define $\operatorname{Jac} F(x): = \det \di_x F$ as
	the Jacobian determinant of $\di_x F$.
\end{prop}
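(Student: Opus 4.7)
The statement bundles four pieces of information at the fixed point $x$: first, that $y := F(x)$ lies outside the cut locus of $x$; second, that $\nabla \phi(x) = \nabla d^2_y/2(x)$; third, that $\operatorname{Hess}_x d^2_y/2 - \operatorname{Hess}_x \phi$ is positive semi-definite; and finally, the definition of $\diff F(x)$ via formula (\ref{equa:differentiate_optimal_transport_map}). My plan is to first locate a contact point $y_*$ where $\phi$ touches $d^2_{y_*}/2 - \psi(y_*)$ from below at $x$, then exploit the Aleksandrov differentiability of $\phi$ at $x$ to show that $y_* \notin \operatorname{Cut}(x)$; once $d^2_{y_*}$ is smooth near $x$, the gradient and Hessian comparisons reduce to elementary calculus, and the differential formula reduces to the smooth case handled in \Cref{lem:differentiate_smooth_concave_function}.

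First I would write $\phi = \psi^c$ on $\widebar{\mathcal{X}}$ for some $\psi$ defined on a compact set $Y$. Compactness of $Y$ together with lower semicontinuity of $c(x,\cdot) - \psi$ produces a minimizer $y_* \in Y$ with $\phi(x) = c(x, y_*) - \psi(y_*)$ and $\phi(z) \le c(z, y_*) - \psi(y_*)$ for all $z \in \widebar{\mathcal{X}}$. Equivalently, the non-negative locally semi-concave function $h(z) := d^2_{y_*}(z)/2 - \phi(z) - \psi(y_*)$ attains its minimum value $0$ at $z = x$.

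The main obstacle is the cut-locus step. Since $\operatorname{Hess}_x \phi$ exists, $\phi$ is classically differentiable at $x$, and the minimum condition for $h$ forces $\nabla \phi(x)$ to lie in the super-differential of $d^2_{y_*}/2$ at $x$. Classical Riemannian geometry identifies this super-differential as the convex hull of $-\exp_x^{-1}(y_*)$ taken over all minimizing geodesics from $x$ to $y_*$, so the uniqueness of $\nabla \phi(x)$ already rules out multiple minimizing geodesics. To exclude the conjugate-point obstruction, I would compare the Aleksandrov second-order expansion of $\phi$ at $x$ (provided by \Cref{prop:existence_Hessian_semi_concave_functions}) with the one-sided quadratic upper bound of $d^2_{y_*}/2$ near $x$: degenerate Jacobi fields at a conjugate point would force $d^2_{y_*}/2$ to be strictly smaller than the corresponding quadratic in some transverse direction, violating $h \ge 0$. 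This step is delicate because it relies on the precise structure of the super-differential of the squared distance at conjugate-cut points.

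Once $y_* \notin \operatorname{Cut}(x)$, the squared distance $d^2_{y_*}$ is smooth on a neighborhood of $x$, so the minimum of $h$ at $x$ gives $\nabla \phi(x) = \nabla d^2_{y_*}/2(x) = -\exp_x^{-1}(y_*)$, whence $y_* = \exp_x(-\nabla \phi(x)) = F(x)$; comparing quadratic Taylor coefficients yields the positive semi-definiteness of $\operatorname{Hess}_x d^2_{y_*}/2 - \operatorname{Hess}_x \phi$. Finally, to see that formula (\ref{equa:differentiate_optimal_transport_map}) is the right definition of $\diff F(x)$, I would substitute the Aleksandrov second-order expansion of $\phi$ at $x$ into the chain-rule computation that justified \Cref{lem:differentiate_smooth_concave_function}; since $d^2_{y_*}$ and $\exp$ are smooth in the relevant neighborhood, the argument of that lemma goes through almost verbatim, producing exactly (\ref{equa:differentiate_optimal_transport_map}).
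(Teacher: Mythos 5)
First, a point of reference: the paper does not prove this proposition at all — it is quoted verbatim from Cordero-Erausquin, McCann and Schmuckenschläger (their Proposition 4.1), so there is no in-paper argument to compare against. Your outline is essentially a reconstruction of the argument of that cited source: contact point $y_*$ from the $c$-transform, first-order contact ruling out multiple minimizing geodesics, second-order (Aleksandrov) contact ruling out conjugate points, then smooth calculus at $x$. That is the right route, with a few repairs needed. To get the contact point you should replace $\psi$ by the continuous potential $\phi^c$ (an arbitrary $\psi$ in the definition need not be upper semicontinuous, but $\phi=\phi^{cc}$ lets you take $\psi=\phi^c$, which is Lipschitz on the compact set). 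Also you have sub- and super-differentials switched: the inequality $d^2_{y_*}/2 \ge \phi + \psi(y_*)$ with equality at $x$ shows that $\nabla\phi(x)$ is a \emph{sub}gradient of the semi-concave function $d^2_{y_*}/2$ at $x$; a semi-concave function with non-empty subdifferential at a point is differentiable there, which is what kills the ``two minimizing geodesics'' part of the cut locus. The conjugate-point exclusion, which you correctly flag as the delicate step, is only asserted: the actual ingredient is quantitative — for points $y_t$ on the geodesic strictly before the conjugate point, $\operatorname{Hess}_x d^2_{y_t}/2$ tends to $-\infty$ in the degenerate Jacobi direction as $t\to 1$, and the triangle inequality $d_{y_*}\le d_{y_t}+d(y_t,y_*)$ converts this into a super-quadratic drop of $d^2_{y_*}/2$ at $x$, contradicting the lower quadratic bound coming from the second-order expansion of $\phi$ (\Cref{prop:existence_Hessian_semi_concave_functions}). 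Without that estimate the step is a statement of intent, not a proof.

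The last step also needs care. Equation (\ref{equa:differentiate_optimal_transport_map}) is a \emph{definition} of $\diff F(x)$, so once (i)–(iii) are established there is nothing left to prove in the statement itself; but your claim that the chain-rule computation of \Cref{lem:differentiate_smooth_concave_function} ``goes through almost verbatim'' is not justified. That lemma uses a $\mathcal{C}^2$ function $\phi$ on a whole neighborhood (the auxiliary map $g(w,z)$ involves $\nabla\phi(z)$ for all $z$ near $x$ and a classical chain rule), whereas here $\nabla\phi$ exists only almost everywhere and is merely approximately differentiable at the single point $x$. What is true — and what the cited reference proves, via density points and approximate differentiability — is that $F$ admits (\ref{equa:differentiate_optimal_transport_map}) as an approximate differential, which is exactly the input needed later for the Jacobian/change-of-variables statement (\Cref{prop:Jacobian_optimal_transport_map}); so either present the formula as a definition, or prove the approximate-differentiability statement directly rather than by appeal to the smooth lemma.
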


The Jacobian determinant of the differential
$\di_x F$, as defined in \Cref{defn:differentiate_optimal_transport_map},
is calculated with respect to normal coordinate systems of
the tangent spaces $T_xM$ and $T_yM$ \cite[Lemma 2.1]{cordero2001riemannian}.
By \cite[Claim 4.5]{cordero2001riemannian}, these algebraic Jacobians
are equivalent to their geometric counterparts,
which results in the following change of variables formula.
For further details, see \cite[p.364 of Chapter 14]{villani2009optimal}.

\begin{prop}[Interpolation and change of variable formula]
	\label{prop:Jacobian_optimal_transport_map}
	Let $(M, \matheuvm{g})$ be a complete Riemannian manifold.
	Fix two absolutely continuous measures $\mu, \nu \in \mathcal{W}_2(M)$ with
	supports contained in two bounded open sets
	$\mathcal{X}$ and $\mathcal{Y}$ respectively.
	Let $F := \exp (-\nabla \phi)$ be the optimal transport map
	that pushes $\mu$ forward to $\nu$, where $\phi \in \mathcal{I}^c(\widebar{ \mathcal{X}}, \widebar {\mathcal{Y}})$ is a $c$-concave function
	given by \Cref{thm:optimal_transport_manifold}.

	Denote by $\phi^c \in \mathcal{I}^c(\widebar{\mathcal{Y}} , \widebar{\mathcal{X}})$
	the $c$-conjugate of $\phi$. The set
	\[
		\Omega : = \left\{ x \in \mathcal{X} \mid F(x) \in \mathcal{Y},\,
		\operatorname{Hess}_x \phi \text{ and } \operatorname{Hess}_{F(x)} \phi^c
		\text{ exist }\right\}
	\]
	satisfies the following properties:
	\begin{enumerate}
		\item $\mu(\Omega) = 1$;
		\item  defining $F^t : = \exp(- t \nabla \phi)$ for $0 \le t \le 1$,
		      we have $\operatorname{Jac} F^t > 0$ on $ \Omega$;
		\item denote by $f$ and $g$ the density functions of $\mu$ and $\nu$ respectively;
		      there exists a measurable subset $N \subset \Omega$
		      depending on these two density functions
		      such that $\mu(N) = 1$ and for $x \in N$,
		      \[
			      f(x) = g(F(x)) \operatorname{Jac} F(x) > 0;
		      \]
		\item for any Borel function $A$ on $[0, + \infty)$ with $A(0) = 0$,
		      with the set $N$ as in Property 3,
		      \begin{equation}
			      \label{equa:change_of_variable}
			      \int_{M} A(g)\diff \operatorname{Vol}
			      = \int_{N} A\left(\frac{f}{\operatorname{Jac} F}\right)
			      \operatorname{Jac} F \diff \operatorname{Vol}.
		      \end{equation}
		      (Either both integrals are undefined or both
		      take the same value in $\mathbb{R}\cup\{+\infty, -\infty\}$.)
	\end{enumerate}
\end{prop}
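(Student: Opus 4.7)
The plan is to prove the four items in order, each building on the previous, and relying on \Cref{prop:existence_Hessian_semi_concave_functions} (Aleksandrov's theorem for locally semi-concave functions on manifolds) and the pointwise differentiation formula developed in \Cref{lem:differentiate_smooth_concave_function} and \Cref{defn:differentiate_optimal_transport_map}.

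For item 1, I would write $\Omega$ as an intersection of three sets, each of which has full $\mu$-measure. First, since $F_{\#}\mu = \nu$ is concentrated on $\mathcal{Y}$, we have $\mu(F^{-1}(\mathcal{Y})) = \nu(\mathcal{Y}) = 1$. Second, because $\phi$ is $c$-concave it is locally semi-concave on $\mathcal{X}$, so \Cref{prop:existence_Hessian_semi_concave_functions} gives that $\operatorname{Hess}_x \phi$ exists $\operatorname{Vol}$-almost everywhere, hence $\mu$-almost everywhere by absolute continuity of $\mu$. Third, the same argument applied to $\phi^c$ on $\mathcal{Y}$ combined with $\nu \ll \operatorname{Vol}$ and $F_{\#}\mu = \nu$ shows that the set $\{x \mid \operatorname{Hess}_{F(x)}\phi^c \text{ exists}\}$ has full $\mu$-measure. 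Intersecting gives $\mu(\Omega) = 1$.

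For item 2, I would extend the derivation of \Cref{lem:differentiate_smooth_concave_function} to the interpolated map $F^t$. The key observation is that along the minimizing geodesic $s \mapsto F^s(x) = \exp_x(-s\nabla\phi(x))$, the scaled identity $\nabla(t\phi)(x) = \nabla d^2_{F^t(x)}/2\,(x)$ holds because the velocity $-\nabla\phi(x)$ traces a segment of the same minimizing geodesic. Replacing $\phi$ by $t\phi$ in the argument of \Cref{lem:differentiate_smooth_concave_function} then yields
\[
\diff F^t(x) = \diff \exp_x|_{-t\nabla\phi(x)} \circ \bigl(\operatorname{Hess}_x d^2_{F^t(x)}/2 - t\operatorname{Hess}_x \phi\bigr).
\]
The first factor is invertible for every $t \in [0, 1]$ because the absence of cut points along the minimizing geodesic from $x$ to $F(x)$ forces absence of conjugate points. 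For the second factor, the symmetric role played by $\phi^c$ at $F(x)$, which inverts $F$ locally via $\exp_{F(x)}(-\nabla\phi^c)$, supplies the reverse formula and therefore strict invertibility at $t = 1$; at $t = 0$ the factor is the identity; and for intermediate $t$ a Jacobi-field continuity argument, together with the semi-definiteness inherited from $\phi$ being $c$-concave, forbids vanishing. I expect this last step, ruling out degeneracy at some interior $t \in (0, 1)$, to be the main technical obstacle, since positivity at the endpoints alone is insufficient.

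For items 3 and 4, once $\operatorname{Jac} F > 0$ on $\Omega$, I would invoke the area formula for approximately differentiable maps: $F$ is essentially injective on $\Omega$ (a standard property of optimal transport maps with absolutely continuous source) and for any Borel $B \subset \mathcal{Y}$,
\[
\int_B g \diff \operatorname{Vol} = \nu(B) = \mu(F^{-1}(B)) = \int_{F^{-1}(B)\cap\Omega} g(F(x))\,\operatorname{Jac} F(x) \diff \operatorname{Vol}(x).
\]
Since this must coincide with $\int_{F^{-1}(B)\cap\Omega} f \diff \operatorname{Vol}$, the pointwise identity $f(x) = g(F(x))\operatorname{Jac} F(x)$ holds on a full $\mu$-measure set $N \subset \Omega$, giving item 3 (strict positivity comes from restricting $N$ to $\{f > 0\}$, still of full $\mu$-measure). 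Item 4 is then a direct substitution: using $g \circ F = f/\operatorname{Jac} F$ on $N$ and $A(0) = 0$ to discard the null set $M \setminus F(N)$,
\[
\int_M A(g) \diff \operatorname{Vol} = \int_{F(N)} A(g) \diff \operatorname{Vol} = \int_N A\!\left(\frac{f}{\operatorname{Jac} F}\right) \operatorname{Jac} F \diff \operatorname{Vol},
\]
where the last equality is the change of variables via $F$ on $N$, justified by items 1--3.
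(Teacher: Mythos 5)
Your treatment of items 1, 3 and 4 is sound and essentially reproduces what the paper simply outsources to \cite[Claim 4.4, Theorem 4.2, Corollary 4.7]{cordero2001riemannian}: full $\mu$-measure of $\Omega$ by intersecting three full-measure sets (using $\nu\ll\operatorname{Vol}$ and $F_{\#}\mu=\nu$ for the $\phi^c$ condition), the Monge--Amp\`ere identity via essential injectivity and an area formula for approximately differentiable maps, and the change of variables by substitution. The one point the paper actually proves itself is precisely the point you leave open, and that is where your proposal has a genuine gap: item 2 for $t\in(0,1)$. Your plan --- invertibility of $\diff\exp_x|_{-t\nabla\phi(x)}$ (correct: no cut point implies no conjugate point), the identity at $t=0$, positivity at $t=1$ via $\phi^c$, and then ``a Jacobi-field continuity argument together with semi-definiteness'' in between --- does not close. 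Positive semi-definiteness of $\operatorname{Hess}_x d^2_{F^t(x)}/2 - t\operatorname{Hess}_x\phi$ for each $t$ (which you do have, since $t\phi$ is $c$-concave) combined with strict positivity of the determinant at both endpoints does not prevent the determinant from vanishing at an interior time; continuity contributes nothing here, and you yourself flag this as the unresolved obstacle.

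The missing ingredient is a quantitative comparison at intermediate points rather than a continuity argument. Write
\[
\operatorname{Hess}_x d^2_{F^t(x)}/2 - t\operatorname{Hess}_x\phi
= \bigl(\operatorname{Hess}_x d^2_{F^t(x)}/2 - t\operatorname{Hess}_x d^2_{F(x)}/2\bigr)
+ t\bigl(\operatorname{Hess}_x d^2_{F(x)}/2 - \operatorname{Hess}_x\phi\bigr).
\]
The first summand is positive semi-definite by \cite[Lemma 2.3]{cordero2001riemannian}, a Hessian comparison for squared distances to points along the minimal geodesic from $x$ to $F(x)$; the second is positive definite on $\Omega$, being positive semi-definite with positive determinant (the $t=1$ case you already have). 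Minkowski's determinant inequality for positive semi-definite matrices then yields
$\det[\operatorname{Hess}_x d^2_{F^t(x)}/2 - t\operatorname{Hess}_x\phi] \ge t^m\det[\operatorname{Hess}_x d^2_{F(x)}/2 - \operatorname{Hess}_x\phi] > 0$,
which is exactly the interior positivity you need. Equivalently one may appeal to the $1/m$-concavity of $t\mapsto(\det J(t))^{1/m}$ along the interpolation, but some such convexity or comparison input is indispensable and cannot be replaced by continuity of Jacobi fields.
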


\begin{proof}
	All the statements follow from \cite[Claim 4.4, Theorem 4.2, Corollary 4.7]{cordero2001riemannian} except Property 2 for $t \in (0,1)$.
	Recall that  for any $c$-concave function $\phi$,
	we always have $\det [\di_{-t\nabla \phi(x)} \exp_x ] > 0$
	since $\exp_x (-t\nabla \phi(x)) $
	is not in the cut locus of $x$. As $t \phi$ is $c$-concave for $0 < t < 1$,
	it suffices to show that
\begin{equation}
	\label{equa:proof_property_2}
	\det[\operatorname{Hess}_x d^2_{F(x)} / 2 - \operatorname{Hess}_x \phi] > 0
	\implies
	\forall \, t \in (0, 1),\,
	\det[\operatorname{Hess}_x d^2_{F^t(x)} / 2 - t \operatorname{Hess}_x \phi] > 0.
\end{equation}

	Indeed, $\operatorname{Hess}_x d^2_{F^t(x)}/2 - t \operatorname{Hess}_x d^2_{F(x)}/2$
	is positive semi-definite for $0 < t < 1$ \cite[Lemma 2.3]{cordero2001riemannian},
	which implies (\ref{equa:proof_property_2}) according to
	Minkowski's determinant inequality
	\cite[(5.23)]{villani2021topics}.
	%	 Properties 1,3,4 are respectively
	%	\cite[Claim 4.4, Theorem 4.2, Corollary 4.7]{cordero2001riemannian}.
	%   Property 2 follows from $\operatorname{Jac} F > 0$ on $\Omega$.
	%  Indeed, for any $c$-concave function $\phi$,
	%	we always have $\det [\diff \exp_x |_{-\nabla \phi(x)}] > 0$
	%	(see for instance \cite[Proposition 4.1]{cordero2001riemannian}).
	%	Since $t \phi$ is $c$-concave for $0 < t < 1$
	%	\cite[Lemma 5.1]{cordero2001riemannian},
	%	
	%	Also, it is shown in \cite[Lemma 2.3]{cordero2001riemannian} that
\end{proof}

% ! TEX root = ../absolute_continuity.tex

\subsection{Proof of Hessian equality}

The Hessian equality (\ref{equa:hessian_equality}) to prove is a second-order relation.
We first demonstrate a first-order counterpart of this equality using the conclusion of \Cref{prop:construction_Wasserstein_barycenter}
that relates barycenters in manifolds to Wasserstein barycenters.
% To transform from a first-order relation to a second-order one,
% we rely on the computational observations in \Cref{lem:zero_approximate_derivative_mainfolds}
% and \Cref{lem:hessian_as_approximate_differential}.
%After this, the Hessian equality is proven using definitions.

\begin{thm}[Hessian equality for Wasserstein barycenters]
	\label{thm:hessian_equality_Wasserstein_barycenter}
	Let $(M, \matheuvm{g})$ be a complete Riemannian manifold.
	Given an integer $n \ge 2$,
	let $\lambda_i > 0, 1 \le i \le n$, be $n$ positive real numbers
	such that $\sum_{i=1}^n \lambda_i =1$ and
	let $\mu_i \in \mathcal{W}_2(M), 1 \le i \le n$, be $n$
	probability measures with compact support.
	We assume that $\mu_1$ is absolutely continuous.
	The unique barycenter $\widebar \mu$ of $\mathbb{P}: =\sum_{i=1}^n \lambda_i\, \delta_{\mu_i}$
	is absolutely continuous with compact support.
	For $ 1 \le i \le n $, let $F_i = \exp( - \nabla \phi_i)$ be the optimal
	transport map pushing $\widebar{\mu}$ forward to $\mu_i$,
	where $\phi_i$ is a $c$-concave function given by \Cref{thm:optimal_transport_manifold}.
	% We assume that $\phi_i, 1 \le i \le n$, are defined on a common open set $\Omega \subset M$.

	For $\widebar \mu$-almost every $x \in M$, $x$ is a barycenter of
	$\sum_{i=1}^n \lambda_i \,\delta_{F_i(x)}$, and we have the Hessian equality
	\begin{align}
		\label{equa:hessian_equality}
		\sum_{i=1}^n \lambda_i \operatorname{Hess}_x \phi_i & = 0.
	\end{align}
	% holds for $\widebar{\mu}$ almost everywhere $x$ in $\Omega$.
\end{thm}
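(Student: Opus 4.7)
My plan proceeds in three stages, mirroring the structure suggested in the excerpt: first establish the pointwise barycenter property for $\widebar\mu$-almost every $x$, then deduce a first-order vanishing relation among the gradients of the Kantorovich potentials, and finally upgrade it to the Hessian identity by a Lebesgue-density argument exploiting the absolute continuity of $\widebar\mu$.

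For the first stage I apply item~3 of Proposition~\ref{prop:construction_Wasserstein_barycenter} on the probability space $(M, \widebar\mu)$, taking $X := \operatorname{Id}$ and $X_i := F_i$. Their laws are $\widebar\mu, \mu_1, \ldots, \mu_n$, and the couplings $(\operatorname{Id}, F_i)_{\#}\widebar\mu$ are the optimal transport plans between $\widebar\mu$ and $\mu_i$ furnished by Theorem~\ref{thm:optimal_transport_manifold}, so that $\mathbb{E}\, d(X, X_i)^2 = W_2(\widebar\mu, \mu_i)^2$. Rerunning the chain of inequalities in the proof of Proposition~\ref{prop:construction_Wasserstein_barycenter} with the test measure $\nu = \widebar\mu$ forces equalities throughout, which gives that for $\widebar\mu$-a.e.\ $x$ the point $x$ is a barycenter of $\sum_{i=1}^n \lambda_i \delta_{F_i(x)}$. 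Lemma~\ref{lem:barycenter_discrete_measure_out_of_cut_locus} then ensures that at such a generic $x$, each $F_i(x)$ lies outside the cut locus of $x$, so every $d^2_{F_i(x)}$ is smooth in a neighborhood of $x$; the first-order optimality condition at the minimizer $x$ of $w \mapsto \sum_i \lambda_i d^2(w, F_i(x))$ yields $\sum_i \lambda_i \nabla d^2_{F_i(x)}/2(x) = 0$. Combined with the identity $\nabla \phi_i(x) = \nabla d^2_{F_i(x)}/2(x)$ from Proposition~\ref{defn:differentiate_optimal_transport_map}, this shows that $\sum_i \lambda_i \nabla \phi_i(x) = 0$ on a Borel set $A$ of full $\widebar\mu$-measure.

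The decisive step is passing from this pointwise gradient identity to the Hessian equality. Write $\widebar\mu = f \operatorname{Vol}$ using Theorem~\ref{thm:absolute_continuity_discrete}; then $A^c \cap \{f > 0\}$ is Lebesgue-negligible, so $A$ has full Lebesgue measure inside $\{f > 0\}$, and the Lebesgue density theorem shows that $\widebar\mu$-a.e.\ $x$ is a Lebesgue density point of $A$ belonging to $A$. Each $\phi_i$ is locally semi-concave as a $c$-concave function, so Proposition~\ref{prop:existence_Hessian_semi_concave_functions} grants the existence of the approximate Hessian $\operatorname{Hess}_x \phi_i$ at $\widebar\mu$-a.e.\ $x$. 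Fix such an $x$ and a normal chart $(\varphi, U)$ around it. In that chart the first-order relation reads $\sum_i \lambda_i \nabla^E(\phi_i \circ \varphi^{-1}) \equiv 0$ on $\varphi(A \cap U)$, a set of Lebesgue density $1$ at $\varphi(x)$, with the same vector vanishing at $\varphi(x)$ itself. The candidate $L = 0$ therefore satisfies the defining condition of the approximate derivative (tested along the density set $\varphi(A \cap U)$), so by uniqueness $\operatorname{ap} D_{\varphi(x)}\bigl[\sum_i \lambda_i \nabla^E(\phi_i \circ \varphi^{-1})\bigr] = 0$. By linearity of approximate differentiation this quantity also equals $\sum_i \lambda_i \operatorname{ap} D_{\varphi(x)} \nabla^E(\phi_i \circ \varphi^{-1})$, which under Definition~\ref{defn:approximate_Hessian} corresponds exactly to $\sum_i \lambda_i \operatorname{Hess}_x \phi_i$. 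Hence $\sum_i \lambda_i \operatorname{Hess}_x \phi_i = 0$, as claimed.

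The main obstacle is precisely this final upgrade. Kim and Pass's second-order balance, together with any naive summation of the pointwise bounds $\operatorname{Hess}_x d^2_{F_i(x)}/2 - \operatorname{Hess}_x \phi_i \geq 0$ from Proposition~\ref{defn:differentiate_optimal_transport_map}, only delivers an inequality; the new ingredient is the observation that $\sum_i \lambda_i \nabla \phi_i$ vanishes not just at a single $x$ but on a set of Lebesgue density $1$ around $x$, a conclusion that requires $\widebar\mu$ to be absolutely continuous. Care is also needed to verify that the pointwise Hessian from Definition~\ref{defn:approximate_Hessian} is additive under finite linear combinations and that the chart-independence of local semi-concavity permits working in a normal coordinate system throughout the density-point argument.
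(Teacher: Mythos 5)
Your proof is correct and follows essentially the same route as the paper's: the pointwise barycenter property via \Cref{prop:construction_Wasserstein_barycenter} applied to $(\operatorname{Id}, F_1, \ldots, F_n)$ on $(M, \widebar\mu)$, the first-order identity $\sum_{i}\lambda_i \nabla\phi_i = 0$ via \Cref{lem:barycenter_discrete_measure_out_of_cut_locus}, and the second-order upgrade by combining the absolute continuity of $\widebar\mu$, density points of the full-measure set, and uniqueness of approximate derivatives in a normal chart. The only (cosmetic) difference is that the paper channels the last step through the auxiliary locally semi-concave function $f := \sum_{i}\lambda_i\phi_i$, whose gradient vanishes on the full-measure set and whose Hessian is therefore null at density points, whereas you apply linearity and uniqueness of approximate derivatives directly to $\sum_{i}\lambda_i\nabla^E(\phi_i\circ\varphi^{-1})$.
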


\begin{proof}
	By \Cref{thm:absolute_continuity_discrete},
	$\widebar \mu$ is absolutely continuous with compact support.
	We now apply \Cref{prop:construction_Wasserstein_barycenter} to $\mathbb{P}$.
	Since $\widebar \mu$ is the unique barycenter of $\mathbb{P}$,
	it coincides with the barycenter constructed in \Cref{prop:construction_Wasserstein_barycenter}.
	Consider the identity map $\operatorname{Id}: (M, \mathcal{B}(M), \widebar \mu) \rightarrow M$
	as a random variable taking values in $M$.
	It has law $\widebar \mu$, and the random variable
	$F_i = F_i \circ \operatorname{Id}$ has law $\mu_i$ for $1 \le i \le n$.
	\Cref{prop:construction_Wasserstein_barycenter} implies that
	for $\widebar \mu$-almost every $x \in M$,
	$x$ is a barycenter of $\sum_{i=1}^n \lambda_i\, \delta_{F_i(x)}$.

	Let $\Omega$ be a Borel subset of $M$ with
	$\widebar \mu(\Omega) = 1$ such that for $x \in \Omega$,
	$\nabla \phi_i(x)$ exists for $1 \le i \le n$ and
	$x$ is a barycenter of $\sum_{i=1}^n \lambda_i\, \delta_{F_i(x)}$.
	Fix a point $x \in \Omega$.
	By definition, $x$ reaches the minimum of the function
	\[
		h:	w \in M \mapsto W_2(\delta_w, \sum_{i=1}^n \lambda_i\, \delta_{F_i(x)})^2
		= \sum_{i=1}^n \lambda_i \,d_{\matheuvm{g}}(w, F_i(x))^2.
	\]
	By \Cref{lem:barycenter_discrete_measure_out_of_cut_locus},
	the fixed point $x$ is out of the cut locus of any point $F_i(x)$ for $ 1 \le i \le n$.
	We can thus differentiate $h$ at $w = x$ and get
	$\nabla h |_{w = x} = 0$.
	Since $\nabla \phi_i (x) = \frac{1}{2}\nabla d^2_{F_i(x)} |_{w = x}$ holds
	as both gradients exist \cite[Lemma 3.3]{cordero2001riemannian},
	it follows that $ \sum_{i=1}^n \lambda_i \nabla \phi_i (x) = \frac{1}{2} \nabla h |_{w = x} = 0$.

	Define $f : = \sum_{i=1}^n \lambda_i\,\phi_i$ on a neighborhood of $\Omega$
	that is a common domain for $\phi_i, 1 \le i \le n$.
	The function $f$ is locally semi-concave as each $\phi_i$ is so,
	and for $x \in \Omega$,
	$\nabla f(x) = \sum_{i=1}^n \lambda_i \nabla \phi_i(x) =  0 \in T_xM$
	by the previous arguments.
	Let $\Omega_1 \subset \Omega$ be the set where the Hessians of
	$f$ and $\phi_i, 1 \le i \le n$, all exist.
	Let $\Omega_2$ be the set of density points of $\Omega$.
	We have $\operatorname{Vol}(\Omega \setminus \Omega_1) = 0$
	by \Cref{prop:existence_Hessian_semi_concave_functions},
	and $\operatorname{Vol}(\Omega \setminus \Omega_2) = 0$ by \cite[Theorem 1.35]{evans2018measure}.

	For $x \in \Omega_1$, using the linearity of the Hessian operator, we get
	$\operatorname{ Hess}_x f = \sum_{i=1}^n \lambda_i\, \operatorname{Hess}_x \phi_i$
	by (\ref{equa:Hessian}). Besides, noting that $\nabla f$ is constant on $\Omega$, we infer from the last statement of
	Lemma \ref{defn:approximate_derivative_manifolds} that for $x \in \Omega_2 \cap \Omega$, $\operatorname{Hess}_x f = 0$.
	It follows that for $x \in \Omega_1 \cap \Omega_2$,
	$ \sum_{i=1}^n \lambda_i\, \operatorname{Hess}_x \phi_i = 0$.
	This proves the theorem since $\widebar \mu(\Omega_1 \cap \Omega_2) = 1$
	thanks to the absolute continuity of $\widebar \mu$.
\end{proof}

%{\color{red}\begin{rmk}
	% In the proof of \Cref{thm:hessian_equality_Wasserstein_barycenter},
%	
%	Let us emphasize that the Hessian equality (\ref{equa:hessian_equality}) is not a straightforward consequence 
%	of barycenter's definition through the inequality
%	 $ \sum_{i=1}^n \lambda_i \operatorname{Hess}_x d^2_{F_i(x)} \ge 0$,
%	which follows from the fact that $x$ reaches
%	the global minimum of $h$. Indeed, $\operatorname{Hess}_x \phi_i$ is not necessarily equal to
%	$\operatorname{Hess}_x d^2_{F_i(x)}$.
%\end{rmk}
%}
% \begin{rmk}[Uniqueness of barycenter in \cref{equa:barycenter_inherited}]
% 	\label{rmk:uniqueness_a.e_might_be_true}
% 	It is natural to ask whether
% 	for $x \in \Omega$, $x$ is the unique barycenter of
% 	$\sum_{i=1}^n \lambda_i\, \delta_{F_i(x)}$,
% 	which is the case if we can find an $\Omega_3$ that is independent of $B$.
% 	However, it seems that this step is missing in \cite[Lemma 4.3]{kim2017wasserstein}.
% \end{rmk}

% ! TEX root = ../absolute_continuity.tex

\section{Lower Ricci curvature bounds and displacement functionals}
\label{sec:ricci_curvature}

In this section, we introduce a class of displacement functionals exploiting
the Hessian equality in \Cref{thm:hessian_equality_Wasserstein_barycenter}.
This is one of the primary difference between our approach and the one proposed by Kim and Pass
\cite{kim2017wasserstein} regarding the absolute continuity of the barycenter.

% ! TEX root = ../absolute_continuity.tex

In \Cref{sec:hessian_equality},
the notion of Hessian plays a central role in
differentiating optimal transport maps.
There is also the following widely used connection
between $\operatorname{Hess}_x \phi$ and Jacobi equations
involving $\exp(-\nabla \phi)$,
which is demonstrated in various works including
Sturm \cite{sturm2005convex},
Lott and Villani \cite[\S 7]{lott2009ricci},
Cordero-Erausquin et al.\@ \cite{cordero2006prekopa}
and Villani \cite[Chapter 14]{villani2009optimal}.
The function $J(t)$ defined below is actually
$\di_x \exp(- \nabla t\,\phi)$ using (\ref{equa:differentiate_optimal_transport_map}).
By convention, for a function $f$ with variable $t$,
we denote by $\dot f$ its derivative with respect to $t$.

\begin{prop}
	\label{prop:Jacobi_equation}
	Let $(M, \matheuvm{g})$ be an $m$-dimensional complete Riemannian manifold and
	let $\phi$ be a $c$-concave function defined on $\widebar {\mathcal{X}} \subset M$
	with $\mathcal{X}$ a bounded open set.
	Fix a point $x \in \mathcal{X}$ such that $\operatorname{Hess}_x \phi$
	(\Cref{prop:existence_Hessian_semi_concave_functions}) exists.
	Then $t \in [0, 1] \mapsto \gamma(t) = \exp(-t \nabla \phi)(x)$ is a minimal geodesic.
	Define
	\[
	J:	t \in [0, 1] \mapsto
		\di_{- t\nabla \phi(x)} \exp_x \cdot (\operatorname{Hess}_x d^2_{\gamma(t)}/ 2
		- t \operatorname{Hess}_x \phi).
	\]
	Denote by $\Delta \phi(x)$ the trace of $\operatorname{Hess}_x \phi$ and
	by $\det J(t), 0 \le t \le 1$ the determinant of $J(t)$
	calculated in coordinates using
	orthonormal bases of $T_xM$ and $T_{\gamma(t)} M$.
	% (in orthonormal bases of $T_xM$ And $T_{\gamma(1)} M$).
	If $- K \in \mathbb{R}$ is a lower Ricci curvature bound of $M$
	along $\gamma$ and $\det J > 0$, then $\ell : = - \log \det J$
	defined on $[0,1]$ satisfies
	\[
		\ddot \ell \ge \dot \ell ^2 / m - K \Vert \nabla \phi(x) \Vert^2
	\]
	with $\ell(0) = 0$ and $\dot \ell(0) = \Delta \phi(x)$.
	In particular,
	\[
		l \ge \Delta \phi(x) -  K \| \nabla \phi(x) \|^2 / 2,
	\]
	where we define $l := \ell(1) = - \log \det J(1)$.
\end{prop}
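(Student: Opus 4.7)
The plan is to realise $J(t)$ as a matrix-valued Jacobi field along $\gamma$ and then extract the differential inequality for $\ell = -\log\det J$ from the trace of a matrix Riccati equation, exactly in the spirit of the classical Bishop comparison argument adapted to $c$-concave potentials.

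First, since $\operatorname{Hess}_x\phi$ exists, \Cref{defn:differentiate_optimal_transport_map} guarantees that $\exp_x(-\nabla\phi(x))$ is not in the cut locus of $x$, so $\gamma(t) = \exp_x(-t\nabla\phi(x))$ is a minimal geodesic on $[0,1]$. Applied to the $c$-concave function $t\phi$ for each $t \in (0,1]$, the same proposition identifies $J(t)$ with the differential at $x$ of $F^t := \exp(-t\nabla\phi)$. For fixed $u \in T_xM$, the variation $(s,t) \mapsto F^t(\exp_x(su))$ is a one-parameter family of geodesics in $t$, so $t \mapsto J(t)(u)$ is the Jacobi field along $\gamma$ with initial data $J(0)(u) = u$ and $\tfrac{D}{dt} J(0)(u) = \nabla_u(-\nabla\phi) = -\operatorname{Hess}_x\phi(u)$. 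Fixing a parallel orthonormal frame along $\gamma$ turns $J$ into an $m\times m$ matrix solution of $\ddot J + \mathcal R\, J = 0$, where the symmetric matrix $\mathcal R(t)$ represents $v\mapsto R(v,\dot\gamma)\dot\gamma$, with $J(0) = I$ and $\dot J(0) = -\operatorname{Hess}_x\phi$; note that $\operatorname{tr}\mathcal R(t) = \operatorname{Ric}(\dot\gamma,\dot\gamma)$.

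On $[0,1]$, where $\det J > 0$ by hypothesis, set $U := \dot J J^{-1}$. The Wronskian $W := J^T \dot J - \dot J^T J$ is constant in $t$ (its derivative $J^T\ddot J - \ddot J^T J = -J^T\mathcal R J + J^T\mathcal R J$ vanishes by symmetry of $\mathcal R$) and zero at $t=0$ (by symmetry of $\operatorname{Hess}_x\phi$), so $W \equiv 0$, which forces $U(t)$ to remain symmetric. Differentiating $\dot J = UJ$ and substituting the Jacobi equation yields the matrix Riccati identity $\dot U + U^2 + \mathcal R = 0$. Taking traces and using $\dot\ell = -\operatorname{tr}U$ gives
\begin{equation*}
\ddot\ell \;=\; \operatorname{tr}(U^2) + \operatorname{Ric}(\dot\gamma,\dot\gamma).
\end{equation*}
Cauchy--Schwarz for the symmetric matrix $U$ furnishes $\operatorname{tr}(U^2) \ge (\operatorname{tr} U)^2/m = \dot\ell^2/m$, and the lower Ricci bound combined with $\|\dot\gamma(t)\| = \|\nabla\phi(x)\|$ gives $\operatorname{Ric}(\dot\gamma,\dot\gamma) \ge -K\|\nabla\phi(x)\|^2$, completing the differential inequality.

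The initial values $\ell(0) = -\log\det I = 0$ and $\dot\ell(0) = -\operatorname{tr}(-\operatorname{Hess}_x\phi) = \Delta\phi(x)$ are immediate from the initial data for $J$. Dropping the non-negative term $\dot\ell^2/m$ in the differential inequality and integrating twice from $0$ to $1$ then yields $l \ge \Delta\phi(x) - K\|\nabla\phi(x)\|^2/2$. The step requiring the most care is the verification that $U(t)$ remains symmetric on all of $[0,1]$: this is precisely where the gradient-field nature of $-\nabla\phi$ enters (through the symmetry of $\operatorname{Hess}_x\phi$) and is what legitimises the Cauchy--Schwarz step capturing the $\dot\ell^2/m$ term.
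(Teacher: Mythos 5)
Your overall route (realise $J$ as a matrix Jacobi field, pass to the Riccati equation for $U=\dot J J^{-1}$, take traces, use Cauchy--Schwarz and the Ricci bound, then integrate twice) is exactly the classical argument that the paper itself invokes by citation instead of proving (Sturm, Lott--Villani, Cordero-Erausquin et al., Villani Ch.~14), and everything from the Riccati identity onward is correct and carefully done -- including the Wronskian argument for the symmetry of $U$, the initial values $\ell(0)=0$, $\dot\ell(0)=\Delta\phi(x)$, and the final integration.

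The gap is the step identifying $J(t)u$ with the variation field of $(s,t)\mapsto F^t(\exp_x(su))$. Here $\phi$ is only a semi-concave ($c$-concave) function with an \emph{approximate} (Aleksandrov) Hessian at the single point $x$ (\Cref{prop:existence_Hessian_semi_concave_functions}); $\nabla\phi$ need not exist at $\exp_x(su)$ for $s\neq 0$, so $F^t(\exp_x(su))$ is not even defined for all small $s$, and where it is defined the map $s\mapsto F^t(\exp_x(su))$ is in general not classically differentiable at $s=0$: approximate differentiability of $\nabla\phi$ only controls difference quotients along a set of density one, so you cannot read off $\partial_s|_{s=0}$, nor exchange $\tfrac{D}{\diff t}\partial_s=\tfrac{D}{\diff s}\partial_t$, which is what the ``variation through geodesics'' argument requires. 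Note also that invoking \Cref{defn:differentiate_optimal_transport_map} for $t\phi$ only \emph{defines} $\diff F^t(x)$ by the stated formula; it carries no variational content by itself. The needed fact -- that the formula-defined $J(t)$ solves the Jacobi equation with $J(0)=\operatorname{Id}$, $\dot J(0)=-\operatorname{Hess}_x\phi$ -- is true, and the cheapest repair is to observe that $J(t)$ depends on $\phi$ only through the fixed data $v:=\nabla\phi(x)$ and $S:=\operatorname{Hess}_x\phi$: replace $\phi$ by any smooth function $\psi$ with $\nabla\psi(x)=v$ and $\operatorname{Hess}_x\psi=S$ (e.g.\ a quadratic in normal coordinates). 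Since $\nabla d^2_{\gamma(t)}/2(x)=tv$, \Cref{lem:differentiate_smooth_concave_function} applied to $t\psi$ gives $\diff\bigl(\exp(-t\nabla\psi)\bigr)(x)=J(t)$, and for the smooth $\psi$ your geodesic-variation argument is legitimate and yields the Jacobi property and the initial conditions. (Alternatively one can verify the Jacobi ODE directly from the formula, as in \cite{cordero2001riemannian} or \cite[Ch.~14]{villani2009optimal}.) With that substitution inserted, the rest of your proof stands as written.
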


The following displacement functionals
$f \diff \operatorname{Vol} \in \mathcal{W}_2(M) \mapsto \int G(f) \diff \operatorname{Vol}$
are inspired by the entropy functional, where $G(x):= x \log x$.
To uniformly bound (from above) their values of the sequence of barycenter measures
in the law of large numbers for Wasserstein barycenters,
we add the assumption of bounded derivatives.
% Even with artificial assumptions imposed on $G$,
Examples of $G$ can be constructed according to
\Cref{lem:modified_de_la_Vallee_Poussin_theorem}.

\begin{prop}[Displacement functionals]
	\label{prop:entropy_estimation}
	Let $(M, \matheuvm{g})$ be an $m$-dimensional complete Riemannian manifold
	with a lower Ricci curvature bound $ - K$ ($K \ge 0$).
	Given an integer $n \ge 2$,
	let $\lambda_i > 0, 1 \le i \le n$, be $n$ positive real numbers
	such that $\sum_{i=1}^n \lambda_i = 1$ and let $\mu_i \in \mathcal{W}_2(M), 1 \le i \le n$, be $n$
	probability measures with compact support.
	Assume that there is an integer $ 1\le k \le n$ such that
	for any index $ 1 \le i \le k$, $\mu_i $ is absolutely continuous
	with density function $g_i$.
	Denote by $\widebar \mu$ the unique Wasserstein barycenter
	of $\mathbb{P}: = \sum_{i=1}^{n} \lambda_i\, \delta_{\mu_i} \in (\mathcal{W}_2(\mathcal{W}_2(M)), \mathbb{W}_2)$,
	which is absolutely continuous, and we denote
	by $f$ its density function.

	Let $G$ be a function on $[0, \infty)$ such that
	$G(0) = 0$, and the function
	$H: x \in \mathbb{R} \mapsto {G(e^x)}\,{e^{-x}}$ is
	continuously differentiable with non-negative derivative
	bounded above by some constant $L_H > 0$.
	The following inequality holds,
	\begin{equation}
		\label{equa:entropy_estimation_on_M}
		\int_M G(f) \diff \operatorname{Vol}
		\le \sum_{i=1}^k \frac{\lambda_i}{\Lambda}
		\int_M G(g_i) \diff \operatorname{Vol} + \frac{L_H K}{2 \Lambda}
		\mathbb{W}_2(\mathbb{P}, \delta_{\widebar{\mu}})^2
		+ \frac{L_H}{2 \Lambda}({m^2} + 2m),
	\end{equation}
	where we define the constant $\Lambda := \sum_{i=1}^k \lambda_i$.
\end{prop}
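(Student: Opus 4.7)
My plan is to reduce both $\int_M G(f)\,\diff \operatorname{Vol}$ and each $\int_M G(g_i)\,\diff \operatorname{Vol}$ to integrals against $\widebar{\mu}$ via the change-of-variable formula, peel off a scalar error term using the monotone and Lipschitz character of $H$, and finally absorb the remaining error using the Hessian equality combined with the Jacobi inequality.

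For $i = 1,\dots,n$, let $\phi_i$ be a $c$-concave potential provided by Theorem~\ref{thm:optimal_transport_manifold} so that $F_i := \exp(-\nabla \phi_i)$ is the optimal transport map pushing $\widebar{\mu}$ onto $\mu_i$; write $u := \log f$ and $\ell_i := -\log \operatorname{Jac} F_i$, both of which are well-defined $\widebar{\mu}$-a.e.\ by Proposition~\ref{prop:Jacobian_optimal_transport_map} and Proposition~\ref{prop:existence_Hessian_semi_concave_functions}. The identity $G(x) = x\,H(\log x)$ together with Proposition~\ref{prop:Jacobian_optimal_transport_map}, item~4, applied with $A = G$, yields, for each $1 \le i \le k$,
\[
	\int_M G(g_i)\,\diff \operatorname{Vol} = \int_M G(f / \operatorname{Jac} F_i)\,\operatorname{Jac} F_i\,\diff\operatorname{Vol} = \int_M H(u + \ell_i)\,\diff \widebar{\mu},
\]
and likewise $\int_M G(f)\,\diff\operatorname{Vol} = \int_M H(u)\,\diff \widebar{\mu}$. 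Since $H$ is nondecreasing and $L_H$-Lipschitz, the elementary inequality $H(a) - H(b) \le L_H (a-b)^+$ specializes pointwise to $H(u) - H(u + \ell_i) \le L_H \ell_i^-$, where $\ell_i^- := \max(-\ell_i, 0)$. Multiplying by $\lambda_i / \Lambda$, summing over $i = 1, \dots, k$ and integrating against $\widebar{\mu}$ gives
\[
	\int_M G(f)\,\diff \operatorname{Vol} \le \sum_{i=1}^k \frac{\lambda_i}{\Lambda} \int_M G(g_i)\,\diff\operatorname{Vol} + \frac{L_H}{\Lambda} \sum_{i=1}^k \lambda_i \int_M \ell_i^-\,\diff \widebar{\mu}.
\]
Using $\ell_i^- \ge 0$ to extend the sum to all $n$ indices, it suffices to prove
\[
	\sum_{i=1}^n \lambda_i \int_M \ell_i^-\,\diff \widebar{\mu} \le \frac{K}{2}\,\mathbb{W}_2(\mathbb{P}, \delta_{\widebar{\mu}})^2 + \frac{m^2 + 2m}{2}.
\]

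For this last estimate, the Jacobi inequality of Proposition~\ref{prop:Jacobi_equation} gives $\ell_i \ge \Delta \phi_i - \frac{K}{2}\|\nabla \phi_i\|^2$, hence the pointwise bound $\ell_i^- \le (\Delta \phi_i)^- + \frac{K}{2}\|\nabla \phi_i\|^2$. Summing with weights $\lambda_i$ and invoking the Hessian equality of Theorem~\ref{thm:hessian_equality_Wasserstein_barycenter}, which yields $\sum_{i=1}^n \lambda_i \Delta \phi_i = 0$ and therefore $\sum_i \lambda_i (\Delta\phi_i)^- = \sum_i \lambda_i (\Delta\phi_i)^+$, converts the signed negative parts into positive ones. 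The positive semi-definiteness of $\operatorname{Hess}_x d_{F_i(x)}^2 / 2 - \operatorname{Hess}_x \phi_i$ from Proposition~\ref{defn:differentiate_optimal_transport_map} then gives $\Delta \phi_i \le \operatorname{tr}(\operatorname{Hess}_x d_{F_i(x)}^2 / 2)$, and the Laplacian-comparison theorem under the Ricci lower bound $-K$ controls this trace by $m$ plus a multiple of $K\,\|\nabla \phi_i(x)\|^2$. Integrating against $\widebar{\mu}$ and using $\int_M \|\nabla \phi_i\|^2\,\diff \widebar{\mu} = W_2(\widebar{\mu}, \mu_i)^2$, hence $\sum_i \lambda_i \int_M \|\nabla \phi_i\|^2\,\diff \widebar{\mu} = \mathbb{W}_2(\mathbb{P}, \delta_{\widebar{\mu}})^2$, produces the required inequality.

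The main obstacle will be the fine bookkeeping of constants in this last step: the precise dimensional factor $(m^2 + 2m)/2$ and the curvature coefficient $K/2$ emerge from specific choices in the Laplacian-comparison inequality (how one bounds $r\coth r$) and from the way the positive-part manipulation trades $(\Delta \phi_i)^-$ against $(\Delta \phi_i)^+$. A crude application of the comparison inequality gives a valid but numerically looser bound; selecting the right variant makes the accounting align cleanly with $(m^2+2m)/2$ and $K/2$. Once this bookkeeping is done, dividing through by $\Lambda$ produces the claimed inequality~\eqref{equa:entropy_estimation_on_M}.
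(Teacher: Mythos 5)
Your overall skeleton is the same as the paper's: rewrite $\int_M G(g_i)\,\diff\operatorname{Vol}$ as $\int H(\log f+\ell_i)\,\diff\widebar\mu$ by the change of variables, use \Cref{prop:Jacobi_equation} to get $\ell_i\ge \Delta\phi_i-\tfrac K2\|\nabla\phi_i\|^2$, use the Laplacian comparison, and use the Hessian equality of \Cref{thm:hessian_equality_Wasserstein_barycenter} to trade the indices $i\le k$ for the indices $i>k$; your pointwise bound $H(u)-H(u+\ell_i)\le L_H\,\ell_i^-$ is a legitimate (indeed pointwise sharper) substitute for the paper's mean-value-theorem step. The genuine gap is in the final accounting, which you leave unfinished and whose proposed repair does not work. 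The route you describe --- bound $\ell_i^-\le(\Delta\phi_i)^-+\tfrac K2\|\nabla\phi_i\|^2$, convert $\sum_i\lambda_i(\Delta\phi_i)^-$ into $\sum_i\lambda_i(\Delta\phi_i)^+$ via the Hessian equality, then bound each $(\Delta\phi_i)^+$ by $m+m^2/2+\tfrac K2\|\nabla\phi_i\|^2$ --- charges the curvature term twice to every index $i\le k$ (once from the Jacobi inequality, once from the comparison), so it can only yield the coefficient $\frac{L_HK}{\Lambda}$ in front of $\mathbb{W}_2(\mathbb{P},\delta_{\widebar\mu})^2$, not the claimed $\frac{L_HK}{2\Lambda}$. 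Your diagnosis that the factor is lost in ``how one bounds $r\coth r$'' is also off: weakening the curvature part of the comparison via Young's inequality with a parameter necessarily inflates the dimensional constant beyond $(m^2+2m)/2$, so no variant of the comparison inequality rescues this bookkeeping.

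The correct accounting keeps $a_i:=\Delta\phi_i-\tfrac K2\|\nabla\phi_i\|^2$ as a block: the comparison says precisely $a_i\le m+m^2/2$, hence for $i\le k$, $\ell_i^-\le a_i^-=a_i^+-a_i\le (m+m^2/2)-\Delta\phi_i+\tfrac K2\|\nabla\phi_i\|^2$; summing with weights $\lambda_i$ over $i\le k$, trading $-\sum_{i\le k}\lambda_i\Delta\phi_i=\sum_{i>k}\lambda_i\Delta\phi_i$ by the Hessian equality, and applying the comparison only to the indices $i>k$ gives $\sum_{i\le k}\lambda_i\int_M\ell_i^-\,\diff\widebar\mu\le \tfrac K2\sum_{i=1}^n\lambda_i W_2(\widebar\mu,\mu_i)^2+\tfrac{m^2+2m}{2}$, so each index contributes its curvature term exactly once; this is exactly what the paper's $H'(\xi)$ computation accomplishes, and plugged into your first display it gives \eqref{equa:entropy_estimation_on_M} with the stated constants. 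Two further small points: extending the sum of $\ell_i^-$ to $i>k$ is both unnecessary and delicate, since for a non--absolutely continuous $\mu_i$ the Jacobian $\operatorname{Jac}F_i$ may vanish on a set of positive $\widebar\mu$-measure, so \Cref{prop:Jacobi_equation} is not available there (though $\ell_i^-=0$ makes the bound vacuous); and all identities should be restricted to the full-measure set $N_1\cap\ldots\cap N_k$ from \Cref{prop:Jacobian_optimal_transport_map} where $\log f$ and the $\ell_i$, $i\le k$, are defined.
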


\begin{rmk}
	The following example helps to understand (\ref{equa:entropy_estimation_on_M}).
	Take $\mathbb{P} = \lambda \, \delta_{\mu_1} + (1- \lambda) \delta_{\mu_2}$
	with $0 < \lambda < 1$ and absolutely continuous measures $\mu_1, \mu_2 \in \mathcal{W}_2(M)$.
	Set $G(x) := x \log x$. Since $H(x) = x$, we choose $L_H = 1$.
	Define $\operatorname{Ent}(f \cdot \operatorname{Vol}) := \int_M G(f) \diff \operatorname{Vol}$.
	The inequality (\ref{equa:entropy_estimation_on_M}) becomes
	\[
		\operatorname{Ent}(\widebar{\mu}) \le
		\lambda \operatorname{Ent}(\mu_1) + (1-\lambda) \operatorname{Ent}(\mu_2)
		+ \frac{K}{2} \lambda(1-\lambda) W_2(\mu_1, \mu_2)^2
		+ \frac{m^2}{2} + m,
	\]
	which has exactly one additional term ${L_H}(m^2 + 2m)/(2 \Lambda)$
	compared to the $\lambda$-convexity expression of $\operatorname{Ent}$
	used to define lower Ricci curvature bound $-K$ for metric measure spaces
	in \cite[\S 4,2]{sturm2006geometryI} and \cite[Definition 0.7]{lott2009ricci}.

	Moreover, ${L_H}(m^2 + 2m)/(2 \Lambda)$ is also the only additional term
	when we compare inequality (\ref{equa:entropy_estimation_on_M}) with
	the Wasserstein Jensen’s inequality proven by Kim and Pass \cite[Theorem 7.11]{kim2017wasserstein},
	which corresponds to the case $k=n$.
	However, our inequality (\ref{equa:entropy_estimation_on_M}) for the case $k < n$
	is crucial to the proof of our main result in the next section.
\end{rmk}

\begin{proof}[Proof of \Cref{prop:entropy_estimation}]
	For $1 \le i \le n$, let $F_i := \exp(- \nabla \phi_i)$ be the optimal transport map
	from $\widebar \mu$ to $\mu_i$ with $\phi_i$ a $c$-concave function
	given by \Cref{thm:optimal_transport_manifold}.
	According to \Cref{thm:hessian_equality_Wasserstein_barycenter} and
	\Cref{prop:Jacobian_optimal_transport_map},
	there exists a Borel set $\Omega \subset M$ with $\widebar \mu (\Omega) = 1$
	such that $\sum_{i=1}^n \lambda_i \operatorname{Hess}_x \phi_i = 0$ for $x \in \Omega$,
	$\operatorname{Jac} \exp(- t \nabla \phi_i) > 0$ on $\Omega$
	for $t \in [0, 1]$ and $1 \le i \le k$, and
	\begin{equation}
		\label{equa:change_of_variable_with_G_and_f}
		\int_{M} G\left(g_i\right)\diff \operatorname{Vol}
		= \int_{N_i} G\left(\frac{f}{\operatorname{Jac} F_{i}}\right)
		\operatorname{Jac} F_{i} \diff \operatorname{Vol}, \quad 1 \le i \le k,
	\end{equation}
	where $N_i \subset \Omega$ for $1 \le i \le k$ are Borel sets
	such that $\widebar{\mu}(N_i) = 1$ and
	$f= g_i(F_i) \operatorname{Jac} F_i > 0$ on $ N_i$.
	Hence, $\log f$ is well-defined on $\cup_{i=1}^k N_i$.
	Define $l_i(x) := - \log \operatorname{Jac} F_i(x)$ on $\Omega$.
	It follows from (\ref{equa:change_of_variable_with_G_and_f}) that
	\begin{equation}
		\label{equa:rewriting_integral_G_g_i}
		\int_{M} G(g_i) \diff \operatorname{Vol}
		= \int_{N_i} H(\log f + l_i)\diff \widebar {\mu},
		\quad 1 \le i \le k.
	\end{equation}
	Applying \Cref{prop:Jacobi_equation} to $\phi_i$ for $ 1 \le i \le k$, 
	we have on $\Omega$,
	\begin{equation}
		\label{equa:lower_bound_l_i}
		l_i \ge \Delta \phi_i -  K \| \nabla \phi_i \|^2 / 2,
		\quad 1 \le i \le k.
	\end{equation}

	For $x \in \Omega$ and $1 \le i \le n$, since $\operatorname{Hess}_x d^2_{F_i(x)} /2
		- \operatorname{Hess}_x \phi_i$ is positive semi-definite
	(\Cref{defn:differentiate_optimal_transport_map}),
	we can also bound $\Delta \phi_i(x)$ from above using
	the upper bound of the Laplacian of distance functions
	observed by Kim and Pass \cite[Lemmma 2.7]{kim2017wasserstein}:
	\begin{align}
		\label{equa:upper_bound_for_Delta_phi}
		\Delta \phi_i(x) \le \Delta d^2_{F_i(x)} /2
		 & \le m \frac{\sqrt K d_{\matheuvm{g}}(x, F_i(x))}{\tanh (\sqrt K d_{\matheuvm{g}}(x, F_i(x)))} \nonumber \\
		 & \le m(1 + \sqrt{K} d_{\matheuvm{g}}(x, F_i(x)))
		\le m + m^2/ 2 + K\, \| \nabla \phi_i(x) \|^2 / 2,
	\end{align}
	where we used the general inequality
	$\alpha / \tanh \alpha \le 1 + \alpha$ for $\alpha \ge 0$
	\footnote{
		Since $\lim_{\alpha \downarrow 0} \frac{\alpha}{\tanh \alpha} = 1$,
		it suffices to show that the function
		$f(\alpha): =\sinh \alpha + \alpha \sinh \alpha - \alpha \cosh \alpha$ is non-negative
		for $\alpha \ge 0$.
		As $f(0) = 0$ and $f^\prime(\alpha) = \sinh \alpha + \alpha (\cosh \alpha - \sinh \alpha)
			= \sinh\alpha + \alpha \, e^{-\alpha}$,
		we have $f^\prime(\alpha) \ge 0$ and thus $f(\alpha) \ge f(0) = 0$.
	},
	applied the inequality of arithmetic and geometric means
	to $ \sqrt{K \,d_{\matheuvm{g}}(x, F_i(x))^2 } \cdot \sqrt{m^2}$,
	and employed the equality $d_{\matheuvm{g}}(x, F_i(x)) = \|\nabla \phi_i(x) \|$ for $x \in \Omega$.
	With our assumptions on $H$, (\ref{equa:lower_bound_l_i}) and
	(\ref{equa:upper_bound_for_Delta_phi}) imply
	that for $1 \le i \le k$,
	on the set $\cup_{i=1}^k N_i$ (where $\log f$ is well-defined),
	\begin{align}
		H(\log f + l_i) - H(\log f) = H^\prime (\xi) \,l_i   \nonumber & \ge
		H^\prime (\xi)  [\Delta \phi_i - K \| \nabla \phi_i \|^2 / 2] \nonumber \\        & \ge
		         H^\prime (\xi)  [\Delta \phi_i -  K \| \nabla \phi_i \|^2 / 2
		- m -{m^2}/{2}] \nonumber                                               \\        & \ge
		         \label{equa:H_derivative_comparison}
		L_H( \Delta \phi_i - K \| \nabla \phi_i \|^2 / 2) - L_H(m + m^2/2),
	\end{align}
	where we applied the mean value theorem to $H$ that gave
	the real number $\xi$ between $\log f + l_i$ and $\log f$.
	Sum up $k$ inequalities as (\ref{equa:H_derivative_comparison})
	with coefficients $\lambda_i / \Lambda$ on the set $\cup_{i=1}^k N_i$,
	\begin{align}
		H(\log f)
		 & \le \sum_{i=1}^k \frac{\lambda_i}{\Lambda} H(\log f + l_i)
		- \frac{L_H}{\Lambda} \sum_{i=1}^k \lambda_i
		(\Delta \phi_i -  K \| \nabla \phi_i \|^2 / 2) +
		L_H(m + m^2/2) \nonumber                                      \\
		 & = \sum_{i=1}^k \frac{\lambda_i}{\Lambda} H(\log f + l_i)
		+ \frac{L_H}{\Lambda} \sum_{i> k}^n \lambda_i\,\Delta \phi_i
		+ \frac{L_H K}{2 \Lambda} \sum_{i=1}^k \lambda_i \,\| \nabla \phi_i \|^2
		+ L_H(m+m^2/2)\nonumber                                       \\
		\label{equa:sumed_up_H_inequality}
		 & \le \sum_{i=1}^k \frac{\lambda_i}{\Lambda} H(\log f + l_i)
		+ \frac{L_H K}{2 \Lambda} \sum_{i=1}^n \lambda_i \,\|\nabla \phi_i \|^2 +
		\frac{L_H}{2\Lambda}(m^2 + 2m),
	\end{align}
	where we used $\sum_{i=1}^n\lambda_i \,\Delta \phi_i = 0$ derived from the Hessian equality
	for the first equality and used
	(\ref{equa:upper_bound_for_Delta_phi}) for the last inequality.
	Finally, (\ref{equa:entropy_estimation_on_M}) follows from
	(\ref{equa:rewriting_integral_G_g_i}) after
	integrating (\ref{equa:sumed_up_H_inequality})
	over $N_1 \cap \ldots \cap N_k$ against
	$\widebar{\mu}$ since $\widebar{\mu}(N_i) = 1$ for $ 1 \le i \le k$
	and $W_2(\widebar{\mu}, \mu_i)^2
		% = \int_M d(x, F_i(x))^2 \diff \widebar{\mu}(x)
		= \int_M \| \nabla \phi_i \|^2 \diff \widebar \mu$ for $ 1 \le i \le n$.
\end{proof}

% ! TEX root = ../absolute_continuity.tex

\section{Proof of our main result}
\label{sec:measure_theory}

In this section, we prove our main result, i.e., the following theorem.
\begin{thm}
	\label{thm:final_theorem_manifolds}
	Let $(M, \matheuvm{g})$ be a complete Riemannian manifold
	with a lower Ricci curvature bound.
	If a probability measure $\mathbb{P} \in \mathcal{W}_2(\mathcal{W}_2(M))$
	gives mass to the set of absolutely continuous probability
	measures on $M$, then its unique Wasserstein barycenter
	is absolutely continuous.
\end{thm}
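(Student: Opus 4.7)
The plan is to combine the four main ingredients developed earlier---the finite-case result (Theorem \ref{thm:absolute_continuity_discrete}), the displacement-functional estimate (Proposition \ref{prop:entropy_estimation}), the law of large numbers for Wasserstein barycenters (Theorem \ref{thm:law_of_large_numbers_Wasserstein_barycenter}), and the generalized de la Vallée Poussin criterion (Lemma \ref{lem:modified_de_la_Vallée_Poussin_theorem})---through an approximation followed by a lower-semicontinuity argument. Uniqueness of the barycenter $\widebar\mu$ of $\mathbb{P}$ is immediate from Proposition \ref{prop:uniqueness_barycenter_Wasserstein} and Lemma \ref{lem:measurable_set_absolutely_continous_measures}, using that $\mathbb{P}(\mathcal{A}) > 0$, where $\mathcal{A} \subset \mathcal{W}_2(M)$ denotes the set of absolutely continuous probability measures.

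First I would invoke Souslin space theory to express $\mathcal{A}$ as a countable increasing union of subsets compact for a suitable weak topology; inner regularity of $\mathbb{P}$ then produces one such subset $\mathcal{K}$ with $\mathbb{P}(\mathcal{K}) \geq \alpha > 0$. Applying the generalized de la Vallée Poussin criterion to the family of densities of measures in $\mathcal{K}$ would yield a function $G:[0,\infty)\to\mathbb{R}$ of superlinear growth at infinity, with $G(0)=0$ and satisfying the hypotheses of Proposition \ref{prop:entropy_estimation} (i.e.\@ $H(x)=G(e^x)e^{-x}$ is $\mathcal{C}^1$ with non-negative derivative bounded by some $L_H$), such that $C_0 := \sup_{\mu\in\mathcal{K}} \int_M G(d\mu/d\operatorname{Vol})\, d\operatorname{Vol} < \infty$.

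Next, I would approximate $\mathbb{P}$ in $\mathcal{W}_2(\mathcal{W}_2(M))$ by finitely supported measures $\mathbb{P}_j = \sum_i \lambda_i^j\, \delta_{\mu_i^j}$ arranged so that the total $\mathbb{P}_j$-mass $\Lambda_j$ on $\mathcal{K}$ stays uniformly bounded below, say $\Lambda_j \geq \alpha/2$. This is achieved by decomposing $\mathbb{P} = \mathbb{P}(\mathcal{K})\,(\mathbb{P}|_{\mathcal{K}}/\mathbb{P}(\mathcal{K})) + (1-\mathbb{P}(\mathcal{K}))\,(\mathbb{P}|_{\mathcal{K}^c}/(1-\mathbb{P}(\mathcal{K})))$ and approximating each piece separately by finitely supported measures concentrated in $\mathcal{K}$ and in $\mathcal{K}^c$ respectively. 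By Theorem \ref{thm:absolute_continuity_discrete}, each $\mathbb{P}_j$ has a unique absolutely continuous barycenter $\widebar\mu_j = f_j \cdot \operatorname{Vol}$, and Theorem \ref{thm:law_of_large_numbers_Wasserstein_barycenter} combined with the uniqueness of $\widebar\mu$ gives $\widebar\mu_j \to \widebar\mu$ in $\mathcal{W}_2(M)$. Proposition \ref{prop:entropy_estimation} applied to $\mathbb{P}_j$, taking as the absolutely continuous marginals precisely those $\mu_i^j$ that lie in $\mathcal{K}$, yields
\[
\int_M G(f_j)\, d\operatorname{Vol} \leq \frac{1}{\Lambda_j}\sum_{i:\, \mu_i^j \in \mathcal{K}}\lambda_i^j \int_M G(g_i^j)\, d\operatorname{Vol} + \frac{L_H K}{2\Lambda_j}\mathbb{W}_2(\mathbb{P}_j, \delta_{\widebar\mu_j})^2 + \frac{L_H(m^2 + 2m)}{2\Lambda_j}.
\]
Each term on the right is bounded uniformly in $j$: the first by $2C_0/\alpha$ thanks to the choice of $G$, the third trivially, and the second because $\mathbb{W}_2(\mathbb{P}_j, \delta_{\widebar\mu_j})$ stays bounded along the convergent sequences $\mathbb{P}_j \to \mathbb{P}$ and $\widebar\mu_j \to \widebar\mu$. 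Invoking the lower semi-continuity of $\nu \mapsto \int_M G(d\nu/d\operatorname{Vol})\, d\operatorname{Vol}$ along Wasserstein convergence (Lemma \ref{lem:lower_semi_continuity_entropy}) then delivers $\int_M G(d\widebar\mu/d\operatorname{Vol})\, d\operatorname{Vol} < \infty$, and the superlinear growth of $G$ forces $\widebar\mu$ to be absolutely continuous.

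The main obstacle I expect is the simultaneous tension on $G$: Proposition \ref{prop:entropy_estimation} constrains $H(x) = G(e^x)e^{-x}$ to be $\mathcal{C}^1$ with bounded derivative, which limits $G$ to grow essentially like $x\log x$ up to lower-order corrections, while the de la Vallée Poussin criterion is typically deployed to produce $G$ with enough superlinear growth to characterize uniform integrability or absolute continuity. Producing one $G$ that meets both constraints and also uniformly controls the $G$-energy over a weakly compact subset of $\mathcal{A}$ is precisely the purpose of Lemma \ref{lem:modified_de_la_Vallée_Poussin_theorem} and the Souslin-space identifications in Section \ref{sec:measure_theory}; verifying these is the delicate step of the argument.
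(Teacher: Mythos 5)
Your proposal follows essentially the paper's own route: the Souslin-space identification of Borel sets yielding a set $\mathcal{K}$ of positive $\mathbb{P}$-mass compact for the weak $L^1$ topology, the de la Vall\'ee Poussin construction of $G$, approximation by finitely supported measures keeping mass at least $\alpha$ on the absolutely continuous atoms, the estimate of \Cref{prop:entropy_estimation}, the law of large numbers, and the lower semicontinuity of \Cref{lem:lower_semi_continuity_entropy}. The only structural difference is that you fold the intermediate step (\Cref{thm:absolute_continuity_main_theorem}, stated for the closed sublevel sets $\operatorname{B}(G,L)$) into a single argument with atoms taken in $\mathcal{K}$ itself; this is workable, since $\mathcal{K}$ is compact for the narrow topology, hence closed in $(\mathcal{W}_2(M), W_2)$, so finitely supported measures with atoms in $\mathcal{K}$ are dense in $\mathcal{W}_2(\mathcal{K})$.

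Two points in your outline need patching. First, the de la Vall\'ee Poussin criterion (\Cref{lem:modified_de_la_Vallée_Poussin_theorem}) takes \emph{uniform integrability} as its hypothesis, whereas \Cref{lem:density_function_topologies_induce_same_Borel_sets} only delivers compactness of $\mathcal{K}$ in the topology $\tau = \sigma(L^1, L^\infty)$; the missing bridge is the Dunford--Pettis theorem (\Cref{thm:Dunford_Pettis_theorem}), which the paper has to prove for $\sigma$-finite reference measures, and it is essential here that the compactness be in $\tau$ rather than merely for narrow convergence. (Relatedly, what Souslin theory gives is not a $\sigma$-compact exhaustion of $\mathcal{A}$ but the coincidence of the Borel $\sigma$-algebras, so that $\mathbb{P}|_{\mathcal{A}}$ is Radon for $\tau$ and inner regularity applies.) Second, \Cref{thm:absolute_continuity_discrete} and \Cref{prop:entropy_estimation} require all marginals to have \emph{compact support}, so the atoms of your $\mathbb{P}_j$ must be arranged to be compactly supported; since truncating and renormalizing an element of $\mathcal{K}$ may leave $\mathcal{K}$, you should relax ``atoms in $\mathcal{K}$'' to ``compactly supported absolutely continuous atoms with uniformly bounded $G$-energy'' (the bounded derivative of $H$ shows the $G$-energy survives truncation up to a controlled factor), which is precisely why the paper prefers to run the approximation inside the closed set $\operatorname{B}(G,L)$ rather than inside $\mathcal{K}$.
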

New auxiliary results in this section no longer require Riemannian structure,
so we usually consider a Polish metric space equipped with a $\sigma$-finite Borel measure.

% Recall that a Polish space is a topological space homeomorphic to a complete
% separable metric space.
% The empty set is also included in the class of Polish spaces.

% ! TEX root = ../absolute_continuity.tex

\subsection{Wasserstein barycenters' absolute continuity by approximation}

We first deduce an intermediate result by applying the law of
large numbers for Wasserstein barycenters to the displacement functionals
introduced in \Cref{prop:entropy_estimation}.

The following lemma,
taken from Santambrogio \cite[Proposition 7.7, Remak 7.8]{Santambrogio2015},
originates from Buttazzo and Freddi \cite[Theorem 2.2]{buttazzo1991functionals},
which was slightly generalized later in
\cite[Theorem 2.34]{ambrosio2000functions}.
One can find another slightly generalized version by
Ambrosio et al.\@ \cite[Theorem 15.8, Theorem 15.9]{ambrosio2021lectures}
with a proof for the case of Euclidean spaces.

\begin{lem}
	\label{lem:lower_semi_continuity_entropy}
	Let $E$ be a Polish metric space with a $\sigma$-finite Borel measure $\mu$.
	Let $G$ be a function defined on $[0, \infty)$ such that
	\begin{enumerate}
		\item $G(x) \ge 0$;
		\item $G$ continuous and convex;
		\item $\displaystyle \lim_{x \rightarrow \infty} {G(x)}/{x} = \infty$.
	\end{enumerate}
	With respect to the reference measure $\mu$,
	if there is a sequence of absolutely continuous probability measures
	$ \nu_i = f_i \diff \mu, \, i \ge 1$
	converging weakly to a probability measure $\nu$
	such that
	$ \displaystyle \liminf_{i \rightarrow \infty} \int_{E} G(f_i) \diff \mu$ is finite,
	then $\nu$ is also absolutely continuous and
	\begin{equation}
		\label{equa:integral_functional_lower_semicontinuity}
		\int_{E} G(f) \diff \mu
		\le \liminf_{i \rightarrow \infty} \int_{E} G(f_i) \diff \mu < \infty,
	\end{equation}
	where $f$ is the density of $\nu$.
\end{lem}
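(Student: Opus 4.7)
The plan is to combine three classical tools: the de la Vallée Poussin criterion for uniform integrability, the Dunford--Pettis theorem, and the weak-$L^1$ lower semi-continuity of convex integral functionals.

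First I would pass to a subsequence, still denoted $(f_i)$, along which
\[
\int_E G(f_i) \diff \mu \longrightarrow C := \liminf_{j \to \infty} \int_E G(f_j) \diff \mu < \infty.
\]
The superlinearity assumption $G(x)/x \to \infty$ gives, for every $M>0$, an $R_M>0$ with $G(x) \ge M x$ for $x \ge R_M$; hence $\int_{\{f_i \ge R_M\}} f_i \diff \mu \le C/M$ uniformly in $i$. Combined with $\|f_i\|_{L^1(\mu)} = 1$, this yields the uniform integrability of $(f_i)$ in $L^1(E,\mu)$. The Dunford--Pettis theorem then gives relative weak compactness of $(f_i)$ in $L^1(\mu)$, so after a further extraction one may assume $f_i \rightharpoonup f$ weakly in $L^1(\mu)$ for some nonnegative $f \in L^1(\mu)$.

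Next I would identify $\nu$ with $f \cdot \mu$. For every bounded continuous $\phi : E \to \mathbb{R}$, $\phi$ belongs to $L^\infty(\mu)$, so the weak $L^1$ convergence gives $\int_E \phi f_i \diff \mu \to \int_E \phi f \diff \mu$. At the same time, the weak convergence $\nu_i \to \nu$ of probability measures gives $\int_E \phi f_i \diff \mu = \int_E \phi \diff \nu_i \to \int_E \phi \diff \nu$. Testing with $\phi \equiv 1$ yields $\int_E f \diff \mu = 1$, so $f \cdot \mu$ is a probability measure; and on a Polish space two finite Borel measures agreeing on all bounded continuous functions coincide, whence $\nu = f \cdot \mu$. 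In particular $\nu \ll \mu$ with density $f$.

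Finally, inequality (\ref{equa:integral_functional_lower_semicontinuity}) follows from the weak-$L^1$ lower semi-continuity of $h \mapsto \int_E G(h) \diff \mu$. This combines two standard facts: first, the lower semi-continuity of $G$ together with $G \ge 0$ and Fatou's lemma (applied after extracting an $\mu$-a.e.\ convergent subsequence from any strongly $L^1$-convergent one) shows that this functional is strongly lower semi-continuous on $L^1(\mu)$; second, by Mazur's theorem any convex strongly lower semi-continuous functional on $L^1$ is also weakly lower semi-continuous. Applied to $f_i \rightharpoonup f$ this gives $\int_E G(f) \diff \mu \le \liminf_{i \to \infty} \int_E G(f_i) \diff \mu = C$, which is exactly the liminf of the original sequence.

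The main obstacle I expect is the identification $\nu = f \cdot \mu$, where one has to carefully pair two distinct notions of weak convergence (narrow convergence of probability measures versus weak $L^1$ convergence against $L^\infty$ test functions). The Polish hypothesis is used precisely there, to guarantee that bounded continuous functions separate finite Borel measures while simultaneously being admissible test functions in $L^\infty(\mu)$; the remainder of the proof is a careful assembly of classical weak compactness and semi-continuity results.
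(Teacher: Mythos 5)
Your reduction of superlinearity to truncation-type uniform integrability, the identification of $\nu$ with $f\cdot\mu$ through $C_b(E)\subset L^\infty(\mu)$, and the final semicontinuity step (Fatou for strong $L^1$ lower semicontinuity plus convexity and Mazur) are all fine. The gap is the Dunford--Pettis step. For an \emph{infinite} $\sigma$-finite measure $\mu$, the condition $\lim_{C\to\infty}\sup_i\int_{\{f_i>C\}}f_i\diff\mu=0$ together with boundedness in $L^1(\mu)$ does \emph{not} imply relative weak compactness in $L^1(\mu)$; one needs in addition that for every $\epsilon>0$ there is a set $A$ with $\mu(A)<\infty$ and $\sup_i\int_{E\setminus A}f_i\diff\mu\le\epsilon$ (note that \Cref{thm:Dunford_Pettis_theorem} in the paper only proves the converse implication). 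Concretely, take $E=[0,1]$, $\mu=\operatorname{Leb}+\sum_{n\ge1}\delta_{1/n}$ (a $\sigma$-finite Borel measure), $f_i=\frac1i\,\mathbbm{1}_{\{1/n\,\mid\, i<n\le 2i\}}$ and $G(x)=x^2$. Each $f_i$ is a probability density with $f_i\le1$, so your uniform integrability holds trivially; $\nu_i=f_i\cdot\mu\to\delta_0$ weakly; and $\int_E G(f_i)\diff\mu=1/i\to0$. Yet $(f_i)$ has no weakly convergent subsequence in $L^1(\mu)$: any weak limit would vanish $\mu$-a.e.\ on $[1/N,1]$ for every $N$, hence be $0$, while testing against $\mathbbm{1}_E\in L^\infty(\mu)$ forces its integral to be $1$. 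So the compactness you invoke cannot be derived from the stated hypotheses.

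The same example shows the missing exhaustion condition cannot be recovered from the narrow convergence alone: Prokhorov gives compact sets carrying most of the $\nu_i$-mass, but for a merely $\sigma$-finite $\mu$ such sets may have infinite $\mu$-measure, and indeed in the example the limit $\delta_0$ is not absolutely continuous, so some additional hypothesis such as local finiteness of $\mu$ is needed both for your compactness step and for the conclusion itself (it is satisfied in the paper's application, where $\mu=\operatorname{Vol}$ on a manifold). Under local finiteness your plan can be repaired: tightness of $(\nu_i)$ yields compact $K_\epsilon$ with $\sup_i\int_{E\setminus K_\epsilon}f_i\diff\mu\le\epsilon$ and $\mu(K_\epsilon)<\infty$, which supplies the missing Dunford--Pettis hypothesis, and the rest of your argument then goes through. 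Note also that the references the paper relies on for this lemma (Santambrogio, Buttazzo--Freddi, Ambrosio et al.) argue differently: they represent $\int G(f)\diff\mu$ by convex duality as a supremum of functionals continuous under weak convergence of measures, which yields lower semicontinuity directly and avoids any weak $L^1$ compactness of the approximating densities.
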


Since convergence in Wasserstein metric implies weak convergence,
\Cref{lem:lower_semi_continuity_entropy} ensures that
the set below is closed in $\mathcal{W}_2(E)$.

\begin{defn}[$\bset(G,L)$ sets]
	\label{defn:bounded_entropy_subset}
	Let $E$ be Polish space with a $\sigma$-finite Borel measure $\mu$.
	Let $G$ be a function on $[0, \infty)$ such that
	\begin{enumerate}
		\item $G$ is non-negative and $G(x) = 0$ for $x \in [0, 1]$;
		\item $G$ is non-decreasing, continuous and convex;
		\item $\displaystyle \lim_{x \rightarrow \infty} {G(x)}/{x} = \infty$;
		\item the function $H(x) : = G(e ^ x) / e^ x$
		      has continuous non-negative bounded derivative.
	\end{enumerate}
	Given $L > 0$, the following set
	of measures,
	\[
		\bset(G, L) : = \left\{ \nu \in \mathcal{W}_2(E) \mid
		\nu = f\cdot \mu, \, \int_M G(f)
		\diff \mu \le L \right\},
	\]
	is a closed subset of $\mathcal{W}_2(E)$.
\end{defn}

The function $\widehat G: x \mapsto x\, \log x$ on $[0, +\infty)$
is not always positive and non-decreasing, so it fails to meet the above assumptions.
Since $\widehat G(e^{-1}) = -e^{-1}$ is the minimum value of $\widehat G$,
we can consider the function that is equal to $0$ on $[0, 1]$ and
is equal to $\widehat G(x / e) + e^{-1}$ on $x \in [1, +\infty)$,
which is a valid example.
Indeed, we include the property that $G$ is non-decreasing
to ensure that each element in $\bset(G, L)$ can be approximated
by elements in $\bset(G, L + 1)$ with compact support,
% compactly supported measures are dense in $\bset(G, L)$ sets,
as shown in the following lemma.

\begin{lem}
	\label{lem:B_G_L_sets_with_compact_support}
	Let $(E, d)$ be a proper metric space equipped with a $\sigma$-finite Borel measure $\mu$.
	Fix a $\bset(G, L)$ set as defined in \Cref{defn:bounded_entropy_subset}.
	For any probability measure $\nu \in \bset(G, L)$,
	there exists a sequence of probability measures
	in $\bset(G, L + 1)$ with compact support
	that converges to $\nu$ with respect to the Wasserstein metric.
\end{lem}

\begin{proof}
	Let $f$ be the density function of $\nu$ with respect to $\mu$, i.e., $\nu = f \cdot \mu$.
	Since the integral $\int_E f \diff \mu = 1$ is non-zero,
	there exists a positive number $l > 0$ such that
	the set $\{ x \in E \mid f(x) \le l\}$ is not $\mu$-negligible.
	Since $\mu$ is $\sigma$-finite, there exists a bounded subset $Y \subset E$
	such that $f(y) \le l$ for $y \in Y$ and $0 < \mu(Y) < + \infty$.
	We define for $(k, x) \in \mathbb{N}^* \times E$,
	\begin{equation}
		\label{equa:approximate_B_G_L_density}
		g(k, x) :=
		f(x) \mathbbm{1}_{\widebar{B}(x_0, k)} (x) +
		\alpha_k\, \mathbbm{1}_{Y \cap \widebar{B}(x_0, k)} (x),
	\end{equation}
	where we set $\alpha_k := 0$ if $\mu(Y \cap \widebar{B}(x_0, k)) = 0$
	and $\alpha_k := [ 1 - \nu(\widebar{B}(x_0, k))] / \mu(Y \cap \widebar{B}(x_0, k))$
	if $\mu(Y \cap \widebar{B}(x_0, k)) > 0$.
	Since $\lim_{k \rightarrow \infty}  \mu(Y \cap \widebar{B}(x_0, k)) = \mu(Y) > 0$,
	for $k$ sufficiently large such that $a_k > 0$,
	the sequence $\alpha_k$ is decreasing with $\lim_{k \rightarrow +\infty} \alpha_k = 0$.
	Let $k_0 \in \mathbb{N}^*$ be the smallest integer such that $\alpha_{k_0} > 0$.
	Our choices of $\alpha_k$ and $k_0$ ensure that for $n \in \mathbb{N}^*$,
	$\alpha_{k_0 + n} > 0$ and $g(k_0 + n, \cdot)$
	is a probability density function with respect to $\mu$.
	Define $\nu_n := g(k_0 + n, \cdot) \cdot \mu$.
	Since $(E, d)$ is a proper metric space,
	$Y$ is pre-compact set, which implies that
	$\nu_n$ is a probability measure with compact support
	and thus $\nu_n \in \mathcal{W}_2(E)$.
	We now prove the convergence $\nu_n \rightarrow \nu$ with respect to $W_2$
	using test functions.
	For a continuous function $\phi: E \rightarrow \mathbb{R}$ such that
	$|\phi(x)| \le 1 + d(x_0, x)^2$, note that
	\[
		|\phi (x) \,g(k_0 + n, x) | \le
		\left(1 + d(x_0, x)^2\right)
		\cdot \left(f(x) + \alpha_{k_0}\,\mathbbm{1}_Y(x)\right)
		\quad\text{and}\quad
		\lim_{n \rightarrow \infty} g(k_0+n, x) = f(x).
	\]
	As $Y$ is pre-compact with $\mu(Y) < + \infty$ and $\nu \in \mathcal{W}_2(E)$,
	it follows from the dominated convergence theorem that
	\begin{equation*}
		\lim_{n \rightarrow \infty}	\int_E \phi \diff \nu_n
		=  \lim_{n \rightarrow \infty} \int_E \phi(x)\, g(k_0 +n, x) \diff \mu(x)
		=  \int_E \phi(x)\, f(x) \diff \mu(x)
		= \int_E \phi \diff \nu,
	\end{equation*}
	which implies $\lim_{n \rightarrow \infty} W_2(\nu_n, \nu) = 0$
	according to the characterization of $W_2$ using the test function $\phi$
	\cite[(iv) of Definition 6.8 and Theorem 6.9]{villani2009optimal}.

	Since $f(y) \le l$ for $y \in Y$ and $G$ is non-decreasing,
	we have
	\begin{equation}
		\label{equa:dominated_convergence_G}
		\forall\,(n, x) \in \mathbb{N}^* \times E,\quad
		G(g(k_0 +n, x)) \le G(f(x)) + G(l + \alpha_{k_0})\, \mathbbm{1}_Y(x).
	\end{equation}
	Since $G$ is a continuous function and $\mu(Y) < 0$,
	we can apply the dominated convergence theorem to (\ref{equa:dominated_convergence_G})
	and obtain
	\[
		\lim_{n \rightarrow \infty} \int_E G(g(k_0 +n, x)) \diff \mu(x)
		= \int_E G(f(x)) \diff \mu(x).
	\]
	Hence, for $n$ sufficiently large, $\nu_n \in \bset(G, L + 1)$,
	which concludes the proof.
\end{proof}

As the assumptions in \Cref{defn:bounded_entropy_subset} include the ones
we used to construct displacement functionals in \Cref{prop:entropy_estimation},
we obtain the following intermediate result.

\begin{prop}
	\label{thm:absolute_continuity_main_theorem}
	Let $(M, \matheuvm{g})$ be a complete Riemannian manifold
	with a lower Ricci curvature bound.
	If $\mathbb{P} \in \mathcal{W}_2(\mathcal{W}_2(M))$
	gives mass to some closed set $\bset(G, L)$
	defined in \Cref{defn:bounded_entropy_subset},
	i.e., $\mathbb{P}(\bset(G, L)) > 0$,
	then the unique barycenter of $\mathbb{P}$ is absolutely continuous.
\end{prop}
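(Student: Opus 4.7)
The plan is to approximate $\mathbb{P}$ by finitely supported measures $\mathbb{P}_j \in \mathcal{W}_2(\mathcal{W}_2(M))$ whose atoms are compactly supported, apply the displacement-functional inequality of \Cref{prop:entropy_estimation} to each $\mathbb{P}_j$ in order to obtain a uniform upper bound on $\int_M G(f_j)\,\diff\operatorname{Vol}$ for the density $f_j$ of the (unique) barycenter $\widebar\mu_j$ of $\mathbb{P}_j$, and then conclude via the law of large numbers for Wasserstein barycenters (\Cref{thm:law_of_large_numbers_Wasserstein_barycenter}) combined with the closedness of the sublevel sets $\operatorname{B}(G, L'')$ (\Cref{defn:bounded_entropy_subset}). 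Since $\operatorname{B}(G, L)$ consists of absolutely continuous measures and $\mathbb{P}(\operatorname{B}(G, L)) > 0$, \Cref{prop:uniqueness_barycenter_Wasserstein} ensures that the barycenter $\widebar\mu$ of $\mathbb{P}$ is unique.

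Set $p_0 := \mathbb{P}(\operatorname{B}(G, L)) > 0$ and decompose $\mathbb{P} = p_0 \mathbb{Q} + (1-p_0)\mathbb{Q}'$ with $\mathbb{Q} := p_0^{-1}\,\mathbb{P}|_{\operatorname{B}(G, L)}$. I would approximate $\mathbb{Q}$ and $\mathbb{Q}'$ separately in $\mathbb{W}_2$ by finitely supported measures $\mathbb{Q}_j$ and $\mathbb{Q}_j'$ whose atoms all have compact support, with the crucial additional constraint that every atom of $\mathbb{Q}_j$ lies in a common enlarged set $\operatorname{B}(G, L')$ for some $L'$ independent of $j$. This is achievable: each $\nu \in \operatorname{B}(G, L)$ admits compactly supported approximations $\nu_R \in \operatorname{B}(G, L')$ obtained by restricting $\nu$ to a large ball and redistributing the lost tail mass into a fixed bounded region, with $W_2(\nu_R, \nu) \to 0$; the whole measure $\mathbb{Q}$ is then approximated in $\mathbb{W}_2$ by finitely supported measures whose atoms are such truncations. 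Setting $\mathbb{P}_j := p_0 \mathbb{Q}_j + (1-p_0) \mathbb{Q}_j'$ then gives $\mathbb{W}_2(\mathbb{P}_j, \mathbb{P}) \to 0$.

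Applying \Cref{prop:entropy_estimation} to each $\mathbb{P}_j$ with the atoms coming from $\mathbb{Q}_j$ playing the role of the absolutely continuous measures (total weight $p_0$, with individual entropies bounded by $L'$), the unique barycenter $\widebar\mu_j$, which is absolutely continuous with compact support by \Cref{thm:absolute_continuity_discrete}, has density $f_j$ satisfying
\[
\int_M G(f_j)\,\diff \operatorname{Vol} \le L' + \frac{L_H K}{2 p_0}\, \mathbb{W}_2(\mathbb{P}_j, \delta_{\widebar\mu_j})^2 + \frac{L_H}{2 p_0}(m^2 + 2m).
\]
The Wasserstein term is uniformly bounded in $j$: by the barycenter property of $\widebar\mu_j$, $\mathbb{W}_2(\mathbb{P}_j, \delta_{\widebar\mu_j}) \le \mathbb{W}_2(\mathbb{P}_j, \delta_{\widebar\mu})$, and the right-hand side converges to $\mathbb{W}_2(\mathbb{P}, \delta_{\widebar\mu})$ as $\mathbb{P}_j \to \mathbb{P}$. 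Hence $\widebar\mu_j \in \operatorname{B}(G, L'')$ for some fixed $L'' > 0$. By \Cref{thm:law_of_large_numbers_Wasserstein_barycenter} and the uniqueness of $\widebar\mu$, the whole sequence $\widebar\mu_j$ converges to $\widebar\mu$ in $\mathcal{W}_2(M)$, and the closedness of $\operatorname{B}(G, L'')$ forces $\widebar\mu \in \operatorname{B}(G, L'')$; in particular, $\widebar\mu$ is absolutely continuous.

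The main obstacle is the approximation step: establishing that compactly supported measures in $\operatorname{B}(G, L)$ are $W_2$-dense in $\operatorname{B}(G, L)$ up to a controlled enlargement of the entropy level. The assumption that $H(x) = G(e^x)\,e^{-x}$ has bounded derivative is essential here, since it encodes a sub-doubling-type control on $G$ (roughly, $G(2y) \le 2G(y) + O(y)$) which limits how truncation and renormalization can inflate $\int_M G(f)\,\diff \operatorname{Vol}$, so that the small redistributed mass does not blow up the entropy of the truncated approximations.
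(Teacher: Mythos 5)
Your proposal is correct and takes essentially the same route as the paper's proof: decompose $\mathbb{P}$ according to $\operatorname{B}(G,L)$, approximate both parts in $\mathbb{W}_2$ by finitely supported measures with compactly supported atoms (those from the first part lying in a fixed entropy sublevel set), apply \Cref{prop:entropy_estimation} to obtain a uniform bound on $\int_M G(f_j)\,\diff\operatorname{Vol}$, and conclude via \Cref{thm:law_of_large_numbers_Wasserstein_barycenter} and the closedness of $\operatorname{B}(G,L'')$ (i.e.\ \Cref{lem:lower_semi_continuity_entropy}). The only differences are cosmetic and in your favor: you explicitly allow the entropy level to enlarge from $L$ to $L'$ when truncating atoms to get compact support (a point the paper glosses over by asserting the atoms can be kept in $\operatorname{B}(G,L)$), and you bound $\mathbb{W}_2(\mathbb{P}_j,\delta_{\widebar{\mu}_j})$ by the minimizing property of the barycenter rather than through the convergence $\widebar{\mu}_j\to\widebar{\mu}$; neither changes the structure of the argument.
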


\begin{proof}
	Write $\mathbb{P} = \mathbb{P}(\bset(G,L)) \,\mathbb{P}^1 +
		(1 - \mathbb{P}(\bset(G,L))\, \mathbb{P}^2$
	with $\mathbb{P}^1, \mathbb{P}^2 \in \mathcal{W}_2(\mathcal{W}_2(M))$ such that
	$\mathbb{P}^1$ is supported in $\bset(G,L)$.
	We approximate $\mathbb{P}$ in the Wasserstein metric $\mathbb{W}_2$
	with finitely supported measures
	$\mathbb{P}_j \in \mathcal{W}_2(\mathcal{W}_2(M))$ by approximating $\mathbb{P}^1$
	and $\mathbb{P}^2$ as follows.

	Since $\bset(G, L)$ equipped with the Wasserstein metric $\mathbb{W}_2$
	is a non-empty closed subspace of $\mathcal{W}_2(M)$,
	we can construct the Wasserstein space $\mathcal{W}_2(\bset(G, L))$
	and treat $\mathbb{P}^1$ as an element in it.
	Recall that the set of finitely supported measures is dense
	in Wasserstein spaces \cite[Theorem 6.18]{villani2009optimal}.
	Applying this property to the Wasserstein spaces
	$\mathcal{W}_2(\bset(G, L))$ and $\mathcal{W}_2(\mathcal{W}_2(M))$,
	we obtain two sequences of finitely supported
	probability measures $\{\mathbb{P}^1_j\}_{j \ge 1}$ and $\{\mathbb{P}^2_j\}_{j \ge 1}$
	satisfying $\mathbb{W}_2(\mathbb{P}_j^1, \mathbb{P}^1) \rightarrow 0$,
	$\mathbb{W}_2(\mathbb{P}_j^2, \mathbb{P}^2) \rightarrow 0$ when $j \rightarrow \infty$.
	Furthermore, thanks to \Cref{lem:B_G_L_sets_with_compact_support},
	we can further refine the two approximating sequences to
	ensure that all $\mathbb{P}_j^1, \mathbb{P}_j^2$ for $j \ge 1$
	are supported in probability measures with compact support
	and $\mathbb{P}^1_j(\bset(G, L + 1)) = 1$.
	% Indeed, since $G(x) = 0$ for $x \in [0,1]$,
	% if a measure $\mu = f \cdot \operatorname{Vol} \in \bset(G,L)$ has non-compact support,
	% then we can redefine its density $f$ on the set $\{ 0 < f \le \epsilon\}$
	% for sufficiently small $\epsilon > 0$ such that the new measure
	% $f \cdot \operatorname{Vol}$ has compact support and
	% the integrals $\int_M f \diff \operatorname{Vol} = 1$ and
	% $\int_M G(f) \diff \operatorname{Vol} \le L$ remain unchanged.
	% Hence, the sequence $\mathbb{P}_j^1$ for $j \ge 1$ can satisfy
	% the previous requirement without violating the property $\mathbb{P}_j^1(\bset(G,L)) = 1$.
	Define $\mathbb{P}_j := \mathbb{P}(\bset(G,L)) \,\mathbb{P}_j^1 +
		(1 - \mathbb{P}(\bset(G,L))\, \mathbb{P}_j^2$.
	It follows that $\mathbb{W}_2(\mathbb{P}_j, \mathbb{P}) \rightarrow 0$ as $j \rightarrow \infty$.

	Consider the displacement functional
	$\mathcal{G}: f \cdot \operatorname{Vol} \mapsto \int_M G(f) \diff \operatorname{Vol}$.
	\Cref{prop:entropy_estimation} implies the following estimate
	of $\mathcal{G}(\mu_{\mathbb{P}_j})$ at the barycenter $\mu_{\mathbb{P}_j}$ of $\mathbb{P}_j$,
	\begin{equation}
		\label{equa:entropy_estimation_for_approximating_sequence}
		\mathcal{G}(\mu_{\mathbb{P}_j}) \le 
		\int_{\mathcal{W}_2(M)} \mathcal{G}(\nu) \diff \mathbb{P}^1_j(\nu)
		% \sum_{i=1}^k \frac{\lambda_i}{\Lambda} \int_M G(g_i) \diff \operatorname{Vol}
		+ \frac{L_H K}{2 \Lambda}
		\mathbb{W}_2(\mathbb{P}_j, \delta_{\mu_{\mathbb{P}_j}})^2
		+ \frac{L_H}{2 \Lambda}({m^2} + 2m),
	\end{equation}
	where $\Lambda := \mathbb{P} (\bset(G,L))$, $-K$ is a lower Ricci curvature bound of $M$,
	$m$ is the dimension of $M$,
	and $L_H$ is an upper bounded of the $H^\prime$ with $H(x) : = G(e^x) e^{-x}$.
	Denote by $\mu_{\mathbb{P}}$ the unique barycenter of $\mathbb{P}$,
	\Cref{thm:law_of_large_numbers_Wasserstein_barycenter}
	implies that $W_2(\mu_{\mathbb{P}_j} , \mu_{\mathbb{P}}) \rightarrow 0$
	and thus $\mathbb{W}_2(\mathbb{P}_j, \delta_{\mu_{\mathbb{P}_j}})
		\rightarrow \mathbb{W}_2(\mathbb{P}, \delta_{\mu_{\mathbb{P}}})$ as $j \rightarrow \infty$.
	Since the support of $\mathbb{P}_j^1$ is a subset of $\bset(G, L + 1)$
	and $\mathbb{W}_2(\mathbb{P}_j, \delta_{\mu_{\mathbb{P}_j}})$ is bounded for $j \ge 1$,
	by setting
	\[
		L^\prime : = (L + 1)
		+ \frac{L_H K}{2 \Lambda}
		\sup_{j \ge 1} \mathbb{W}_2(\mathbb{P}_j, \delta_{\mu_{\mathbb{P}_j}})^2
		+ \frac{L_H}{2 \Lambda}({m^2} + 2m),
	\]
	we have $\mu_{\mathbb{P}_j} \in \bset(G, L^\prime)$
	for all $j \ge 1$.
	It follows from \Cref{lem:lower_semi_continuity_entropy}
	that $\mu_{\mathbb{P}}$ is absolutely continuous.
\end{proof}

We replace the assumption $\mathbb{P}(\bset(G, L)) > 0$
by a more natural one in the next subsection.

% ! TEX root = ../absolute_continuity.tex

\subsection{Compactness using Souslin space theory}
\label{sec:topology_density_functions}

The last step towards our main result
is to show that the closed
subset $\bset(G, L)$ needed in \Cref{thm:absolute_continuity_main_theorem} always exists
if $\mathbb{P}$ gives mass to the set of absolutely continuous measures.
Our inspiration is the criterion of uniform integrability by
Charles-Jean de la Vall\'ee Poussin.
This criterion \cite[Theorem 4.5.9]{bogachev2007measure} constructs
a functional $ f \mapsto \int G(f) \diff \mu$
that is uniformly bounded for a family of uniformly
integrable functions.
We have enough freedom in its construction
to impose the properties required by \Cref{defn:bounded_entropy_subset}
on the function $G$.
Pre-compact sets of measures with respect to the topology $\tau$ defined below are
closely related to uniformly integrable families.

\begin{defn}[The set $\mathbb{A}$ and four topologies $\tau_w, \tau_W, \tau, \tau_L$]
	\label{defn:set_of_absolutely_continuous_measures}
	Let $E$ be a Polish space with a $\sigma$-finite reference measure $\mu$.
	Pick a point $x_0 \in E$ and
	define the following set of measurable functions on $E$,
	\begin{equation}
		\label{equa:defn_density_proba_ball}
		\mathbb{A} :=
		\left\{ f \in L^1(\mu) \,\bigg|\,
		f \ge 0,\,  \int_E f \diff \mu = 1, \,
		\int_E d(x_0, x)^2 f(x) \diff \mu(x) < \infty \right\},
	\end{equation}
	which is independent of the chosen point $x_0$.
	The set $\mathbb{A}$ is naturally identified via $f \leftrightarrow f \cdot \mu$
	with the set of probability measures
	in $\mathcal{W}_2(E)$ that are absolutely continuous with respect to $\mu$.
	We introduce the following four topologies.
	Denote by $\tau_w$ the topology 
	on $\mathcal{W}_2(E)$ with respect to the weak convergence,
	denote by $\tau_W$ the topology of the Wasserstein space $\mathcal{W}_2(E)$,
	denote by $\tau$ the weak topology on $L^1(\mu)$
	induced by its dual space $L^\infty(\mu)$
	\cite[Theorem 4.4.1]{bogachev2007measure}
	and denote by $\tau_L$ the topology of the Lebesgue space $L^1(\mu)$.
	By definition, $\tau_w \subset \tau_W$ and $\tau \subset \tau_L$.
	Denote by $(\mathbb{A}, \tau_w), (\mathbb{A}, \tau_W), (\mathbb{A}, \tau)$ and $(\mathbb{A}, \tau_L)$
	the four topological subspaces induced
	by these topologies on the set $\mathbb{A}$.
	% We have the topological inclusion
	% $(\mathbb{A}, \tau_w) \subset (\mathbb{A}, \tau)$.
\end{defn}

Consider the case when $E$ is a complete Riemannian manifold and $\mu$ is the volume measure on $E$.
By \Cref{lem:measurable_set_absolutely_continous_measures},
$\mathbb{A}$ is a Borel set for the topology $\tau_W$.
Given a probability measure $\mathbb{P} \in \mathcal{W}_2(\mathcal{W}_2(E))$
such that $\mathbb{P}(\mathbb{A}) > 0$,
our goal is to find a compact subset $\mathcal{F}$ in $(\mathbb{A}, \tau)$
with $\mathbb{P}(\mathcal{F}) > 0$.
If we can accomplish this, then $\mathcal{F}$ forms
a family of uniformly integrable functions by the Dunford-Pettis theorem
(\Cref{thm:Dunford_Pettis_theorem}),
bringing us closer to the main result.
To find such an $\mathcal{F}$, a direct but problematic approach is
to argue that $\mathbb{P}$ is a Radon measure.
However, this argument overlooks that crucial point that
$\mathbb{P}$ (restricted on $\mathbb{A}$) must be a Borel measure
with respect to the Borel sets of $(\mathbb{A}, \tau)$.
To address this issue, we revisit the Souslin space theory.
Our main reference is Bogachev \cite[Section 6.6, Section 6.7, Section 7.4]{bogachev2007measure}.
\begin{defn}[Souslin space]
	A set in a Hausdorff space is called Souslin if it is
	the image of a Polish metric space under a continuous map.
	A Souslin space is a Hausdorff space that is a Souslin set. The empty set is
	Souslin as well.
\end{defn}

By definition, Polish spaces are Souslin.
Here are some properties of Souslin spaces:
\begin{enumerate}
	\item Every Borel subset of a Souslin space is a Souslin
	      space \cite[Theorem 6.6.7]{bogachev2007measure};
	\item Let $E$ and $F$ be Souslin spaces and let $f: E \mapsto F$ be
	      a measurable map. If $f$ is bijective,
	      then $E$ and $F$ share the same Borel sets, see
	      \cite[Proposition 423F]{fremlin2000measure} or \cite[Theorem 6.7.3]{bogachev2007measure};
	\item If $E$ is a Souslin space, then every finite Borel
	      measure $\mu$ on $E$ is Radon \cite[Theorem 7.4.3]{bogachev2007measure}.
\end{enumerate}
These properties are used in the following lemma
to justify the previous arguments with Radon measures.

\begin{lem}
	\label{lem:density_function_topologies_induce_same_Borel_sets}
	Let $(E, d)$ be a Polish space
	with an outer regular and $\sigma$-finite Borel measure $\mu$ on $E$.
	Let $\mathbb{A}$ be as (\ref{equa:defn_density_proba_ball}).
	The four topological subspaces,
	 $(\mathbb{A}, \tau_w), (\mathbb{A}, \tau_W), (\mathbb{A}, \tau)$, and $(\mathbb{A}, \tau_L)$
	share the same Borel sets.
	
	In particular,
	% the set $\mathbb{A}$ is a common Borel set for the four topologies
	% $\tau_w, \tau_W, \tau$ and $\tau_L$.
	if $\mathbb{P} \in \mathcal{W}_2(\mathcal{W}_2(E))$
	gives mass to the set $\mathbb{A}$,
	then it gives mass to a compact subset of $(\mathbb{A}, \tau)$.
\end{lem}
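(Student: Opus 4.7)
The plan is to follow the hint in the excerpt: use Souslin space theory to show that all four topologies on $\mathbb{A}$ generate the same Borel $\sigma$-algebra, and then invoke the Radon property of finite Borel measures on Souslin spaces.

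First, I would verify that each of the four topological subspaces is Souslin. The space $(\mathcal{W}_2(E), \tau_W)$ is Polish, and by \Cref{lem:measurable_set_absolutely_continous_measures} (whose hypotheses are satisfied by our outer regular, $\sigma$-finite $\mu$) the set $\mathbb{A}$ is Borel in it, so $(\mathbb{A}, \tau_W)$ is Souslin by property (1). The space $L^1(\mu)$ is Polish (a separable Banach space thanks to the $\sigma$-finiteness of $\mu$ on the Polish $E$), and I would check that $\mathbb{A}$ is a Borel subset of it: the conditions $f \ge 0$ and $\int f \, d\mu = 1$ carve out a closed convex subset, while the finite second moment condition is an $F_\sigma$-condition because $f \mapsto \int d(x_0, \cdot)^2 f \, d\mu$ is lower semi-continuous on $\{f \ge 0\} \subset L^1(\mu)$ by Fatou's lemma. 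Hence $(\mathbb{A}, \tau_L)$ is Souslin too. Finally, $(\mathbb{A}, \tau_w)$ and $(\mathbb{A}, \tau)$ are Hausdorff (bounded continuous functions separate distinct finite Borel measures, and $L^\infty$-duality separates distinct elements of $L^1$) and arise as continuous images of the Souslin spaces $(\mathbb{A}, \tau_W)$ and $(\mathbb{A}, \tau_L)$ respectively, so they are Souslin as well.

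Next, I would compare Borel $\sigma$-algebras by means of identity maps. The identities
$(\mathbb{A}, \tau_L) \to (\mathbb{A}, \tau)$ and $(\mathbb{A}, \tau_W) \to (\mathbb{A}, \tau_w)$
are continuous because the target topology is coarser, and a bridging identity $(\mathbb{A}, \tau_L) \to (\mathbb{A}, \tau_w)$ is also continuous because $L^1$-convergence implies weak convergence of the associated probability measures via the estimate $|\int \phi\, (f_j - f)\, d\mu| \le \|\phi\|_\infty \|f_j - f\|_{L^1}$ for $\phi \in C_b(E)$. Each of these is a continuous bijection between Souslin spaces, so by property (2) of Souslin spaces the Borel $\sigma$-algebras are preserved, and chaining these three comparisons yields that all four Borel $\sigma$-algebras on $\mathbb{A}$ coincide.

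For the second assertion, the restriction $\mathbb{P}|_{\mathbb{A}}$ is a finite Borel measure on $(\mathbb{A}, \tau_W)$, hence, by the coincidence of Borel $\sigma$-algebras just established, also a finite Borel measure on the Souslin space $(\mathbb{A}, \tau)$. By property (3) of Souslin spaces it is Radon on $(\mathbb{A}, \tau)$, so inner regularity with respect to $\tau$-compact sets, combined with $\mathbb{P}(\mathbb{A}) > 0$, yields a $\tau$-compact subset $\mathcal{F} \subset \mathbb{A}$ with $\mathbb{P}(\mathcal{F}) > 0$. The main delicate point is the bookkeeping on Souslin-ness, namely verifying that $\mathbb{A}$ is Borel in $L^1(\mu)$ and setting up a triangle of continuous identity maps to bridge the $\tau_W$-side with the $\tau_L$-side, since $\tau_W$ and $\tau_L$ are not directly comparable.
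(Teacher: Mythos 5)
Your proposal is correct and follows essentially the same Souslin-theoretic route as the paper: show that $\mathbb{A}$ is Borel both in the Polish space $(\mathcal{W}_2(E), W_2)$ and in $L^1(\mu)$, transfer the Borel structure through continuous identity maps (bijective Borel maps between Souslin spaces preserve Borel sets), and conclude with the Radon property of finite Borel measures on Souslin spaces to extract a $\tau$-compact set of positive mass. The only differences are cosmetic: you establish Borelness of $\mathbb{A}$ in $L^1(\mu)$ via Fatou/lower semicontinuity of the second-moment functional instead of the paper's truncation argument $\mathbb{A} = \cup_j \cap_k A_{k,j}$, and you assert separability of $L^1(\mu)$ where the paper spells out a proof.
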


\begin{proof}
	% Denote by $(\mathbb{A}, \tau_w)$ the topological subspace of $\tau_w$ on $\mathbb{A}$
	% and introduce similar notation for other three topologies.
	For spaces $(\mathbb{A}, \tau_w)$ and $(\mathbb{A}, \tau_W)$,
	the first statement is already proven in \cite[Lemma 2.4.2]{panaretos2020invitation},
	and we recall its arguments here.
	By \Cref{lem:measurable_set_absolutely_continous_measures},
	$\mathbb{A}$ is a Borel set for both $\tau_w$ and $\tau_W$.
	Since $(\mathcal{W}_2(E), W_2)$ is a Polish space,
	$(\mathbb{A}, \tau_W)$ is then a Souslin space as a Borel
	subset of $(\mathcal{W}_2(E), W_2)$.
	% \cite[Corollary 6.6.7]{bogachev2007measure}.
	Consider the identity map $\operatorname{Id}:
		(\mathbb{A}, \tau_W) \rightarrow (\mathbb{A}, \tau_w)$,
	it is continuous and bijective.
	By definition, $(\mathbb{A}, \tau_w)$
	is a Souslin space as the image of
	the Souslin space $(\mathbb{A}, \tau_W)$ under
	the continuous map $\operatorname{Id}$.
	Moreover, $(\mathbb{A}, \tau_W)$ and $(\mathbb{A}, \tau_w)$
	share the same Borel sets since
	the measurable map $\operatorname{Id}$ is bijective.

	We claim that $(\mathbb{A}, \tau_L)$ is also a Souslin space.
	We first prove that the Lebesgue space
	$L^1(\mu)$ is complete and separable using the assumption
	that $E$ is Polish.
	$L^1(\mu)$ is complete for any measurable space $E$
	\cite[Theorem 4.1.3]{bogachev2007measure}.
	Its separability is asserted in Br\'ezis \cite[Theorem 4.13]{brezis2011functional} and Bogachev
	\cite[Section 1.12(iii), Corollary 4.2.2, Exercise 4.7.63]{bogachev2007measure} but only proven
	for the case of Euclidean spaces.
	Here is a brief proof of it.
	Every Polish space is homeomorphic to a closed subspace
	of $\mathbb{R}^\infty$ \cite[Theorem 6.1.12]{bogachev2007measure}.
	Moreover, one can show that $L^1(\mu)$ is separable when $E = \mathbb{R}^\infty$
	using the same arguments for Euclidean spaces.
	It follows that $L^1(\mu)$ is a Polish space.
	We then prove that $\mathbb{A}$ is a Borel set for the topology $\tau_L$.
	Fix a point $x_0 \in E$.
	Define the following sets for integers
	$k, j \ge 1$,
	\[
		A_{k,j} :=
		\left\{ f \in L^1(\mu) \,\bigg|\,
		f \ge 0,\,  \int_E f \diff \mu = 1, \,
	\int_E \min\{ d(x_0, x)^2, k \} f(x) \diff \mu(x) \le j \right\}.
	\]
	Fix two integers $k, j \ge 1$.
	We show that the set $A_{k,j}$ is a closed subset of $L^1(\mu)$.
	Let $\{f_i\}_{i \ge 1} \subset A_{k,j}$ be a sequence converging
	to $f \in L^1(\mu)$ in $L^1(\mu)$.
	Since $\{f_i\}_{i \ge 1}$ has a subsequence
	converging almost everywhere to $f$,
	$f$ is non-negative ($\mu$-almost everywhere).
	It follows that
	$\int_E f \diff \mu = \| f \|_{L^1(\mu)} =
	\lim_{i \rightarrow \infty} \| f_i \|_{L^1(\mu)} = 1$.
	Noting that as $i \rightarrow \infty$,
	\[
		\| \min\{d(x_0, \cdot)^2, k\} f_i - \min\{d(x_0, \cdot)^2, k \} f\|_{L^1(\mu)}
		\le k \| f_i - f \|_{L^1(\mu)} \rightarrow 0,
	\]
	which implies that $f \in A_{k,j}$.
	Hence, $A_{k,j}$ is a closed subset of $L^1(\mu)$.
	By the monotone convergence theorem, we have
	$\mathbb{A} = \cup_{j \ge 1} \cap_{k \ge 1} A_{k,j}$,
	which proves that $\mathbb{A}$ is a Borel set.
	Finally, $(\mathbb{A}, \tau_L)$ is a Souslin space
	as $\mathbb{A}$ is a Borel set of the Polish space $L^1(\mu)$.

	By definition of $\tau_w$ and $\tau$,
	we have the topological inclusions
	$(\mathbb{A}, \tau_w) \subset (\mathbb{A}, \tau) \subset (\mathbb{A}, \tau_L)$.
	Using the identity map as before, we conclude that
	the three topological spaces,
	$(\mathbb{A}, \tau_w)$, $(\mathbb{A}, \tau)$ and $(\mathbb{A}, \tau_L)$, share the same Borel sets
	since $(\mathbb{A}, \tau_L)$ is a Souslin space.

	$\mathbb{P}$, restricted on $\mathbb{A}$, is then a Radon measure with respect
	to the common Borel sets for the four topological subspaces since
	finite Borel measures on Souslin spaces are Radon.
	Hence, $\mathbb{P}(\mathbb{A}) > 0$ can be approximated
	by the $\mathbb{P}$ measure of compact subsets of $(\mathbb{A}, \tau)$.
	% as $\mathbb{A}$ is a Borel set for $\tau$.
\end{proof}

% ! TEX root = ../absolute_continuity.tex

We now recall some results from functional analysis to
prove \Cref{thm:Dunford_Pettis_theorem} later,
a slightly generalized Dunford-Pettis theorem that
connects uniform integrability and the weak topology $\tau$.
The Eberlein–Šmulian theorem characterizes compact sets with respect to
the weak topology of a Banach space.
For its proof, see \cite[Theorem 1.6.3]{albiac2006topics} or \cite[Theorem II.3]{li2017introduction}.

\begin{thm}[Eberlein–Šmulian theorem]
	\label{thm:Eberlein_Šmulian_theorem}
	A subset $K$ of a Banach space $E$ is pre-compact with respect to the weak topology $\sigma(E^*, E)$
	if and only if, from each sequence of elements of $K$, we can extract a weakly convergent subsequence.
\end{thm}

To deal with set-wise convergence of countably additive set functions (i.e., signed measures),
we recall the following Vitali–Hahn–Saks theorem.
For its proof, see \cite[\S 3.14]{swartz1994measure}, \cite[Theorem 4.6.3]{bogachev2007measure}
or \cite[Theorem A8.15]{alt2016linear}.

\begin{thm}[Vitali–Hahn–Saks theorem]
	\label{thm:Vitali_Hahn_Saks_theorem}
	Let $(\Omega, \mathcal{B})$ be a measurable space with a probability measure $\mu$ on it.
	Let $\nu_n: \mathcal{B} \rightarrow \mathbb{R},\, n\in \mathbb{N}$ be a sequence of
	real-valued countably additive set functions
	such that
	\begin{enumerate}
		\item the limit $\lim_{n \rightarrow \infty} \nu_n(A) \in \mathbb{R}$ exists and is finite
		      for any $A \in \mathcal{B}$;
		\item each $\nu_n$ is absolutely continuous with respect to $\mu$,
		      i.e., for $A \in \mathcal{B}$, $\mu(A) = 0$ implies $\nu_n(A) = 0$.
	\end{enumerate}
	Then $\{ \nu_n \}_{n \in \mathbb{N}}$ is uniformly absolutely continuous
	with respect to $\mu$, i.e.,
	\[
		\sup_{n \in \mathbb{N}} |\nu_n(A)| \rightarrow 0
		\text{ as } \mu(A) \rightarrow 0.
	\]
\end{thm}

As a corollary, we illustrate how to
apply \Cref{thm:Vitali_Hahn_Saks_theorem} with a $\sigma$-finite measure $\mu$.

\begin{coro}
	\label{coro:uniformly_absolute_continuity}
	Let $(\Omega, \mathcal{B})$ be a measurable space with a $\sigma$-finite measure $\mu$ on it.
	Let $\{f_n\}_{n \in \mathbb{N}} \subset L^1(\mu)$ be a sequence of $\mu$-integrable function
	such that there exists a $\mu$-integrable function $f \in L^1(\mu)$ satisfying
	\[
		\forall\, A \in \mathcal{B},\quad
		\lim_{n \rightarrow \infty} \int_A f_n \diff \mu = \int_A f \diff \mu.
	\]
	Then for any $\epsilon > 0$, there exists $\delta > 0$ such that for $A \in \mathcal{B}$,
	\[
		% \forall\, A \in \mathcal{B},\quad
		\mu(A) < \delta \implies \sup_{n \in \mathbb{N}} \int_A f_n \diff \mu < \epsilon.
	\]
\end{coro}

\begin{proof}
	Since $\mu$ is $\sigma$-finite, there exists an at most countable family
	of pairwise disjoint measurable sets,
	$\{ E_j, j \in J \}$ ($J \subset \mathbb{N}$),
	such that $0< \mu(E_j) < +\infty$ and $\mu(\Omega \setminus \cup_{j \in J} E_j) = 0$.
	Define the measure $\eta: = \sum_{j \in J} \lambda_j \frac{1}{\mu(E_j)} \mu |_{E_j}$
	with $\lambda_j : = 2^{-j} / \sum_{k \in J} 2^{-k}$.
	Since $\sum_{j \in J} \lambda_j = 1$, $\eta$ is a probability measure satisfying
	\begin{equation}
		\label{equa:define_eta}
		\forall\, A \in \mathcal{B},\quad
		\eta (A) = \int_A \sum_{j \in J} \frac{\lambda_j }{\mu(E_j)} \mathbbm{1}_{E_j} \diff \mu.
	\end{equation}
	As $\mu(\Omega \setminus \cup_{j \in J} E_j) = 0$,
	(\ref{equa:define_eta}) implies that
	$\eta(A) = 0$ if and only if $\mu(A) = 0$.

	For $n \in \mathbb{N}$, define the countably additive function
	$\nu_n: \mathcal{B} \rightarrow \mathbb{R}$,
	\[
		\nu_n (A): = \int_A f_n \diff \mu,\quad A \in \mathcal{B}.
	\]
	As $f \in L^1(\mu)$, the limit $\lim_{n \rightarrow \infty} \mu_n(A) = \int_A f \diff \mu$
	always exists and is finite.
	Since $\eta(A) = 0$ implies $\mu(A) = 0$ and thus $\nu_n(A) = 0$,
	\Cref{thm:Vitali_Hahn_Saks_theorem} is applicable to $\{ \nu_n \}_{n \in \mathbb{N}}$
	with the probability measure $\eta$, which implies that
	$\sup_{n \in \mathbb{N}} |\nu_n(A)| \rightarrow 0$ as $\eta(A) \rightarrow 0$.
	Moreover, since $\eta$ is finite measure that is absolutely continuous
	with respect to $\mu$,
	the convergence $\mu(A) \rightarrow 0$ implies $\eta(A) \rightarrow 0$ \cite[Lemma 4.2.1]{cohn2013measure}.
	Hence, $\mu(A) \rightarrow 0$ implies $\sup_{n \in \mathbb{N}} |\nu_n(A)| \rightarrow 0$,
	which concludes the proof.
\end{proof}
% \subsubsection{Total variations}

We are ready to prove the following Dunford-Pettis theorem,
where the $\sigma$-finiteness of $\mu$ is not a standard assumption.

\begin{prop}[Dunford-Pettis theorem]
	\label{thm:Dunford_Pettis_theorem}
	Let $(E, \mathcal{B})$ be a measurable space with
	a $\sigma$-finite Borel measure $\mu$ on it.
	Let $\mathcal{F} \subset L^1(\mu)$ be a set of $\mu$-integrable functions.
	If $\mathcal{F}$ has compact closure in the weak topology induced
	by the dual space $L^\infty(\mu)$ of $L^1(\mu)$, then $\mathcal{F}$ is
	\emph{uniformly integrable}, i.e.,
	\[
		\lim_{C \rightarrow \infty} \sup_{f \in \mathcal{F}}
		\int_{ \{ | f | > C \} } | f | \diff \mu = 0.
	\]
	% Moreover, if $\mu$ is a finite measure, then the converse
	% is also true.
\end{prop}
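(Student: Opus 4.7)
The plan is to proceed by contraposition: assume $\mathcal{F}$ is not uniformly integrable and produce a sequence in $\mathcal{F}$ that admits no weakly convergent subsequence in $L^1(\mu)$, contradicting compactness of the closure via the Eberlein--\v{S}mulian theorem. From the failure of uniform integrability I extract $\epsilon > 0$, a sequence $(f_n) \subset \mathcal{F}$, and constants $C_n \to \infty$ with
\[
\int_{A_n} |f_n|\, \diff \mu \ge \epsilon, \qquad A_n := \{|f_n| > C_n\}.
\]
Since weakly relatively compact subsets of a Banach space are norm bounded (a consequence of the uniform boundedness principle applied to the evaluations $f \mapsto \int g f \, \diff \mu$ for $g \in L^{\infty}(\mu)$), the constant $M := \sup_{f \in \mathcal{F}} \|f\|_{L^1(\mu)}$ is finite; hence $\mu(A_n) \le M/C_n \to 0$. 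Eberlein--\v{S}mulian then yields a subsequence $f_{n_k}$ converging weakly in $L^1(\mu)$ to some $f$, and the plan is to derive a contradiction from the tension between $\mu(A_{n_k}) \to 0$ and $\int_{A_{n_k}} |f_{n_k}|\, \diff \mu \ge \epsilon$.

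To convert the $|f_{n_k}|$ condition into one involving signed integrals (which are what weak convergence controls), I split $A_{n_k} = A_k^+ \sqcup A_k^-$ with $A_k^{\pm} := A_{n_k} \cap \{\pm f_{n_k} \ge 0\}$. From
\[
\int_{A_k^+} f_{n_k}\, \diff \mu - \int_{A_k^-} f_{n_k}\, \diff \mu = \int_{A_{n_k}} |f_{n_k}|\, \diff \mu \ge \epsilon,
\]
one of the two parts has absolute value at least $\epsilon/2$, so after a further subsequence extraction there is a measurable choice $B_k \in \{A_k^+, A_k^-\}$ satisfying $|\nu_k(B_k)| \ge \epsilon/2$, where $\nu_k := f_{n_k} \cdot \mu$ is a signed measure of total variation $\|f_{n_k}\|_{L^1(\mu)} \le M$. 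Note that $\mu(B_k) \le \mu(A_{n_k}) \to 0$.

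Testing weak convergence against the indicator $\mathbf{1}_C \in L^\infty(\mu)$ for every $C \in \mathcal{B}$ gives setwise convergence $\nu_k(C) \to \nu(C)$ with $\nu := f \cdot \mu$. The Vitali--Hahn--Saks theorem applies (each $\nu_k$ is a bounded signed measure absolutely continuous with respect to $\mu$, converging setwise to a finite set function) and yields that $\{\nu_k\}$ is uniformly absolutely continuous with respect to $\mu$: for every $\epsilon' > 0$ there exists $\delta > 0$ such that $\mu(C) < \delta$ implies $|\nu_k(C)| < \epsilon'$ for all $k$. Choosing $\epsilon' := \epsilon/4$ and using $\mu(B_k) \to 0$, one obtains $|\nu_k(B_k)| < \epsilon/4$ for all large $k$, contradicting $|\nu_k(B_k)| \ge \epsilon/2$. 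This closes the contrapositive.

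The main obstacle I anticipate is correctly invoking Vitali--Hahn--Saks in the stated $\sigma$-finite setting; the classical formulation requires only bounded signed measures on a $\sigma$-algebra, so the uniform bound $|\nu_k|(E) \le M$ suffices, with no finiteness assumption on $\mu$. A secondary technical point is verifying that extracting weakly convergent subsequences is legitimate here: this requires $L^1(\mu)$ to be a Banach space (immediate) and the relative weak compactness to entail relative weak sequential compactness, which is precisely the content of Eberlein--\v{S}mulian. The sign-splitting step is essentially cosmetic but is needed to translate uniform absolute continuity of signed measures (the natural output of Vitali--Hahn--Saks) into the uniform absolute continuity of the positive measures $|f_{n_k}|\, \diff \mu$ that drives the contradiction.
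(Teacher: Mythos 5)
Your proof is correct and follows essentially the same route as the paper's: argue by contradiction, use Eberlein--\v{S}mulian to extract a weakly convergent subsequence, test against indicators to get setwise convergence, invoke a Vitali--Hahn--Saks type result (the paper cites Bogachev's Theorem 4.5.6) for uniform absolute continuity, and conclude with a Chebyshev estimate. The only cosmetic deviations are that you get the $L^1$ bound from the uniform boundedness principle and add a sign-splitting step, which the paper avoids because its cited theorem states uniform absolute continuity directly for the integrals $\int_A |f_{n_k}| \diff \mu$.
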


\begin{proof}
	We need the assumption of $\mu$ being $\sigma$-finite
	to ensure that $L^{\infty}(\mu)$ is the dual space of $L^1(\mu)$,
	see \cite[Theorem 4.4.1]{bogachev2007measure} and \cite[Exercise 6.12]{rudin1987real}.
	The above definition of uniform integrability is
	taken from Bogachev \cite[Definition 4.5.1]{bogachev2007measure}.
	When $\mu$ is finite, the equivalence
	between pre-compactness in the weak topology and uniform integrability
	is already proven by Bogachev \cite[Theorem 4.7.18]{bogachev2007measure}.
	The following arguments for the general case are based on his proof.

	We prove our statement for $\sigma$-finite measures by contradiction.
	Suppose that $\mathcal{F}$ has compact closure in the weak topology, but
	is not uniformly integrable.
	Then, there are $\epsilon > 0$ and a sequence $\{f_n\}_{n \ge 1} \subset \mathcal{F}$
	such that
	\begin{equation}
		\label{equa:not_uniformly_integrable}
		\inf_{ n \ge 1} \int_{\{ | f_{n} | > n \}}
		|f_{n}| \diff \mu \geq \epsilon.
	\end{equation}
	Applying the Eberlein–Šmulian theorem (\Cref{thm:Eberlein_Šmulian_theorem})
	to $\{f_n\}$ and the Banach space $L^1(\mu)$ \cite[Theorem 4.8]{brezis2011functional},
	we obtain a subsequence $\{f_{n_k}\}_{ k \ge 1 }$
	convergent to some function $f \in L^1(\mu)$ in the weak topology.
	In particular, for every measurable set $A \in \mathcal{B}$ we have
	\begin{equation}
		\label{equa:convegence_set_wise}
		\lim_{k \rightarrow \infty} \int_A f_{n_k} \diff \mu = \int_A f \diff \mu.
	\end{equation}
	It follows from the Vitali–Hahn–Saks theorem (\Cref{coro:uniformly_absolute_continuity})
	that sequence $\{f_{n_k}\}_{k \ge 1}$
	has uniformly absolutely continuous integrals,
	i.e., for every $\epsilon > 0$, there exists $\delta > 0$ such that
	\begin{equation}
		\label{equa:uniform_absolute_continous_integral}
		\mu(A) < \delta \implies
		\sup_{k \ge 1} \int_A | f_{n_k} | \diff \mu < \epsilon.
	\end{equation}
	Via the isometric embedding of $L^1(\mu)$ into the dual space of
	$L^\infty(\mu)$ \cite[Corollary 1.4]{brezis2011functional},
	the Banach–Steinhaus theorem \cite[Theorem 4.4.3]{bogachev2007measure}
	is applicable to the Banach space $L^\infty(\mu)$
	and the convergent sequence of functional $\{f_{n_k}\}_{k \ge 1}$,
	which implies that
	$C : = \sup_{k \ge 1} \| f_{n_k} \|_{L^1(\mu)} < \infty$ is finite.
	Take the $\delta$ given by (\ref{equa:uniform_absolute_continous_integral})
	for the $\epsilon$ in (\ref{equa:not_uniformly_integrable}), and
	let $n$ be an integer bigger than $C / \delta$.
	Then by Chebyshev’s inequality,
	\[
		\sup_{k \ge 1} \mu (\{| f_{n_k} | > n\}) \le \frac{1}{n} \sup_{k \ge 1}
		\| f_{n_k} \|_{L^1(\mu)} < \delta,
	\]
	which leads to a contradiction between (\ref{equa:not_uniformly_integrable})
	and (\ref{equa:uniform_absolute_continous_integral}).
\end{proof}

We also generalize the de la Vall\'ee Poussin criterion to construct the function $G$
in \Cref{defn:bounded_entropy_subset}.
In the following proposition, the $\sigma$-finiteness of $\mu$ allows us to apply Fubini's theorem.

\begin{thm}[De la Vall\'ee Poussin criterion]
	\label{lem:modified_de_la_Vallee_Poussin_theorem}
	Let $(E, \mathcal{B})$ be a measurable space with a $\sigma$-finite Borel measure $\mu$ on it.
	A subset $\mathcal{F} \subset L^1(\mu)$ is uniformly integrable, i.e.,
	\[
		\lim_{C \rightarrow \infty} \sup_{f \in \mathcal{F}}
		\int_{ \{ | f | > C \} } | f | \diff \mu = 0
	\]
	if and only if there exists a function $G$ defined on $[0, +\infty)$
	such that
	\begin{enumerate}
		\item $G(x) = 0$ for $ 0 \le x \le 1$;
		\item $G$ is a non-decreasing and convex function that is smooth on $(0, +\infty)$;
		\item $\sup_{f \in \mathcal{F}} \int_E G(| f |) \diff \mu \le 1$;
		\item if we define the function $H(x): =G(e^x) e^{-x}$ on $\mathbb{R}$,
		      then $\displaystyle \lim_{x \rightarrow + \infty } H(x) = + \infty$, and its derivative
		      ${H}^\prime$ is smooth with $ 0 \le H^{\prime}(x) \le 1$.
	\end{enumerate}
\end{thm}

\begin{proof}
	If we have the asserted function $G$ for some subset $\mathcal{F} \subset L^1(\mu)$,
	then for every $\epsilon > 0$, we can find a real number $C > 0$ such that
	$G(t) / t \ge 2 / \epsilon$ for any $t > C$.
	It implies that $ | f(x) | \le \epsilon \, G(| f(x) |) / 2$
	for all $ f \in \mathcal{F}$ when $| f(x) | > C$. Hence,

	\[	\int_{\{ | f | > C \}} | f | \diff \mu
		\leq \frac{\varepsilon}{2}
		\int_{\{ | f | > C \}}\, G \circ | f | \diff \mu
		\le \epsilon,
	\]
	which shows that $\mathcal{F}$ is uniformly integrable.

	Now assume that we are given a uniformly integrable subset $\mathcal{F} \subset L^1(\mu)$.
	To better motivate our construction of $G$,
	we postpone the definition of a smooth function $H$
	with $H(x) =0, x \le 0$ to (\ref{equa:definition_H})
	but use it here to define $G(x) := H(\log x) \, x$.
	Differentiate this equation twice,
	we obtain ${G}^{\prime \prime}(x) = [ H^\prime(\log x) + H^{\prime\prime}(\log x)] / x$.
	By our requirements on $H$, $G(x) = 0$ for $0 \le x \le 1$.
	Hence, we have
	$G(x) = \int_0^x \int_0^s G^{\prime\prime}(t) \diff t \diff s$
	for $x > 0 $ and thus
	\begin{align}
		\int_E G(|f|) \diff \mu & =
		\int_E \int_0^{|f|} \int_0^s G^{\prime\prime}(t)
		\diff t \diff s \diff \mu =
		\int_E \int_\mathbb{R} \int_\mathbb{R} {G}^{\prime \prime}(t) \cdot
		\mathbbm{1}_{0 < t < s < |f|}
		\diff t \diff s \diff \mu \nonumber                                                      \\
		                        & = \int_\mathbb{R} \int_\mathbb{R} {G}^{\prime \prime}(t) \cdot
		\mathbbm{1}_{0 < t < s} \cdot \mu(|f| > s)
		\diff t \diff s \nonumber                                                                \\
		                        & = \int_\mathbb{R} {G}^{\prime \prime}(t) \cdot
		\mathbbm{1}_{t > 0} \int_{t}^{\infty} \mu(|f| > s)
		\diff s \diff t \nonumber                                                                \\
		                        & = \int_0^\infty \frac{H^\prime(\log t)
			+ H^{\prime \prime}(\log t)}{t} \int_{t}^{\infty} \mu(|f| > s)
		\diff s \diff t \nonumber                                                                \\
		                        & = \label{equa:fubini_rewriting_G}
		\int_\mathbb{R} [{H}^\prime(y) + {H}^{\prime\prime}(y)] \int_{e^y}^\infty
		\mu(|f| > s) \diff s \diff y,
	\end{align}
	where we applied Fubini's theorem twice and a change of variable $y: = \log t$.
	According to (\ref{equa:fubini_rewriting_G}), we need to
	control ${H}^\prime + {H}^{\prime\prime}$ and the
	integral of $\mu(|f| > s)$ at the same time.
	For the integral, note that by Fubini's theorem again,
	we have for $t > 0$ and $f \in L^1(\mu)$ that

\begin{align}
	\int_{ \{ |f| > t \} } |f| \diff \mu & =
	\int_{ \{ |f| > t \} } \int_\mathbb{R} \mathbbm{1}_{0 < s < |f|} \diff s \diff \mu =
	\int_{\mathbb{R}} \int_{ E }
	\mathbbm{1}_{ |f| > t} \cdot \mathbbm{1}_{0 < s < |f|}
	\diff \mu \diff s        \nonumber       \\
	\label{equa:fubini_integrate_subset}
	& =
	\int_\mathbb{R} \int_E
	\mathbbm{1}_{0 < s < t < |f|} +
	\mathbbm{1}_{0 < t \le s < |f|} \diff \mu \diff s \nonumber \\
	&= t\, \mu(|f| > t) + \int_t^\infty \mu(|f| > s) \diff s.
\end{align}

	Let $\alpha: \mathbb{N} \rightarrow \mathbb{N}$ be a strictly increasing function
	such that $\alpha(0) \ge 0$ and
	\[
		\sup_{f \in \mathcal{F}} \int_{e^{\alpha(n)}}^\infty \mu(|f| > s) \diff s \le
		\sup_{f \in \mathcal{F}} \int_{ \{ |f| > e^{\alpha(n)} \} }
		|f| \diff \mu \le 2^{-(n + 1)},
	\]
	where we used (\ref{equa:fubini_integrate_subset}) for the first inequality
	and the uniform integrability of $\mathcal{F}$ for the second one.
	It follows that
	\begin{equation}
		\label{equa:summation_with_alphan}
		\sup_{f \in \mathcal{F}} \sum_{n \ge 0}
		\int_{e^{\alpha(n)}}^\infty \mu(|f| > s) \diff s \le 1.
	\end{equation}
	For the term ${H}^\prime + {H}^{\prime\prime}$ in (\ref{equa:fubini_rewriting_G}),
	we bound it from above with a function that is non-zero only
	on selected intervals based on our choice of $\alpha(n)$,
	allowing us to convert the integral of
	$\int_{e^y}^{\infty} \mu(|f| > s) \diff s$
	into the series summation (\ref{equa:summation_with_alphan}).
	To achieve this, we first select a
	smooth function $\gamma: \mathbb{R} \rightarrow [0,1]$ such that
	$\gamma(x) = 1$ for $x \in [\alpha(n)+ 1/3, \alpha(n) + 2/3]$
	and $\gamma(x) = 0$ for $x \notin (\alpha(n), \alpha(n)+1)$.
	Then we define
	\begin{equation}
		\label{equa:definition_H}
		H(x) := \begin{cases}
			\int_0^x e^{-s} \int_0^s \gamma(t) e^t \diff t \diff s, & x>0     \\
			0,                                                      & x \le 0
		\end{cases}.
	\end{equation}
	In this way, we have ${H}^{\prime\prime}(x) + {H}^\prime(x) = \gamma(x)$.
	Using this construction,
	(\ref{equa:fubini_rewriting_G}) and (\ref{equa:summation_with_alphan}) imply that
	\[
		\sup_{f \in \mathcal{F}} \int_E G(|f|) \diff \mu
		= \sup_{f \in \mathcal{F}}
		\sum_{n \ge 0 } \int_{\alpha(n)}^{\alpha(n) + 1} \gamma(y)
		\int_{e^y}^\infty \mu(|f| > s) \diff s \diff y
		\le \sup_{f \in \mathcal{F}} \sum_{n \ge 0} \int_{e^{\alpha(n)}}^\infty
		\mu(|f| > s) \diff s \le 1.
	\]
	For the first derivative of $H$, we have
	\[
		0 \le {H}^\prime(x) = e^{-x} \int_0^x \gamma(t) e^t \diff t \le e^{-x} (e^x -1) \le 1.
	\]
	And by direct calculation we have that the difference
	\[
		H(\alpha(n) + 1) - H(\alpha(n)) >
		\int_{\alpha(n) + \frac{2}{3}}^{\alpha(n) + 1} e^{-s}
		\int^{\alpha(n) + \frac{2}{3}}_{\alpha(n) + \frac{1}{3}} e^{t}
		\diff t \diff s
		= (1 - e^{- \frac{1}{3}})^2
	\]
	is bigger than a constant independent of $n$,
	which implies that $\displaystyle \lim_{x \rightarrow + \infty} H(x) = +\infty$
	since $H$ is non-decreasing.
	It follows from $0 \le \gamma \le 1$ that
	$G$ is non-decreasing and convex as
	${G}^{\prime\prime}(x) = \gamma(\log x) / x \ge 0$ for $ x > 1$
	and $G(x) = 0$ for $ 0 \le x \le 1$.
\end{proof}

\subsection{Final step of the proof}

To prove \Cref{thm:final_theorem_manifolds},
it remains to combine the previous auxiliary propositions
% to conclude our main result. Now we are ready 
to replace the assumption in
\Cref{thm:absolute_continuity_main_theorem} that $\mathbb{P}(\bset(G, L)) > 0$
for some set $\bset(G, L)$ (\Cref{defn:bounded_entropy_subset}).
% by the natural assumption that $\mathbb{P}(\mathbb{A}) > 0$.

As in \Cref{defn:set_of_absolutely_continuous_measures},
we denote by $\mathbb{A}$ the set of absolutely continuous measures in $\mathcal{W}_2(M)$.
If $\mathbb{P}(\mathbb{A}) > 0$,
then \Cref{lem:density_function_topologies_induce_same_Borel_sets} provides
a compact subset $\mathcal{F}$ of $(\mathbb{A}, \tau)$ such that $\mathbb{P}(\mathcal{F}) > 0$.
Applying the Dunford-Pettis theorem (\Cref{thm:Dunford_Pettis_theorem})
to $\mathcal{F}$ with $\mu: = \operatorname{Vol}$,
we see that $\mathcal{F}$ is uniformly integrable.
Then the de la Vall\'ee Poussin criterion (\Cref{lem:modified_de_la_Vallee_Poussin_theorem})
asserts the existence of a smooth function $G$
such that $\mathcal{F} \subset \bset(G, 1) \subset \mathbb{A}$.
Therefore,
our theorem follows from \Cref{thm:absolute_continuity_main_theorem} and
the property $\mathbb{P}(\bset(G, 1)) \ge \mathbb{P}(\mathcal{F}) > 0$.

% ! TEX root = ../absolute_continuity.tex

\vspace{0.5cm}
\noindent
\textbf{Acknowledgments}\quad
\small{
	This work is based on the author's Master 2 internship
	at IMT (Institut de Math\'ematiques de Toulouse),
	and it is finished during the author's preparation for his thesis.
	In both periods, the author is funded by LabEX CIMI
	(Centre International de Math\'ematiques et d’Informatique)
	under the supervision of Dr. J\'er\^ome Bertrand, to whom the author
	would like to express his sincere gratitude for his patience and invaluable
	advice and help on how to present the work properly.
}

\vspace{0.3cm}
\noindent
\textbf{Conflicts of Interest Statement}\quad
\small{
	The author declares that they have no known competing financial interests or personal relationships that could have appeared to influence the work reported in this paper.
}

\vspace{0.3cm}
\noindent
\textbf{Data Availability Statement}\quad
\small{
	Data sharing is not applicable to this article as no datasets were generated or analysed during the current study.
}

\bibliography{bibliography.bib}

@book{alt2016linear,
	title={Linear Functional Analysis: An Application-Oriented Introduction},
	author={Alt, H.W. and N{\"u}rnberg, R.},
	isbn={9781447172802},
	lccn={2016944464},
	series={Universitext},
	year={2016},
	doi={10.1007/978-1-4471-7280-2},
	publisher={Springer},
	address={London},
}

@book{li2017introduction,
	title={Introduction to Banach Spaces: Analysis and Probability},
	author={Li, Daniel and Queff{\'e}lec, Herv{\'e}},
	series={Cambridge Studies in Advanced Mathematics},
	volume={166},
	year={2017},
	doi={10.1017/CBO9781316675762},
	publisher={Cambridge University Press},
	address={Cambridge CB2 8BS, United Kingdom}
}

@book{albiac2006topics,
	title={Topics in Banach space theory},
	author={Albiac, Fernando and Kalton, Nigel John},
	series={Graduate Texts in Mathematics},
	volume={233},
	year={2006},
	isbn={978-0387-28141-4},
	publisher={Springer},
	address={New York}
}

@book{sakai1996riemannian,
	title={Riemannian Geometry},
	author={Sakai, Takashi},
	isbn={9780821889565},
	series={Translations of mathematical monographs},
	address={Providence, Rhode Island},
	url={https://books.google.fr/books?id=aqgaZVr94xMC},
	year={1996},
	publisher={American Mathematical Soc.}
}

@book{niculescu2018convex,
	title={Convex Functions and Their Applications: A Contemporary Approach},
	author={Niculescu, C.P. and Persson, L.E.},
	isbn={9783319783376},
	series={CMS Books in Mathematics},
	year={2018},
	edition={2},
	doi={10.1007/978-3-319-78337-6},
	publisher={Springer International Publishing},
	address={Cham, Switzerland}
}

@book{kobayashi1996foundations,
	title={Foundations of Differential Geometry, Volume 1},
	author={Kobayashi, Shoshichi and Nomizu, Katsumi},
	volume={1},
	year={1996},
	publisher={John Wiley \& Sons},
	address={Hoboken},
}

@article{whitehead1932convex,
	title={Convex regions in the geometry of paths},
	author={Whitehead, John Henry Constantine},
	journal={The Quarterly Journal of Mathematics},
	number={1},
	pages={33--42},
	year={1932},
	publisher={Oxford University Press}
}

@book{lee2012introduction,
	title={Introduction to Smooth Manifolds},
	author={Lee, John M.},
	isbn={9781441999825},
	lccn={2012945172},
	series={Graduate Texts in Mathematics},
	volume={218},
	year={2013},
	edition={2},
	doi={10.1007/978-1-4419-9982-5},
	publisher={Springer SZcience+Business Media},
	address={New York}
}

@article{clarke1995proximal,
	title={Proximal smoothness and the lower-$\mathcal{C}^2$ property},
	author={Clarke, Francis H and Stern, Ronald J and Wolenski, Peter R},
	journal={Journal of Convex Analysis},
	volume={2},
	number={1-2},
	pages={117--144},
	year={1995}
}

@article{vial1983strong,
	title={Strong and weak convexity of sets and functions},
	author={Vial, Jean-Philippe},
	journal={Mathematics of Operations Research},
	volume={8},
	number={2},
	pages={231--259},
	doi={10.1287/moor.8.2.231},
	year={1983},
	publisher={INFORMS}
}

@article{bangert1979analytische,
	title={{Analytische Eigenschaften konvexer Funktionen auf Riemannschen Mannigfaltigkeiten}},
	pages={309--324},
	doi={doi:10.1515/crll.1979.307-308.309},
	year={1979},
	journal={Journal für die reine und angewandte Mathematik},
	author={Bangert, Victor},
	year={1979},
	publisher={Walter de Gruyter, Berlin}
}

@book{federer2014geometric,
	title={Geometric measure theory},
	author={Federer, Herbert},
	year={1996},
	doi={10.1007/978-3-642-62010-2},
	publisher={Springer},
	address={Berlin},
}

@incollection{bourbaki2004extension,
	title={Extension of a measure. ${L}^p$ spaces},
	author={Bourbaki, Nicolas},
	booktitle={Elements of Mathematics: Integration I},
	pages={94--241},
	doi="10.1007/978-3-642-59312-3_5",
	year={2004},
	address="Berlin",
	publisher={Springer}
}

@book{lee2018introduction,
	title={Introduction to Riemannian manifolds},
	author={Lee, John M.},
	edition={2},
	doi={10.1007/978-3-319-91755-9},
	year={2018},
	publisher={Springer International Publishing AG},
	address={Chanm, Switzerland}
}

@book{swartz1994measure,
	title={Measure, integration and function spaces},
	author={Swartz, Charles W},
	doi={10.1142/2223},
	year={1994},
	publisher={World Scientific},
	address={Singapore},
}

@article{figalli2008absolute,
	title={Absolute continuity of {W}asserstein geodesics in the {H}eisenberg group},
	author={Figalli, Alessio and Juillet, Nicolas},
	journal={Journal of Functional Analysis},
	volume={255},
	number={1},
	pages={133--141},
	doi={10.1016/j.jfa.2008.03.006},
	year={2008},
	publisher={Elsevier}
}

@article{bernard2007optimal,
	title={Optimal mass transportation and {M}ather theory},
	author={Bernard, Patrick and Buffoni, Boris},
	journal={Journal of the European Mathematical Society},
	volume={9},
	number={1},
	doi={10.4171/JEMS/74},
	pages={85--121},
	year={2007}
}

@article{fathi2010optimal,
	title={Optimal transportation on non-compact manifolds},
	author={Fathi, Albert and Figalli, Alessio},
	journal={Israel Journal of Mathematics},
	volume={175},
	pages={1--59},
	doi={10.1007/s11856-010-0001-5},
	year={2010},
	publisher={Springer}
}

@article{gigli2016optimal,
	title="Optimal maps and exponentiation on finite-dimensional spaces with {R}icci curvature bounded from below",
	author={Gigli, Nicola and Rajala, Tapio and Sturm, Karl-Theodor},
	journal={The Journal of geometric analysis},
	volume={26},
	pages={2914--2929},
	doi={10.1007/s12220-015-9654-y},
	year={2016},
	publisher={Springer}
}

@article{cavalletti2017optimal,
	title={Optimal maps in essentially non-branching spaces},
	author={Cavalletti, Fabio and Mondino, Andrea},
	journal={Communications in Contemporary Mathematics},
	volume={19},
	number={06},
	pages={1750007},
	doi={10.1142/S0219199717500079},
	year={2017},
	publisher={World Scientific}
}

@article{gigli2012optimal,
	title="Optimal maps in non branching spaces with {R}icci curvature bounded from below",
	author={Gigli, Nicola},
	journal={Geom. Funct. Anal},
	volume={22},
	number={4},
	doi={10.1007/s00039-012-0176-5},
	pages={990--999},
	year={2012}
}

@book{billingsley1999convergence,
	title = {Convergence of probability measures},
	series = {Wiley series in probability and statistics Probability and statistics section},
	isbn = {9780471197454},
	year = {1999},
	edition = {2},
	language = {eng},
	author = {Billingsley, Patrick},
	doi={10.1002/9780470316962},
	keywords = {Convergence},
	lccn = {978-0-471-19745-4},
}

@book{bredon2013topology,
	title={Topology and geometry},
	author={Bredon, Glen E},
	volume={139},
	series={Graduate Texts in Mathematics},
	year={1993},
	doi={10.1007/978-1-4757-6848-0},
}

@inproceedings{fremlin2006measurable,
	title={Measurable selections and measure-additive coverings},
	author={Fremlin, DH},
	booktitle={Measure Theory Oberwolfach 1981},
	pages={425--431},
	series={Lecture Notes in Mathematics},
	volume={945},
	doi={10.1007/BFb00},
	year={2006},
}

@article{koumoullis1983ramsey,
	title={The {R}amsey property and measurable selections},
	author={Koumoullis, G and Prikry, K},
	journal={Journal of the London Mathematical Society},
	volume={s2-28},
	number={2},
	month={10},
	pages={203--210},
	year={1983},
	doi={10.1112/jlms/s2-28.2.203},
	publisher={Oxford University Press},
}

@article{sturm2005convex,
	title = {Convex functionals of probability measures and nonlinear diffusions on manifolds},
	journal = {Journal de Mathématiques Pures et Appliquées},
	volume = {84},
	number = {2},
	pages = {149-168},
	year = {2005},
	issn = {0021-7824},
	doi = {https://doi.org/10.1016/j.matpur.2004.11.002},
	url = {https://www.sciencedirect.com/science/article/pii/S0021782404001485},
	author={Sturm, Karl-Theodor},
	keywords = {Gradient flow, Nonlinear diffusion, Bakry–Emery criterion, Ricci curvature, Entropy, Wasserstein distance, Porous medium equation, Fast diffusion},
	abstract = {The topic of this paper are convexity properties of free energy functionals on the space P2(M) of probability measures over a Riemannian manifold. As applications, we obtain contraction properties of nonlinear diffusions on Rn or on a Riemannian manifold M, regarding them as gradient flows of appropriate free energy functionals. In particular, we present extensions of the Bakry–Emery criterion to nonlinear equations.
			Résumé
			Cet article traite de propriétés de convexité de fonctionnelles d'énergie libre sur l'espace P2(M) des mesures de probabilités sur une variété riemannienne. Comme applications nous obtenons des propriétés de contraction de diffusions non-linéaires, sur Rn ou sur une variété riemannienne M, en les considérant comme des flots de gradients pour des fonctionnelles d'énergie libre apropriées. En particulier, nous présentons des extensions du critère de Bakry–Emery à des équations non-linéaires.}
}

@book{rudin1987real,
	title={Real and Complex Analysis},
	author={Rudin, Walter},
	year={1987},
	isbn={9780070542341},
	publisher={McGraw-Hill Book},
	edition={3},
	address={Singapore}
}

@book{brezis2011functional,
	title={Functional analysis, Sobolev spaces and partial differential equations},
	author={Br{\'e}zis, Haim},
	year={2011},
	series={Universitext},
	doi={10.1007/978-0-387-70914-7},
	publisher={Springer},
	address={New York}
}

@article{mccann1997convexity,
	title={A convexity principle for interacting gases},
	author={McCann, Robert J},
	journal={Advances in mathematics},
	volume={128},
	number={1},
	pages={153--179},
	doi={10.1006/aima.1997.1634},
	year={1997},
	publisher={Elsevier}
}

@article{sturm2006geometryI,
	title={On the geometry of metric measure spaces. {I}},
	author={Sturm, Karl-Theodor},
	journal={Acta Math},
	volume={196},
	pages={65--131},
	doi={/10.1007/s11511-006-0002-8},
	year={2006}
}

@article{sturm2006geometryII,
	title={On the geometry of metric measure spaces. {II}},
	author={Sturm, Karl-Theodor},
	journal={Acta Math},
	volume={196},
	doi={10.1007/s11511-006-0003-7},
	pages={133--177},
	year={2006}
}

@article{gigli2011inverse,
	title="On the inverse implication of {Brenier-McCann} theorems and the structure of ${(\mathcal{P}_2(M), W_2)}$",
	author={Gigli, Nicola},
	journal={Methods and Applications of Analysis},
	volume={18},
	number={2},
	pages={127--158},
	doi={10.4310/MAA.2011.v18.n2.a1},
	year={2011},
	publisher={International Press of Boston}
}

@article{kell2017transport,
	title="Transport maps, non-branching sets of geodesics and measure rigidity",
	journal={Advances in Mathematics},
	volume={320},
	pages={520-573},
	year={2017},
	issn={0001-8708},
	doi={https://doi.org/10.1016/j.aim.2017.09.003},
	url={https://www.sciencedirect.com/science/article/pii/S0001870817302347},
	author={Martin Kell},
	keywords={Optimal transport maps, Essentially non-branching, Measure contraction property, Measure rigidity},
	abstract={In this paper we investigate the relationship between a general existence of transport maps of optimal couplings with absolutely continuous first marginal and the property of the background measure called essentially non-branching introduced by Rajala–Sturm (2014) [27]. In particular, it is shown that the qualitative non-degeneracy condition introduced by Cavalletti–Huesmann (2015) [6] implies that any essentially non-branching metric measure space has a unique transport maps whenever the initial measure is absolutely continuous. This generalizes a recently obtained result by Cavalletti–Mondino (2017) [8] on essentially non-branching spaces with the measure contraction condition MCP(K,N). In the end we prove a measure rigidity result showing that any two essentially non-branching, qualitatively non-degenerate measures on a fixed metric spaces must be mutually absolutely continuous. This result was obtained under stronger conditions by Cavalletti–Mondino (2016) [7]. It applies, in particular, to metric measure spaces with generalized finite dimensional Ricci curvature bounded from below.}
}

@article{bertrand2008existence,
	title="Existence and uniqueness of optimal maps on {A}lexandrov spaces",
	author={Bertrand, Jérôme},
	journal={Advances in Mathematics},
	doi={10.1016/j.aim.2008.06.008},
	volume={219},
	number={3},
	pages={838--851},
	year={2008},
	publisher={Elsevier}
}

@book{ambrosio2021lectures,
	title="Lectures on Optimal Transport",
	author={Ambrosio, L. and Bru{\'e}, E. and Semola, D.},
	isbn={9783030721626},
	series={UNITEXT},
	doi={10.1007/978-3-030-72162-6},
	year={2021},
	volume={130}
}

@book{fremlin2000measure,
	title={Topological Measure Theory},
	author={Fremlin, David Heaver},
	series={Measure Theory},
	volume={4},
	publisher={Torres Fremlin},
	address={Colchester CO3 3AT, England},
	year={2005},
}

@book{panaretos2020invitation,
	title="An invitation to statistics in {W}asserstein space",
	author={Panaretos, Victor M and Zemel, Yoav},
	year={2020},
	series={SpringerBriefs in Probability and Mathematical Statistics},
	doi={10.1007/978-3-030-38438-8},
	publisher={Springer},
	address={Cham, Switzerland}
}

@book{cohn2013measure,
	title="Measure theory",
	author={Cohn, Donald L.},
	edition={2},
	series={Birkhäuser Advanced Texts Basler Lehrbücher},
	doi={10.1007/978-1-4614-6956-8},
	year={2013},
	publisher={Birkhäuser},
	address={New York}
}

@book{ambrosio2000functions,
	title="Functions of bounded variation and free discontinuity problems",
	author={Ambrosio, Luigi and Fusco, Nicola and Pallara, Diego},
	year={2000},
	series={Oxford Mathematical Monographs},
}

@article{buttazzo1991functionals,
	title="Functionals defined on measures and applications to non equi-uniformly elliptic problems",
	author={Buttazzo, Giuseppe and Freddi, Lorenzo},
	journal={Annali di matematica pura ed applicata},
	volume={159},
	number={1},
	pages={133--149},
	year={1991},
	doi={10.1007/BF01766298},
	publisher={Springer-Verlag}
}

@article{lott2009ricci,
	title="Ricci curvature for metric-measure spaces via optimal transport",
	author={Lott, John and Villani, C{\'e}dric},
	journal={Annals of Mathematics},
	pages={903--991},
	year={2009},
	volume = {169},
	number = {3},
	doi={10.4007/annals.2009.169.903},
	publisher={JSTOR}
}

@book{petersen2016riemannian,
	title="Riemannian Geometry",
	author={Petersen, Peter},
	volume={171},
	series={Graduate Texts in Mathematics},
	year={2016},
	doi={10.1007/978-3-319-26654-1},
}

@book{evans2018measure,
	title="Measure theory and fine properties of functions",
	author={Evans, Lawrence C and Garzepy, Ronald F},
	year={2018},
	publisher={Routledge},
	address={Oxfordshire},
}

@book{taylor2006measure,
	title="Measure theory and integration",
	author={Taylor, Michael Eugene},
	year={2006},
	series={Graduate Studies in Mathematics},
	volume={76},
	publisher={American Mathematical Society},
	address={Michigan},
}

@article{cordero2006prekopa,
author = {Dario Cordero-Erausquin and Robert J. McCann and Michael Schmuckenschl\"ager},
title = {Pr\'ekopa{\textendash}Leindler type inequalities on {Riemannian} manifolds, {Jacobi} fields, and optimal transport},
journal = {Annales de la Facult\'e des sciences de Toulouse : Math\'ematiques},
pages = {613--635},
publisher = {Universit\'e Paul Sabatier, Institut de Math\'ematiques},
address = {Toulouse},
volume = {Ser. 6, 15},
number = {4},
year = {2006},
doi = {10.5802/afst.1132},
mrnumber = {2295207},
zbl = {1125.58007},
language = {en}
}

@book{gallot2004riemannian,
	title="Riemannian Geometry",
	author={Gallot, Sylvestre and Hulin, Dominique and Lafontaine, Jacques},
	year={2004},
	doi={10.1007/978-3-642-18855-8},
	series={Universitext},
	address={Berlin},
	publisher={Springer Science \& Business Media}
}

@article{mccann2001polar,
	title="Polar factorization of maps on {R}iemannian manifolds",
	author={McCann, Robert J},
	journal={Geometric \& Functional Analysis GAFA},
	volume={11},
	number={3},
	doi={10.1007/PL00001679},
	pages={589--608},
	year={2001},
	publisher={Springer}
}

@book{villani2021topics,
	title="Topics in optimal transportation",
	author={Villani, C{\'e}dric},
	volume={58},
	year={2003},
	series={Graduate Studies in Mathematics},
	publisher={American Mathematical Society},
	address={Providence, Rhode Island}
}

@book{villani2009optimal,
	title="Optimal transport: old and new",
	author={Villani, C{\'e}dric},
	volume={338},
	year={2009},
	doi={10.1007/978-3-540-71050-9},
	series={Grundlehren der mathematischen Wissenschaften},
}

@article{ohta2012barycenters,
	title="Barycenters in {A}lesxandrov spaces of curvature bounded below",
	author={Ohta, Shin-Ichi},
	journal={Advances in geometry},
	volume={12},
	number={4},
	doi={10.1515/advgeom-2011-058},
	pages={571--587},
	year={2012},
	publisher={De Gruyter}
}

@article{cordero2001riemannian,
	title={A {R}iemannian interpolation inequality à la {B}orell, {B}rascamp and {L}ieb},
	author = {Dario Cordero-Erausquin and Robert J. McCann and Michael Schmuckenschl\"ager},
	journal={Inventiones mathematicae},
	year={2001},
	month={Nov},
	day={01},
	volume={146},
	number={2},
	pages={219-257},
	abstract={A concavity estimate is derived for interpolations between L1(M) mass densities on a {R}iemannian manifold. The inequality sheds new light on the theorems of Pr{\'e}kopa, Leindler, Borell, Brascamp and Lieb that it generalizes from Euclidean space. Due to the curvature of the manifold, the new {R}iemannian versions of these theorems incorporate a volume distortion factor which can, however, be controlled via lower bounds on Ricci curvature. The method uses optimal mappings from mass transportation theory. Along the way, several new properties are established for optimal mass transport and interpolating maps on a {R}iemannian manifold.},
	issn={1432-1297},
	doi={10.1007/s002220100160},
	url={https://doi.org/10.1007/s002220100160}
}

@article{jiang2017absolute,
	title="Absolute continuity of {W}asserstein barycenters over {A}lesxandrov spaces",
	author={Jiang, Yin},
	journal={Canadian Journal of Mathematics},
	volume={69},
	number={5},
	pages={1087--1108},
	doi={10.4153/CJM-2016-035-8},
	year={2017},
	publisher={Cambridge University Press}
}

@article{le2017existence,
	title="Existence and consistency of {W}asserstein barycenters",
	author={Le Gouic, Thibaut and Loubes, Jean-Michel},
	journal={Probability Theory and Related Fields},
	volume={168},
	number={3},
	doi={10.1007/s00440-016-0727-z},
	pages={901--917},
	year={2017},
	publisher={Springer}
}

@article{agueh2011barycenters,
	title="Barycenters in the {W}asserstein space",
	author={Agueh, Martial and Carlier, Guillaume},
	journal={SIAM Journal on Mathematical Analysis},
	volume={43},
	number={2},
	pages={904--924},
	year={2011},
	doi={10.1137/100805741},
	publisher={SIAM}
}

@article{kim2015multi,
	title="Multi-marginal optimal transport on {R}iemannian manifolds",
	author={Kim, Young-Heon and Pass, Brendan},
	journal={American Journal of Mathematics},
	volume={137},
	number={4},
	pages={1045--1060},
	doi={10.1353/ajm.2015.0024},
	year={2015},
	publisher={Johns Hopkins University Press}
}

@book{Santambrogio2015,
	author="Santambrogio, Filippo",
	title="Optimal Transport for Applied Mathematicians: Calculus of Variations, PDEs, and Modeling",
	year="2015",
	series={Progress in Nonlinear Differential Equations and Their Applications},
	volume={87},
	doi={10.1007/978-3-319-20828-2},
}

@article{kim2017wasserstein,
	title="Wasserstein barycenters over {R}iemannian manifolds",
	journal={Advances in Mathematics},
	volume={307},
	pages={640-683},
	year={2017},
	issn={0001-8708},
	doi={https://doi.org/10.1016/j.aim.2016.11.026},
	url={https://www.sciencedirect.com/science/article/pii/S0001870815304643},
	author={Kim, Young-Heon and Pass, Brendan},
	keywords={Optimal Transport, {R}iemannian manifolds, {W}asserstein barycenters, Multi-marginal optimal transport, Ricci curvature bounds, Jensen's inequality, Random Brunn–Minkowski inequality},
}

@book{ambrosio2008gradient,
	title="Gradient flows: in metric spaces and in the space of probability measures",
	author={Ambrosio, Luigi and Gigli, Nicola and Savar{\'e}, Giuseppe},
	year={2005},
	address = {Basel},
	publisher={Birkhäuser},
	series={Lectures in Mathematics. ETH Zürich},
	doi={10.1007/b137080},
}

@book{bogachev2007measure,
	title="Measure theory",
	author={Bogachev, Vladimir Igorevich},
	doi={10.1007/978-3-540-34514-5},
	year={2007},
	publisher={Springer},
	address={Berlin},
}
\end{document}